


\documentclass{cpamart1}     
\startingpage{1}


\authorheadline{J.-F. Babadjian, G.A. Francfort}
\titleheadline{Continuity equation in Hencky plasticity}



\usepackage{amsthm,amssymb,color}
\usepackage{latexsym,esint}
\usepackage{mathrsfs}

\usepackage{graphicx,pgf,tikz}
\usetikzlibrary{decorations}
\tikzstyle xyax=[thin]
\tikzstyle mlin=[thick]
\tikzstyle slin=[]

\usepackage[T1]{fontenc}
\usepackage{mathptmx}



\newtheorem{theorem}{Theorem}[section]
\newtheorem{lemma}[theorem]{Lemma}

\newtheorem{proposition}[theorem]{Proposition}
\theoremstyle{definition}
\newtheorem{definition}[theorem]{Definition}
\theoremstyle{remark}
\newtheorem{remark}[theorem]{Remark}



\newcommand{\res}{\mathop{\hbox{\vrule height 7pt width .5pt depth 0pt
\vrule height .5pt width 6pt depth 0pt}}\nolimits}

\definecolor{purple}{rgb}{0.8,0.01,0.7}

\newcommand{\N}{\mathbb N}
\newcommand{\R}{\mathbb R}

\newcommand{\dist}{{\rm dist}}

\DeclareMathOperator{\Div}{div}
\newcommand{\wto}{\rightharpoonup}
\newcommand{\e}{\varepsilon}

\newcommand\p{\varphi}

\newcommand{\LL}{{\mathcal L}}
\newcommand{\HH}{{\mathcal H}}
\newcommand{\M}{{\mathcal M}}
\newcommand{\C}{{\mathcal C}}

\newcommand\h{\HH^1}

\let\O=\Omega

\newcommand\ds{\displaystyle}

\newcommand{\bpartial}{{\boldsymbol{\partial}}}


\begin{document}                        


\title{Continuity equation and characteristic flow for scalar
Hencky plasticity}

\author{Jean-Fran\c{c}ois Babadjian}{Universit\'e Paris-Saclay}

\author{Gilles A. Francfort}{Universit\'e Paris-Nord \& Courant Institute}






\begin{abstract}
 {\scriptsize We investigate uniqueness issues for a   continuity equation arising out of the simplest model for plasticity,  Hencky plasticity. The associated system is of the form $\rm{ curl\;}(\mu\sigma)=0$ where $\mu$ is a nonnegative measure and $\sigma$ a two-dimensional divergence free unit vector field. After establishing the Sobolev regularity of that field, we provide a precise description of all possible geometries of the characteristic flow, as well as of the associated solutions.}
\end{abstract}

\maketitle



 {\scriptsize\tableofcontents}


\section{Introduction}

\subsection{The mathematical subtext}\label{sec:subtext}

It is by now well established that {the solutions $\mu$ of} the continuity equation
$$\Div({\mu b})=0,$$
for some given vector field $b:\R^N \to \R^N$, are closely related to the notion of characteristics, that is to the solutions $X$ of the ordinary differential equation
$$\begin{cases}
\ds\frac{dX}{ds}{(s)}= {b}(X(s)), \quad s\ge 0,\\[2mm]
X(0)=x.
\end{cases}$$
When $b$ is a Lipschitz continuous vector field, the Cauchy-Lipschitz theorem for ODE's provides a complete picture of the solutions. For less regular $b$'s, the theory of  regular Lagrangian flows initiated by R. DiPerna and P.-L. Lions \cite{DPL} and pursued  notably in  \cite{Amb,CD} has had tremendous success in handling such problems  in the context of Hamiltonian flows, {\it i.e.}, under the additional assumption that $\Div b$ is well controlled, e.g. in $L^\infty(\R^N)$.

We propose  to investigate a closely related question {in a two-dimensional setting where $b=\sigma^\perp$ is the $\pi/2$-rotation of some $\sigma \in L^\infty(\R^2;\R^2) \cap H^1_{\rm loc}(\R^2;\R^2)$. In that setting, the} continuity equation {takes} the form
\begin{equation}\label{eq.cont.plas}
{\rm curl\;}(\mu \sigma)=0 \quad \Longleftrightarrow \quad \Div (\mu\sigma^\perp)=0.
\end{equation}
The field $ \sigma$ under consideration in \eqref{eq.cont.plas} is a divergence free field such that $\Div \sigma^\perp=-{\rm curl\; \sigma}$ only belongs to $L^2_{\rm loc}(\R^2)$. As a consequence we do not control the so-called compressibility constant. The available theoretical tools developed in \cite{DPL,Amb,CD} cannot produce the kind of uniqueness obtained in e.g. \cite[Corollary 2.10]{CD}. Note that here the ODE defining the characteristic flow follows a gradient flow structure, rather than that of a Hamiltonian flow.

As  will be discussed in Subsection  \ref{sec:mech}, the existence of a nonnegative measure $\mu$ which solves \eqref{eq.cont.plas} (in a sense that will be specified in Section \ref{sec:Hencky}) is secured. However we {have no information about uniqueness}. The problem we will detail in Subsection  \ref{sec:mech} and Section \ref{sec:Hencky}  exhibits additional structure. First it lives in a bounded domain $\O$ of $\R^2$ and the  divergence free field $\sigma$ belongs to  $L^\infty(\O;\R^2) \cap H^1_{\rm loc}(\O;\R^2)$. Further,  $\mu$ is  a nonnegative bounded Radon measure supported on  $\overline\O$ and $\mu\sigma$ is well-defined as a bounded Radon measure supported on  $\overline\O$. Consequently,
there is $u \in BV(\O)$ such that $Du=\mu\sigma$. That function is further assigned {a prescribed} exterior trace $w$ on $\partial\O$. Finally, it might be so that $|\sigma|\equiv 1$ on an open subdomain of $\O$; see the example discussed in Subsection \ref{sec:mech}.
This motivates our choice of studying \eqref{eq.cont.plas} under the following assumptions:
\begin{equation}\label{eq:eq.for.sigma}\begin{cases}
\sigma\in H^1_{\rm loc}(\O;\R^2),\\
|\sigma|=1\quad\mbox{a.e. in }\O_p\mbox{ open convex subset of }\O,\\
\Div\sigma=0 \quad\mbox{in }\O.
\end{cases}
\end{equation}
From the standpoint of $\sigma$ the setting is a particular case  of that expounded upon in \cite{JOP}. The additional information here is that $\sigma\in H^1_{\rm loc}(\O;\R^2)$ and it allows us to provide a very detailed description of the characteristics which, restricted to $\O_p$, are
straight lines in the direction of $\sigma^\perp$ (a  constant field along those lines) in Section \ref{sec:rig}.

But even an intimate knowledge of the characteristics does not yield  any
uniqueness result for the solution $\mu$ to the continuity equation
\eqref{eq.cont.plas}.  In our setting we prove in Section \ref{sec:u} that {the associated function} $u$ remains constant along the characteristic lines as suggested by  the formal computation
$$
\frac{d}{ds} u(x+s\sigma^\perp\!(x))\!=\!Du(x+s\sigma^\perp\!(x))\cdot \sigma^\perp\!(x) \!=\!Du(x+s\sigma^\perp\!(x))\cdot \sigma^\perp\!(x+s\sigma^\perp\!(x))\!=\!0
$$
since $Du=\mu \sigma.$ This also is not  sufficient to claim uniqueness
of the
solution $\mu$ to the continuity equation, most notably because, as explained in Remark \ref{rem:strict-conv}, $\O_p$ cannot coincide with $\O$ so
that we  do not know how to relate the values of $u$ on $\O_p$ to the boundary values of $u$, this independently of whether or not the internal trace of $u$ on $\partial \O$ coincides with the given external trace $w$.

The full results are given in Theorem \ref{thm:main-results}. They are expressed in a slightly different language, that of plasticity because, as will become clear in the next subsection, our main motivation derives from issues of uniqueness of the plastic strain in Von Mises plasticity. The
connection with  hyperbolicity \`a la  \eqref{eq.cont.plas} is uncovered in Subsection \ref{sec:mech}.

\subsection{The specific context}\label{sec:mech}

When departing from a completely reversible behavior, fluid mechanics  essentially follows a unique path, that of viscosity. In its simplest manifestation, Euler equations cede the ground to Navier-Stokes equations which become the template for classical fluid behavior.   {Even non-Newtonian
fluids usually exhibit viscosity, although one that may depend on a variety of kinematic or internal variables. When it comes to solids, while} elasticity is the universally adopted reversible behavior, the irreversibility palette is much richer. This is so because solid mechanics encodes geometry and not only flow. As for  fluids, viscosity is one expression of dissipation, leading to various kinds of viscoelastic models which, by the way, are mathematically much easier to handle   in the case of solids. But many other kinds of dissipative behaviors may occur, together with, or separate from viscosity. Their essential distinguishing feature is rate-independence: Material response is, up to rescaling, impervious  to the loading rate. In that class, the best-established behavior is plasticity, and, within plasticity, Von Mises plasticity. While other rate-independent behaviors are still a modeling challenge, Von Mises plasticity can
be thought of as the solid equivalent to Navier-Stokes, that is an admittedly simplistic model that however contains key ingredients for explaining much of the underlying physics at the macroscopic level.

Of course, because of geometry, this assertion should be nuanced: Von Mises plasticity is a perfectly sound model as long as deformations are small, that is as long as the kinematics of the deformation does not result in large changes of shape. Models for large deformations are not completely settled at present, even in the absence of irreversibility. In spite of major advances in the past 40 years spearheaded by the work of J.M. Ball \cite{Ball}, finite elasticity is far from a complete theory, while finite plasticity is a  minefield.

Von Mises plasticity, also called Prandtl-Reuss elasto-plasticity with a Von Mises yield criterion, consists in a  system of time dependent equations below. There, we denote by $\O$ the {three-dimensional} domain under consideration and, for simplicity, place ourselves in a quasi-static setting, that is in the absence of inertia. We further assume homogeneity  and take all material parameters to be identically $1$ (with the right units).

 The displacement field $u(t):\O\to \R^3$ is constrained by a time-dependent Dirichlet boundary condition $u(t)=w(t)$ on $\partial\O_d$, a relatively open subset of $\partial\O$, which constitutes the Dirichlet part of the boundary. The associated linearized strain $Eu(t):=\frac12 (\nabla u(t)+\nabla u(t)^T)$ is additively decomposed into the elastic strain $e(t)$ (a $3 \times 3$ symmetric matrix) and the plastic strain $p(t)$ (a trace-free $3 \times 3$ symmetric matrix), {\it i.e.},
$$
Eu(t)=e(t)+p(t), \mbox{ with } {\rm tr}(p(t)) =0.
$$

 We assume, for simplicity, that the only driving mechanism, besides the imposed displacement $w(t)$, is a surface load $g(t)$;  there are no body
 loads.  With our assumptions, the Cauchy stress $\sigma(t)$,  is simply $$\sigma(t)= e(t).$$
It is in quasi-static equilibrium, {\it i.e.},
$$
\Div \sigma(t)=0 \mbox{ in }\O,\quad \sigma(t)\nu=g(t) \mbox{ on } \partial\O_n:=\partial\O\setminus\overline{\partial\O_d}
$$
($\nu$  is the outer normal to $\partial\O_n$), while its deviatoric part
$\sigma_D(t):=\sigma(t)-\frac13 {\rm tr\,}(\sigma(t)) {\rm Id}$ satisfies the Von Mises yield criterion,
$$
|\sigma_D(t,x)|\le\sqrt{\frac{2}{3}} \mbox{ at every point $x\in \O$}.
$$

The deviatoric stress  $\sigma_D(t)$ and the plastic strain rate $\dot
p(t)$ are related, at every point $x\in \O$, through the so-called flow rule
\begin{equation}\label{eq:fr-na}
\dot p(t,x) =\lambda(t,x) \sigma_D(t,x), \mbox{ with }
\begin{cases}
\lambda(t,x)\ge 0,\\
\lambda(t,x)=0 \;\mbox{ if } |\sigma_D(t,x)|<\sqrt{\frac{2}{3}}.
\end{cases}
\end{equation}
In other words, whenever the (deviatoric part of the) stress reaches the boundary of its admissible set,  the plastic strain should flow in the direction normal to that set.

\medskip

The  modern mathematical treatment of Von Mises plasticity  finds its roots in the work of  P.-M. Suquet \cite{suquet81}, later completed by various works (see {\it e.g.} \cite{T, kohn.temam,A2, anzellotti84,AnzLuc1987}).  That work was revisited some 20 years later by G. Dal Maso, A. De Simone and M. G. Mora \cite{DMDSM} within the framework of  the variational theory of rate independent evolutions popularized by A. Mielke (see {\it e.g.}\;\cite{mielke05}). The basic tenet there  is  that the evolution can be viewed as a time-parameterized set of minimization problems for the sum of the elastic energy and of the add-dissipation.  The minimizers  should also be such that an energy conservation statement, amounting to a kind of Clausius-Duhem inequality, is satisfied throughout the evolution.

In any case, for a Lipschitz bounded domain $\O$ and smooth enough $w$ and $g$ (see e.g. \cite[Remark 2.10]{FG}), the resulting evolutions $t \mapsto (u(t), e(t), p(t))$ are found to live in $AC([0,T]; BD(\O)\times L^2(\O;\R^6)\times \M(\O\cup\partial\O_d;\R^6))$. Here, $BD(\Omega)$ stands for the space of functions of bounded deformation, {\it i.e.},  integrable vector fields $v:\Omega\to {\R^3}$ whose distributional symmetrized gradient $Ev=\frac12 (Dv+Dv^T)$ is a bounded measure in $\O$ and $\M(\O\cup\partial\O_d;\R^6)$ stands for the space of {$\R^6$-valued} bounded Radon measures on $\O\cup\partial\O_d$ (see \cite{FG} under those
conditions). Further, uniqueness of $e(t)$, hence of $\sigma(t)$ is guaranteed. Such however is not the case for $p(t)$, hence for $u(t)$. The first example of  non-uniqueness was presented in \cite[Section 2.1]{suquet81} while \cite[Section 10]{demyanov} introduces the first examples of uniqueness. In those references the setting is essentially 1D.  To our knowledge, the only examples of determination of uniqueness or non-uniqueness in 3D can be found in \cite{FGM16, FGM17}. There, the discussion around uniqueness is centered around the equation that the Lagrange multiplier $\lambda$ in \eqref{eq:fr-na} must satisfy when $|\sigma_D(t,x)|=\sqrt{2/3}$.

\medskip

We next formally manipulate the equations at a given fixed time $t$. Indeed, since $\dot p= E\dot u- \dot e$, or still $\lambda \sigma_D= E\dot u- \dot \sigma$, we can use the compatibility equations for symmetrized gradient, that is, for all $1 \leq i,j,k,l\leq 3$,
$$\ds
\frac{\partial^2 (E \dot u)_{ij}}{\partial x_k\partial x_l}+\frac{\partial^2 (E \dot u)_{kl}}{\partial x_i\partial x_j}-\frac{\partial^2 (E \dot u)_{ik}}{\partial x_j\partial x_l}-\frac{\partial^2 (E \dot u)_{il}}{\partial x_j\partial x_k}=0.
$$
We obtain a system of 6 equations, namely,
\begin{multline}\label{eq:sys-hyp}
(\sigma_D)_{ij}\frac{\partial^2\lambda}{\partial x_k\partial x_l}+(\sigma_D)_{kl}\frac{\partial^2\lambda}{\partial x_i\partial x_j}-(\sigma_D)_{ik}\frac{\partial^2\lambda}{\partial x_j\partial x_l}-(\sigma_D)_{il}\frac{\partial^2\lambda}{\partial x_j\partial x_k}\\ (+ \mbox{ terms of lower order in }\lambda)= -({\rm curl}\;{\rm curl}\; ( \dot \sigma))_{ijkl}.
\end{multline}
In the example investigated in \cite{FGM16}, the stress field $\sigma$ is
constant, so that the lower order terms disappear as well as the right-hand side, and the formal manipulations can be justified. We then have to deal with a {\it bona fide} system of  second order linear partial differential equations for the measure $\lambda$ of the form
$$
D\nabla^2 \lambda(t)=0,
$$
with $D$ a constant $6\times 6$ matrix. Whenever the determinant of $D$ is not $0$,  $\nabla^2\lambda(t)\equiv 0$;  then, because of the specific setting in that example, $\lambda(t)$ is $x$-independent, from which it follows that $p(t)$ is an $x$-independent plastic strain $p(t)$. With that
result at hand, $p(t)$ is   easily shown to be unique; the {example} in  \cite{FGM17}, while more intricate,  goes along the same lines. Otherwise, the system reduces to a spatial hyperbolic equation for $\lambda(t)$ and then uniqueness depends on whether the associated characteristics coming out of $\partial\O_d$ fill the whole domain. Roughly speaking, if they do, then uniqueness is obtained. If they don't, then non-uniqueness can be drastic because plastic strains that are as badly behaved as one desires (for example plastic strains supported on Cantor sets) can appear at any time $t$ in the region not reached by those characteristics. So one could then impose a large enough homogeneous boundary condition and the stress field would consequently be constant while a possible plastic strain would be spatially homogeneous. Yet, arbitrary localized non-zero plastic strains could be superimposed at any later time. This is reminiscent of what was recently observed by C. De Lellis and L. Sz\'ekelyhidi when dealing with non-uniqueness in Euler equations (see \cite{DLS} and subsequent works),  with the important caveat that plastic strains, once turned on, cannot be turned off because of dissipation.

 \medskip

This kind of analysis of uniqueness depends on our ability to deal with a
system such as \eqref{eq:sys-hyp}. In the examples already alluded to,  the key observation is the spatial homogeneity of the stress field $\sigma(t)$, a feast that cannot be easily reproduced in a generic problem. Barring this, our toolbox is rather empty.  As a matter of fact, it is impossible to even define possible characteristics in a meaningful manner because of the lack of regularity of the stress field $\sigma(t)$. At best, it
is a locally $H^1$-function while $\lambda(t)$  is a measure.

This is why, in an attempt to simplify the problem, we address in the present paper a scalar-valued version of Von Mises plasticity in the simplest setting where geometry will play its part, that is in 2D (see \cite[Section 3.1]{BMi} for a formal derivation). Furthermore, since time evolution seems to be a complicating feature but not one that uniqueness hinges on, we propose to investigate a time independent (static) version of Von Mises plasticity, that of Hencky plasticity which actually predates evolutionary plasticity \`a la Von Mises. The  system of time-independent equations becomes in its formal version (see \eqref{eq:plast} for a more precise formulation)
$$\begin{cases}
{\rm div}\sigma=0 \quad \text{ in }\O, \\[1mm]
|\sigma|\leq 1 \quad \text{ in }\O,\\[1mm]
Du=\sigma+p \mbox{ in } \O,  \\[1mm]
u=w\text{ on } \partial \O_d,\quad \sigma\cdot\nu=g \text{ on } \partial \O_n,\\[1mm]
p=\lambda\sigma \mbox { in } \O \mbox{ with } \lambda\ge 0 \text{ and }{\lambda(1-|\sigma|)=0}.
\end{cases}$$
Once again existence of $(u,\sigma,p)$ (for a slightly relaxed problem) is {guaranteed}, this time through a straightforward minimization process (see Section \ref{sec:Hencky}). The triplet $(u,\sigma,p)$ belongs to $BV(\O)\times L^2(\O;\R^2)\times \M(\O\cup\partial\O_d;\R^2)$ and $\sigma$ is unique {and actually belongs to  $H^1_{\rm loc}(\O;\R^2)$ (see below).}

\medskip

\begin{figure}[hbtp]
\scalebox{.9}{\begin{tikzpicture}

\fill[color=lightgray]  plot coordinates { (0,-1.04) (3,-1.04) (0,1.96)};

\draw[style=very thick] (0,-1.04) -- (0,2.765) ;
\draw[style=very thick] (3,-1.04) -- (3,-.235)  ;
\draw[style=very thick] (0,2.765) -- (3,-.235);
\draw[style=very thick] (0,-1.04) -- (3,-1.04);

\draw[dashed] (-.5,3.265) -- (4,-1.235);
\draw (-1,3.1) node{\tiny $x+y=\ell$};

\draw (-1,1.04) node{\small $\sigma\cdot \nu=-\frac{1}{\sqrt2}$};
\draw(2.4,1.6) node{\small $u=\frac{a\ell}{\sqrt 2},\;a>1$};
\draw(1.3,-1.5) node{\small $u=\frac{x}{\sqrt 2}$};
\draw(1.3,-1.9) node{\tiny  width $=\mathrm{d<\ell}$};
\draw (4,1) node{$\Omega$};
\draw(4,-0.64) node{\small $\sigma\cdot \nu=\frac{1}{\sqrt2}$};
\draw(-1,.6) node{\tiny  height $=\ell$};
\draw[->] (3.7,1) -- (2.7, 0.15);

\draw(1.5,0.85) node{ $\mathbf U$};

\end{tikzpicture}}
\caption{\small  {Example of non-uniqueness.}}
\label{fig:example}
\end{figure}
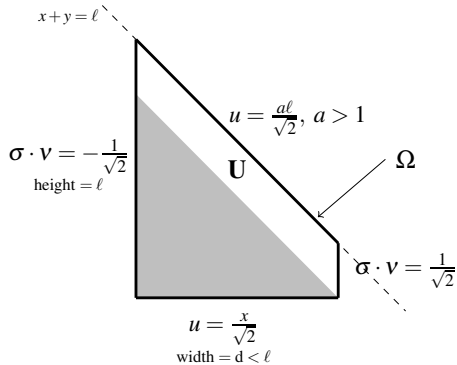

In that setting as well, uniqueness issues are intimately tied to the solving of a first order hyperbolic equation. Let us illustrate this with a very simple example. For $0<d<\ell$, take
$$\O:=\{(x,y) \in \R^2:\; 0<x<d,\; 0<y<\ell-x\},$$
$$\partial\O_d=(0,d)\times\{0\}\cup \{(x,y) \in \R^2:\; 0<x<d,\;x+y=\ell\},$$
and
$$\partial\O_n=(\{0\}\times(0,\ell)) \cup(\{d\}\times(0,\ell-d)).$$
We set
$$\begin{cases}
\ds {w(x,0)=\frac{x}{\sqrt 2}, \quad w(x,\ell-x)=\frac{a\ell}{\sqrt 2}\quad \text{ for all } x\in (0,d)}\\[2mm]
\ds g(0,y)=-\frac{1}{\sqrt 2}\quad \text{ for all }y\in(0,\ell), \\
\ds g(d,y)=\frac{1}{\sqrt 2}\quad \text{ for all } y\in(0,\ell-d),
\end{cases}$$
with $a>1$. It is then easily seen that the unique stress field is given by
{$\sigma(x,y)=\frac{1}{{\sqrt 2}}(1,1)$}, and that $\lambda$ satisfies $\frac{\partial\lambda}{\partial x}-\frac{\partial\lambda}{\partial y}=0$ from which we conclude that $\lambda$ reads as $\zeta(x+y)$, the push forward of a nonnegative bounded Radon measure on $\zeta \in \M(\R)$ by the map $(x,y) \mapsto x+y$. Consequently,
\begin{equation}\label{eq:u-uniq}
u(x,y)=\frac{x+y+Z(x+y)}{\sqrt{2}} \quad \text{ for all }(x,y) \in \O
\end{equation}
for some $Z \in BV(\R)$ with $DZ=\zeta$.

In view of Remark \ref{rmk:jump-flow-rule} below, there can be no jumps of $u$ on $(0,d)\times\{0\}$ since $\sigma\cdot\nu \neq \pm 1$ on that set
($\nu$ outer normal). Thus $\lambda=0$ on $(0,d)\times\{0\}$, which means that $\zeta = 0$ in $(0,d)$.  Using again the boundary condition and
the fact that $u$ does not jump on $(0,d)\times\{0\}$, we obtain
$$Z(t)=0 \quad \text{ for all }t \in (0,d).$$
However, there can be a jump on $\{(x,y) \in \R^2:\; 0<x<d,\;x+y=\ell\}$ since, in that case,
$\sigma\cdot\nu = 1$.
Since $d<\ell$, the part $U:=\{(x,y)\in \O:\; d-x<y<\ell-x\}$ is not traversed by characteristic lines intersecting $(0,d)\times\{0\}$ (see Figure \ref{fig:example}). So, $\lambda(x,y)=\zeta(x+y)$, where $\zeta$ is any nonnegative Radon measure. Because $w=\frac{a\ell}{\sqrt 2}$ on $\{(x,y) \in \R^2:\; 0<x<d,\;x+y=\ell\}$, we must have $Z^+(\ell)=(a-1)\ell$. In conclusion,  $Z$ can be any monotonically increasing  function such that $Z\equiv 0$ on $(0,d)$ and $Z^+(\ell)=(a-1)\ell$. This in turn
 will give rise to  many possible   $u$'s in $U$ through \eqref{eq:u-uniq}.

Note that, for $d=\ell$ the domain degenerates to a triangle, the Neumann boundary condition is only on $\{0\}\times (0,\ell)$ and the solution is unique. It corresponds to $Z\equiv 0$ on $(0,\ell)$ and $Z^+(\ell)=(a-1)\ell$.

\medskip

In the spirit our prior discussion, one should investigate the compatibility equation ${\rm curl\;}Du=0$, that is the continuity equation
$${\rm curl\;}(\mu\sigma)=0.$$
with $\mu:=\mathcal L^2+\lambda$. At this point, we are back to \eqref{eq.cont.plas} and  have to further specialize the setting by assuming that there is an open set $\O_p$  on which $|\sigma|\equiv 1$. This is the case  in the examples that were investigated in \cite{FGM16,FGM17} in a vectorial setting.

\begin{remark}The previous assumption is also related to the so-called slip line theory, widely used in mechanics to get exact solutions to plane-stress rigid-plasticity problems; see \cite{BabFr} for a description of the mathematical framework of rigid plasticity. There, the spatial hyperbolic structure of the stress equations is used in combination with the method of characteristics to construct non trivial solutions to the stress problem and to determine the associated velocities \cite[Section 5.1]{Lub}. \hfill\P
\end{remark}

\begin{remark} As mentioned in Subsection \ref{sec:subtext}, the results of \cite{JOP} will play a decisive role in the analysis. Unfortunately, those are severely limited to the scalar case, that is to a single equation of the form $\Div\sigma=0$. Extending the results to the true two-dimensional Hencky setting where $\sigma$ is a symmetric $2\times 2$-matrix and $\Div\sigma=0$ is a set of two equations would require, in the terminology of \cite{JOP}, to design appropriate entropies as in \cite{DKMO};
see Subsection \ref{stressLip} for details. Unfortunately, we are unable to accomplish such a task at this time.\hfill\P
\end{remark}

As already announced, our results are  briefly described in the next subsection.
\subsection{The results}
Section \ref{stressreg} focuses mainly on the $H^1_{\rm loc}(\O;\R^2)$-regularity of the stress field $\sigma$. That regularity   was
 first demonstrated  in \cite{S}, then in \cite{BF}, in both instances in
the  vectorial setting. This is the object of Theorem \ref{thm:Sobolev}  which offers a short derivation in the current scalar setting using a  Perzyna type approximation of the problem as in \cite{BM} (see \eqref{eq:perzyna}), in lieu of the so-called Norton-Hoff approximation used in \cite{BF}, or of the Kelvin-Voigt visco-elastic approximation used in \cite{S}. It is  our belief that the proof of that result is rather convoluted in those prior works; we strive to give a hopefully more transparent and self-contained proof.

In Subsection \ref{stressLip}, we  quickly revisit the main result in \cite{JOP} adapted to our context, that is the locally Lipschitz regularity of the stress in $\O_p$, provided that this set is open and convex (see Theorem \ref{thm:lip}).  In particular, those results imply that $\sigma$ remains constant along straight lines in $\O_p$ with direction $\sigma^\perp$  which are precisely the characteristics inside $\O_p$ (see Proposition \ref{prop:caracteristic}). This is done by adapting the results
of \cite{JOP} (see also \cite{BP,I}) which use the notion of entropies introduced in \cite{DKMO}. Again, the results of this Subsection are not new. However, the proofs in prior works do not take into account the specificities of the case at hand, that is the {\it a priori} knowledge of the Sobolev regularity of the stress. In Subsection \ref{sec:u}, we prove that any displacement field $u$ must remain constant along the characteristic lines in $\O_p$ (see Theorem \ref{thm:cst-disp}).

In Section \ref{sec:rig} we first consider boundary fans that are solutions that correspond to vortices for $\sigma$ (see Subsection \ref{bdary-fans}). This happens when two distinct characteristic lines intersect on the boundary. Note that such an intersection is impossible inside $\O_p$ because it would contradict the continuity of $\sigma$ at that point. Those
can be anywhere in $\O_p$. {We take their union} and consider the complementary set $\mathscr C$ within $\O_p$. We  show that each connected component of $\mathscr C$ intersects the boundary $\partial\O_p$. If its interior is empty, it is a characteristic line. Otherwise, its intersection with $\partial\O_p$ has one or two connected components. Furthermore, if it has two connected components, then all points in  those are traversed by a characteristic line, whereas if it has only one component, then it might be so that a single line segment within that component is not traversed by any characteristic. This is the object of Theorem \ref{thm:geom-struct} which is our main rigidity result. As far as the stress is
concerned, we show continuity of the stress at all points of the boundary
that are traversed by a characteristic (see Theorem \ref{thm:cont-sigma}). Finally, we demonstrate that, besides boundary fans, exterior fans (that are fans with an apex outside $\overline \O_p$), and areas of constant $\sigma$ (which correspond to parallel characteristic lines), one can also have  areas where the characteristic lines look like a ``continuous''
one parameter family of lines, {\it e.g.} of the form $y=x/t-t$, for $t>0$ (see Paragraph \ref{4.5.3}). We conjecture that those four situations
are the only possible ones for $\sigma$ in the region on which $|\sigma|=1$.

The behavior of any solution field $(\sigma,u)$  along the characteristic
lines in $\O_p$ (see Proposition \ref{prop:caracteristic} and Theorem \ref{thm:cst-disp})  seems beyond  reach for now, even in the scalar-valued setting, absent an additional assumption like the existence of a set with
non empty interior where $|\sigma|=1$. But even in our restrictive setting this result falls short of adjudicating uniqueness of the plastic strain $p$. This is so because the set $P:=\{x\in \O: \; |\sigma|=1\}$ is a closed set in $\O$ while we have to assume that $\O_p$ is a convex open set in the interior of $P$. In particular, we have no systematic way of relating the values of $u$ on the boundary of $\partial\O$ to those on $\partial\O_p$, except for very particular settings {(see Propositions \ref{prop:unique-u-fan} and \ref{prop:unique-C}). {If $\O$ was a convex domain and $P=\O$, then uniqueness could be obtained, at least in the case
of Dirichlet boundary conditions throughout $\partial\O$}. For more details  see Remark \ref{rem:strict-conv}.

\medskip

For the reader's convenience, we concatenate the main results in a unique
Theorem which, in its concision, somewhat hides the hyperbolic nature of the questions that are central to this paper (see Figure \ref{fig:geometry} for an  illustration of the geometric structure of the solutions).

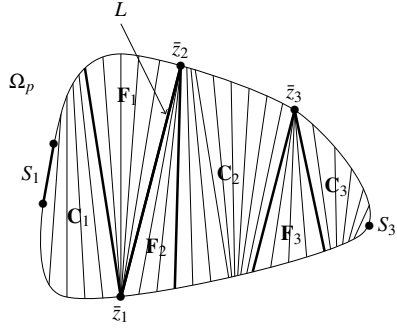
\begin{figure}[hbtp]
\scalebox{.8}{\begin{tikzpicture}

\draw plot[smooth cycle] coordinates {(-1,-1) (4,0) (3,2) (0,3) (-1,2)};

\fill[color=white]  plot coordinates {(-1.3,0.5)  (-1.12,1.5) (-1.5,1.5) (-1.5,0.5)};

\draw[style=very thick] (0,-1.04) -- (-0.6,2.765) ;
\draw (0,-1.04) -- (-0.3,2.95) ;
\draw (0,-1.04) -- (0,3) ;
\draw (0,-1.04) -- (0.3,2.97) ;
\draw (0,-1.04) -- (0.65,2.9) ;
\draw[style=very thick] (0,-1.04) -- (1,2.8) ;
\draw (0,-1.04) node{{\small $\bullet$}};
\draw (0,-1.04) node[below]{\small $\bar z_1$};
\draw (-0.2,2.3) node[right]{\small $\mathbf F_1$};

\draw[style=very thick] (1,2.8)--(0,-1.04)  ;
\draw (1,2.8)--(0.3,-0.98)  ;
\draw (1,2.8)--(0.6,-0.94)  ;
\draw[style=very thick] (1,2.8)--(0.9,-0.89)  ;
\draw (1,2.8) node{{\small $\bullet$}};
\draw (1,2.8) node[above]{\small $\bar z_2$};
\draw (0.6,-0.2) node{\small $\mathbf F_2$};

\draw[style=very thick] (2.9,2.07)--(2.2,-0.61)  ;
\draw (2.9,2.07)--(2.5,-0.54) ;
\draw (2.9,2.07)--(2.8,-0.45) ;
\draw (2.9,2.07)--(3.1,-0.36) ;
\draw[style=very thick] (2.9,2.07)--(3.4,-0.27)  ;
\draw (2.9,2.07) node{{\small $\bullet$}};
\draw (2.9,2.07) node[above]{\small $\bar z_3$};
\draw (2.85,0) node{\small $\mathbf F_3$};

\draw (-0.3,-1.05) -- (-0.7,2.66) ;
\draw (-0.6,-1.06) -- (-0.8,2.51) ;
\draw (-0.9,-1.03) -- (-0.9,2.31) ;
\draw (-1.2,-0.83) -- (-1,2) ;
\draw[style=very thick] (-1.3,0.5) -- (-1.12,1.5) ;
\draw (-0.7,0.3) node{\small $\mathbf C_1$};
\draw  (-1.3,0.5)  node{{\small $\bullet$}};
\draw  (-1.12,1.5) node{{\small $\bullet$}};
\draw (-1.2,1) node[left]{\small $S_1$};

\draw[<-]  (0.75,2) -- (0,3.5);
\draw( 0,3.5) node[above] {\small $L$};

\draw (1.1,2.78)--(1.2,-0.83)  ;
\draw (1.17,2.75)--(1.5,-0.77)  ;
\draw (1.24,2.74)--(1.8,-0.71)  ;
\draw (1.54,2.64)--(1.85,-0.7)  ;
\draw (1.84,2.54)--(1.9,-0.69)  ;
\draw (2.14,2.43)--(1.95,-0.67)  ;
\draw (2.44,2.31)--(2,-0.66)  ;
\draw (2.7,2.18)--(2.1,-0.64)  ;
\draw (1.8,1) node{\small $\mathbf C_2$};

\draw (3.2,1.84)--(3.5,-0.24)  ;
\draw (3.57,1.45)--(3.6,-0.19)  ;
\draw (3.85,1.11)--(3.7,-0.16)  ;
\draw (4.05,0.77)--(3.8,-0.11)  ;
\draw (4.15,0.50)--(3.89,-0.06)  ;
\draw (3.6,0.8) node{\small $\mathbf C_3$};
\draw (4.15,0.15) node{{\small $\bullet$}};
\draw (4.15,0.15) node[right]{\small $S_3$};

\draw (-2,2.5) node[right]{\small $\Omega_p$};
\end{tikzpicture}}
\caption{\small  {An example of geometry with three boundary fans $\mathbf F_1$, $\mathbf F_2$, $\mathbf F_3$ with apexes, respectively, $\bar z_1$, $\bar z_2$, $\bar z_3$, three connected components $\mathbf C_1$, $\mathbf C_2$ and $\mathbf C_3$, and a characteristic line $L$ which is the intersecting characteristic line segment between the (open) fans $\mathbf F_1$ and $\mathbf F_2$. The connected components $\mathbf C_1$ and $\mathbf C_3$ have one characteristic line segment on their boundaries, the characteristic boundary set $S_1$ is a closed line segment, while $S_3$ is a
single point. The connected component $\mathbf C_2$ has two characteristic line segments on its boundary.}}
\label{fig:geometry}
\end{figure}

\begin{theorem}[Main results]\label{thm:main-results}
Assume that $\O$ is a Lipschitz bounded domain of $\R^2$ and that $w\in H^1(\O)$. The minimization problem
$$
\inf \Big\{ \frac12 \int_\O |\sigma|^2\, dx + |p|(\overline \O) : \; (u,\sigma,p) \in \mathcal A_w\Big\}
$$
 with
\begin{multline*}\mathcal A_w:=\{(u,\sigma,p) \in BV(\O) \times L^2(\O;\R^2) \times \M(\overline \O;\R^2):
 \\ Du=\sigma+p \text{ in } \O,\ p=(w-u)\nu \HH^1\text{ on } \partial
\O\}
\end{multline*}
has at least one minimizer $(u,\sigma,p)$. Furthermore, $\sigma$ is unique and
belongs to $H^1_{\rm loc}(\O;\R^2)$.

Assume moreover that there exists a non empty convex open set $\O_p\subset\O$ such that $|\sigma|=1$ a.e. on $\O_p$. Then $\sigma$ is locally Lipschitz on $\O_p$ and remains constant along each open line segment
$$L_x:=(x+\R \sigma^\perp(x)) \cap \O_p, \quad \text{ with }x \in \O_p,$$
called  a {\rm characteristic} line segment. Moreover,  $u$ remains constant along $L_x \cap \O_p$ for all $x \in \O_p \setminus \left(\bigcup_{z \in Z} L_z\right)$ where $Z \subset \O_p$ is an $\HH^1$-negligible set such that $\LL^2\left(\bigcup_{z \in Z}L_z\cap \O_p\right)=0$.

Geometrically, $\O_p$ can be decomposed as the following disjoint union
$$\O_p=\bigcup_{i\in I} \mathbf F_i\cup \bigcup_{\lambda \in \Lambda} (L_{x_\lambda} \cap \O_p) \cup \bigcup_{j\in J}\mathbf C_j,$$
for some countable sets $I$ and $J$, and some (possibly) uncountable set $\Lambda$. For all $i \in I$,
$\mathbf F_i$ is a boundary fan, {\it i.e.},  the intersection of $\O_p$ with an open cone with apex $\bar z_i \in \partial\O_p$ and two characteristic line segments as generatrices; for all $\lambda \in \Lambda$, $L_{x_\lambda}$ is a characteristic line segment passing through $x_\lambda \in \O_p$ and we set $P_\lambda:=L_{x_\lambda}\cap\partial\O_p$ (a set made of two points); and for all $j \in J$,  $\mathbf C_j$ is a convex set,
closed in the relative topology of $\O_p$ and with non empty interior, endowed with one of the following two properties:
\begin{itemize}
\item Either $\partial \mathbf C_j= L_j \cup \Gamma_j$ with $L_j\subset
\O_p$ {an open characteristic} line segment  and $\Gamma_j$ a connected closed set in $\partial\O_p$. In that case, $\Gamma_j=\Gamma_j^1\cup\Gamma_j^2\cup {S_j}$ where $\Gamma_j^1$ and $\Gamma_j^2$ are connected  and
$S_j$ is {a closed line segment (possibly reduced to single point)} that separates $\Gamma_j^1$ and $\Gamma_j^2$. Further each point of $\Gamma_j^1$ (resp. $\Gamma_j^2$) is traversed by a characteristic line segment which will re-intersect $\partial\O_p$ on $\Gamma_j^2$ (resp. $\Gamma_j^1$);
\item Or  $\partial \mathbf C_j= L_j \cup L'_j\cup \Gamma_j\cup \Gamma'_j$ where $L_j$ and $L'_j\subset \O_p$ are open characteristic line segments, while $\Gamma_j$ and $\Gamma'_j$ are two disjoint connected closed sets in $\partial\O_p$. Further each point of $\Gamma_j$ (resp. $\Gamma'_j$) is traversed by a characteristic line segment which will re-intersect
$\partial\O_p$ on $\Gamma'_j$ (resp. $\Gamma_j$). In that case we set $S_j=\emptyset$.
\end{itemize}

Finally, $\sigma$ is continuous on {$\overline \O_p\setminus \big(\bigcup_{\lambda\in\Lambda}P_\lambda \cup \bigcup_{j \in J}S_j \cup \bigcup_{i \in I}\{\bar z_i\}\big)$}.
\end{theorem}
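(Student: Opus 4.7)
The plan is to assemble the statement from the theorems established in Sections~\ref{stressreg}--\ref{sec:rig}, each clause corresponding to a dedicated result. First I would prove existence in $\mathcal A_w$ by the direct method: for a minimizing sequence $(u_n,\sigma_n,p_n)$, the bounds on $\sigma_n$ in $L^2(\O;\R^2)$ and on $|p_n|(\overline\O)$, combined with the identity $Du_n=\sigma_n+p_n$ in $\O$, give a uniform bound in $BV(\O)$; weak-$*$ compactness and lower semicontinuity of the three pieces of the functional (the boundary term $(w-u)\nu\,\HH^1$ being handled as in \cite{kohn.temam,anzellotti84}) then yield a minimizer. Uniqueness of $\sigma$ follows from the strict convexity of $\sigma\mapsto\tfrac12\int_\O|\sigma|^2\,dx$ combined with the affine structure of $\mathcal A_w$. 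The Sobolev regularity $\sigma\in H^1_{\rm loc}(\O;\R^2)$ is exactly Theorem~\ref{thm:Sobolev}, proved via the Perzyna visco-plastic approximation.

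Granted the convex open set $\O_p$ on which $|\sigma|=1$ a.e., the local Lipschitz regularity of $\sigma$ on $\O_p$ is Theorem~\ref{thm:lip}, which adapts the entropy framework of \cite{DKMO,JOP} to our $H^1_{\rm loc}$ setting. Constancy of $\sigma$ along each line segment $L_x=(x+\R\sigma^\perp(x))\cap\O_p$ is Proposition~\ref{prop:caracteristic}: once $\sigma$ is Lipschitz with $|\sigma|=1$ and $\Div\sigma=0$, the level sets of the angle of $\sigma$ must be straight lines in the direction $\sigma^\perp$. The analogue for $u$ is Theorem~\ref{thm:cst-disp}, whose heuristic is $\tfrac{d}{ds}u(x+s\sigma^\perp(x))=Du\cdot\sigma^\perp=\mu\,\sigma\cdot\sigma^\perp=0$; the $\HH^1$-negligible exceptional set $Z$ is precisely the set of characteristic lines on which $\mu$ concentrates and which must be excluded from the Fubini-type slicing that rigorously justifies the formal computation.

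The geometric decomposition contained in Theorem~\ref{thm:geom-struct} is, in my view, the main obstacle. My plan is to first collect all boundary fans: for every apex $\bar z_i\in\partial\O_p$ at which two distinct characteristic segments emanate, let $\mathbf F_i$ be the intersection of $\O_p$ with the open cone they span. Two fans cannot overlap, since a common interior point would have to carry two different constant values of $\sigma$ along two different foliations, contradicting continuity; hence the family $\{\mathbf F_i\}$ is at most countable. Setting $\mathscr C:=\O_p\setminus\bigcup_i\mathbf F_i$, I would analyse each connected component $\mathbf C_j$ of its interior. Convexity of $\O_p$ and the foliation by characteristics force $\mathbf C_j$ to be convex; its boundary relative to $\O_p$ is made of at most two characteristic segments $L_j,L_j'$ shared with adjacent fans, while the rest of $\partial\mathbf C_j$ lies in $\partial\O_p$. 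A planar connectedness argument then shows that $\partial\O_p\cap\partial\mathbf C_j$ splits into at most two connected arcs $\Gamma_j,\Gamma_j'$ whose points are paired by transversal characteristics, separated (in the one-internal-segment case) by a ``shadow'' line segment $S_j$ that no characteristic crosses. Components of empty interior must reduce to isolated characteristic lines $L_{x_\lambda}$ with endpoints $P_\lambda$.

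Finally, continuity of $\sigma$ on $\overline\O_p$ away from the apexes $\bar z_i$, the shadow segments $S_j$ and the endpoints $P_\lambda$ is Theorem~\ref{thm:cont-sigma}: at every other boundary point of $\O_p$ a unique characteristic line arrives, along which $\sigma$ is constant, so its interior value extends continuously to the boundary. The hardest step throughout is the geometric classification just sketched, because ruling out pathological arrangements of apexes and verifying the boundary structure of each component rely on combining the Lipschitz regularity of $\sigma$ on $\O_p$ with planar Jordan-type arguments that are specific to two dimensions.
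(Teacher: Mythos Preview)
Your assembly is correct and mirrors exactly how the paper presents Theorem~\ref{thm:main-results}: as a concatenation of the existence/uniqueness argument in Section~\ref{sec:Hencky}, Theorem~\ref{thm:Sobolev}, Theorem~\ref{thm:lip} with Proposition~\ref{prop:caracteristic}, Theorem~\ref{thm:cst-disp}, and the geometric results of Section~\ref{sec:rig} (Lemma~\ref{lem:cone-bdary} through Theorem~\ref{thm:cont-sigma}). One minor correction to your internal logic: in the paper, constancy of $\sigma$ along characteristics (Proposition~\ref{prop:caracteristic}) is established for Lebesgue points via the ordering property \emph{before} the Lipschitz regularity of Theorem~\ref{thm:lip}, which in fact relies on it---not the other way around as you suggest.
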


\begin{remark}\label{rem:caution}
Our results only pertain to the case $\partial\O_d=\partial\O$ and $\partial\O_n=\emptyset$. There are no obstacles in treating the more general case of a surface load $g$ on a Neumann part of the boundary $\partial\O_n$. This would simply add
the term  $-\int_{\partial\O_n} g u \,d\mathcal H^1$  in the minimization
problem \eqref{eq:minim} and the term $\int_{\partial\O_n}\varphi g(u-w)\, d\mathcal H^1$ in the definition \eqref{eq:duality} of the duality. However, one would have to spell out a so-called safe-load conditions on $g$ that guarantee existence, as well as add technical conditions on $\partial_{\lfloor \partial\O}\partial\O_d$ (the boundary of the Dirichlet part $\partial\O_d$ in $\partial\O$) (see \cite[Section 6]{FG}). Barring this, all results are local in nature and would not be affected.
With that caveat in mind, all our results, which are local, equally hold in the enlarged setting of a Neumann condition on part of the boundary of the domain.
\hfill\P\end{remark}

As already alluded to at the onset of the introduction, the reader uninterested in the particulars of Hencky plasticity may skip Sections \ref{sec:Hencky}, \ref{stressreg} without prejudice and view our contribution as an investigation of
the continuity equation \eqref{eq.cont.plas} under the assumptions \eqref{eq:eq.for.sigma} on $\sigma$, but with the additional knowledge of the existence of a nonnegative measure-solution $\mu$ such that ${\sigma}\mu=Du$ for some $u\in BV(\O)$.

\section{Notation and preliminaries}\label{sec:not}

The Lebesgue measure in $\R^n$ is denoted by $\mathcal L^n$  and the $s$-dimensional Hausdorff measure by $\mathcal H^s$.

 From here onward the space dimension is set to $2$. If $a$ and $b \in \R^2$, we write $a \cdot b$ for the Euclidean scalar product, and we denote
the norm by $|a|=\sqrt{a \cdot a}$. The open (resp. closed) ball of center $x$ and radius $\rho$ is denoted by $B_\rho(x)$ (resp. $\overline B_\rho(x)$).  If $K \subset \R^2$ is a closed and convex set, we denote by ${\mathscr N}_K(x)=\{\xi \in \R^2 : \; \xi \cdot (y-x) \leq 0 \text{ for
all }y \in {K}\}$ the normal cone to $K$ at $x \in \partial K$, and by ${\mathscr T}_K(x)=\{\zeta \in \R^2 : \; \xi \cdot \zeta \leq 0 \text{ for all }\xi \in {\mathscr N}_K(x)\}$ the tangential cone to $K$ at $x \in \partial K$. If $z=(z_1,z_2) \in \R^2$, we denote by $z^\perp=(-z_2,z_1)$ the rotation of $z$ of an angle $\pi/2$. Given two  vectors $u$ and $v \in \R^2$, we denote by $C(u,v):=\{\alpha u +\beta v : \; \alpha>0, \, \beta>0\}$ the open cone generated by $u$ and $v$ { with apex at the origin}. Also, if  $f:\R \to \overline \R$ is a proper, convex function, we denote by $\bpartial f(x)$ (or $\bpartial f(\cdot)(x)$) the subdifferential of $f$ at the point $x\in \R^2$ which is not empty at all points in the interior of its domain.

\medskip

In all that follows, $\Omega \subset \R^2$ is a bounded and Lipschitz open set. We use standard notation for Lebesgue and Sobolev spaces. We write
$\mathcal{M}(\Omega;\R^2)$ (resp. $\M(\O)$) for the space bounded Radon measures in $\Omega$ with values in $\R^2$ (resp. $\R$), endowed with the
norm $|\mu|(\Omega)$, where $|\mu|\in \mathcal{M}(\Omega)$ is the total variation of the measure $\mu$. The space $BV(\Omega)$ of functions of bounded variation in $\Omega$ is made of all functions $u \in L^1(\Omega)$ such that its distributional gradient $Du\in \mathcal M(\Omega;\R^2)$. Then $BV(\O)\subset L^2(\O)$.

\medskip

Given a map $\sigma:\R^2 \to \R^2$, we set  ${\rm div\,} \sigma:= \frac{\partial\sigma_1}{\partial x_1}+ \frac{\partial\sigma_2}{\partial x_2}$ and denote by ${\rm curl\,}\sigma$  the scalar quantity $\frac{\partial\sigma_2}{\partial x_1}-\frac{\partial\sigma_1}{\partial x_2}(=-\Div \sigma^\perp)$. We denote by $H(\Div,\Omega)$ the Hilbert space of all $\sigma \in L^2(\Omega;\R^2)$
such that $\Div \sigma \in L^2(\Omega)$. We recall that if $\O$ is bounded with Lipschitz boundary and $\sigma \in H(\Div,\Omega)$, its normal trace, denoted by $\sigma\cdot\nu$, is well defined as an element of $H^{-1/2}(\partial \Omega)$. If further $\sigma \in H(\Div,\Omega) \cap L^\infty(\Omega;\R^2)$, then $\sigma\cdot \nu \in L^\infty(\partial \Omega)$ with
$\|\sigma\cdot\nu\|_{L^\infty(\partial \O)}\leq \|\sigma\|_\infty$ (see {\it e.g.}~\cite[Theorem 1.2]{A}). Moreover, according to \cite[Theorem 2.2 (iii)]{CF}, if $\Omega$ is of class $\mathcal C^2$, then for all $\varphi \in L^1(\partial \Omega)$,
\begin{equation}\label{eq:sigmanu}
\lim_{\e \to 0}\int_0^1\int_{\partial \Omega} \big(\sigma(y-\e s \nu(y))\cdot \nu(y) - (\sigma\cdot \nu)(y) \big)\varphi(y)\, d\HH^1(y)\,ds=0,
\end{equation}
where $\nu$ denotes the outer unit normal to $\partial \Omega$.

According to \cite[Definition 1.4]{A}, we define {a generalized notion of duality pairing} between stresses and plastic strains as follows (see also \cite[Section 6]{FG}).
\begin{definition}
\label{def_Sigma_p}
Let $\sigma \in H(\Div,\Omega) \cap L^\infty(\Omega;\R^2)$, $(u,e,p) \in BV(\Omega) \times  L^2(\Omega;\R^2) \times \mathcal{M}(\overline \Omega;\R^2)$ and $w \in H^1(\O)$ be such that $D u = e+p$ in $\O$ and $p=(w-u)\nu \HH^{1}$ on $\partial \O$. We define the distribution $\left[\sigma\cdot p \right]\in \mathcal D'(\R^2)$ by
\begin{multline}\label{eq:duality}\langle\left[\sigma\cdot p \right], \varphi\rangle =  \int_\O \p\sigma\cdot (\nabla w-e)\; dx
+ \int_\O (w-u)\sigma \cdot\nabla\p\; dx\\
+\int_{\Omega} (w-u) (\Div \sigma) \varphi \, dx \quad \text{ for all } \varphi\in \mathcal{C}^{\infty}_c(\R^2).
\end{multline}
\end{definition}

If $\O$ has Lipschitz boundary, approximating $\sigma$ by smooth functions, and using the integration by parts formula in $BV$, one can show that $[\sigma\cdot p]$ is actually a bounded Radon measure supported in $\overline \Omega$ satisfying
\begin{equation}\label{eq:convex-ineq}
|[\sigma\cdot p]| \leq \|\sigma\|_\infty |p| \quad \text{ in } \mathcal M(\overline \Omega)
\end{equation}
and with total mass that obtained by taking $\p\equiv 1$ in \eqref{eq:duality}. Moreover, if $\sigma \in \mathcal C(\O;\R^2)$ we can show that
$$\langle\left[\sigma\cdot p \right], \varphi\rangle =  \int_\O \varphi
\sigma \cdot\, dp=\int_\O \varphi \sigma \cdot \frac{dp}{d|p|} \, d|p|\quad \text{ for all }\varphi\in \C_c(\O),$$
where $\frac{dp}{d|p|}$ stands for the Radon-Nikod\'ym derivative of $p$ with respect to its total variation $|p|$. For all of the above, see \cite[Section 6]{FG} in the vectorial case.

\section{Hencky plasticity}\label{sec:Hencky}

We recall in a few lines the basic tenet of Hencky plasticity, which is the ancestor of modern (small strain) plasticity. In Hencky plasticity (see {\it e.g.} \cite{T}), time is absent, so that the plasticity problem reduces to a problem of statics which can be tackled as a minimization problem. In this paper, we even go further and only address what is usually called the anti-plane shear strain case for which the displacement field (generally a vector-valued field in 3d) is  along the $x_3$-direction and only depends on the planar variables $(x_1,x_2)$.

\medskip

Let $\O$ be a bounded open set in $\R^2$ with Lipschitz boundary and $w \in H^1(\O)$ be a boundary data.
We consider the following minimization problem
\begin{equation}\label{eq:minim}
\inf \Big\{ \frac12 \int_\O |\sigma|^2\, dx + |p|(\overline \O) : \; (u,\sigma,p) \in \mathcal A_w\Big\}
\end{equation}
 with
\begin{multline*}\mathcal A_w:=\{(u,\sigma,p) \in BV(\O) \times L^2(\O;\R^2) \times \M(\overline \O;\R^2): \\
Du=\sigma+p \text{ in } \O, \ p=(w-u)\nu \HH^1\text{ on } \partial \O\}.
\end{multline*}
The direct method in the calculus of variations ensures the existence of minimizers $(u,\sigma,p) \in BV(\O) \times L^2(\O;\R^2) \times \M(\overline \O;\R^2)$. In addition, the stress $\sigma$ is unique and all minimizers $(u,\sigma,p)$ satisfy the following first order conditions
\begin{equation}
\label{eq:plast}
\begin{cases}
{\rm div}\sigma=0 \quad \text{ in }H^{-1}(\O), \\[1mm]
|\sigma|\leq 1 \quad \text{ a.e. in }\O,\\[1mm]
Du=\sigma+p \quad \text{ in }\M(\O;\R^2), \\[1mm]
p=(w-u)\nu \HH^1\text{ in } \M(\partial \O;\R^2),\\[1mm]
|p|=[\sigma\cdot p]  \quad \text{ in }\M(\overline \O).
\end{cases}
\end{equation}

The uniqueness of $\sigma$ is a immediate consequence of the strict convexity of $\sigma\mapsto\int_\O |\sigma|^2\, dx$.  We quickly check \eqref{eq:plast}. Performing variations of the form $(u,\sigma,p)+t(v,\tau,q)$ where $t\in (0,1)$ and $(v,\tau,q) \in \mathcal A_0$ and letting $t \to 0^+$ leads to
\begin{equation}\label{eq:EL}
|p|(\overline \O) \leq \int_\O \sigma\cdot\tau\, dx + |p+q|(\overline \O).
\end{equation}
Choosing first $(v,\tau,q)=\pm (\varphi,\nabla \varphi,0)$, with $\varphi \in \mathcal C^\infty_c(\O)$ arbitrary, as test function in \eqref{eq:EL} gives
$$\int_\O \sigma \cdot \nabla \varphi\, dx=0$$
hence ${\rm div}\sigma=0$  in $H^{-1}(\O)$.
Next, denoting by $p^a=\frac{dp}{d\LL^2}$ the Radon-Nikod\'ym derivative of $p$ with respect to $\LL^2$ and considering $(v,\tau,q)=(0,p^a-\eta,\eta-p^a)$ as test function in \eqref{eq:EL}, where $\eta \in L^1(\O;\R^2)$ is arbitrary, leads to
$$\int_\O |\eta|\, dx \geq \int_\O |p^a|\, dx + \int_\O \sigma\cdot (\eta-p^a)\, dx.$$
Localizing this inequality yields $\sigma(x) \in \bpartial |\cdot|(p^a(x))\subset\bpartial |\cdot|(0)$ for a.e. in $x \in \O$, hence
\begin{equation}\label{eq.str.const}|\sigma|\leq 1 \mbox{ a.e. in }\O.
\end{equation}
It remains to prove the ``flow rule". From \eqref{eq:convex-ineq} and  \eqref{eq.str.const}  the first inequality $|p| \geq [\sigma\cdot p]$ in $\mathcal M(\overline \O)$ holds. To prove the reverse inequality take $(v,\tau,q)=(w-u,\nabla w-\sigma,-p)$ as test function in \eqref{eq:EL}, and use the definition \eqref{eq:duality} of duality. This gives
$$|p|(\overline\O) \leq \int_\O \sigma\cdot (\nabla w-\sigma)\, dx=\langle [\sigma\cdot p],1\rangle.$$
Hence, the nonnegative measure $|p|-[\sigma\cdot p]$ has zero total mass which leads to the flow rule $|p| = [\sigma\cdot p]$ in $\mathcal M(\overline \O)$.

\begin{remark}\label{rmk:jump-flow-rule}
Exactly  as in \cite[Lemma 3.8]{FG},  if $\omega$ is an  open subset of $\O$  with Lipschitz boundary $\Gamma=\partial \omega$ and such that $\overline\omega\subset\O$, then $\sigma\cdot \nu \in L^\infty (\Gamma)$ and
$$[\sigma\cdot p]\res \Gamma=(\sigma\cdot\nu) (u^+-u^-) \HH^1 \res \Gamma,$$
where $u^+$ and $u^-$ are the outer and inner traces of $u$ on $\Gamma$ and $\sigma\cdot\nu$ is the normal trace of $\sigma$ on $\Gamma$. Thus, the flow rule localized on $\Gamma$ reads
$$(\sigma\cdot\nu) (u^+-u^-)=|u^+-u^-| \quad \HH^1\text{-a.e. on }\Gamma.$$
Since by definition $u^+ \neq u^-$ on $J_u$, we infer that $\sigma\cdot\nu=\pm 1$ $\HH^1$-a.e. on $\Gamma \cap J_u$. This applies also if $\HH^1(\partial\omega\cap\partial\O)>0$, replacing $u^+$ by $w$ on that part of
$\partial\omega$.
\hfill\P\end{remark}

\section{{Sobolev regularity of the stress}}\label{stressreg}

This section revolves around the Seregin/Bensoussan-Frehse's Sobolev regularity property of the (unique) stress field $\sigma$ in the minimization
problem \eqref{eq:minim} (or equivalently in the system \eqref{eq:plast}).

\begin{theorem}[Seregin/Bensoussan-Frehse regularity]\label{thm:Sobolev}
The unique stress $\sigma$ such that the triplet $(u,\sigma,p)\in BV(\O) \times L^2(\O;\R^2) \times \M(\overline \O;\R^2)$ is a minimizer of \eqref{eq:minim} belongs to $H^1_{\rm loc}(\O;\R^2)$.
\end{theorem}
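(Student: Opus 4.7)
The plan is to apply a Perzyna-type smoothing of \eqref{eq:minim}, extract from it an $H^1_{\rm loc}$ estimate on the approximate stress that is uniform in the smoothing parameter, and pass to the limit. Specifically, I would add the viscous penalty $\frac{\e}{2}\int_\O|p|^2\,dx$ to the dissipation in \eqref{eq:minim}; by infimal convolution this corresponds on the stress side to the smooth, strictly convex, uniformly $1$-strongly convex $C^{1,1}$ potential
$$
\phi^*_\e(\sigma)=\frac12|\sigma|^2+\frac{1}{2\e}\bigl((|\sigma|-1)_+\bigr)^2.
$$
The regularized problem admits a unique minimizer $(u_\e,\sigma_\e,p_\e)\in H^1(\O)\times L^2(\O;\R^2)\times L^2(\O;\R^2)$, with first-order conditions $\Div\sigma_\e=0$ in $\O$ and $\nabla u_\e=\nabla\phi^*_\e(\sigma_\e)=\sigma_\e+p_\e$, where $p_\e=\lambda_\e\sigma_\e$ is parallel to $\sigma_\e$ with $\lambda_\e=(|\sigma_\e|-1)_+/(\e|\sigma_\e|)\ge 0$. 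A standard $\Gamma$-convergence argument together with the uniform $L^2$-coercivity of $\phi^*_\e$ forces $\sigma_\e\to\sigma$ strongly in $L^2(\O;\R^2)$ as $\e\to 0^+$.

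\medskip

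The heart of the proof is a Caccioppoli-type $H^1_{\rm loc}$ estimate for $\sigma_\e$, uniform in $\e$. I would test the weak formulation $\int_\O\sigma_\e\cdot\nabla v\,dx=0$ with a difference quotient $v=\delta_{-h}(\eta^2\delta_h u_\e)$, for $\eta\in\C^\infty_c(\O)$ and $|h|$ small, with $\delta_h f(x)=f(x+h)-f(x)$. Discrete integration by parts yields
$$
\int_\O\eta^2\,\delta_h\sigma_\e\cdot\delta_h\nabla u_\e\,dx=-2\int_\O\eta\,\nabla\eta\cdot\delta_h\sigma_\e\,\delta_h u_\e\,dx,
$$
and the crucial $\e$-uniform monotonicity
$$
\delta_h\sigma_\e\cdot\delta_h\nabla u_\e=\delta_h\sigma_\e\cdot\delta_h\nabla\phi^*_\e(\sigma_\e)\ge|\delta_h\sigma_\e|^2
$$
follows from the lower bound $D^2\phi^*_\e\ge I$ alone, which avoids the degenerate upper bound $\|D^2\phi^*_\e\|_{\mathrm{op}}=O(1/\e)$ in the radial direction.

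\medskip

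The main obstacle is to bound the right-hand side by $C|h|^2$ uniformly in $\e$: the naive Cauchy--Schwarz $\|\delta_h u_\e\|_{L^2}\le|h|\,\|\nabla u_\e\|_{L^2}$ is of no help here, since $\|\nabla u_\e\|_{L^2}=O(\e^{-1/2})$ through the plastic contribution $p_\e$. Its resolution calls upon the parallel structure $p_\e\parallel\sigma_\e$, which confines the $\e$-degeneracy of $\nabla\phi^*_\e$ to a single direction, together with the energy bound $\|(|\sigma_\e|-1)_+\|_{L^2}\le C\sqrt\e$: one decomposes $\delta_h u_\e$ along elastic and plastic components and absorbs the problematic plastic piece via a Young-type inequality, thereby delivering $\|\eta\,\delta_h\sigma_\e\|_{L^2}\le C|h|$ with $C$ independent of $\e$. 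Once that uniform bound is in hand, weak $H^1_{\rm loc}$ compactness combined with the strong $L^2$ convergence $\sigma_\e\to\sigma$ identifies the weak limit of $\nabla\sigma_\e$ and yields $\sigma\in H^1_{\rm loc}(\O;\R^2)$.
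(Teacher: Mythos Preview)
Your overall plan—Perzyna regularization, convergence $\sigma_\e\to\sigma$, Caccioppoli estimate via testing the divergence equation against (a cut-off times) differences of $u_\e$—is exactly the paper's strategy, and your dual potential $\phi^*_\e(\sigma)=\tfrac12|\sigma|^2+\tfrac{1}{2\e}((|\sigma|-1)_+)^2$ with $\nabla u_\e=\nabla\phi^*_\e(\sigma_\e)$ is equivalent to the paper's relation $\sigma_\e-\e p_\e=D\Psi^*(\nabla u_\e-\e p_\e)$.

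The gap is in the step you flag as ``the main obstacle''. You keep only the consequence $\delta_h\sigma_\e\cdot\delta_h\nabla u_\e\ge|\delta_h\sigma_\e|^2$ of $D^2\phi^*_\e\ge I$, and then propose to absorb the plastic part of $\delta_h u_\e$ by a Young inequality using $p_\e\parallel\sigma_\e$ and $\|(|\sigma_\e|-1)_+\|_{L^2}\le C\sqrt\e$. This does not close: with only $\int\eta^2|\delta_h\sigma_\e|^2$ on the left, the best Young gives for the right-hand side is $\tfrac12\int\eta^2|\delta_h\sigma_\e|^2+C\int|\nabla\eta|^2|\delta_h u_\e|^2$, and $\|\delta_h u_\e\|_{L^2}^2\le |h|^2\|\nabla u_\e\|_{L^2}^2=|h|^2\|\sigma_\e+p_\e\|_{L^2}^2=O(|h|^2/\e)$. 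The parallel structure does not help here, since $\delta_h u_\e$ is scalar and involves $\nabla u_\e\cdot h$; the bound $\|(|\sigma_\e|-1)_+\|_{L^2}\le C\sqrt\e$ is just a restatement of $\|p_\e\|_{L^2}\le C/\sqrt\e$. One obtains $\|\eta\,\delta_h\sigma_\e\|_{L^2}\le C|h|/\sqrt\e$, not a uniform bound.

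What is missing is precisely the excess positivity you discard when passing from $\delta_h\sigma_\e\cdot\delta_h\nabla u_\e$ to $|\delta_h\sigma_\e|^2$. In the differentiated form the left-hand side carries, on the set $\{|\sigma_\e|>1\}$, the additional terms
\[
\frac{1}{\e}\int\varphi^2\,(\hat\sigma_\e\cdot\partial_k\sigma_\e)^2\,dx
\;+\;\int\varphi^2\,\frac{|\sigma_\e|-1}{\e|\sigma_\e|}\,|P_{\sigma_\e^\perp}\partial_k\sigma_\e|^2\,dx,
\]
and these are exactly what is needed to absorb the bad term $\int|\partial_k\sigma_\e|\,|\nabla\varphi^2|\,|p_\e|\,dx$ (recall $|p_\e|=(|\sigma_\e|-1)_+/\e$) after splitting $\partial_k\sigma_\e$ into its radial and tangential parts relative to $\sigma_\e$. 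The paper implements this in dual variables: it keeps the full term $\int\varphi^2\,D^2\Psi^*(\nabla u_\e-\e p_\e)[\cdot]\cdot[\cdot]$ together with $\e\int\varphi^2|\partial_k p_\e|^2$, invokes the chain rule identity $\partial_k\sigma_\e-\e\partial_k p_\e=\tfrac{1}{|\nabla u_\e-\e p_\e|}P_{(\nabla u_\e-\e p_\e)^\perp}[\,\cdot\,]$ on the plastic set, and splits $\O$ into $\{|\nabla u_\e-\e p_\e|\le 1\}$, $\{1<|\nabla u_\e-\e p_\e|\le 2\}$, $\{|\nabla u_\e-\e p_\e|>2\}$ so that on the first two pieces $p_\e$ is uniformly in $L^2$, while on the third piece the projection-squared term divided by $|\nabla u_\e-\e p_\e|$ balances the factor $|p_\e|\sim|\nabla u_\e-\e p_\e|$ in the bad term. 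Either carry out this region-splitting/projection argument, or equivalently retain the full Hessian $D^2\phi^*_\e$ on the left and absorb the radial and tangential pieces of the plastic term separately against the two extra positive contributions above; the bare inequality $D^2\phi^*_\e\ge I$ is not enough.
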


\begin{proof}
\noindent{\sf Step 1.\;} In a first step we perform a so-called {Perzyna approximation} of the minimization problem \eqref{eq:minim}. We thus consider, for $\e>0$,
\begin{multline}\label{eq:perzyna}
\inf \Big\{\int_\O \Big( \frac12 |\sigma|^2 + |p|+ \frac{\e}{2} |p|^2\Big) \,dx : \; (u,\sigma,p) \in H^1(\O) \times L^2(\O;\R^2) \times L^2(\O;\R^2)  \\\mbox{ such that }
\nabla u=\sigma+p \text{ a.e. in } \O \text{ and } u=w \; \h\text{-a.e. on } \partial \O\Big\}.
\end{multline}
By strict convexity, there exists a unique minimizing triplet $(u_\e,\sigma_\e,p_\e)$.

Further, for a subsequence (still labeled by $\e$), it is straightforward
to show that
\begin{equation}
\label{eq:bds}
\begin{cases}
u_\e \rightharpoonup u \mbox{ weak-* in } BV(\O),\\
\sigma_\e \rightharpoonup \sigma \mbox{ weak in } L^2(\O;\R^2),\\
(\sqrt\e p_\e)_{\e>0} \mbox{ is bounded in } L^2(\O;\R^2),\\
p_\e \rightharpoonup p \mbox{ weakly* in } \M(\overline \O;\R^2),
\end{cases}
\end{equation}
where $(u,\sigma,p) \in BV(\O) \times L^2(\O;\R^2) \times \M(\overline \O;\R^2)$ is a minimizer of \eqref{eq:minim} as can be seen through direct application of an approximation result found in \cite[Theorem 3.5]{Mo}; see also \cite[Proposition 3.3]{BM}.

Now, testing minimality with triplets $(u_\e+tv,\sigma_\e+t\tau, p_\e+tq)$ with $t \in (0,1)$ and $(v,\tau,q) \in H^1_0(\O)\times L^2(\O;\R^2)\times L^2(\O;\R^2)$ where $\nabla v=\tau+q$, it is easily seen -- choosing
either $q\equiv 0$ and $\tau=\nabla v$ with $v \in H^1_0(\O)$ arbitrary, or $v\equiv 0$ and $\tau=-q{\bf 1}_E$ where $E \subset \O$ is measurable and $q \in \R^2$ arbitrary -- that
the minimizing triplet $(u_\e,\sigma_\e,p_\e)$ satisfies the following Euler-Lagrange equations:
\begin{equation}
\label{eq:ELeps}
\begin{cases}
\Div \sigma_\e =0\quad\text{ in }H^{-1}(\O),\\[1mm]
\nabla u_\e= \sigma_\e+p_\e \quad\mbox{ a.e. in }\O,\\[1mm]
u_\e=w \quad \h\text{-a.e. on }\partial \O,\\[1mm]
\sigma_\e-\e p_\e \in \partial H(p_\e) \quad \mbox{ a.e. in }\O,
\end{cases}
\end{equation}
where $H(q):=|q|$ for all $q\in \R^2$.
\begin{remark}
\label{rem:prop-subdif}
Observe that the fourth relation in \eqref{eq:ELeps} reads, by convex duality, as
$$
p_\e\in \bpartial I(\sigma_\e-\e p_\e)  \quad \mbox{ a.e. in }\O,
$$
where $I$ is the indicator function of the closed unit ball, {\it i.e.},

$$I(q):=
\begin{cases}
0 & \text{ if } |q|\le1,\\
+\infty & \text{ if } |q|>1.
\end{cases}$$
Thus
$\nabla u_\e-\e p_\e=p_\e+(\sigma_\e-\e p_\e)\in \bpartial \Psi(\sigma_\e-\e p_\e)
$ where $\Psi:\R^2 \to \R$ is defined by
$$\Psi(q):=\frac12 |q|^2+I(q) \quad \text{ for all } q \in \R^2,$$
or still
\begin{equation}
\label{eq:sudif-rel}
\sigma_\e-\e p_\e =D\Psi^*(\nabla u_\e-\e p_\e)
\end{equation}
where $\Psi^*:\R^2 \to \R$, the convex conjugate of $\Psi$, is given by
$$
\Psi^*(q)=
\begin{cases}
\frac12|q|^2 & \text{ if } |q|\le1,\\[1mm]
|q|-\frac12 & \text{ if } q|>1.
\end{cases}
$$
Remark that $\Psi^*\in \mathcal C^1(\R^2)$ with
$$D\Psi^*(q)=
\begin{cases}
q & \text{ if } |q|\le1,\\[1mm]
{q}/{|q|} & \text{ if }|q|>1,
\end{cases}$$
and that $D\Psi^*\in {\rm Lip}(\R^2) \cap \mathcal C^1(\R^2 \setminus \mathbb S^1)$ with
$$D^2\Psi^*(q)=
\begin{cases}
{\rm Id} & \text{ if }  |q|<1,\\[1mm]\ds
\frac{1}{|q|}{\rm Id}-\frac1{|q|^3} q\otimes q & \text{ if }|q|>1.
\end{cases}$$

Finally, the expression for $D^2\Psi^*$ implies that, for $|q|>1$ and for
all $r \in \R^2$,
$$
D^2\Psi^*(q)[r]= \frac1{|q|}P_{q^\perp}(r)
$$
where $P_{q^\perp}$ is the orthogonal projection onto the linear span of $q^\perp$.

Those projection properties, which are  specific to the Von-Mises criterion,  will be instrumental in the proof of the Sobolev regularity of the stress.
\hfill\P
\end{remark}

For $\e$ fixed, a usual translation argument  yields the classical local elliptic regularity
of the fields {(see {\it e.g.} \cite[Proposition 3.4]{BM} in the vectorial evolution case)}, that is
\begin{equation}
\label{eq:ell-reg}
u_\e\in H^2_{\rm loc}(\O), \quad  \sigma_\e,\; p_\e\in H^1_{\rm loc}(\O;\R^2)
\end{equation}
with corresponding $\e$-dependent bounds.

\medskip

\noindent{\sf Step 2.\;}
Let $k\in \{1,2\}$ and $\p\in \mathcal C^\infty_c(\O)$. Taking $\p^2 \partial_k u_\e$ (which belongs to $H^1_0(\O)$ thanks to \eqref{eq:ell-reg})
as test function for the equation
$$\Div (\partial_k  \sigma_\e)=0 \quad \text{ in }H^{-1}(\O),$$
we obtain
$$
0=\int_\O \p^2 \partial_k \sigma_\e \cdot \partial_k \nabla u_\e\, dx+
\int_\O\partial_k \sigma_\e\cdot \nabla \p^2  \partial_k u_\e\, dx=:I_1+I_2.
$$
{In the sequel, $C_\p$ will stand for a positive constant which may vary from line to line; it may depend on $\p$ and on the bounds coming from \eqref{eq:bds}, but it is independent of $\e$.}

We now rewrite $I_1$ and $I_2$ as follows.
\begin{equation}
\label{eq:I1}
I_1=\int_\O \p^2 |\partial_k \sigma_\e|^2\, dx+\int_\O \p^2\left(\partial_k \sigma_\e-\e\partial_k p_\e\right)\cdot \partial_k p_\e\, dx+\e\int_\O \p^2|\partial_k p_\e|^2\, dx,
\end{equation}
while
\begin{eqnarray}
\label{eq:I2}
|I_2| & \le & 2\int_\O |\partial_k \sigma_\e\cdot\nabla\p|\; |(\sigma_\e)_k \p|\, dx+
\int_\O |\partial_k \sigma_\e| |\nabla \p^2| |(p_\e)_k|\, dx\nonumber\\
& \leq & C_\p \|\sigma_\e\|_{L^2(\O)}\|\p\partial_k \sigma_\e\|_{L^2(\O)}+\int_\O |\partial_k \sigma_\e| |\nabla \p^2| |p_\e|\, dx.
\end{eqnarray}
Using Young's inequality and reassembling \eqref{eq:I1}, \eqref{eq:I2} we
thus get, in view of the bound coming from the second convergence in \eqref{eq:bds},
\begin{multline}
\label{eq:ineq1}
\frac12 \int_\O \p^2 |\partial_k \sigma_\e|^2\, dx+\e\int_\O \p^2|\partial_k p_\e|^2\, dx+\int_\O \p^2\left(\partial_k \sigma_\e-\e\partial_k p_\e\right)\cdot \partial_k p_\e\, dx\\
\leq C_\p +\int_\O |\partial_k \sigma_\e| |\nabla \p^2| |p_\e|\, dx,
\end{multline}
or still, adding and subtracting $\partial_k \sigma_\e-\e \partial_k p_\e$ to $\partial_k p_\e$ in the third integral on the left hand-side of \eqref{eq:ineq1} above and recalling that $\nabla u_\e=\sigma_\e+p_\e$,
\begin{multline}
\label{eq:ineq2}
\frac12 \int_\O \p^2 |\partial_k \sigma_\e|^2\, dx+\e\int_\O \p^2|\partial_k p_\e|^2\, dx\\
+\int_\O \p^2(\partial_k \sigma_\e-\e\partial_k p_\e)\cdot (\partial_k \nabla u_\e-\e \partial_k p_\e)\, dx\\
\leq C_\p+\int_\O \p^2|\partial_k \sigma_\e-\e\partial_k p_\e|^2\, dx+
\int_\O |\partial_k \sigma_\e| |\nabla \p^2| |p_\e|\, dx.
\end{multline}

We now rewrite \eqref{eq:ineq2}  using Remark \ref{rem:prop-subdif}. Since, in view of \eqref{eq:ell-reg},  {$\nabla u_\e-\e p_\e \in H^1_{\rm loc}(\O;\R^2)$, we can apply the generalized chain rule formula from \cite[Theorem 2.1]{MT} to \eqref{eq:sudif-rel}.} We obtain
\begin{equation}
\label{eq:cr}
\partial_k \sigma_\e-\e\partial_k p_\e= D^2\Psi^*(\nabla u_\e-\e p_\e)\left[\partial_k \nabla u_\e-\e \partial_k p_\e\right].
\end{equation}

\begin{remark}\label{rem:mean-lip}
Relation \eqref{eq:cr} has to be understood as follows (see \cite{MT}):
\begin{equation}\label{eq:chainrule}
\partial_k \sigma_\e-\e\partial_k p_\e=
\begin{cases}
\ds \partial_k \nabla u_\e-\e \partial_k p_\e  \quad\text{ if } |\nabla u_\e-\e p_\e| \leq 1,\\
\ds \frac{1}{|\nabla u_\e-\e p_\e|}P_{(\nabla u_\e-\e p_\e)^\perp}(\partial_k \nabla u_\e-\e \partial_k p_\e) \quad \text{ else.}
\end{cases}
\end{equation}
Note that application of \cite[Proposition 2.2]{MT}  to the $\mathcal C^1$-Lipschitz functions
$q\mapsto |q|^2 \mbox{ and } q\mapsto 2|q|-1$ yields
\begin{multline*}\partial_k \nabla u_\e-\e \partial_k p_\e=\frac{1}{|\nabla u_\e-\e p_\e|}P_{(\nabla u_\e-\e p_\e)^\perp}(\partial_k \nabla u_\e-\e \partial_k p_\e) \\ \text{ a.e. on }\{|\nabla u_\e-\e p_\e| = 1\},
\end{multline*}
which implies that \eqref{eq:chainrule} can be changed into
$$\partial_k \sigma_\e-\e\partial_k p_\e=
\begin{cases}
\ds \partial_k \nabla u_\e-\e \partial_k p_\e \quad \text{ if } |\nabla u_\e-\e p_\e| < 1,\\
\ds \frac{1}{|\nabla u_\e-\e p_\e|}P_{(\nabla u_\e-\e p_\e)^\perp}(\partial_k \nabla u_\e-\e \partial_k p_\e) \quad \text{ else.}
\end{cases}$$
\hfill\P
\end{remark}

In view of \eqref{eq:cr}, \eqref{eq:ineq2} reads as
\begin{multline}
\label{eq:ineq3}
\frac12 \int_\O \p^2 |\partial_k \sigma_\e|^2\, dx+\e\int_\O \p^2|\partial_k p_\e|^2\, dx
+ \\\int_\O \p^2D^2\Psi^*(\nabla u_\e-\e p_\e)[\partial_k \nabla u_\e-\e \partial_k p_\e]\cdot (\partial_k \nabla u_\e-\e \partial_k p_\e)\, dx\\
\leq C_\p+\int_\O \p^2|D^2\Psi^*(\nabla u_\e-\e p_\e)[\partial_k \nabla u_\e-\e \partial_k p_\e]|^2\, dx\\
+\int_\O |\partial_k \sigma_\e| |\nabla \p^2| |p_\e|\, dx.
\end{multline}

\medskip

\noindent{\sf Step 3.\;} We next exploit inequality \eqref{eq:ineq3} obtained at the end of Step 2 by splitting $\O$ into three $\e$-dependent subsets as follows,
\begin{eqnarray*}
\O_\e^-&:=&\{x\in \O: \; |\nabla u_\e(x)-\e  p_\e(x)|\le 1\},\\[2mm]
\O^+_{\e>}&:=&\{x\in \O:\; |\nabla u_\e(x)-\e p_\e(x)|> 2\},\\[2mm]
\O^+_{\e<}&:=&\{x\in \O: \; 1<|\nabla u_\e(x)-\e p_\e(x)|\le 2\},\\[2mm]
\O_\e^+&:=& \O^+_{\e<}\cup\O^+_{\e>}.
\end{eqnarray*}
First note that, on $\O_\e^-$, \eqref{eq:chainrule} in Remark \ref{rem:mean-lip} implies that
$$
\partial_k\sigma_\e-\e\partial_k p_\e= D^2\Psi^*(\nabla u_\e-\e p_\e)\left[\partial_k \nabla u_\e-\e \partial_k p_\e\right]=\partial_k \nabla
u_\e-\e\partial_k p_\e.
$$ Consequently, the contributions of the integrals involving the term $D^2\Psi^*(\nabla u_\e-\e  p_\e)$ cancel out on that set in \eqref{eq:ineq3}. Further, writing $p_\e= (\nabla u_\e-\e p_\e)-\sigma_\e+\e p_\e$ and  by virtue of the bounds coming from the second and third convergences in \eqref{eq:bds}, we have that,
$$
\|p_\e\|_{L^2(\O_\e^-\cup \O^+_{\e<})}\le  C
$$
with $C>0$ a constant independent of $\e$ (and actually also independent of $\p$). Using once again Young's inequality we can thus absorb the contribution of the term $\int_{\O^-_\e\cup \O^+_{\e<}} \left|\partial_k\sigma_\e\right| |\nabla \p^2| |p_\e|\, dx$ in  the term $\frac12\int_\O \p^2 |\partial_k\sigma_\e|^2\, dx$ in \eqref{eq:ineq3}  at the possible expense of changing $C_\p$. In lieu of \eqref{eq:ineq3}, we are thus left  with
\begin{multline*}
\frac14\int_\O \p^2 |\partial_k\sigma_\e|^2\, dx+\e\int_\O \p^2|\partial_k p_\e|^2\, dx\\
 +\int_{\O^+_\e} \p^2D^2\Psi^*(\nabla u_\e-\e  p_\e)[\partial_k \nabla u_\e-\e\partial_k p_\e]\cdot (\partial_k \nabla u_\e-\e\partial_k p_\e)\, dx \\
\leq C_\p +\int_{\O^+_\e} \p^2 |D^2\Psi^*(\nabla u_\e-\e p_\e)[\partial_k \nabla u_\e-\e\partial_k p_\e]|^2\, dx\\
+\int_{\O^+_{\e>}} |\partial_k\sigma_\e| |\nabla \p^2| |p_\e|\, dx,
\end{multline*}
or still, in view of  \eqref{eq:cr}, with
\begin{multline*}
\frac14\int_\O \p^2 |\partial_k\sigma_\e|^2\, dx+\e\int_\O \p^2|\partial_k p_\e|^2\, dx\\
+\int_{\O^+_\e} \p^2D^2\Psi^*(\nabla u_\e-\e  p_\e)[\partial_k \nabla u_\e-\e\partial_k p_\e]\cdot (\partial_k \nabla u_\e-\e\partial_k p_\e)\, dx\\
\leq C_\p +\int_{\O^+_\e} \p^2 |D^2\Psi^*(\nabla u_\e-\e p_\e)[\partial_k \nabla u_\e-\e\partial_k p_\e]|^2\, dx\\
+\int_{\O^+_{\e>}}\!\! |D^2\Psi^*(\nabla u_\e-\e p_\e)[\partial_k \nabla
u_\e-\e\partial_k p_\e]| |\nabla \p^2| |p_\e|\, dx\\
+\e\!\int_{\O^+_{\e>}}\!\! |\partial_k p_\e||\nabla \p^2| |p_\e|\, dx.
\end{multline*}

Because of  the third relation in \eqref{eq:bds} and of Young's inequality, the third integral in the right hand-side of the last inequality above
can be controled by the term $\e\int_\O \p^2|\partial_k p_\e|^2\, dx$ in the left hand-side at the expense of changing $C_\p$, {\it i.e.} ,
\begin{multline}
\label{eq:ineq4}
\frac14\int_\O \p^2 |\partial_k\sigma_\e|^2\, dx+\frac{\e}{2} \int_\O \p^2|\partial_k p_\e|^2\, dx\\
+\!\int_{\O^+_\e} \p^2D^2\Psi^*(\nabla u_\e-\e  p_\e)[\partial_k \nabla u_\e-\e\partial_k p_\e]\cdot (\partial_k \nabla u_\e-\e\partial_k p_\e)\; dx\\
\leq C_\p+\int_{\O^+_\e} \p^2 |D^2\Psi^*(\nabla u_\e-\e p_\e)[\partial_k
\nabla u_\e-\e\partial_k p_\e]|^2 \; dx\\
+\int_{\O^+_{\e>}} |D^2\Psi^*(\nabla u_\e-\e  p_\e)[\partial_k \nabla u_\e-\e\partial_k p_\e]| |\nabla \p^2| \big\{|\nabla u_\e-\e p_\e|+|\sigma_\e+\e p_\e|\big\}\, dx,
\end{multline}
where we used $|p_\e|\le|\nabla u_\e-\e p_\e|+|\sigma_\e+\e p_\e|$ in the
last term of the right hand side.

Recalling Remark \ref{rem:prop-subdif} and noting that $P_{q^\perp}(\xi)\cdot \xi=|P_{p^\perp}(\xi)|^2$, \eqref{eq:ineq4} now reads as
\begin{multline*}
\frac14\int_\O \p^2 |\partial_k\sigma_\e|^2\, dx+\frac{\e}{2}\int_\O \p^2|\partial_k p_\e|^2\, dx
\\+ \int_{\O^+_\e} \frac{\p^2}{|\nabla u_\e-\e  p_\e|}|P_{(\nabla u_\e-\e
 p_\e)^\perp}[\partial_k \nabla u_\e-\e\partial_k p_\e]|^2\, dx\\
\leq C_\p+\int_{\O^+_\e} \frac{\p^2}{|\nabla u_\e-\e p_\e|^2}|P_{(\nabla
u_\e-\e  p_\e)^\perp}[\partial_k \nabla u_\e-\e\partial_k p_\e]|^2\, dx\\
+\int_{\O^+_{\e>}} \!\!|P_{(\nabla u_\e-\e p_\e)^\perp}[\partial_k \nabla u_\e-\e\partial_k p_\e]| |\nabla \p^2| \, dx \\
+\int_{\O^+_{\e>}} \frac{|\nabla \p^2|}{|\nabla u_\e-\e p_\e|}|P_{(\nabla u_\e-\e  p_\e)^\perp}[\partial_k \nabla u_\e-\e\partial_k p_\e]| |\sigma_\e+\e p_\e|\, dx.
\end{multline*}
In view of the bounds coming from the second and third relations in \eqref{eq:bds}, application of the Cauchy-Schwarz and Young inequalities to the last term in the inequality above yield, at the possible expense of changing $C_\p$,
\begin{multline}
\label{eq:ineq5}
\frac14\int_\O \p^2 |\partial_k\sigma_\e|^2\, dx+\frac{\e}{2}\int_\O \p^2|\partial_k p_\e|^2\, dx\\
+ \int_{\O^+_\e} \frac{\p^2}{|\nabla u_\e-\e  p_\e|}|P_{(\nabla u_\e-\e  p_\e)^\perp}[\partial_k \nabla u_\e-\e\partial_k p_\e]|^2\, dx\\
\leq C_\p +\int_{\O^+_{\e<}} \frac{\p^2}{|\nabla u_\e-\e p_\e|^2}|P_{(\nabla u_\e-\e  p_\e)^\perp}[\partial_k \nabla u_\e-\e\partial_k p_\e]|^2\,
dx\\
+\int_{\O^+_{\e>}} |P_{(\nabla u_\e-\e  p_\e)^\perp}[\partial_k \nabla u_\e-\e\partial_k p_\e]| |\nabla \p^2| \, dx\\
+\frac32\int_{\O^+_{\e>}} \frac{\p^2}{|\nabla u_\e-\e p_\e|^2}|P_{(\nabla u_\e-\e  p_\e)^\perp}[\partial_k \nabla u_\e-\e\partial_k p_\e]|^2\, dx.
\end{multline}
The second integral in the right hand-side of  inequality \eqref{eq:ineq5} can in turn be estimated as follows with the help, once more, of the Cauchy-Schwarz inequality,
\begin{multline*}
\int_{\O^+_{\e>}} |P_{(\nabla u_\e-\e  p_\e)^\perp}[\partial_k \nabla u_\e-\e\partial_k p_\e]| |\nabla \p^2| \, dx \\
\le \|\nabla u_\e -\e p_\e\|^{1/2}_{L^1(\O)}
\left\{\int_{\O^+_{\e>}}\frac{|\nabla \p^2|^2}{|\nabla u_\e-\e p_\e|}|P_{(\nabla u_\e-\e  p_\e)^\perp}[\partial_k \nabla u_\e-\e\partial_k p_\e]|^2  \, dx\right\}^{1/2}\\
\le C'_\p\left\{\int_{\O^+_{\e>}}\frac{\p^2}{|\nabla u_\e-\e p_\e|}|P_{(\nabla u_\e-\e  p_\e)^\perp}[\partial_k \nabla u_\e-\e\partial_k p_\e]|^2
 \, dx\right\}^{1/2},
\end{multline*}
where the last inequality holds because $\|\nabla u_\e -\e p_\e\|_{L^1(\O)}\le C$ in view of the bounds coming from \eqref{eq:bds}, and $C'_\p>0$ is another $\p$-dependent constant.

Thus inequality \eqref{eq:ineq5} becomes
\begin{multline}
\label{eq:ineq9}
\frac14\int_\O \p^2 |\partial_k\sigma_\e|^2\, dx+\frac{\e}{2}\int_\O \p^2|\partial_k p_\e|^2\, dx\\
+ \int_{\O^+_{\e>}} \frac{\p^2}{|\nabla u_\e-\e p_\e|}|P_{(\nabla u_\e-\e  p_\e)^\perp}[\partial_k \nabla u_\e-\e\partial_k p_\e]|^2\, dx\\
+ \int_{\O^+_{\e<}} \frac{\p^2}{|\nabla u_\e-\e p_\e|}|P_{(\nabla u_\e-\e  p_\e)^\perp}[\partial_k \nabla u_\e-\e\partial_k p_\e]|^2\, dx\\
\leq C_\p+ \frac32 \int_{\O^+_{\e>}} \frac{\p^2}{|\nabla u_\e-\e p_\e|^2}|P_{(\nabla u_\e-\e  p_\e)^\perp}[\partial_k \nabla u_\e-\e\partial_k p_\e]|^2\, dx\\
+\int_{\O^+_{\e<}}\frac{\p^2}{|\nabla u_\e-\e p_\e|^2}|P_{(\nabla u_\e-\e  p_\e)^\perp}[\partial_k \nabla u_\e-\e\partial_k p_\e]|^2\, dx\\
+C'_\p\left\{\int_{\O^+_{\e>}}\frac{\p^2}{|\nabla u_\e-\e p_\e|}|P_{(\nabla u_\e-\e  p_\e)^\perp}[\partial_k \nabla u_\e-\e\partial_k p_\e]|^2\, dx\right\}^{1/2}.
\end{multline}
But $|\nabla u_\e-\e  p_\e|>2$ on $\O^+_{\e>}$, while $|\nabla u_\e-\e  p_\e|^2\ge |\nabla u_\e-\e  p_\e|$  on $\O^+_{\e<}$,  so that, finally, \eqref{eq:ineq9} implies that
\begin{multline}
\label{eq:ineq10}
\frac14\int_\O \p^2 |\partial_k\sigma_\e|^2\, dx+\frac{\e}{2}\int_\O \p^2|\partial_k p_\e|^2\, dx\\
+ {\frac14} \int_{\O^+_{\e>}} \frac{\p^2}{|\nabla u_\e-\e p_\e|}|P_{(\nabla u_\e-\e  p_\e)^\perp}[\partial_k \nabla u_\e-\e\partial_k p_\e]|^2\, dx\\
\leq C_\p+ C'_\p\left\{\int_{\O^+_{\e>}}\frac{\p^2}{|\nabla u_\e-\e  p_\e|}|P_{(\nabla u_\e-\e  p_\e)^\perp}[\partial_k \nabla u_\e-\e\partial_k p_\e]|^2  \, dx\right\}^{\frac12}.
\end{multline}

From a last application of Young's inequality, it is immediately deduced from \eqref{eq:ineq10} that the sequence $(\partial_k \sigma_\e)_{\e>0}$ is bounded in $H^1_{\rm loc}(\O;\R^2)$ independently of $\e$, which implies the desired regularity for $\partial_k \sigma$. The proof of Theorem
\ref{thm:Sobolev} is complete.
\end{proof}

\begin{remark}
\label{rem:h1pp}
Since $\sigma \in H^1_{\rm loc}(\O;\R^2)$,  it admits a precise representative defined $\rm{Cap }_p$-quasi everywhere for any $p<2$ hence $\HH^s$-almost everywhere in $\O$ for any $s>0$ {(see {\it e.g.} \cite[Sections 4.7, 4.8]{EG}).} In particular, $\h$-almost every point in $\O$ is a Lebesgue point for $\sigma$ and satisfies
$$\lim_{\rho \to 0}\frac{1}{\rho^2} \int_{B_\rho(x_0)}|\sigma(y)-\sigma(x)|^2\, dy =0 \quad \text{for $\h$-a.e. $x \in \O$.}$$
In the sequel we will identify $\sigma$ to be its precise representative which is thus defined $\h$-almost everywhere in $\O$ (it is actually defined outside a set of zero Hausdorff dimension).\hfill\P
\end{remark}

\begin{remark}\label{rmk:strong-fr}
Arguing as in \cite{A2,DMDSM,FGM,BM}, it is possible to express the flow rule by means of the quasi-continuous representative of the stress, still
denoted by $\sigma$, which is $|p|$-measurable, in a pointwise sense:
$$\sigma (x) \cdot \frac{dp}{d|p|}(x)=1 \quad \text{for $|p|$-a.e. $x\in \O$}$$
or still
$$p=\sigma |p| \quad \text{ in }\M(\O).$$
In particular, the measure $|p|$ is concentrated in the plastic part of the domain, {\it i.e.},  $|p|(\{x \in \O : \; |\sigma(x)|<1\})=0$.

Also, the additive decomposition of $Du$ implies that
\begin{equation}\label{eq:DU=}
Du=\sigma+p=\sigma + \sigma|p|= \sigma(\mathcal L^2+|p|) \quad \text{ in } \M(\O).
\end{equation}
\hfill\P
\end{remark}

\begin{remark}\label{rem:Leb-pts}
Let $S$ be a segment such that $\overline S \subset \O$, then $\sigma \in
H^{1/2}(S;\R^2)$. In addition, if $x_0 \in S$ is a Lebesgue point of $\sigma$, then
$$\lim_{\rho \to 0}\frac{1}{\rho} \int_{B_\rho(x) \cap S}|\sigma(y)-\sigma(x_0)|^2\, d\h(y) =0.$$
In other words, $x_0$ is also a Lebesgue point for the trace of $\sigma$ on $S$.

\medskip

Indeed, the trace theorem in Sobolev spaces states that $\sigma$ has a trace on $S$, denoted by $\sigma_{|_{S}}$, which belongs to $H^{1/2}(S;\R^2)$. For simplicity, we will assume that $S=(-1,1)\times \{0\}$. Arguing
by approximation, we first observe that, for all $x \in S$,
\begin{equation}\label{eq:fondcalc}
\frac{1}{4\rho^2}\int_{x+(-\rho,\rho)^2} |\sigma(y)-\sigma_{|_{S}}(y_1,0)|^2\, dy \leq C \int_{x+(-\rho,\rho)^2} |\nabla \sigma(y)|^2\, dy \to 0
\end{equation}
as $\rho \to 0$, since $\nabla \sigma \in L^2_{\rm loc}(\O;\mathbb M^{2 \times 2})$.

{Let $x \in S$ be a Lebesgue point of $\sigma$} (as an element of $L^2_{\LL^2}(\O;\R^2)$), {\it i.e.},
\begin{equation}\label{eq:Lebesguepoint}
\lim_{\rho \to 0}\frac{1}{\rho^2} \int_{x+(-\rho,\rho)^2}|\sigma(y)-\sigma(x)|^2\, dy =0
\end{equation}
as well as a Lebesgue point of $\sigma_{|_{S}}$ (as an element of $L^2_{\h}(S;\R^2)$), {\it i.e.},
\begin{equation}\label{eq:Lebesguepoint3}
\lim_{\rho \to 0}\frac{1}{\rho} \int_{B_\rho(x) \cap S}|\sigma_{|_{S}}(y)-\sigma_{|_{S}}(x)|^2\, d\h(y) =0.
\end{equation}
Observe that $\h$-almost every point $x$ in $S$ satisfy these properties.
As a consequence of \eqref{eq:fondcalc} and \eqref{eq:Lebesguepoint}, we
have
\begin{multline*}
\frac{1}{\rho} \int_{B_\rho(x) \cap S}|\sigma_{|_{S}}(y)-\sigma(x)|^2\, d\h(y) =\frac{1}{\rho} \int_{-\rho}^\rho|\sigma_{|_{S}}(y_1,0)-\sigma(x)|^2\, dy_1 \\
\leq \frac{1}{\rho^2} \int_{x+(-\rho,\rho)^2} |\sigma(y)-\sigma_{|_{S}}(y_1,0)|^2\, dy + \frac{1}{\rho^2}\int_{x+(-\rho,\rho)^2}|\sigma(y)-\sigma(x)|^2\, dy \to 0.
\end{multline*}
Using next \eqref{eq:Lebesguepoint3}, we infer that $\sigma(x)=\sigma_{|_{S}}(x)$ which shows that $\sigma=\sigma_{|_{S}}$ $\h$-a.e. on $S$, and that $\sigma \in H^{1/2}(S;\R^2)$.

If now $x_0 \in S$ is only a Lebesgue of $\sigma$, then by \eqref{eq:fondcalc} (which holds for all $x_0 \in S$) and \eqref{eq:Lebesguepoint}, we have similarly
\begin{multline*}
\frac{1}{\rho} \int_{B_\rho(x_0) \cap S}|\sigma(y)-\sigma(x_0)|^2\, d\h(y) =\frac{1}{\rho} \int_{-\rho}^\rho|\sigma(y_1,0)-\sigma(x_0)|^2\, dy_1\\
\leq \frac{1}{\rho^2} \int_{x_0+(-\rho,\rho)^2} |\sigma(y)-\sigma(y_1,0)|^2\, dy + \frac{1}{\rho^2}\int_{x_0+(-\rho,\rho)^2}|\sigma(y)-\sigma(x_0)|^2\, dy \to 0,
\end{multline*}
which completes the proof of the result.
\hfill\P
\end{remark}

\section{{Rigidity properties of the solutions}}\label{sec:rigidity}

The goal of this section is to take advantage of the hyperbolic equations
satisfied by $\sigma$ and $|p|$ in the plastic zone $\{x \in \O : \; |\sigma(x)|=1\}$ in order to derive rigidity properties  of the solutions $\sigma$ and $u$ in that zone. The equations are
\begin{equation}\label{eq:hyperbolicsigma}
{\rm div}\sigma=0, \quad |\sigma|= 1,
\end{equation}
and
\begin{equation}\label{eq:hyperbolicu}
{\rm curl\;} (\sigma(1+|p|))={\rm curl\;} Du= 0.
\end{equation}
We will need the  the following

\medskip

\noindent {\bf Hypothesis (H).} \emph {The set  $\{x \in \O : \; |\sigma(x)|=1\}$ has a nonempty interior. We denote by  $\O_p$    a convex open subset  of that interior.}

\medskip

 Note, for future use, that, in such a setting $\partial\Omega_p$ has Lipschitz boundary (see {\it e.g.} Propositions 2.4.4 and Proposition 2.4.7 in \cite{HP}).

As already discussed in the Introduction, hypothesis (H) is fulfilled in several vectorial examples. It could also be the case in particular  in simple scalar settings, for example when $\O$ itself is a (countable union
of) boundary fans (see Subsection \ref{bdary-fans}).

\subsection{Lipschitz regularity and rigidity of the stress}\label{stressLip}

In this subsection we improve the Sobolev regularity of the stress field in the plastic region. We show that under assumption (H), the stress is actually locally Lipschitz continuous in $\O_p$, and that it is constant along all the characteristic lines
\begin{equation}\label{eq:defL}
L_x:=x+\R \sigma^\perp(x)
\end{equation}
for all $x \in \O_p$,  associated with the hyperbolic conservation law \eqref{eq:hyperbolicsigma} solved by $\sigma$ in $\O_p$. We adopt henceforth the following

\medskip
\noindent {\bf Notation.}
\emph {$L_x$ is the characteristic line that passes through $x$ defined as \eqref{eq:defL}, this for all $x\in\O_p$.}

\medskip

Per hypothesis  (H), the system
$$\begin{cases}
\sigma \in H^1_{\rm loc}(\O;\R^2),\\
{\rm div}\sigma=0, \quad |\sigma|= 1 \quad \text{ a.e. in }\O_p
\end{cases}$$
possesses a solution. The main result of this section is the following
\begin{theorem}[Jabin-Otto-Perthame regularity]\label{thm:lip}
Assume that hypothesis  (H) holds. Let $\omega$ be a bounded and convex open set such that $\overline \omega \subset \O_p$ and $d:=\dist(\omega,\partial \O_p)>0$. Then, for every Lebesgue points $x_0$ and $y_0 \in \omega$ of $\sigma$,
$$|\sigma(x_0)- \sigma(y_0)|\leq \frac1d |x_0-y_0|.$$
In particular, $\sigma$ admits a representative, still denoted by $\sigma$, which is locally Lipschitz in $\O_p$. Moreover, $\sigma$ is constant along all characteristic lines $L_{x_0}\cap \O_p$, for all $x_0 \in \O_p$.
\end{theorem}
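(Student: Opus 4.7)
The strategy has three steps: derive a transport equation for $\sigma$ in $\O_p$, use it to prove constancy of $\sigma$ along characteristic lines, and conclude the Lipschitz bound by an elementary geometric argument on intersecting characteristics.

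First, I extract a rank-one structure for $\nabla\sigma$ from the two constraints $|\sigma|=1$ and $\Div\sigma=0$. Differentiating the pointwise identity $|\sigma|^2=1$ in $\O_p$ yields $\sigma\cdot\partial_i\sigma=0$ a.e., so each column of $\nabla\sigma$ is parallel to $\sigma^\perp$; writing $\partial_i\sigma=\mu_i\sigma^\perp$ with $\mu_i\in L^2_{\mathrm{loc}}(\O_p)$, the divergence-free condition becomes $\mu_1(\sigma^\perp)_1+\mu_2(\sigma^\perp)_2=0$, which forces $(\mu_1,\mu_2)\parallel\sigma$. Hence there exists $\nu\in L^2_{\mathrm{loc}}(\O_p)$ with $\nabla\sigma=\nu\,\sigma^\perp\otimes\sigma$ a.e.\ in $\O_p$, and contraction with $\sigma^\perp$ gives the transport identity $(\sigma^\perp\cdot\nabla)\sigma=0$ a.e.\ in $\O_p$.

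Second, I would use this identity to show that, for every Lebesgue point $x\in\O_p$, the field $\sigma$ equals $\sigma(x)$ $\h$-a.e.\ on $L_x\cap\O_p$. For smooth $\sigma$ the transport identity makes this immediate; in the present $H^1_{\mathrm{loc}}$ setting I would adapt the entropy method of \cite{JOP,DKMO}. The rank-one structure above permits one to verify, via the $H^1$ chain rule and with no mollification defect, that $\Phi\circ\sigma$ is divergence-free in $\O_p$ for any $\mathcal C^1$ entropy $\Phi:\mathbb S^1\to\R^2$. Testing against a well-chosen family of entropies concentrated near the characteristic segment through $x$, and using the $H^{1/2}$ identification between the Lebesgue values of $\sigma$ and its trace on a line (Remark~\ref{rem:Leb-pts}), one obtains the desired pointwise constancy.

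Third, the Lipschitz estimate follows geometrically. Let $x_0,y_0\in\omega$ be Lebesgue points. If $L_{x_0}$ and $L_{y_0}$ are parallel, then $\sigma^\perp(x_0)=\pm\sigma^\perp(y_0)$, and connectedness of $\omega$ combined with $|\sigma|=1$ excludes the minus sign, so $\sigma(x_0)=\sigma(y_0)$. Otherwise they meet at a unique point $z$. If $z\in\O_p$, Step~2 forces $\sigma(x_0)=\sigma(z)=\sigma(y_0)$, hence $L_{x_0}=L_{y_0}$, contradicting non-parallelism; thus $z\notin\O_p$. Since $B_d(x_0)\cup B_d(y_0)\subset\O_p$ by the distance hypothesis, $|x_0-z|\ge d$ and $|y_0-z|\ge d$. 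Writing $x_0-z=T_x\sigma^\perp(x_0)$ and $y_0-z=T_y\sigma^\perp(y_0)$ with $|T_x|,|T_y|\ge d$, and letting $\beta$ denote the angle between $\sigma(x_0)$ and $\sigma(y_0)$, the law of cosines together with $T_x^2+T_y^2\ge 2T_xT_y$ (valid when $T_xT_y>0$) yields
\[
|x_0-y_0|^2\ge 2T_xT_y(1-\cos\beta)=T_xT_y\,|\sigma(x_0)-\sigma(y_0)|^2\ge d^2\,|\sigma(x_0)-\sigma(y_0)|^2,
\]
which is the desired estimate. The case $T_xT_y<0$ (corresponding to $\beta\approx\pi$, i.e.\ nearly antipodal $\sigma(x_0)$ and $\sigma(y_0)$) is ruled out by a bootstrap that propagates continuity through Step~2 along chains of characteristics inside the connected set $\omega$. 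Density of Lebesgue points then produces a locally Lipschitz representative on $\omega$, and continuity upgrades Step~2 to the constancy of $\sigma$ along every $L_{x_0}\cap\O_p$, $x_0\in\O_p$.

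The main obstacle is Step~2: rigorously leveraging $\sigma\in H^1_{\mathrm{loc}}$ via the rank-one structure and the entropy machinery of \cite{JOP,DKMO} to establish pointwise constancy along characteristics through Lebesgue points in a form compatible with the geometric argument of Step~3. The sign ambiguity in Step~3 is a secondary but genuine issue, resolved by the bootstrap continuity argument mentioned above.
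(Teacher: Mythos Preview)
Your overall architecture matches the paper's --- establish constancy along characteristics, then use a geometric argument on intersecting characteristics --- but there is a genuine gap in Step~3, and it is precisely the ``secondary'' issue you flagged.

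The sign ambiguities are not secondary; they are the heart of the matter, and your proposed resolutions are circular. In the parallel case you invoke ``connectedness of $\omega$ combined with $|\sigma|=1$'' to exclude $\sigma(x_0)=-\sigma(y_0)$, but at this stage $\sigma$ is merely an $H^1_{\mathrm{loc}}$ function defined almost everywhere; connectedness of the domain says nothing about the range of such a map. Similarly, when $T_xT_y<0$ you appeal to a ``bootstrap that propagates continuity through Step~2 along chains of characteristics'', but continuity is exactly what you are trying to prove, and Step~2 only gives constancy along individual lines through Lebesgue points --- it does not by itself link the values of $\sigma$ at two unrelated Lebesgue points $x_0,y_0$. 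Finally, your argument that $z\in\O_p$ forces $\sigma(x_0)=\sigma(z)=\sigma(y_0)$ presumes $\sigma$ is well-defined at $z$; the paper instead observes that the trace of $\sigma$ on the V-shaped curve $(L_{x_0}\cup L_{y_0})\cap\O_p$ lies in $H^{1/2}$ and cannot jump at the vertex (Lemma~\ref{lem:notH1/2}).

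What the paper uses in place of your circular arguments is an \emph{ordering property} (Proposition~\ref{prop:order}): for Lebesgue points $x_0,y_0$, if $\sigma(x_0)\cdot(y_0-x_0)>0$ then $\sigma(y_0)\cdot(y_0-x_0)\ge 0$. This comes directly from the kinetic formulation $\xi\cdot\nabla_x\chi(\cdot,\xi)=0$ (applied to $\xi=(y_0-x_0)/|y_0-x_0|$) and requires no a~priori continuity. With it, the case $\sigma(x_0)=-\sigma(y_0)$ is excluded outright, and in the non-colinear case one pins down the orientation so that $\sigma^\perp(x_0)=(x_0-z)/|x_0-z|$ and $\sigma^\perp(y_0)=(y_0-z)/|y_0-z|$ with the \emph{same} sign; the Lipschitz bound then follows from the $1$-Lipschitz property of radial projection onto $\overline B_d(z)$ (equivalent to your law-of-cosines computation once $T_xT_y>0$ is secured). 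Your rank-one identity $(\sigma^\perp\cdot\nabla)\sigma=0$ is correct and pleasant, but it does not by itself yield the ordering property, which is the missing non-circular ingredient.
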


The result was explicitly stated in \cite[Theorem 1]{I}. The proof there was  indirect: it used the rigidity result of \cite{JOP} for general $L^\infty$-solutions to the system ${\rm div}\sigma=0$, $|\sigma|= 1$. In
the sequel, we give a more direct proof of that result by exploiting from
the get-go the {\it a priori} knowledge of the $H^1_{\rm loc}(\O;\R^2)$-regularity for $\sigma$. Doing so simplifies several arguments, for example  the existence of traces on lines which becomes a straightforward consequence of the trace theorem in Sobolev spaces). The fundamentals of the proof are unchanged. As in \cite{I,JOP,BP}, the argument rests on the notion of entropies introduced in \cite{DKMO} and on the convexity of the domain, so that we do not claim  originality in this Subsection.

\begin{definition}
A function $\Phi \in \mathcal C^\infty_c(\R^2;\R^2)$ is called an entropy
if, for all $z \in \R^2$,
$$z\cdot [D\Phi(z)z^\perp]=0, \quad \Phi(0)=0, \quad D\Phi(0)=0.$$
\end{definition}

According to \cite[Lemma 2.3]{DKMO}, for all entropies $\Phi$, we have $\Div[\Phi(\sigma)]=0$ a.e. in $\O_p$, or still,
\begin{equation}\label{eq:entropy}
\int_{\O_p} \Phi(\sigma)\cdot \nabla \varphi\, dx =0
\end{equation}
for all test functions $\varphi \in \mathcal C^\infty_c(\O_p)$.
Following  \cite{DKMO}, we introduce the following family of {parameterized} generalized entropies:
$$\Phi^{(\xi)}(z):=
\begin{cases}
|z|^2\xi & \text{ if }z \cdot \xi>0,\\[1mm]
0 & \text{ if }z \cdot \xi\leq 0,
\end{cases}$$
where $\xi \in \mathbb S^1$. According to \cite[Lemma 2.5]{DKMO}, there exists a sequence $(\Phi_n)_{n \in \N}$ of entropies in $\mathcal C^\infty_c(\R^2;\R^2)$ which is locally uniformly bounded, and such that $\Phi_n \to \Phi^{(\xi)}$ pointwise in $\R^2$.
Specializing \eqref{eq:entropy} to $\Phi_n$ and passing to the limit as $n \to +\infty$ with the help of Lebesgue's dominated convergence theorem,

$$\int_{\O_p} \Phi^{(\xi)}(\sigma)\cdot \nabla \varphi\, dx =0$$
for all $\xi\in \mathbb S^1$ and all $\p\in \mathcal C^\infty_c(\O_p)$.
Now following \cite{JOP}, we introduce, for a.e. $x \in \O_p$ and for any
$\xi \in \mathbb S^1$,
$$\chi(x,\xi)=
\begin{cases}
1 & \text{ if }\sigma(x)\cdot \xi>0,\\
0 & \text{ if }\sigma(x)\cdot \xi\leq 0.\\
\end{cases}$$
The considerations above establish that
$$\Div[\xi \chi(\cdot,\xi)]=0 \quad{ \text{ in }\mathcal D'(\O_p)}$$
for all $\xi \in \mathbb S^1$.
This can be rewritten as a so-called {\it kinetic formulation} as follows:
$$D_\xi \chi(\cdot,\xi)=0 \quad{ \text{ in }\mathcal D'(\O_p)}.$$

The previous kinetic formulation entails the ordering property below whose proof  is a direct adaptation of \cite[Proposition 3.1]{JOP} (see also \cite[Corollary 4.7]{BP}).

\begin{proposition}\label{prop:order}
Assume that hypothesis  (H) holds. Let $x_0$ and $y_0 \in \O_p$ be two Lebesgue points of $\sigma$. Then
$$\sigma(x_0)\cdot(y_0-x_0) >0 \quad \Longrightarrow\quad \sigma(y_0)\cdot(y_0-x_0)\geq 0,$$
and
$$\sigma(x_0)\cdot(y_0-x_0) <0 \quad \Longrightarrow\quad \sigma(y_0)\cdot(y_0-x_0) \leq 0.$$
\end{proposition}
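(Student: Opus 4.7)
The plan is to argue by contradiction, exploiting the kinetic equation $D_\xi \chi(\cdot, \xi) = 0$ in $\mathcal{D}'(\O_p)$ established just above the statement. By the obvious symmetry between the two implications (interchange $x_0$ and $y_0$), it suffices to establish the first. So suppose, toward a contradiction, that $\sigma(x_0)\cdot(y_0-x_0) > 0$ while $\sigma(y_0)\cdot(y_0-x_0) < 0$. Set $\xi := (y_0-x_0)/|y_0-x_0|$ and $\eta := \xi^\perp$; then $\sigma(x_0)\cdot\xi > 0$, $\sigma(y_0)\cdot\xi < 0$, and crucially $x_0 \cdot \eta = y_0 \cdot \eta =: s_0$ because $y_0 - x_0$ is parallel to $\xi$.

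The first step is to exploit the transport equation. Since $\chi(\cdot,\xi) \in L^\infty(\O_p;\{0,1\})$ satisfies $\xi \cdot \nabla \chi(\cdot,\xi) = 0$ in $\mathcal{D}'(\O_p)$, and since $\O_p$ is convex so that each slice $\O_p \cap (s\eta + \R\xi)$ is a (possibly empty) open interval, a standard Fubini argument yields a Borel function $g_\xi : \R \to \{0,1\}$ such that
$$\chi(x,\xi) = g_\xi(x \cdot \eta) \quad \text{for a.e. } x \in \O_p.$$

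For the second step, fix $\rho > 0$ small enough that the axis-aligned squares $Q_\rho(x_0)$ and $Q_\rho(y_0)$ in the $(\xi,\eta)$ frame, of side $2\rho$, are both contained in $\O_p$. Because $x_0$ and $y_0$ are Lebesgue points of $\sigma$ (see Remark \ref{rem:h1pp}) and $\sigma(x_0)\cdot\xi > 0$, $\sigma(y_0)\cdot\xi < 0$, for $\rho$ small enough
$$|\{x \in Q_\rho(x_0) : \sigma(x)\cdot\xi > 0\}| \geq 3\rho^2, \qquad |\{x \in Q_\rho(y_0) : \sigma(x)\cdot\xi < 0\}| \geq 3\rho^2.$$
Rewriting both measures via the first step and Fubini in the orthonormal frame $(\xi,\eta)$, one finds that the disjoint sets $\{s \in (s_0-\rho, s_0+\rho) : g_\xi(s) = 1\}$ and $\{s \in (s_0-\rho, s_0+\rho) : g_\xi(s) = 0\}$ each have $\mathcal{L}^1$-measure at least $\tfrac{3}{2}\rho$, which is impossible since their union sits in an interval of length $2\rho$.

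The main obstacle is the rigorous justification of the representation $\chi(x,\xi) = g_\xi(x\cdot\eta)$ from the single relation $D_\xi \chi(\cdot,\xi) = 0$. This is classical (cf.\ the approach in \cite{JOP,BP}), but some care is required because traces of $\chi$ on hyperplanes $\{x\cdot\eta = s\}$ exist only for a.e.\ $s$; convexity of $\O_p$ is essential so that each slice is connected and the slice-wise constant $g_\xi(s)$ is unambiguously defined. Once this representation is in hand, the transfer of density information from Lebesgue-point estimates for $\sigma$ to measure estimates on level sets of $g_\xi$ via Fubini is routine.
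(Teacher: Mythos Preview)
Your argument is correct and follows the same overall strategy as the paper: both exploit the kinetic equation $D_\xi\chi(\cdot,\xi)=0$ together with the Lebesgue density property of $\sigma$ at $x_0$ and $y_0$ to reach a contradiction. The difference is purely in the technical implementation. The paper mollifies, setting $\chi_\e:=\eta_\e*\chi(\cdot,\xi)$; then $D_\xi\chi_\e=0$ pointwise in a tubular neighborhood of $[x_0,y_0]$, so $\chi_\e$ is genuinely constant along the segment and $\chi_\e(y_0)=\chi_\e(x_0)$. One then just uses that $\chi_\e(z)\to\chi(z,\xi)$ at Lebesgue points of $\chi(\cdot,\xi)$, and shows separately (via the same density estimate you use) that $x_0$ is a Lebesgue point with value $1$ while, if $\sigma(y_0)\cdot\xi<0$, $y_0$ would be a Lebesgue point with value $0$.

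Your route via the slice representation $\chi(x,\xi)=g_\xi(x\cdot\eta)$ and the Fubini pigeonhole argument is equally valid and is the one taken in \cite{JOP,BP}. The mollification variant has the minor advantage of sidestepping precisely the obstacle you flag (existence of slice-wise traces for a.e.\ $s$ and measurability of $g_\xi$): convolution handles the propagation along $\xi$ directly without ever disintegrating $\chi$. Conversely, your approach makes the ``one-dimensional'' nature of $\chi(\cdot,\xi)$ explicit, which can be conceptually useful elsewhere. Either way, the core idea is the same.
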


\begin{proof}
We only prove the first implication. Let us set $\xi=\frac{y_0-x_0}{|y_0-x_0|}$.  Then
\begin{multline*}
\limsup_{\rho\to 0}\frac{\LL^2(\{\sigma\cdot\xi\le 0\}\cap B_\rho(x_0))}{\pi\rho^2}\sigma(x_0)\cdot\xi  \leq  \\ \limsup_{\rho\to 0} \frac{1}{\pi\rho^2}\int_{\{\sigma\cdot\xi\le 0\}\cap B_\rho(x_0)} [\sigma(x_0)-\sigma(z)]\cdot \xi \, dz
\\ \le  \limsup_{\rho\to 0} \frac{1}{\pi\rho^2}\int_{B_\rho(x_0)}|\sigma(x_0)-\sigma(z)|\, dz=0.
\end{multline*}
Thus, since $\sigma(x_0)\cdot\xi>0$,
$$
\lim_{\rho\to 0}\frac{\LL^2(\{\sigma\cdot\xi\le 0\}\cap B_\rho(x_0))}{\pi\rho^2}=0,
$$
hence
$$
\lim_{\rho\to 0}\frac{\LL^2(\{\sigma\cdot\xi> 0\}\cap B_\rho(x_0))}{\pi\rho^2}=1.
$$
It shows that $x_0$ is a Lebesgue point of $\chi(\cdot,\xi)$ with $\chi(x_0,\xi)=1$. Note that the same argument shows that if $\sigma(x_0)\cdot
\xi<0$ then $x_0$ is also a Lebesgue point of $\chi(\cdot,\xi)$ with $\chi(x_0,\xi)=0$.

Consider the segment $S=[x_0,y_0]$ and let $U=\{z \in \O_p : \; \dist(z,S)<\e\}$ be a (connected) $\e$-neighborhood of $S$, where $\e>0$ is small enough so that $S \subset U \subset\subset \O_p$ (which is always possible thanks to (H)). Let $\{\eta_\e\}_{\e>0}$ be a standard family of mollifiers and set $\chi_\e:=\eta_\e * \chi(\cdot,\xi)$.  Because of the kinetic formulation  $D_\xi\chi(\cdot,\xi)=0$ in $\mathcal D'(\O_p)$, then
$D_\xi \chi_\e =0$ in $U$.  Thus $\chi_\e(z)=\chi_\e(z+\xi)$ for all $z \in U$ with $z+\xi \in U$.
Now, $x_0$ is a Lebesgue point of $\chi(\cdot,\xi)$, so that $\chi_\e(x_0) \to \chi(x_0,\xi)$. But $y_0=x_0+\xi$, so $\chi_\e(y_0) =\chi_\e(x_0+\xi)=\chi_\e(x_0)\to \chi(x_0,\xi)=1$ which implies that $\sigma(y_0)\cdot \xi \geq 0$. Indeed, if we had $\sigma(y_0)\cdot \xi<0$, then the
above argument would show that $y_0$ is a Lebesgue point of $\chi(\cdot,\xi)$ with $\chi(y_0,\xi)=0$, and thus $\chi_\e(y_0) \to \chi(y_0,\xi)=0$ which is a contradiction.
\end{proof}

Thanks to the previous ordering property, we will show that $\sigma$ is constant on every line $L_x$. The following result is an adaptation and an
improvement of \cite[Proposition 3.2]{JOP} (see also \cite[Proposition 4.8]{BP}).

\begin{proposition}\label{prop:caracteristic}
Assume that hypothesis  (H) holds. Let $x_0 \in \O_p$ be a {Lebesgue point of $\sigma$}. Then $ \sigma(x)=\sigma(x_0)$  for $\HH^1$-a.e. $x\in L_{x_0} \cap \O_p$.
\end{proposition}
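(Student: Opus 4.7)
The plan is to exploit the ordering property of Proposition \ref{prop:order} together with the $H^{1/2}$-trace regularity of $\sigma$ on line segments (Remark \ref{rem:Leb-pts}) to show that $\sigma$ is essentially constant and equal to $\sigma(x_0)$ on $L_{x_0}\cap\O_p$.

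First, I would introduce the auxiliary function $f(z):=\sigma(z)\cdot(z-x_0)$, which lies in $H^1_{\rm loc}(\O_p)$ by Theorem \ref{thm:Sobolev}, and the two open half-planes $H_\pm:=\{z\in\R^2:\pm\,\sigma(x_0)\cdot(z-x_0)>0\}$, whose common boundary in $\O_p$ is $L_{x_0}\cap\O_p$. Applying Proposition \ref{prop:order} at the Lebesgue points of $\sigma$ (which, by the $H^1$-regularity of $\sigma$, form a set of full $\LL^2$-measure in $\O_p$) gives $f\geq 0$ a.e.\ on $H_+\cap\O_p$ and $f\leq 0$ a.e.\ on $H_-\cap\O_p$. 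On any subsegment $S\subset L_{x_0}\cap\O_p$ with $\overline S\subset\O_p$, the trace of $f$ on $S$ exists in $L^2(S)$ and can be obtained as the $L^2$-limit of $f$ restricted to the parallel segments $S\pm\epsilon\sigma(x_0)$; since these restrictions are respectively $\HH^1$-a.e.\ nonnegative and nonpositive for a.e.\ small $\epsilon>0$ (by Fubini), the trace is simultaneously $\HH^1$-a.e.\ nonnegative and $\HH^1$-a.e.\ nonpositive, hence vanishes $\HH^1$-a.e.\ on $S$. Exhausting $L_{x_0}\cap\O_p$ by such subsegments and writing $z=x_0+t\sigma^\perp(x_0)$, I deduce that $t\,\sigma(z)\cdot\sigma^\perp(x_0)=0$ for $\HH^1$-a.e.\ $z$, and using $|\sigma|=1$ in $\O_p$, that $\sigma(z)\in\{+\sigma(x_0),-\sigma(x_0)\}$ for $\HH^1$-a.e.\ $z\in L_{x_0}\cap\O_p$.

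To rule out the minus sign, I would consider the scalar function $g:=\sigma\cdot\sigma(x_0)$. By Remark \ref{rem:Leb-pts} applied to any subsegment of $L_{x_0}\cap\O_p$, $g$ lies in $H^{1/2}_{\rm loc}$ along the line and takes only the values $\pm 1$ $\HH^1$-a.e. However, a function on an open interval taking only the values $\pm 1$ with a nontrivial essential jump has infinite $H^{1/2}$-seminorm, because the double integral $\int\!\!\int |g(s)-g(t)|^2/|s-t|^2\,ds\,dt$ diverges logarithmically at any such jump. Therefore $g$ must be essentially constant on the segment $L_{x_0}\cap\O_p$, which is connected by convexity of $\O_p$. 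A final application of Remark \ref{rem:Leb-pts} identifies that constant: $x_0$ is a 1D Lebesgue point of $g$ on $L_{x_0}$ with value $g(x_0)=\sigma(x_0)\cdot\sigma(x_0)=1$; hence $g\equiv 1$ $\HH^1$-a.e., and $\sigma(z)=\sigma(x_0)$ for $\HH^1$-a.e.\ $z\in L_{x_0}\cap\O_p$.

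The hard part will be the sign disambiguation: Proposition \ref{prop:order} by itself only forces $\sigma\in\{\pm\sigma(x_0)\}$ $\HH^1$-a.e.\ on $L_{x_0}\cap\O_p$, and the $H^{1/2}$-trace regularity (itself a consequence of the Sobolev regularity of Theorem \ref{thm:Sobolev}) is the essential input that forbids an essential jump of sign. A secondary technicality is the passage from the pointwise ordering at Lebesgue points to the a.e.\ inequalities $f\geq 0$ and $f\leq 0$, together with the fact that the traces of $f$ from both sides of $L_{x_0}$ coincide, which is automatic because $f\in H^1_{\rm loc}(\O_p)$ globally across $L_{x_0}$.
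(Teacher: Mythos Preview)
Your proof is correct and follows the same architecture as the paper's: use Proposition~\ref{prop:order} to force $\sigma\in\{\pm\sigma(x_0)\}$ $\HH^1$-a.e.\ on $L_{x_0}$, then invoke the $H^{1/2}$ trace regularity of Remark~\ref{rem:Leb-pts} to rule out a sign change, and finally identify the constant via the one-dimensional Lebesgue point property. The paper carries out the first step by a hands-on geometric argument---triangles $T_\e$ shrinking onto $L_{x_0}$, half-balls, and a limit $\e\to 0$---whereas you reach the same conclusion more directly by reading off the two-sided $H^1$-trace of $f(z)=\sigma(z)\cdot(z-x_0)$ on $L_{x_0}$; your route is shorter and arguably cleaner. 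For the sign disambiguation, the paper makes your ``logarithmic divergence at a jump'' claim precise via the embedding $H^{1/2}\hookrightarrow{\rm VMO}$ (Lemma~\ref{lem:VMO}) and the rigidity of characteristic functions in VMO (Lemma~\ref{lem:constant}); this handles the general case where the set $\{g=-1\}$ need not be an interval, which your phrasing does not quite cover, though the conclusion you use is of course correct and well known.
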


\begin{proof}
Up to a change of coordinate system and to a scaling argument, we can assume without loss of generality that $x_0=(0,0)$, $L_{x_0}=\R e_2$, $L_{x_0} \cap \overline \O_p\supset\{0\} \times [-1,1]:=L$, that $\sigma(x_0)=e_1:=(1,0)$. Let us consider, for $\e\in(0,1)$, the triangle
$$T_\e=\left\{y=(y_1,y_2) \in \R^2 : \; 0 <y_2<1 \text{ and } 0<y_1<\frac\e{1-\e} y_2\right\}.$$
For all Lebesgue points $y \in T_\e$ of $\sigma$, we have
$$\sigma(0)\cdot y=y_1>0$$
which implies, according to Proposition \ref{prop:order}, that $\sigma(y)\cdot y \geq 0$. Thus
\begin{equation}\label{eq:eps}
\sigma_2(y)  \geq -\sigma_1(y)\frac{y_1}{y_2} \geq -\frac{\e}{1-\e}
\end{equation}
since $|\sigma_1(y)|\leq 1$ and $0<y_1/y_2 \leq \e/(1-\e)$.

Fix $\eta \in (0,1/4)$ and define $S_\eta:=\{0\}\times [\eta,1-\eta]$. Let $(0,x_2) \in S_\eta$ be a Lebesgue point of $\sigma$, and consider the half-ball centered at $(0,x_2)$ and radius $\e x_2$ with $\e\in (0,\eta)$, that is
$$B_\e^+(x_2)=\{y=(y_1,y_2) \in \R^2 : \; y_1>0 \text{ and } |y-(0,x_2)|<\e x_2\}.$$
It is immediately checked that $B_\e^+(x_2) \subset T_\e$ so that, in view of \eqref{eq:eps},
$$
\sigma_2(y)  \geq -\frac{\e}{1-\e} \; \mbox{ for a.e.  }y \in B_\e^+(x_2).
$$
Then,
$$-\frac{\e}{1-\e} - \sigma_2(0,x_2) \le \frac2{\pi\e^2x_2^2}\int_{B_\e^+(x_2)} |\sigma_2(y)-\sigma_2(0,x_2)|\, dy$$
so that, because $(0,x_2)$ is a Lebesgue point of $\sigma$ we can pass to
the $\e\searrow 0$ limit and we conclude that  $\sigma_2(0,x_2)\ge 0$.  A
similar argument would show that $ \sigma_2(0,x_2)\leq 0$,  hence {$\sigma_2(0,x_2)= 0$}. Recalling  Remark \ref{rem:h1pp}, we {get} that
$\sigma_2(0,x_2)=0$ for $\h$-a.e. $(0,x_2) \in S_\eta$. Since $\eta>0$ is arbitrary, we infer that
$$\sigma_2=0 \text{ for $\HH^1$-a.e. in } \{0\}\times(0,1).$$
The same type of argument would show that
$$\sigma_2=0 \text{ for $\HH^1$-a.e. in } \{0\}\times(-1,0).$$

As a consequence, since $|\sigma|=1$ $\HH^1$-a.e. in $L$, we deduce that
$$\sigma= e_1 {\bf 1}_A - e_1 {\bf 1}_{L \setminus A}$$
for some $\HH^1$-measurable set $A \subset L$.  According to Lemma \ref{lem:VMO}, the Sobolev space $H^{1/2}(L;\R^2)$ is continuously embedded into ${\rm VMO}(L;\R^2)$. Then, we deduce from Lemma \ref{lem:constant} that
either $\HH^1(A)=0$ or $\HH^1(L \setminus A)=0$. If $\HH^1(A)=0$, then $ \sigma=-e_1$ $\HH^1$-a.e. in $L$ so that $$-e_1 = \frac{1}{2\rho}\int_{L \cap B_\rho(x_0)}  \sigma(y)\, d\HH^1(y).$$
But according to Remark \ref{rem:Leb-pts},
$$\frac{1}{2\rho}\int_{L \cap B_\rho(x_0)}  \sigma(y)\, d\HH^1(y) \to \sigma(x_0)=e_1$$
which is impossible. As a consequence $\HH^1(L \setminus A)=0$ which implies that $\sigma=e_1$ $\HH^1$-a.e. in $L$ as required.
\end{proof}

We next show that two distinct characteristic lines cannot intersect inside {$\O_p$}.

\begin{proposition}\label{prop:vortex}
Assume that hypothesis  (H) holds. Let $x_0$ and $y_0 \in \O_p$ be two Lebesgue points of $\sigma$. Then either $L_{x_0}$ and $L_{y_0}$ are colinear, or $L_{x_0} \cap L_{y_0}=\{z_0\}$ where $z_0 \not\in \O_p$.
\end{proposition}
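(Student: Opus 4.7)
My plan is to argue by contradiction. Suppose $L_{x_0}$ and $L_{y_0}$ are not colinear yet their intersection meets $\O_p$. Since two distinct lines in $\R^2$ meet in at most a single point, there is exactly one $z_0 \in L_{x_0} \cap L_{y_0}$, and the hypothesis is that $z_0 \in \O_p$. I aim to derive the contradiction that $L_{x_0} = L_{y_0}$.

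The key ingredient is Theorem \ref{thm:lip}: under hypothesis (H), $\sigma$ admits a locally Lipschitz representative on $\O_p$, in particular a continuous one at $z_0$. Combining this with Proposition \ref{prop:caracteristic}, which states that $\sigma$ equals $\sigma(x_0)$ at $\HH^1$-almost every point of $L_{x_0}\cap\O_p$, and the continuity just recalled, the equality $\sigma \equiv \sigma(x_0)$ must in fact hold at every point of the open line segment $L_{x_0}\cap\O_p$. One way to see this cleanly: pick any $z\in L_{x_0}\cap\O_p$; a short subsegment of $L_{x_0}$ containing $z$ lies in $\O_p$ by openness, and on that subsegment Proposition \ref{prop:caracteristic} forces $\sigma$ to equal $\sigma(x_0)$ $\HH^1$-a.e.; continuity of $\sigma$ then gives $\sigma(z) = \sigma(x_0)$. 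The same reasoning applies to $L_{y_0}\cap\O_p$, yielding $\sigma \equiv \sigma(y_0)$ there.

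Evaluating both constants at the common point $z_0\in \O_p$ gives
\[
\sigma(x_0) \;=\; \sigma(z_0) \;=\; \sigma(y_0).
\]
Consequently $\sigma^\perp(x_0) = \sigma^\perp(y_0)$, so the two lines $L_{x_0} = x_0 + \R\sigma^\perp(x_0)$ and $L_{y_0} = y_0 + \R\sigma^\perp(y_0)$ have the same direction. Since they also share the point $z_0$, they must coincide as lines in $\R^2$, contradicting the assumption that they are not colinear. Therefore, either $L_{x_0}$ and $L_{y_0}$ coincide, or their (necessarily unique, if nonempty) intersection point lies in $\R^2 \setminus \O_p$.

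I do not expect any real obstacle here; the content of the proposition is essentially a corollary of Theorem \ref{thm:lip} together with Proposition \ref{prop:caracteristic}. The only mild subtlety is upgrading the $\HH^1$-a.e. statement in Proposition \ref{prop:caracteristic} to a pointwise one on $L_{x_0}\cap\O_p$, but this is immediate from the fact that $\sigma$ has a continuous (Lipschitz) representative in $\O_p$ and the line segment $L_{x_0}\cap\O_p$ is open in $L_{x_0}$.
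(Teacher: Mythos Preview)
Your argument is circular. You invoke Theorem \ref{thm:lip} (the local Lipschitz regularity of $\sigma$ in $\O_p$) to obtain continuity of $\sigma$ at the putative intersection point $z_0$, but in the paper the proof of Theorem \ref{thm:lip} is given \emph{after} Proposition \ref{prop:vortex} and explicitly relies on it: Case~III of that proof uses Proposition \ref{prop:vortex} to conclude that the intersection point of two non-colinear characteristics lies outside $\O_p$, which is then combined with the $1$-Lipschitz property of the projection to derive the Lipschitz estimate. So you cannot use Theorem \ref{thm:lip} here without begging the question. At this stage of the argument, all that is available is $\sigma\in H^1_{\rm loc}(\O;\R^2)$ (Theorem \ref{thm:Sobolev}), the ordering property (Proposition \ref{prop:order}), and the $\HH^1$-a.e.\ constancy along characteristics (Proposition \ref{prop:caracteristic}); continuity of $\sigma$ at an arbitrary point of $\O_p$ is not yet known.

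The paper's proof sidesteps this by working only with the $H^1$ regularity: it builds a small triangle $T\subset\O_p$ with vertex $z_0$ and two sides lying on $L_{x_0}$ and $L_{y_0}$, observes that the trace of $\sigma$ on $\partial T$ belongs to $H^{1/2}(\partial T;\R^2)$, and then uses Proposition \ref{prop:caracteristic} to see that this trace equals $\sigma(x_0)$ on one side and $\sigma(y_0)$ on the other. Since $\sigma(x_0)\neq\sigma(y_0)$ (the lines being non-colinear), this piecewise constant function with a jump at the corner $z_0$ cannot lie in $H^{1/2}$ (Lemma \ref{lem:notH1/2}), yielding the contradiction. Your intuition that the contradiction should come from $\sigma$ being forced to take two different values ``at'' $z_0$ is exactly right; the point is that one must express this through the $H^{1/2}$ trace rather than through pointwise continuity, which is not yet available.
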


\begin{proof}
If $L_{x_0}$ and $L_{y_0}$ are not colinear, we have in particular that $\sigma(x_0) \neq \sigma(y_0)$, $\sigma(x_0) \neq -\sigma(y_0)$ and there exists a unique $z_0 \in L_{x_0} \cap L_{y_0}$. Assume that $z_0 \in \O_p$, and let $z_1 \in L_{x_0} \cap \O_p$ and $z_2 \in L_{y_0} \cap \O_p$ be
such that the triangle $T$ with vertices $z_0$, $z_1$ and $z_2$ satisfies
$\overline T \subset \O_p$. Since $\sigma \in H^1(T;\R^2),$  its trace $\sigma_{|_{\partial T}} \in H^{1/2}(\partial T;\R^2)$. Let us denote by $S_{x_0}:=\partial T \cap L_{x_0}$, $S_{y_0}:=\partial T \cap L_{y_0}$ and $\Gamma=S_{x_0}\cup S_{y_0}$ so that
$$\sigma_{|_{\partial T}} \in H^{1/2}(\Gamma;\R^2).$$
On the other hand, Proposition \ref{prop:caracteristic} implies that
$\sigma_{|_{\partial T}}=\sigma_{|_{S_{x_0}}}=\sigma(x_0)$ $\HH^1$-a.e. on $S_{x_0}$
and
$\sigma_{|_{\partial T}}=\sigma_{|_{S_{y_0}}}=\sigma(y_0)$ $\HH^1$-a.e. on $S_{y_0}$,
which is impossible in view of Lemma \ref{lem:notH1/2}. We thus deduce that $z_0 \not\in \O_p$.
\end{proof}

A concatenation of the previous results implies the announced local Lipschitz regularity for $\sigma$ (see \cite[Theorem 1.5]{BP}).

\begin{proof}[Proof of Theorem \ref{thm:lip}]
{\it Case I:} If $\sigma(x_0)=\sigma(y_0)$ then the result follows.

\medskip

{\it Case II:} We now prove that the case $\sigma(x_0)=-\sigma(y_0)$ cannot happen. If $\sigma(x_0)\cdot (y_0-x_0)>0$, according to Proposition \ref{prop:order}, we have $\sigma(y_0)\cdot (y_0-x_0) \geq 0$ hence $\sigma(x_0)\cdot (y_0-x_0) \leq 0$ which is impossible. A similar argument shows that $\sigma(x_0) \cdot (y_0-x_0)<0$ is impossible. It remains to consider the case where $\sigma(x_0)\cdot (y_0-x_0)=0$ which means that $y_0 \in L_{x_0}=:x_0+\R \sigma^\perp(x_0)$ or still  that $L_{y_0}= L_{x_0}$. Since $x_0$ and $y_0$ are Lebesgue points of $\sigma$, we obtain a contradiction with the result of   Proposition \ref{prop:caracteristic}.

\medskip

{\it Case III:} Assume now that $\sigma(x_0)$ and $\sigma(y_0)$ are not colinear. Then both lines $L_{x_0}=x_0+\R \sigma^\perp(x_0)$ and $L_{y_0}=y_0+\R \sigma^\perp(y_0)$ intersect at a single point $z_0 \not\in \O_p$ by Proposition \ref{prop:vortex}. Note that
$L_{x_0}$ (resp. $L_{y_0}$) is  colinear with $x_0-z_0$ (resp. $y_0-z_0$)
so that there is no loss of generality in assuming that {\it e.g.}
$$\sigma^\perp(x_0)=\frac{x_0-z_0}{|x_0-z_0|}, \quad \sigma^\perp(y_0)=\pm \frac{y_0-z_0}{|y_0-z_0|}.$$
We claim that actually
\begin{equation}\label{eq:claim}
\sigma^\perp(y_0)=\frac{y_0-z_0}{|y_0-z_0|}.
\end{equation}
Since $y_0 \not\in L_{x_0}$,
$$\sigma (x_0) \cdot \frac{y_0-x_0}{|y_0-x_0|}  \neq 0$$
so that Proposition \ref{prop:order} ensures that
\begin{equation}\label{eq:sign1}
{\rm sign}\left(\sigma (x_0) \cdot \frac{y_0-x_0}{|y_0-x_0|}\right)={\rm sign}\left(\sigma (y_0) \cdot \frac{y_0-x_0}{|y_0-x_0|}\right).
\end{equation}
Since $\frac{y_0-z_0}{|y_0-z_0|}$ belongs to the convex cone $C(\frac{x_0-z_0}{|x_0-z_0|},\frac{y_0-x_0}{|y_0-x_0|})$, there exist $\alpha > 0$ and $\beta > 0$ such that
$$\frac{y_0-z_0}{|y_0-z_0|}=\alpha\frac{x_0-z_0}{|x_0-z_0|}+\beta\frac{y_0-x_0}{|y_0-x_0|},$$
hence
$$\frac{y_0-z_0}{|y_0-z_0|} \cdot \frac{(y_0-x_0)^\perp}{|y_0-x_0|}=\alpha\frac{x_0-z_0}{|x_0-z_0|} \cdot \frac{(y_0-x_0)^\perp}{|y_0-x_0|}.$$
As a consequence
$$\pm \sigma^\perp(y_0) \cdot \frac{(y_0-x_0)^\perp}{|y_0-x_0|}=\alpha\sigma^\perp(x_0)\cdot \frac{(y_0-x_0)^\perp}{|y_0-x_0|}$$
or still
\begin{equation}\label{eq:sign2}
\pm \sigma(y_0) \cdot \frac{y_0-x_0}{|y_0-x_0|}=\alpha\sigma(x_0)\cdot \frac{y_0-x_0}{|y_0-x_0|}
\end{equation}
Gathering \eqref{eq:sign1} and \eqref{eq:sign2} yields \eqref{eq:claim}.

Since $\dist(z_0,\omega) \geq d$, then it follows that $|z_0-x_0|\geq d$ and $|z_0-y_0|\geq d$. Therefore the projections of $x_0-z_0$ and $y_0-z_0$ onto the closed ball $\overline B_d(z_0)$ are given, respectively, by
$d(x_0-z_0)/|x_0-z_0|$ and $d(y_0-z_0)/|y_0-z_0|$. Since the projection is $1$-Lipschitz, we deduce that
$$|\sigma^\perp(x_0)-\sigma^\perp(y_0)|=\left|\frac{x_0-z_0}{|x_0-z_0|}
-\frac{y_0-z_0}{|y_0-z_0|} \right| \leq \frac1d |x_0-y_0|,$$
and the conclusion follows.

{In the sequel, we will identify $\sigma$ with its locally Lipschitz representative. In particular, the conclusion of Proposition \ref{prop:order}
now holds {\it for all} $x_0$ and $y_0 \in \O_p$, while that of Proposition \ref{prop:caracteristic} states that {\it for all} $x_0 \in \O_p$, then $\sigma(x)=\sigma(x_0)$ {\it for all} $x \in L_{x_0} \cap \O_p$.}
\end{proof}

\subsection{Rigidity of the displacement}\label{sec:u}

We now demonstrate that the displacement field(s), like the stress field,
is (are) severely constrained by assumption (H) and conform(s) to what formal manipulations {of the hyperbolic equation \eqref{eq:hyperbolicu}} would entail, that is that the displacement must remain constant on (almost) every characteristic line in $\O_p$. Formally, the argument goes as follows. Compute the derivative of $u$ along the characteristics. The chain rule gives
$$\frac{d}{ds} u(x+s\sigma^\perp(x))=Du(x+s\sigma^\perp(x))\cdot {\sigma^\perp}(x).$$
Using that $\sigma$ is constant along the characteristics, we get
$$\frac{d}{ds} u(x+s\sigma^\perp(x))=Du(x+s\sigma^\perp(x))\cdot {\sigma^\perp}(x+s\sigma^\perp(x))=0,$$
since, thanks to the flow rule, $Du=\sigma(1+|p|)$ is colinear with $\sigma$. Unfortunately, this argument cannot be made rigorous for want of a
general chain rule formula for the composition of a $BV$ function with a (locally) Lipschitz function.

\medskip

We will first show that such a property actually holds locally, {\it i.e.}, for small values of $s$. Using a well-suited covering we then establish the global result in $\O_p$, resulting in the

\begin{theorem}\label{thm:cst-disp}
Assume that hypothesis  (H) holds. There exists an $\HH^1$-negligible set
$Z \subset \O_p$ with $\LL^2\left((\bigcup_{z \in Z}L_z)\cap \O_p\right)=0$ such that  $u$ is constant on $L_x\cap \O_p$ for all $x \in \O_p \setminus \left(\bigcup_{z \in Z} L_z\right)$.
\end{theorem}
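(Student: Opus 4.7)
The starting point of my plan is a measure identity that rigorously captures the formal calculation preceding the theorem. By Remark \ref{rmk:strong-fr} (equation \eqref{eq:DU=}), $Du = \sigma(\LL^2 + |p|)$ in $\M(\O_p;\R^2)$; since $\sigma\cdot\sigma^\perp = 0$ pointwise and $\sigma^\perp$ is locally Lipschitz on $\O_p$ by Theorem \ref{thm:lip}, the product of the vector measure $Du$ with the continuous vector field $\sigma^\perp$ is well-defined and gives
$$\sigma^\perp\cdot Du = 0 \quad \text{in } \M(\O_p).$$
This is the rigorous measure-theoretic content of the statement that $u$ has vanishing derivative in the characteristic direction.

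Next I convert this global identity into constancy along each characteristic via a local bi-Lipschitz straightening. Fix $x_0 \in \O_p$ and, after a rigid motion, assume $x_0 = 0$ and $\sigma(0) = e_1$. For $\delta > 0$ small enough, set
$$\Phi: Q:=(-\delta,\delta)^2 \to \O_p, \qquad \Phi(s,t) := (s,0) + t\,\sigma^\perp(s,0).$$
Because $\sigma^\perp$ is Lipschitz with $\sigma^\perp(0)=e_2$, the map $\Phi$ is bi-Lipschitz onto its image, with Jacobian bounded between positive constants. The decisive geometric feature is that, for fixed $s$, the curve $t\mapsto\Phi(s,t)$ parameterizes the characteristic $L_{(s,0)}$ within $\Phi(Q)$, and Proposition \ref{prop:caracteristic} gives $\sigma^\perp(\Phi(s,t)) = \sigma^\perp(s,0)$ throughout $Q$. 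Setting $v := u\circ\Phi \in BV(Q)$, the goal reduces to proving $\partial_t v = 0$ in $\M(Q)$.

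For that step I will regularize. Let $(\rho_\e)_{\e>0}$ be standard mollifiers and set $u_\e := u * \rho_\e$, $v_\e := u_\e \circ \Phi$. The classical chain rule yields
$$\partial_t v_\e(s,t) = \sigma^\perp(\Phi(s,t))\cdot \nabla u_\e(\Phi(s,t)).$$
Writing $\nabla u_\e(x) = \int \sigma(y)\,\rho_\e(x-y)\,d\mu(y)$ with $\mu := \LL^2 + |p|$ and projecting on $\sigma^\perp(x)$, the term at $y=x$ vanishes and the local Lipschitz bound for $\sigma$ yields
$$|\sigma^\perp(x)\cdot\nabla u_\e(x)| \le C\e\,(\mu*\rho_\e)(x)$$
on any $\omega \Subset \O_p$ containing $\Phi(\overline Q)$. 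Integrating and applying the bi-Lipschitz change of variables gives $\|\partial_t v_\e\|_{L^1(Q)} \le C'\e \to 0$. Since $v_\e \to v$ in $L^1(Q)$, one has $\partial_t v_\e \to \partial_t v$ in $\mathcal D'(Q)$, whence $\partial_t v = 0$. The slicing theorem for $BV$ functions then implies that for $\LL^1$-a.e.\ $s\in(-\delta,\delta)$, $v(s,\cdot)$ is constant, i.e.\ $u$ is constant on the local segment $L_{(s,0)}\cap\Phi(Q)$.

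To globalize, I cover $\O_p$ by countably many such rectangles $\Phi_n(Q_n)$ with transversals $\gamma_n$, obtaining for each $n$ an $\LL^1$-null exceptional set $E_n \subset (-\delta_n, \delta_n)$. Set $Z := \bigcup_n \gamma_n(E_n)$, which is $\HH^1$-negligible as a countable union of Lipschitz images of $\LL^1$-null sets. Any $x \in \O_p \setminus \bigcup_{z\in Z} L_z$ lies on a characteristic that avoids every local exceptional set, so $u$ is constant on each piece $L_x \cap \Phi_n(Q_n)$; by convexity of $\O_p$, $L_x \cap \O_p$ is a connected segment covered by these overlapping pieces, and constancy transfers to the whole segment. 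I expect the main technical obstacle to be establishing $\LL^2(\bigcup_{z\in Z}L_z\cap\O_p)=0$: propagating an $\LL^1$-null parameter set from one chart to another requires that the reparameterization of characteristics across charts (obtained by intersecting each characteristic line with a different transversal) be Lipschitz, which itself follows from the Lipschitz regularity of $\sigma$ on $\O_p$. Once that is secured, Fubini in each chart gives an $\LL^2$-null bad set per chart, and countable additivity closes the argument.
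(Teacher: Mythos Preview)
Your proposal is correct and shares the overall architecture of the paper's proof: a local bi-Lipschitz straightening $\Phi$ sending a square $Q$ onto a neighbourhood of $x_0$ (this is the content of Proposition~\ref{prop:bi-Lip}, whose injectivity and inverse-Lipschitz parts you assert but do not detail), a proof that $v=u\circ\Phi$ is independent of the characteristic variable (Proposition~\ref{prop:cst-disp}), and a covering to globalize.

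The genuine difference is in the local step. The paper approximates $u$ strictly in $BV$ by smooth $u_n$, changes variables to obtain
\[
\int_{Q_r}(\partial_s v_n)\,\varphi\,g\,dt\,ds=\int_{U_{x_0}}(\nabla u_n\cdot\sigma^\perp)\,\varphi\circ\Phi_{x_0}^{-1}\,dx,\qquad g=\det\nabla\Phi_{x_0},
\]
passes to the weak limit using $\sigma^\perp\cdot Du=0$, and is then left with the equation $g(t,s)\,Dv_t=0$; a separate argument that the Jacobian $g(t,s)=a(t)+sb(t)$ does not vanish on $Q_r$ is needed to conclude. Your mollification route is more direct: writing $\nabla u_\e=\rho_\e*(\sigma\mu)$ and using the local Lipschitz continuity of $\sigma$ from Theorem~\ref{thm:lip} gives the pointwise commutator bound $|\sigma^\perp(x)\cdot\nabla u_\e(x)|\le C\e\,(\mu*\rho_\e)(x)$, hence $\|\partial_t v_\e\|_{L^1(Q)}\to 0$ with the Jacobian entering only as a harmless bounded factor in the change of variables. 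This buys you a shorter argument that avoids the explicit analysis of $g$.

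For the globalization and the estimate $\LL^2\big(\bigcup_{z\in Z}L_z\cap\O_p\big)=0$, the obstacle you anticipate is lighter than you suggest. The paper does not transfer null sets across charts; it simply observes that the same formula $\Phi_{x_n}(t,s)=x_n+t\sigma(x_n)+s\sigma^\perp(x_n+t\sigma(x_n))$ extends to a Lipschitz map on $[-r_n,r_n]\times[-T,T]$ for any fixed $T$, and that $\bigcup_{z\in Z_n}L_z\cap\O_p\subset\Phi_{x_n}(E_n\times(-T,T))$ once $T$ dominates ${\rm diam}\,\O_p$. Since $\LL^2(E_n\times(-T,T))=0$, the Lipschitz image has $\LL^2$ measure zero, and countable additivity finishes. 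You can adopt this shortcut verbatim.
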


The rest of the section is devoted to the proof of Theorem \ref{thm:cst-disp}. Let $\omega$ be a convex open set such that $\overline \omega \subset \O_p$. Let $x_0 \in \overline \omega$ and $R>0$ small enough so that $\overline{B_{2R}(x_0)} \subset \O_p$. Let us define the mapping {$\Phi_{x_0} : [-R,R] \times \R \to \R^2$} by
$$
\Phi_{x_0}(t,s):=x_0+t\sigma(x_0)+s\sigma^\perp(x_0+t\sigma(x_0)) \quad
\text{ for all }{(t,s) \in [-R,R]\times \R}.
$$
Clearly, $\Phi_{x_0}$ is locally Lipschitz in $[-R,R]\times \R$ as composition of locally Lipschitz mappings.

\begin{proposition}\label{prop:bi-Lip}
There exists $0<r<R$ and an open neighborhood $U_{x_0}$ of $x_0$ such that the function $\Phi_{x_0}$ is bi-Lipschitz from {$Q_r:=(-r,r)^2$} onto
$U_{x_0}$.
\end{proposition}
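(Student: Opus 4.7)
The plan is to establish bi-Lipschitz-ness of $\Phi_{x_0}$ on a small square $Q_r$ by a direct computation, exploiting two crucial facts already at our disposal: the local Lipschitz regularity of $\sigma$ on $\O_p$ given by Theorem \ref{thm:lip}, and the orthonormality of the pair $(\sigma(x_0),\sigma^\perp(x_0))$, since $|\sigma(x_0)|=1$ by hypothesis (H).

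First I would fix a Lipschitz constant $L\ge 1$ for $\sigma$ (hence also for $\sigma^\perp$) on the compact set $\overline{B_{2R}(x_0)}\subset \O_p$. For every $|t|\le R$ the point $x_0+t\sigma(x_0)$ lies in $\overline{B_R(x_0)}\subset\O_p$, so $\Phi_{x_0}$ is well defined on $[-R,R]\times\R$. The key algebraic step is to rewrite, for $(t_1,s_1),(t_2,s_2)\in Q_r$ with $r<R$,
\begin{multline*}
\Phi_{x_0}(t_1,s_1)-\Phi_{x_0}(t_2,s_2) = (t_1-t_2)\sigma(x_0)+(s_1-s_2)\sigma^\perp(x_0)\\
+(s_1-s_2)\bigl[\sigma^\perp(x_0+t_1\sigma(x_0))-\sigma^\perp(x_0)\bigr]
+ s_2\bigl[\sigma^\perp(x_0+t_1\sigma(x_0))-\sigma^\perp(x_0+t_2\sigma(x_0))\bigr].
\end{multline*}
The two bracketed error terms are bounded, thanks to the Lipschitz bound on $\sigma^\perp$ and to $|\sigma(x_0)|=1$, by $L|t_1|\,|s_1-s_2|$ and $L|s_2|\,|t_1-t_2|$ respectively. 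Since $|t_1|,|s_2|\le r$, the total error is at most $Lr(|t_1-t_2|+|s_1-s_2|)\le \sqrt{2}\,Lr\sqrt{(t_1-t_2)^2+(s_1-s_2)^2}$.

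Because $(\sigma(x_0),\sigma^\perp(x_0))$ is an orthonormal basis of $\R^2$, the leading term has norm exactly $\sqrt{(t_1-t_2)^2+(s_1-s_2)^2}$. A triangle inequality then yields
$$
(1-\sqrt2 Lr)\sqrt{(t_1-t_2)^2+(s_1-s_2)^2}\le |\Phi_{x_0}(t_1,s_1)-\Phi_{x_0}(t_2,s_2)|\le (1+\sqrt2 Lr)\sqrt{(t_1-t_2)^2+(s_1-s_2)^2}.
$$
Choosing $r:=\min\{R/2,\,1/(2\sqrt2 L)\}$ makes $\sqrt2 Lr\le 1/2$, and $\Phi_{x_0}$ is bi-Lipschitz from $Q_r$ onto its image, with constants in $[1/2,3/2]$.

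It remains to check that $U_{x_0}:=\Phi_{x_0}(Q_r)$ is an open neighborhood of $x_0$. The inclusion $x_0=\Phi_{x_0}(0,0)\in U_{x_0}$ is immediate. Openness follows from the Brouwer invariance of domain theorem applied to the continuous injective map $\Phi_{x_0}:Q_r\to \R^2$; alternatively, from the lower bi-Lipschitz bound one sees that for any $(t_*,s_*)\in Q_r$ and any $\rho>0$ with $\overline{B_\rho(t_*,s_*)}\subset Q_r$, the set $\Phi_{x_0}(\partial B_\rho(t_*,s_*))$ stays at distance at least $\rho/2$ from $\Phi_{x_0}(t_*,s_*)$, and a standard degree argument shows that $B_{\rho/2}(\Phi_{x_0}(t_*,s_*))\subset \Phi_{x_0}(B_\rho(t_*,s_*))\subset U_{x_0}$. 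No serious obstacle is expected here: the heart of the argument is the elementary perturbation estimate above, which works precisely because the Lipschitz regularity of $\sigma$ provided by Theorem \ref{thm:lip} allows the $t$-dependence of $\sigma^\perp(x_0+t\sigma(x_0))$ to be treated as a controlled perturbation of the orthonormal frame at $x_0$.
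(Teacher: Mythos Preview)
Your proof is correct and takes a genuinely different, more elementary route than the paper. You treat $\Phi_{x_0}$ as a Lipschitz perturbation of the orthogonal linear map $(t,s)\mapsto t\sigma(x_0)+s\sigma^\perp(x_0)$ and read off both bi-Lipschitz bounds directly; this yields injectivity and the Lipschitz inverse in one stroke, using nothing beyond the local Lipschitz regularity of $\sigma$ from Theorem~\ref{thm:lip}. The paper instead (i) proves injectivity on the larger square $\overline{Q_R}$ via the geometric Propositions~\ref{prop:caracteristic} and~\ref{prop:vortex} (distinct characteristics cannot meet inside $\O_p$), (ii) computes the Jacobian determinant explicitly as $\det\nabla\Phi_{x_0}(t,s)=a(t)+sb(t)$ and bounds it below, and (iii) recovers $\Phi_{x_0}^{-1}\in W^{1,\infty}$ by an area-formula/integration-by-parts argument. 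Your argument is shorter and self-contained for the proposition as stated; the paper's route, however, delivers the explicit affine-in-$s$ form of the Jacobian and the lower bound \eqref{eq:det>0}, both of which are reused verbatim in the proof of Proposition~\ref{prop:cst-disp}. If you adopt your approach in the paper, you will still need to compute $\det\nabla\Phi_{x_0}$ separately before that proposition.
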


\begin{proof}
{We first observe that, since $\overline{B_{2R}(x_0)} \subset \O_p$ and both $\sigma(x_0)$ and $\sigma^\perp(x_0+t\sigma(x_0))$ are unit vectors,  $\Phi_{x_0}([-R,R]^2) \subset \O_p$. Moreover, using that $\sigma$ is locally Lipschitz in $\O_p$}, it follows that $t \in [-R,R]\mapsto \sigma^\perp(x_0+t\sigma(x_0))$ is Lipschitz. As a consequence of Rademacher's Theorem, it is differentiable almost everywhere and there exists $M>0$ such that
\begin{equation}\label{eq:M}
\left|\frac{d}{dt} \sigma^\perp(x_0+t\sigma(x_0))\right|\leq M \quad \text{ for a.e. }t \in [-R,R].
\end{equation}
Hence, $\Phi_{x_0}$ is Lipschitz in $\overline{Q}_R$ as  composition of Lipschitz functions.

\medskip

Further, $\Phi_{x_0}$ is injective on $\overline{Q_R}$. Indeed, if $(t,s)\ne (t',s')\in \overline{Q_R}$ are such that $\Phi_{x_0}(t,s)=\Phi_{x_0}(t',s')$, then
$$
z:=x_0+t\sigma(x_0)+s\sigma^\perp(x_0+t\sigma(x_0))=x_0+t'\sigma(x_0)+s'\sigma^\perp(x_0+t'\sigma(x_0)).
$$
Clearly $t\ne t'$ and thus $x_0+t\sigma(x_0) \neq x_0+t'\sigma(x_0)$. The
point $z$ would then belong to both $L_{x_0+t\sigma(x_0)}$ and $L_{x_0+t'\sigma(x_0)}$ so that, by Proposition \ref{prop:vortex}, we {would} have $L_{x_0+t\sigma(x_0)}\!\!=L_{x_0+t'\sigma(x_0)}$, hence, by Proposition \ref{prop:caracteristic}, $\sigma(x_0+t\sigma(x_0))\!\!=\sigma(x_0+t'\sigma(x_0))$. But then $\sigma(x_0)$ and $\sigma^\perp(x_0+t\sigma(x_0))$ are colinear, and, because they are both unit vectors,
\begin{equation}\label{eq:sig=sigperp}\sigma^\perp(x_0+t\sigma(x_0))=\pm \sigma(x_0).
\end{equation}
Consequently, we would have $x_0=x_0+t\sigma(x_0)\mp t\sigma^\perp(x_0+t\sigma(x_0))$, and  the characteristic line $L_{x_0+t\sigma(x_0)}$ would
intersect $L_{x_0}$ at the point $x_0$ which is impossible, owing again to Proposition \ref{prop:vortex}, unless $\sigma(x_0)=\pm \sigma(x_0+t\sigma(x_0))$. But this last relation contradicts \eqref{eq:sig=sigperp}.
The injectivity of $\Phi_{x_0}$ in $\overline{Q_R}$ is  established, and,
as a consequence of Brouwer's Invariance Domain Theorem (see \cite[Theorem 3.30]{FoGa}), $\Phi_{x_0}$ is a homeomorphism from the open square $Q_R$ onto its range which is open.

We  now compute the Jacobian determinant of $\Phi_{x_0}$. For a.e. $(t,s)
\in Q_R$, we have
\begin{eqnarray}\label{eq:det}
{\rm det} \nabla \Phi_{x_0}(t,s) &= &\sigma(x_0)\cdot\sigma(x_0+t\sigma(x_0))- s\sigma^\perp(x_0+t\sigma(x_0))\cdot \frac{d}{dt}\sigma(x_0+t\sigma(x_0))\nonumber\\
& :  = & a(t)+sb(t),
\end{eqnarray}
where, {for a.e.} $t \in (-R,R)$, we set
$$a(t):=\sigma(x_0)\cdot\sigma(x_0+t\sigma(x_0)), \quad b(t)=-\sigma^\perp(x_0+t\sigma(x_0))\cdot \frac{d}{dt}\sigma(x_0+t\sigma(x_0)).$$
Since $\sigma$ is Lipschitz in $\overline{B_{2R}(x_0)}$, there exists $K>0$ such that
\begin{equation}\label{eq:K}
|\sigma(x_0+t\sigma(x_0))-\sigma(x_0)|\leq K|t| \quad \text{ for all }t \in [-R,R].
\end{equation}
Thus, by \eqref{eq:M} and \eqref{eq:K}, for a.e. $t \in (-R,R)$,
\begin{equation}\label{eq:int-det}
a(t)\ge 1-K |t|,\quad |b(t)|\le M,
\end{equation}
and we can bound from below the right hand-side of \eqref{eq:det} by
$$
{\rm det} \nabla \Phi_{x_0}(t,s)\ge 1-K|t| - M|s|.
$$
Let $r>0$ be small enough so that {$1-(K+M)r > \frac12$}, then we get that
\begin{equation}\label{eq:det>0}
{\rm det} \nabla \Phi_{x_0}(t,s)> 1/2 \quad \text{ for a.e. } (t,s) \in Q_r.
\end{equation}
Denoting by $U_{x_0}:=\Phi_{x_0}(Q_r){\subset \O_p}$, we have so far established that $\Phi_{x_0} : Q_r \to U_{x_0}$ is a Lipschitz homeomorphism. With the help of \eqref{eq:det>0}, we now show that $\Phi_{x_0}^{-1}$ is Lipschitz in $U_{x_0}$. To that end, we prove in a  manner similar
to that in \cite[Theorem 6.1]{FoGa} that $\Phi_{x_0}^{-1} \in W^{1,\infty}(U_{x_0};\R^2)$. Since we already know that $\Phi_{x_0}^{-1}$ is continuous in $U_{x_0}$, it enough to prove that $\Phi_{x_0}^{-1}$ has a weak gradient in $L^\infty(U_{x_0};\mathbb M^{2 \times 2})$.

Let {$\varphi \in \mathcal C^\infty_c(U_{x_0})$} be an arbitrary test
function. Using the area formula with the one-to-one Lipschitz function $\Phi_{x_0}$, together with the chain rule formula $\nabla (\varphi \circ \Phi_{x_0})=(\nabla \Phi_{x_0})^T \nabla \varphi \circ \Phi_{x_0}$, we get that for $i=1,2$,
\begin{eqnarray*}
\int_{U_{x_0}} \Phi_{x_0}^{-1}(x)\, \partial_i \varphi(x)\, dx & = & \int_{Q_r}  \Phi_{x_0}^{-1}(\Phi_{x_0}(t,s)) \, \partial_i \varphi(\Phi_{x_0}(t,s)) \, {\rm det} \nabla \Phi_{x_0}(t,s)\, dt\, ds\\
 & = &   \int_{Q_r}   \big[ {\rm cof}(\nabla \Phi_{x_0}(t,s)) \nabla (\varphi \circ \Phi_{x_0})(t,s)\big]_i
\begin{pmatrix} t\\s\end{pmatrix}  dt\, ds.
\end{eqnarray*}
Let $(\Phi_n)_{n \in \N}$ be an approximating sequence in $\mathcal C^\infty(\overline{Q_r};\R^2)$ such that $\Phi_n \to \Phi_{x_0}$ uniformly in $\overline{Q_r}$ and also in $W^{1,p}(Q_r;\R^2)$ for all $p<\infty$, then
$$\int_{U_{x_0}} \Phi_{x_0}^{-1}(x)\, \partial_i \varphi(x)\, dx= \lim_{n \to +\infty}  \int_{Q_r}   \big[ {\rm cof}(\nabla \Phi_n(t,s)) \nabla (\varphi \circ \Phi_n)(t,s)\big]_i \begin{pmatrix} t\\s\end{pmatrix} dt\,
ds.$$
Integrating by parts, using that ${\rm div} \big({\rm cof} (\nabla \Phi_n)\big)=0$, as well as the area formula with the function $\Phi_{x_0}$ once more, we get
\begin{eqnarray*}
\int_{U_{x_0}} \Phi_{x_0}^{-1}(x) \,\partial_i \varphi(x)\, dx & = & - \lim_{n \to +\infty}\int_{Q_r} \big( {\rm cof}(\nabla \Phi_n)\big)^{(i)}
(\varphi \circ \Phi_n) \, dt\, ds\\
& = &- \int_{Q_r}  \big({\rm cof}(\nabla \Phi_{x_0})\big)^{(i)}  (\varphi \circ \Phi_{x_0}) \, dt\, ds\\
& = & - \int_{U_{x_0}} \frac{\big( {\rm cof}(\nabla \Phi_{x_0}(\Phi_{x_0}^{-1}))\big)^{(i)} }{{\rm det}\nabla \Phi_{x_0}(\Phi_{x_0}^{-1})}\varphi\, dx,
\end{eqnarray*}
where $A^{(i)}$ stands for the $i$-th row of the matrix $A \in \mathbb M^{2 \times 2}$. By definition of the weak gradient, \eqref{eq:det>0} and since $\Phi_{x_0}$ is  Lipschitz in {$\overline{Q}_r$}, we infer that
$$\nabla\Phi_{x_0}^{-1}= \frac{\big({\rm cof}(\nabla \Phi_{x_0}(\Phi_{x_0}^{-1}))\big)^T}{{\rm det}\nabla \Phi_{x_0}(\Phi_{x_0}^{-1})} \in  L^\infty(U_{x_0};\mathbb M^{2 \times 2}),$$
which completes the proof of the Proposition.
\end{proof}

The mapping {$\Phi_{x_0}$}  provides a convenient change of variables thanks to which we now deduce that the displacement is locally constant along characteristics.

\begin{proposition}\label{prop:cst-disp}
The function $(t,s) \in Q_r \mapsto u\circ \Phi_{x_0}(t,s) \in BV(Q_r)$ only depends on $t$.
\end{proposition}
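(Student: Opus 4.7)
The target is to show $\partial_s v = 0$ in $\mathcal D'(Q_r)$, where $v := u\circ\Phi_{x_0}$ (which belongs to $BV(Q_r)$ automatically by Proposition \ref{prop:bi-Lip}, since bi-Lipschitz changes of variable preserve $BV$). My plan is a mollification argument that combines two facts already in hand: the flow rule $Du = \sigma(\LL^2 + |p|)$ from Remark \ref{rmk:strong-fr}, and the local Lipschitz regularity of $\sigma$ on $\O_p$ from Theorem \ref{thm:lip}. A direct chain-rule manipulation with $u\in BV$ and $\Phi_{x_0}$ Lipschitz is not available, so one works at the approximation level.

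Fix an open convex $W$ with $\overline{U_{x_0}}\subset W\subset\subset\O_p$, let $L$ denote the Lipschitz constant of $\sigma$ on $\overline W$ (finite by Theorem \ref{thm:lip}), and set $\mu:=\LL^2+|p|$. For $\e$ small, put $u_\e:=u*\rho_\e$, which is smooth on a neighborhood of $\overline{U_{x_0}}$, and $v_\e := u_\e\circ\Phi_{x_0}$, a Lipschitz function with $v_\e\to v$ in $L^1(Q_r)$. By definition of $\Phi_{x_0}$ one has $\partial_s\Phi_{x_0}(t,s)=\sigma^\perp(x_0+t\sigma(x_0))$; by Proposition \ref{prop:caracteristic}, $\sigma$ is constant along the characteristic passing through $x_0+t\sigma(x_0)$, so $\sigma^\perp(x_0+t\sigma(x_0))=\sigma^\perp(\Phi_{x_0}(t,s))$. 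Consequently, the chain rule for a smooth function composed with a Lipschitz map gives
\begin{equation*}
\partial_s v_\e(t,s) = \bigl(\nabla u_\e\cdot\sigma^\perp\bigr)\bigl(\Phi_{x_0}(t,s)\bigr)\qquad \text{a.e. in }Q_r.
\end{equation*}

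The crux is then to show that $\nabla u_\e\cdot\sigma^\perp \to 0$ in $L^1(U_{x_0})$. Writing $\nabla u_\e(x)=\int \rho_\e(x-y)\,dDu(y) = \int \rho_\e(x-y)\sigma(y)\,d\mu(y)$ by the flow rule, and using the pointwise orthogonality $\sigma(x)\cdot\sigma^\perp(x)=0$, one obtains
\begin{equation*}
\nabla u_\e(x)\cdot\sigma^\perp(x) = \int_W \rho_\e(x-y)\bigl[\sigma(y)-\sigma(x)\bigr]\cdot\sigma^\perp(x)\,d\mu(y).
\end{equation*}
The integrand is supported on $\overline{B_\e(x)}\subset W$ for $\e$ small, and the Lipschitz bound $|\sigma(y)-\sigma(x)|\le L\e$ yields $|\nabla u_\e\cdot\sigma^\perp|(x)\le L\e\,(\mu*\rho_\e)(x)$, hence $\|\nabla u_\e\cdot\sigma^\perp\|_{L^1(U_{x_0})}\le L\e\,\mu(W)$. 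Composing with the bi-Lipschitz map $\Phi_{x_0}$ transfers this to $\|\partial_s v_\e\|_{L^1(Q_r)}\to 0$. Passing to the limit in $\int v_\e\,\partial_s\varphi\,dt\,ds = -\int \partial_s v_\e\,\varphi\,dt\,ds$ for $\varphi\in\C^\infty_c(Q_r)$ yields $\partial_s v=0$, so $v$ depends only on $t$.

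The key step, and the one where all of the preceding machinery is used, is the pointwise estimate $|\nabla u_\e\cdot\sigma^\perp|\le L\e\,(\mu*\rho_\e)$: the cancellation $\sigma\cdot\sigma^\perp=0$ extracts one factor of $\e$ from the modulus of continuity of $\sigma$, and this requires the actual Lipschitz (not merely $H^1_{\rm loc}$) regularity of $\sigma$ on $\O_p$ proved in Theorem \ref{thm:lip}. Without that, the argument does not close, which is why hypothesis (H) has to be in force.
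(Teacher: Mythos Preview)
Your argument is correct and is a genuinely different, more streamlined route than the paper's. Both proofs rest on the identity $Du=\sigma\mu$ (hence $\sigma^\perp\cdot dDu=0$) together with the characteristic property $\sigma^\perp(x_0+t\sigma(x_0))=\sigma^\perp(\Phi_{x_0}(t,s))$, but they organize the approximation and the passage to the limit differently. The paper approximates $u$ by smooth $u_n\in W^{1,1}$, carries the Jacobian weight $g(t,s)=\det\nabla\Phi_{x_0}(t,s)=a(t)+sb(t)$ through an integration by parts, uses the area formula to rewrite $\int_{Q_r}(\partial_s v_n)\varphi g$ as $\int_{U_{x_0}}\nabla u_n\cdot\sigma^\perp\,\varphi\circ\Phi_{x_0}^{-1}$, and passes to the limit by weak-$*$ convergence of $\nabla u_n\LL^2$ to $Du$ against the \emph{continuous} test function $\sigma^\perp\varphi\circ\Phi_{x_0}^{-1}$; it then disintegrates to obtain $(a(t)+sb(t))Dv_t=0$ and invokes the lower bound $g>\tfrac12$ to conclude. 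Your mollification argument bypasses both the Jacobian weight and the slicing step: the pointwise cancellation $\sigma(x)\cdot\sigma^\perp(x)=0$ combined with the Lipschitz bound on $\sigma$ in $W$ yields the quantitative estimate $|\nabla u_\e\cdot\sigma^\perp|\le L\e\,(\mu*\rho_\e)$, from which $\|\partial_s v_\e\|_{L^1(Q_r)}\to 0$ follows by a bi-Lipschitz change of variables. The trade-off is that your proof genuinely needs the Lipschitz regularity of $\sigma$ on $\O_p$ from Theorem~\ref{thm:lip}, whereas the paper's limit step only requires continuity of $\sigma$; since both are available under hypothesis~(H), this costs nothing here, and your version is the shorter one.
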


\begin{proof}Let $(u_n)_{n \in \N}$ be a sequence in $\C^\infty(\O) \cap W^{1,1}(\O)$ be such that $u_n \to u$ in $L^1(\O)$ and $Du_n \wto Du$ weakly* in $\M(\O;\R^2)$. Let us define the functions
$$v=u \circ \Phi_{x_0}, \quad v_n=u_n\circ \Phi_{x_0}.$$
Note that $v_n \in W^{1,\infty}(Q_r)$. Because, up to a subsequence, $u_n
\to u$ a.e. in $\O$ and because $\Phi^{-1}_{x_0}$, being Lipschitz, maps sets of zero Lebesgue measure into sets of zero Lebesgue measure, $v_n \to v$ a.e. in $Q_r$. The area formula (applied to the function $\Phi_{x_0}$) together with \eqref{eq:det>0} implies that,
$$\int_{Q_r} |v_n-v|\, dt\, ds = \int_{U_{x_0}}  \frac{|u_n-u|}{{\rm det}\nabla \Phi_{x_0}(\Phi_{x_0}^{-1})}\, dx \leq 2 \int_{U_{x_0}} |u_n-u| \, dx\to 0.$$
Hence $v_n \to v$ in $L^1(Q_r)$. Since $\Phi_{x_0} \in W^{1,\infty}(Q_r;\R^2)$ and $\nabla v_n=(\nabla\Phi_{x_0})^T \nabla u_n \circ \Phi_{x_0}$, the same change of variable argument yields in turn
\begin{multline*}\int_{Q_r} |\nabla v_n|\, dt\, ds \leq  \|\nabla \Phi_{x_0}\|_{L^\infty(Q_r)} \int_{Q_r} |\nabla u_n(\Phi_{x_0})|\, dt\, ds\\ \leq 2 \|\nabla \Phi_{x_0}\|_{L^\infty(Q_r)} \!\!\!\int_{U_{x_0}}|\nabla u_n|\, dx \leq C
\end{multline*}
for some constant $C>0$ independent of $n$. Hence $\nabla v_n \LL^2 \wto Dv$ weakly* in $\M(Q_r;\R^2)$ and $v \in BV(Q_r)$.

\medskip

Because $\Phi_{x_0}$ is Lipschitz in $\overline{Q_r}$,   the function
$$(t,s) \in Q_r \mapsto g(t,s):={\rm det}\nabla \Phi_{x_0}(t,s)=a(t)+
sb(t)$$ defined in \eqref{eq:det} is in $L^\infty(Q_r)$
and it is affine with respect to $s$ for a.e. $t \in {(-r,r)}$. It follows from an integration by parts that for all $\varphi \in \C^\infty_c(Q_r)$
\begin{equation}\label{eq:lim1}
\int_{Q_r} (\partial_s v_n) \,\varphi\, g\, dt\, ds=-\int_{Q_r} \left((\partial_s \varphi) \,v_n \, g + \varphi \, v_n\, \partial_s g\right) dt\, ds.
\end{equation}
On the other hand, using that $\sigma$ is constant on each characteristic
line, {\it i.e.}, that $\sigma(x_0+t\sigma(x_0))=\sigma(\Phi_{x_0}(t,s))$ for all $(t,s) \in Q_r$, we get that
\begin{eqnarray}\label{eq:lim2}
&&\int_{Q_r} (\partial_s v_n) \,\varphi\, g\, dt\, ds\nonumber\\
& & \hspace{1cm}= \int_{Q_r} \!\!\nabla u_n(\Phi_{x_0}(t,s)) \cdot \sigma^\perp(x_0+t\sigma(x_0)) \, \varphi(t,s)\, g(t,s)\, dt ds\nonumber\\
& & \hspace{1cm}=\int_{Q_r} \nabla u_n(\Phi_{x_0}(t,s)) \cdot \sigma^\perp(\Phi_{x_0}(t,s))\, \varphi \circ \Phi_{x_0}^{-1}(\Phi_{x_0}(t,s))\, g(t,s)\, dtds\nonumber\\
& & \hspace{1cm}=\int_{U_{x_0}} \nabla u_n \cdot \sigma^\perp \varphi\circ \Phi_{x_0}^{-1}\, dx,
\end{eqnarray}
where we used once more the area formula with the function $\Phi_{x_0}$ in the last equality. Since $\Phi_{x_0}^{-1}$ is continuous, $\sigma$ is locally Lipschitz  and $\varphi \in \C_c(Q_r)$, it follows that the function $\sigma^\perp \varphi \circ \Phi_{x_0}^{-1}$ belongs to $\C_c(U_{x_0};{\R^2})$. Gathering \eqref{eq:lim1}, \eqref{eq:lim2}, and passing to the limit leads to
$$-\int_{Q_r} \left((\partial_s \varphi) \,v \, g + \varphi \, v\, \partial_s g\right) dt\, ds=\int_{U_{x_0}} \varphi \circ\Phi_{x_0}^{-1} \sigma^\perp  \cdot dDu.$$
Because of \eqref{eq:DU=}, the right-hand side of the previous equality vanishes and
\begin{equation}\label{eq:gv}
\int_{Q_r} \left((\partial_s \varphi) \,v \, g + \varphi \, v\, \partial_s g\right) dt\, ds=0 \quad \text{ for all }\varphi \in \C^\infty_c(Q_r).
\end{equation}
Since $v \in BV(Q_r)$, it follows from slicing properties of $BV$-functions (see \cite[Theorem 3.107]{AFP}) that $s \mapsto v_t(s)=v(t,s)$ belongs to $BV((-r,r))$ for $\LL^1$-a.e. $t \in (-r,r)$, and, using disintegration, that $D_s v=\LL^1_t \otimes Dv_t$  in $\mathcal M(Q_r)$, {\it i.e.},
$$\int_{Q_r} \bar \varphi \, dD_s v=\int_{-r}^r\left(\int_{-r}^{r} \bar \varphi(t,s)\, dDv_t(s)\right){dt} \quad \text{ for all } \bar \varphi\in \C_c(Q_r).$$
Since $s \mapsto g(t,s)$ is affine, we can thus localize \eqref{eq:gv} in $t$ and we conclude that, for $\LL^1$-a.e. $t \in (-r,r)$,
$$ \int_{-r}^{r} g(t,s)\phi(s) \, d Dv_t(s) =0 \quad \text{ for all }\phi \in \C_c((-r,r)),$$
which means that
$$(a(t)+sb(t)) Dv_t=0 \quad \text{ in } \mathcal M((-r,r)) \quad \text{
for  $\LL^1$-a.e. }t \in (-r,r).$$
Note that by our choice of $r$ and \eqref{eq:int-det} $a(t) \geq \frac12$
for all $t \in (-r,r)$. If $b(t)=0$, we have $|a(t)+sb(t)|=|a(t)| \geq \frac12$ for all $s \in (-r,r)$. On the other hand, if $b(t) \neq 0$, we get with \eqref{eq:int-det} that
$$\left|\frac{a(t)}{b(t)} \right| \geq \frac{1}{2M}.$$
Since our choice of $r$ ensures that $r<1/2M$, we conclude that $|a(t)+sb(t)|>0$ for $s\in(-r,r)$. We conclude that $Dv_t=0$ in $\mathcal M((-r,r))$ for $\LL^1$-a.e. $t \in (-r,r)$, which proves that $D_s v=0$ in $\M(Q_r)$.
\end{proof}

\medskip

We now proceed to the proof of Theorem \ref{thm:cst-disp}.

\begin{proof}[Proof of Theorem \ref{thm:cst-disp}]
Since $\{U_{x}\}_{x \in \overline \omega}$ is an open covering of the compact set $\overline \omega$, we can extract a finite sub-covering. We can
thus find finitely many points $x_1,\ldots,x_N \in \overline \omega$ such
that
$$\overline \omega \subset \bigcup_{i=1}^N U_{x_i}.$$
Let $Q_i =(-r_i,r_i)^2=\Phi^{-1}_{x_i}(U_{x_i})$ be the corresponding
open sets with $v_i:=u \circ \Phi_{x_i} \in BV(Q_i)$. By Proposition \ref{prop:cst-disp}, $D_s v_i=0$ in $\M(Q_i)$, which means that the function
$$(t,s) \in Q_i \mapsto u\big(x_i+t\sigma(x_i)+s\sigma^\perp(x_i+t\sigma(x_i))\big)$$
is an {$s$-independent $BV(Q_i)$-function}. Using slicing properties of $BV$ functions (see \cite[Theorem 3.107]{AFP}), there exists an $\LL^1$-negligible set $N_i \subset (-r_i,r_i)$ such that for all $t \in (-r_i,r_i)
\setminus N_i$,
\begin{equation}\label{eq:inter-ucst}
s \in (-r_i,r_i) \mapsto u\big(x_i+t\sigma(x_i)+s\sigma^\perp(x_i+t\sigma(x_i))\big)
\end{equation}
is constant.

{Set $\Gamma_i=\Phi_{x_i}((-r,r) \times \{0\})$ and $Z_i:=\Phi_{x_i}(N_i \times \{0\})$. Since $\LL^1(N_i)=0$ and $\Phi_{x_i}$ is Lipschitz in $Q_r$, $\HH^1(Z_i)=0$. Further, since $\O_p$ is bounded, there exists $T>0$ large enough so that $\bigcup_{z \in Z_i}L_z\cap\O_p \subset
\Phi_{x_i}(N_i \times (-T,T))$. Using again that $\LL^1(N_i)=0$, then $\LL^2(N_i \times (-T,T))=0$ and because $\Phi_{x_i}$ is a Lipschitz function on $[-R,R] \times [-T,T]$, so $\LL^2\left(\big(\bigcup_{z \in Z_i}L_z\big)\cap\O_p\right)=0$. From \eqref{eq:inter-ucst}, for all $x \in \Gamma_i \setminus Z_i$,
$$s \in (-r_i,r_i) \mapsto u(x+s\sigma^\perp(x))$$
is constant. By construction, any point $x \in  U_{x_i}\setminus (\bigcup_{z \in Z_i}L_z)$ has an associated characteristic line $L_x$ passing through $\Gamma_i$ and, according to Proposition \ref{prop:vortex}, $L_x \cap L_z \cap \O_p= \emptyset$ for all $z \in Z_i$. It thus follows that, for all $x \in U_{x_i}\setminus (\bigcup_{z \in Z_i}L_z)$, the function
$$s \in (-r_i,r_i) \mapsto u(x+s\sigma^\perp(x))$$
is constant, that is $u$ remains constant along $L_{x}\cap U_{x_i}$. }

In turn, $\{U_{x_i}\}_{1 \leq i \leq N}$ is a finite covering of $\overline \omega$, so that $u$ is constant on $L_x \cap \overline \omega$ for every $x\in \omega\setminus (\bigcup_{z \in Z_{\omega}} L_z)$ with $Z_{\omega}:=\bigcup_{i=1}^N Z_{i}$ satisfying $\HH^1(Z_\omega)=0$ and $$\mathcal L^2\left(\left(\bigcup_{z \in Z_{\omega}} L_z\right)\cap\O_p\right)=0.$$ Finally, consider an exhaustion $\{\omega_k\}_{k \in \N}$ of open sets with $$\omega_k=\left\{y \in \O_p : {\rm dist}(x,\partial\O_p)>\frac{1}{k}\right\}$$ for all $k \geq 1$. Set $Z:=\bigcup_{k \in \N} Z_{\omega_k}$ which satisfies
$$\HH^1(Z)=0, \quad \LL^2\left(\left(\bigcup_{z \in Z} L_z\right)\cap\O_p\right)=0.$$
In conclusion, for all $x \in \O_p \setminus \left(\bigcup_{z \in Z} L_z\right)$, the function $u$ is constant on $L_x \cap \O_p$.
\end{proof}

\section{Geometry of the solutions}\label{sec:rig}

In this section, we  show that assumption (H) constrains the geometric structure of the plastic zone $\O_p$, and of the solutions $(\sigma,u)$ in that set. In particular, subsets $\mathbf F_{\bar z}$ of $\O_p$ which are
bounded by characteristics intersecting at $\bar z \in \partial\O_p$ lead
to boundary fans where the stress behaves like a vortex centered at the apex $\bar z$ of the fan, and the displacement is a monotone function of the angle (see Theorem \ref{thm:fan-sigma}). The complementary of those fans is made of either isolated characteristic lines (see Proposition \ref{prop:emptyint}), or convex sets $\mathbf C$ with one or two characteristics on the boundary (see Theorem \ref{thm:geom-struct}). In any case, if the non characteristic boundary of either $\mathbf F_{\bar z}$, or $\mathbf C$ intersects the boundary of the domain $\O$ on a set of positive $\HH^1$ measure, then the propagation of the prescribed Dirichlet boundary datum $w$ through the characteristics inside $\mathbf F_{\bar z}$ or $\mathbf C$ provides a partial uniqueness property of the displacement (see Propositions \ref{prop:unique-u-fan} and \ref{prop:unique-C}). In the case of a connected component which is not a boundary fan,  possible geometries are offered in Subsection \ref{examples}. They consist of exterior fans
 (Proposition \ref{prop:ext-fan}), constant zones (Proposition \ref{prop:cst-u}), or smooth one-parameter families of lines (see Paragraph \ref{4.5.3}). We conjecture that those are the only possibilities.

\subsection{Boundary fans}\label{bdary-fans}

We set
$$\mathcal F:=\Big\{\bar z \in {\partial  \O_p}: \; \exists \, x, y \in
\O_p \text{ with }y \neq x \text{ such that } L_x \cap L_y= \{\bar z\}\Big\}.$$
It is the set of all  points on $\partial \O_p$ belonging to (at least) two distinct characteristic lines. The next result shows that {any such point is the apex} of a cone which is the unique intersection point of all other characteristic lines inside the cone.

\begin{lemma}\label{lem:cone-bdary}
Let $\bar z \in \mathcal F$ and $x$, $y \in \O_p$ with $y \neq x$ be such
that $L_x \cap L_y= \{\bar z\}$. Then for all $z \in {(\bar z+ C(x-\bar
z,y-\bar z))}\cap \O_p$,
$$L_z \cap L_{x}=L_z \cap L_{y}=L_x \cap L_y=\{\bar z\}.$$
\end{lemma}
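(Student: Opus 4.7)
My plan is first to observe that, by convexity of $\O_p$, the open triangle $\triangle$ with vertices $\bar z$, $x$, $y$ is contained in $\O_p$: any $p = \alpha \bar z + \beta x + \gamma y$ with $\alpha,\beta,\gamma>0$ and $\alpha+\beta+\gamma=1$ lies on the open segment from $\bar z \in \partial\O_p$ to $q := (\beta x+\gamma y)/(\beta+\gamma) \in \O_p$, hence in $\O_p$. I will then establish the conclusion first for $z' \in \triangle$, and bootstrap to a general $z$ in $(\bar z + C(x-\bar z,y-\bar z)) \cap \O_p$ afterwards.

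For $z' \in \triangle$, the closed segment $L_{z'} \cap \overline\triangle$ has two distinct endpoints on $\partial\triangle$, each lying on one of the three edges $[\bar z, x]$, $[\bar z, y]$, $[x,y]$. The plan is to eliminate by case analysis every configuration except ``one endpoint is $\bar z$ and the other lies in the open segment $(x,y)$''. Two observations drive the analysis. First, $z' \in \triangle$ lies neither on $L_x$ nor on $L_y$, so $L_{z'} \neq L_x$ and $L_{z'} \neq L_y$; Proposition \ref{prop:vortex} then forbids $L_{z'}$ from meeting $L_x$ (respectively $L_y$) at any point of $\O_p$, which excludes endpoints in $(\bar z, x] \subset L_x \cap \O_p$ (respectively $(\bar z, y] \subset L_y \cap \O_p$) --- the endpoint $\bar z$ itself remains admissible since $\bar z \notin \O_p$. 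Second, two distinct lines intersect in at most one point, so $L_{z'}$ cannot have both endpoints on $[x,y]$ unless it coincides with the line through $x$ and $y$, which is impossible as $z' \notin [x,y]$. The only surviving configuration yields $\bar z \in L_{z'}$.

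To extend the result to a general $z = \bar z + \alpha(x-\bar z) + \beta(y-\bar z) \in \O_p$ with $\alpha,\beta>0$, I would pick $t \in (0, 1/(\alpha+\beta))$ small enough so that $z' := (1-t)\bar z + tz = (1-t(\alpha+\beta))\bar z + t\alpha x + t\beta y$ lies in $\triangle$. The previous step yields $\bar z \in L_{z'}$; since $z$, $z'$ and $\bar z$ are collinear, $z$ belongs to the line $L_{z'}$, and because $z \in \O_p$, Proposition \ref{prop:caracteristic} gives $\sigma(z) = \sigma(z')$, so $L_z = L_{z'}$ passes through $\bar z$. Finally, $z$ sits in the open cone and hence is not on $L_x \cup L_y$, whence $L_z$ differs from both; a last application of Proposition \ref{prop:vortex}, together with the uniqueness of the intersection point of two distinct lines, gives $L_z \cap L_x = L_z \cap L_y = \{\bar z\}$.

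The main obstacle is the bookkeeping of the open-triangle case analysis: one has to verify in every configuration that the non-trivial options force an intersection of $L_{z'}$ with $L_x$ or $L_y$ at a point of $\O_p$, in violation of Proposition \ref{prop:vortex}, or else force $L_{z'}$ to coincide with a line it does not belong to. Once this combinatorial step is in place, the bootstrap to a general $z$ in the cone is a direct consequence of the constancy of $\sigma$ along characteristic lines.
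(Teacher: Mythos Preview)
Your proof is correct and follows essentially the same approach as the paper's: both argue first inside a triangle with apex $\bar z$ (the paper uses the larger triangle with vertices $\bar z$, $x'$, $y'$ on $\partial\O_p$, you use the triangle with vertices $\bar z$, $x$, $y$), invoke Proposition~\ref{prop:vortex} to exclude intersections of $L_{z'}$ with the legs $(\bar z,x]$, $(\bar z,y]$ inside $\O_p$, and then bootstrap to a general point of the cone via Proposition~\ref{prop:caracteristic}. Your case analysis is slightly more explicit, and you spell out the final identification $L_z\cap L_x=L_z\cap L_y=\{\bar z\}$, but the core argument is the same.
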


\begin{proof}
Since from hypothesis  (H) the set $\O_p$ is assumed to be convex, there exist unique points $x' \in \partial \O_p \cap L_x$ and $y' \in \partial \O_p \cap L_y$ with $x' \neq \bar z$ and $y' \neq \bar z$ with the open segments $]\bar z,x'[$ and $]\bar z,y'[$  contained in $\O_p$.  Let us consider the (closed) triangle $T:=(x',y',\bar z) \subset \overline \O_p$.

If $z \in \mathring T$, the line $L_z$ passing through $z$ must intersect
at least one of the segments $[\bar z,x'[$ or $[\bar z,y'[$. Without loss
of generality, we suppose that $[\bar z,x'[ \,\cap L_z \neq \emptyset$ and let $\hat z \in [\bar z,x'[\, \cap L_z$. Let us assume  that $\hat z \neq \bar z$. Since $L_z \cap L_x=\{\hat z\}$, we get from Proposition \ref{prop:vortex} that $\hat z \not\in \O_p$ which contradicts the fact that $\hat z\in\, ]\bar z,x'[\ \subset\O_p$. Thus $\hat z=\bar z$ and $L_z$ passes through $\bar z$.

If $z \not\in \mathring T$, take any point $w$ on  $]\bar z,z[\ \cap\ \mathring T$. Then, according to the previous argument $L_w$ passes through the point $\bar z$, thus $[\bar z,w] \subset L_w$ and  $z \in L_w$. Therefore, Proposition \ref{prop:caracteristic} ensures that $L_w=L_z$, hence $L_z$ passes through $\bar z$ as well.
\end{proof}

For all $\bar z \in \mathcal F$, we consider the set of all points $z$ in
$\O_p$ such that the associated characteristic line $L_z$ passes through the point $\bar z$, {\it i.e.},
$$\mathbf F_{\bar z}:={\rm int} \{z \in \O_p: \; \bar z \in L_{z}\}.$$
In view of Lemma \ref{lem:cone-bdary}, it is a non empty open set of the form $\mathbf F_{\bar z}:= (\bar z+C(x-\bar z,y-\bar z))\cap\O_p$, for some $x,y \in \overline\O_p$, and we call that open subset of $\O_p$ a {\it boundary fan} (see Figure \ref{fig:fan}).
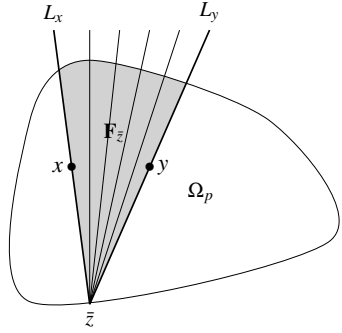
\begin{figure}[htbp]
\scalebox{.8}{\begin{tikzpicture}

\fill[color=gray!35]  plot[smooth cycle] coordinates {(-1,-1) (4,0) (3,2) (0,3) (-1,2)};
\fill[color=white]  plot coordinates { (0,-1.04) (-0.6,3.5) (-2.6,3.5)
(-2.6,-1.2) (0,-1.04)};
\fill[color=white]  plot coordinates { (0,-1.04)  (2,3.5) (5,3.5) (5,-1.04) };

\draw plot[smooth cycle] coordinates {(-1,-1) (4,0) (3,2) (0,3) (-1,2)};

\draw[style=thick] (0,-1.04) -- (-0.6,3.5) node[midway]{{\small $\bullet$}} node[midway,left]{$x$};
\draw (0,-1.04) -- (0,3.5) ;
\draw (0,-1.04) -- (0.5,3.5) ;
\draw (0,-1.04) -- (1,3.5) ;
\draw (0,-1.04) -- (1.5,3.5) ;
\draw[style=thick] (0,-1.04) -- (2,3.5) node[midway]{{\small $\bullet$}} node[midway,right]{$y$};

\draw (0.1,1.8) node[right]{\small $\mathbf F_{\bar z}$};
\draw (1.5,0.8) node[right]{\small $\Omega_p$};
\draw (0,-1.04) node[below]{\small $\bar z$};
\draw (-0.6,3.5) node[above]{\small $L_x$};
\draw (2,3.5) node[above]{\small $L_y$};
\end{tikzpicture}}
\caption{\small  A boundary fan with apex $\bar z$}
\label{fig:fan}
\end{figure}
Note that it might be the case that $[\bar z,x] \subset \partial \O_p$ or
 $[\bar z,y] \subset \partial \O_p$. In addition, if $\mathbf F_{\bar z}$
and $\mathbf F_{\bar z'}$ are two distinct boundary fans, for some $\bar z$ and $\bar z' \in \mathcal F$ with $\bar z \neq \bar z'$, then Proposition \ref{prop:vortex} ensures that ${\mathbf F_{\bar z}}\cap {\mathbf F_{\bar z'}}=\emptyset$ so that there are at most countably many boundary fans.

We now show a rigidity property of the Cauchy stress $\sigma$ in {$\bigcup_{\bar z\in \mathcal F} \mathbf F_{\bar z}$}: it is a vortex inside each boundary fan $\mathbf F_{\bar z}$. The corresponding displacement is constant along all characteristic lines. Note that the conclusion of Theorem \ref{thm:fan-sigma} below is stronger than that of Theorem \ref{thm:cst-disp}. Here we get a monotone function of
the angle (which {parameterizes} the characteristics in the case of a fan).

\begin{theorem}\label{thm:fan-sigma}
Let $\bar z \in \mathcal F$. Then there exists $\alpha \in \{-1,1\}$ such
that
$$\sigma(x)=\alpha \frac{(x-\bar z)^\perp}{|x-\bar z|} \quad \text{ for
all }x \in \overline{\mathbf F}_{\bar z}\cap\O_p \setminus \{\bar z\}$$
and
$$u(x)=\alpha h\left( \frac{(x-\bar z)_2}{(x-\bar z)_1}\right) \text{ for all }x \in \mathbf F_{\bar z},$$
for some nondecreasing function $h:\R \to \R$.
\end{theorem}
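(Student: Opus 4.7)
The plan is to treat $\sigma$ and $u$ separately, and to reduce the argument for $u$ to polar coordinates centered at $\bar z$.

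\textbf{Formula for $\sigma$.} First I apply Lemma \ref{lem:cone-bdary}: every $z\in \mathbf F_{\bar z}$ lies on a characteristic line passing through $\bar z$, so $z-\bar z$ is parallel to $\sigma^\perp(z)$. Combined with $|\sigma(z)|=1$ in $\O_p$, this forces
$$\sigma(z)=\alpha(z)\,\frac{(z-\bar z)^\perp}{|z-\bar z|}, \quad \alpha(z)\in\{-1,1\}.$$
The locally Lipschitz representative of $\sigma$ on $\O_p$ provided by Theorem \ref{thm:lip} makes $\alpha$ continuous on the open convex (in particular connected) sector $\mathbf F_{\bar z}$, hence constant; continuity of $\sigma$ on $\O_p$ extends the identity to $\overline{\mathbf F}_{\bar z}\cap\O_p\setminus\{\bar z\}$.

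\textbf{Formula for $u$.} After a rotation of coordinates I may assume $\mathbf F_{\bar z}\subset\{(x-\bar z)_1>0\}$, and pass to polar coordinates through $\Phi(r,\theta):=\bar z+r(\cos\theta,\sin\theta)$, which is a locally bi-Lipschitz homeomorphism from $R:=\Phi^{-1}(\mathbf F_{\bar z})\subset(0,\infty)\times(-\pi/2,\pi/2)$ onto $\mathbf F_{\bar z}$, with Jacobian $r>0$ bounded below on any set compactly contained in $\mathbf F_{\bar z}$; moreover every $\theta$-slice of $R$ is an interval thanks to the convexity of $\O_p$. Mimicking the mollification plus area formula argument of Proposition \ref{prop:cst-disp}, the function $v:=u\circ\Phi$ lies in $BV_{\rm loc}(R)$ and its distributional derivatives are weak-$*$ limits of those of $v_n:=u_n\circ\Phi$ for a suitable smoothing $u_n$ of $u$.

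The decisive input is the flow rule $Du=\sigma(\LL^2+|p|)$ of Remark \ref{rmk:strong-fr}. The previous step gives, on $\mathbf F_{\bar z}$, $\sigma^\perp(x)=-\alpha(x-\bar z)/|x-\bar z|$ (purely radial) and $\sigma(x)=\alpha(x-\bar z)^\perp/|x-\bar z|$ (purely angular). Testing the vectorial identity $Du=\sigma(\LL^2+|p|)$ against continuous, compactly supported fields of the form $\psi\circ\Phi^{-1}$ with $\psi\in\C_c(R;\R^2)$ and pushing through the change of variables yields, in $\M_{\rm loc}(R)$,
$$\partial_r v=0\quad\text{and}\quad\alpha\,\partial_\theta v\geq 0.$$
The first identity, combined with the slicing theorem for $BV$ functions \cite[Theorem 3.107]{AFP} and the connectedness of $\theta$-slices of $R$, delivers a function $\tilde h\in BV_{\rm loc}((-\pi/2,\pi/2))$ such that $v(r,\theta)=\tilde h(\theta)$, while the second forces $\alpha\,\tilde h$ to be nondecreasing. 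Setting $h(t):=\alpha\,\tilde h(\arctan t)$ (nondecreasing because $\arctan$ is) and using $\theta=\arctan((x-\bar z)_2/(x-\bar z)_1)$ returns the announced formula $u(x)=\alpha\,h((x-\bar z)_2/(x-\bar z)_1)$.

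The main technical hurdle will be the rigorous derivation of the scalar identities $\partial_r v=0$ and $\alpha\,\partial_\theta v\ge 0$ from the measure-valued flow rule when $Du$ has a singular part. This is handled by the same mollification-and-change-of-variables scheme already used in the proof of Proposition \ref{prop:cst-disp}; the only novelty is the explicit polar Jacobian $r$, which stays bounded below on relatively compact subsets of $\mathbf F_{\bar z}$ and therefore causes no difficulty.
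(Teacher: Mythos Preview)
Your proof is correct and follows essentially the same route as the paper: the formula for $\sigma$ is obtained from the fact that every characteristic in $\mathbf F_{\bar z}$ passes through $\bar z$ together with $|\sigma|=1$ and continuity (the paper phrases this in polar coordinates and invokes $f'=0$ from $\Div\sigma=0$, but that is redundant once $|f|\equiv 1$ and $f$ is continuous), and the formula for $u$ is obtained by the polar change of variables combined with the flow rule $Du=\sigma(\LL^2+|p|)$. Your derivation of $\partial_r v=0$ and $\alpha\,\partial_\theta v\ge 0$ is a mild streamlining of the paper's argument, which first establishes $D_r\tilde\mu=0$ for $\tilde\mu=\Psi^{-1}\#(\LL^2+|p|)$ via ${\rm curl}(\sigma\mu)=0$ and then compares two expressions for $\int\varphi\,dDu$; both arrive at $rD_r\tilde u=0$, $D_\theta\tilde u=\alpha\,\LL^1_r\otimes\eta$ with $\eta\ge 0$.
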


\begin{proof}
Assume for simplicity that $\bar z=(0,0)$. We  use the change of variables in polar coordinates $\Psi : (0,+\infty) \times (-\pi,\pi) \to \R^2 \setminus ((-\infty,0] \times \{0\})$ given by $\Psi(r,\theta)=(r\cos\theta,r\sin\theta)$. We set $e_r=(\cos\theta,\sin\theta)$, $e_\theta=(-\sin\theta,\cos\theta)$ and  $\sigma_r=(\sigma \circ \Psi) \cdot e_r$, $\sigma_\theta=(\sigma \circ \Psi) \cdot e_\theta$ so that $\sigma\circ
\Psi=\sigma_r e_r +\sigma_\theta e_\theta$. Since $\O_p$ is convex, so is $\mathbf F_{\bar z}$, thus
$$\mathbf F_{\bar z}=\{(r\cos \theta,r\sin\theta) \in \R^2: \; r>0 \text{ and }\theta_0< \theta< \theta_1\} \cap \O_p,$$
for some $-\frac{\pi}{2} \leq \theta_0 <\theta_1 < \frac{\pi}{2}$. We define
$$\tilde F:=\Psi^{-1}(\mathbf F_{\bar z})=\{(r,\theta) \in \R^2 : \; (r\cos \theta,r\sin\theta) \in \mathbf F_{\bar z}\}.$$

According to Proposition \ref{prop:caracteristic}, for all $z \in \mathbf
F_{\bar z}$, the vector field $\sigma$ is constant along all characteristic lines $L$ (which all pass through the origin),  and, further, it is orthogonal to the direction of $L$. Therefore, $\sigma(r\cos\theta,r\sin\theta):=f(\theta) e_\theta$ and, since $(r,\theta) \mapsto \sigma(r\cos\theta,r\sin\theta)$ is locally Lipschitz continuous,  $f:(\theta_0,\theta_1)\to \R$ is so as well. Using that $|\sigma|=1$ together with the expression of the divergence in polar coordinates, we conclude that
$$|f|=1 \text{ in } (\theta_0,\theta_1) \text{ and } f'=0  \text{ a.e. in } (\theta_0,\theta_1),$$
{\it i.e.},  $f \equiv 1$ or $f \equiv -1$. Therefore,  $\sigma\circ \Psi=\alpha e_\theta$, with $\alpha = 1$ or $-1$. Coming back to cartesian coordinates leads to the desired expression for $\sigma$ in $\mathbf F_{\bar z}$, hence to $\overline{\mathbf F}_{\bar z}\cap\O_p\setminus \{\bar z\}$ by continuity.

\medskip

Recalling \eqref{eq:DU=} $Du=\sigma\mu$  in  $\M(\O_p)$ with $\mu:=\LL^2+|p|$. Applying the ${\rm curl}$ to the previous equality yields
\begin{equation}\label{eq:curl1}
0={\rm curl }(\sigma\mu)=\Div (\sigma^\perp \mu) \quad \text{ in } \mathcal D'(\O_p).
\end{equation}
Let $\tilde \mu=\Psi^{-1}\, \# \, \mu \in \M(\tilde F)$ be the push-forward of $\mu$ by $\Psi^{-1}$. Since $\sigma^\perp(x)=-\alpha \frac{x}{|x|}$ for all $x \in {\mathbf F_{\bar z} \setminus \{\bar z\}}$, \eqref{eq:curl1} is easily seen to imply that $D_r \tilde \mu=0$ in $\mathcal D'(\tilde F)$ upon testing $D_r
\tilde \mu$ by smooth functions of the form $\varphi\circ \Psi=:\tilde \varphi  \in \C^\infty_c(\tilde F)$. This implies the existence of an orthoradial nonnegative measure $\eta \in \M((\theta_1,\theta_2))$ such that
$\tilde \mu= \LL^1_r \otimes \eta$, {\it i.e.},  for all $\varphi \in \mathcal C^\infty_c(\mathbf F_{\bar z})$,
$$\int_{\mathbf F_{\bar z}} \varphi \, d\mu=\int_{\tilde F} \tilde \varphi \, d\tilde \mu= \int_{\tilde F} \tilde \varphi(r,\theta)\, dr \, d\eta(\theta),$$
where, once again, $\varphi\circ \Psi=\tilde \varphi$. As a consequence, since $Du=\sigma\mu=\alpha \frac{\;\ x^\perp}{|x|\;} \mu$, we deduce that
\begin{equation}\label{eq:Du1}
\int_{\mathbf F_{\bar z}} \varphi \, dDu=\alpha \int_{\tilde F} \tilde \varphi(r,\theta) e_\theta \, dr \, d\eta(\theta).
\end{equation}
But,
\begin{eqnarray}\label{eq:Du2}
\int_{\mathbf F_{\bar z}} \varphi \, dDu & = & -\int_{\mathbf F_{\bar z}} u \nabla \varphi \, dx\nonumber\\
& = & -\int_{\tilde F}u(r\cos\theta,r\sin\theta) \nabla \varphi(r\cos \theta,r\sin\theta)\, r \, dr\, d\theta\nonumber\\
& = & -\int_{\tilde F}  \tilde u \left[D_r \tilde \varphi \, e_r+ \frac{1}{r} D_\theta \tilde \varphi\,  e_\theta\right] r \, dr \, d\theta,
\end{eqnarray}
where we set $\tilde u:=u \circ \Psi$. Since $\tilde F$ does not contain the exceptional line $(-\infty,0] \times \{0\}$, it follows that $\Psi$ defines a $\C^\infty$-diffeomorphism between $\tilde F$ and $\mathbf F_{\bar z}$ so that $\tilde u \in BV(\tilde F)$. Thus, since $\partial_r e_r =0$, $\partial_\theta
e_\theta =-e_r$, \eqref{eq:Du2} reads as
\begin{equation}\label{eq:Du3}
\int_{\mathbf F_{\bar z}} \varphi \, dDu=\langle (D_r(r\tilde u)-\tilde
u)e_r+(D_\theta \tilde u) e_\theta,\tilde \varphi\rangle=\langle ({r}D_r\tilde u)e_r+(D_\theta \tilde u) e_\theta,\tilde \varphi\rangle,
\end{equation}
and gathering \eqref{eq:Du1} and \eqref{eq:Du3} yields
$${r}D_r\tilde u =0, \quad D_\theta \tilde u=\alpha \LL^1_r \otimes \eta \quad \text{ in }\mathcal D'(\tilde F).$$
By the first equation, there exists an orthoradial function $\tilde h \in
BV((\theta_0,\theta_1))$ such that $\tilde u(r,\theta)=\alpha \tilde h(\theta)$ for a.e. $(r,\theta) \in \tilde F$. The second equation leads to
$D\tilde h=\eta \geq 0$ which implies that $\tilde h$ is nondecreasing.
Using that $\theta=\arctan\left(\frac{(x-\bar z)_2}{(x-\bar z)_1}\right)$ and setting $h=\tilde h \circ \arctan$, the result then follows coming back to cartesian coordinates.
\end{proof}

In the following result, we prove a partial uniqueness result for a fan for which a portion of  its  ``top" boundary coincides with that of $\O$, which of course may not happen.

\begin{proposition}\label{prop:unique-u-fan}
{Extend $u$ by $w$ outside $\Omega$. Let $\mathbf F_{\bar z}$ be a fan centered at $\bar z \in \mathcal F$. Then, $u^+=u^-$ $\HH^1$-a.e. on $\partial \mathbf F_{\bar z}\cap \partial\O_p$ and, in particular, $u=w$ $\HH^1$-a.e. on  $\partial \mathbf F_{\bar z}\cap \partial\O_p \cap \partial\Omega$.}
\end{proposition}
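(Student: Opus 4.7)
The plan is to derive the conclusion from the localized flow rule of Remark~\ref{rmk:jump-flow-rule}, using the explicit vortex form of $\sigma$ inside a fan granted by Theorem~\ref{thm:fan-sigma} and a geometric transversality argument exploiting the convexity of $\O_p$. Since $\mathbf F_{\bar z}$ is a bounded Lipschitz open subset of $\Omega$, Remark~\ref{rmk:jump-flow-rule} (together with its $\partial\Omega$ variant, which is exactly the extension of $u$ by $w$ outside $\Omega$ appearing in the statement) yields the pointwise flow rule $(\sigma\cdot\nu)(u^+-u^-)=|u^+-u^-|$ $\HH^1$-a.e. on $\partial\mathbf F_{\bar z}$. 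Any jump across $\partial\mathbf F_{\bar z}\cap\partial\O_p$ thus forces $|\sigma\cdot\nu|=1$ there, and it suffices to show that the set where $|\sigma\cdot\nu|=1$ on this boundary portion is $\HH^1$-negligible. The ``in particular'' assertion then follows immediately from the extension convention: on $\partial\mathbf F_{\bar z}\cap\partial\O_p\cap\partial\Omega$ the outer trace is $w$, so $u^-=u^+=w$.

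By Theorem~\ref{thm:fan-sigma}, $\sigma(x)=\alpha(x-\bar z)^\perp/|x-\bar z|$ on $\overline{\mathbf F}_{\bar z}\cap\O_p\setminus\{\bar z\}$, and this expression extends continuously up to $\partial\mathbf F_{\bar z}\setminus\{\bar z\}$. Consequently, the inner normal trace on $\partial\mathbf F_{\bar z}\cap\partial\O_p$ reads $(\sigma\cdot\nu)(x)=\alpha(x-\bar z)^\perp\cdot\nu(x)/|x-\bar z|$, and $|\sigma\cdot\nu|(x)=1$ is equivalent to $(x-\bar z)\perp\nu(x)$, i.e., to the characteristic line $L_x$ passing through $\bar z$ and $x$ being tangent to $\partial\O_p$ at $x$.

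To rule this out $\HH^1$-a.e., parameterize a Lipschitz portion of $\partial\mathbf F_{\bar z}\cap\partial\O_p$ by arc length $s\mapsto\gamma(s)$ (the Lipschitz regularity of $\partial\O_p$ is guaranteed by the convexity of $\O_p$, as noted at the start of Section~\ref{sec:rigidity}). Writing $\gamma(s)-\bar z=r(s)e(s)$ with $|e(s)|=1$ and $e\cdot e'=0$, differentiation gives $\gamma'(s)=r'(s)e(s)+r(s)e'(s)$, so the tangency condition $\gamma'(s)\parallel(\gamma(s)-\bar z)$ forces $r(s)e'(s)=0$, hence $e'(s)=0$ on the tangency set. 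Thus $e$ is locally constant there, and the image points lie on a single ray from $\bar z$. Since $\O_p$ is bounded and convex and $\bar z\in\partial\O_p$, such a ray meets $\partial\O_p$ either in at most two isolated points (yielding an $\HH^1$-null contribution to the tangency set) or, in the degenerate supporting-line case, in an entire segment of $\partial\O_p$ starting from $\bar z$; in the latter case such a segment must coincide with one of the two cone-generatrix rays defining $\mathbf F_{\bar z}$, so the contribution is contained in the at most two generatrix segments $[\bar z,x']$ and $[\bar z,y']$ of $\mathbf F_{\bar z}$ that happen to lie on $\partial\O_p$.

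The main obstacle is the exceptional second alternative: when an entire generatrix of $\mathbf F_{\bar z}$ lies on $\partial\O_p$, the flow rule does not rule out a jump. To close this case one uses the finer information provided by Theorem~\ref{thm:fan-sigma}: the inner trace of $u$ along such a generatrix is the single constant $\alpha h(\theta_0^\pm)$ dictated by the monotone angular profile $h$, and a separate matching argument, combining this constancy with the structure of $\sigma$ just outside $\O_p$ (where $|\sigma|<1$ forces $|\sigma\cdot\nu|<1$ on the outer side) together with the compatibility of $w$ implicit in the existence of the fan, delivers $u^+=u^-$ there as well. Once this edge case is settled, the conclusion $u^+=u^-$ $\HH^1$-a.e. on $\partial\mathbf F_{\bar z}\cap\partial\O_p$ follows by assembling the transversality estimate on the arc portion with the generatrix analysis.
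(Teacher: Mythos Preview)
Your overall strategy---flow rule forces $|\sigma\cdot\nu|=1$ at jump points, then the vortex form of $\sigma$ translates this into a tangency condition for the characteristic through $\bar z$---matches the paper's. But the execution diverges at the geometric step and contains two genuine gaps. First, the inference ``$e'(s)=0$ on the tangency set, thus $e$ is locally constant there, and the image points lie on a single ray'' is not valid: having $e'=0$ a.e.\ on a measurable set says nothing about the values of $e$ on that set unless the set is an interval (think of $e$ built from a fat Cantor set). Convexity of $\partial\O_p$ does provide enough extra structure to close this (tangency forces the tangent line at $\gamma(s)$ to be a supporting line of $\O_p$ through $\bar z$, hence a supporting line \emph{at} $\bar z$; so $\gamma'(s)$ must lie in the fixed arc of supporting directions at $\bar z$, and angle-monotonicity of $\gamma'$ then confines the tangency set to at most two $s$-intervals), but you do not invoke this. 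Second, your treatment of the generatrix-on-$\partial\O_p$ case appeals to ``$|\sigma|<1$ just outside $\O_p$'', which is not granted by Hypothesis~(H): $\O_p$ is only \emph{a} convex open subset of the interior of $\{|\sigma|=1\}$, not the whole of it, and the ``matching argument'' you allude to is not a proof.

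The paper bypasses the tangency-set analysis entirely. It restricts to $\partial\O_p\cap C_{\bar z}$ (the part of the fan boundary inside the \emph{open} cone) and argues by contradiction at a single point: if some $x_0\in\partial\O_p\cap C_{\bar z}\cap J_u$ exists, then $\nu(x_0)=\pm\sigma(x_0)$ makes $L_{x_0}$ a supporting line of the convex set $\O_p$, so $\overline\O_p$ lies in one closed half-plane $H$ bounded by $L_{x_0}$. But $x_0\in C_{\bar z}$ (open) means $L_{x_0}$ strictly separates the two cone generators, so one of them---say the ray through $y$---falls outside $H$, contradicting $y\in\overline\O_p$. This is a direct clash with the maximality of $C_{\bar z}$, and no parameterization or measure-of-tangency-set estimate is needed. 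The generatrix segments are on $\partial C_{\bar z}$, hence outside the open cone, and the paper simply records that $(\partial\mathbf F_{\bar z}\cap\partial\O_p)\setminus(\partial\mathbf F_{\bar z}\cap C_{\bar z})$ is $\HH^1$-null to finish.
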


\begin{proof}
{Let $C_{\bar z}:=\bar z+C(x-\bar z,y-\bar z)$, for some $x$, $y\in\overline{\Omega}_p$, be the maximal open cone with vertex $\bar z$ such that $\mathbf F_{\bar z}=C_{\bar z}\cap \O_p$. Then the set $\partial\mathbf F_{\bar z} \cap C_{\bar z}=\partial \O_p \cap  C_{\bar z}$ is} open in the relative topology of $\partial \O_p$ and Lipschitz.  According to Remark \ref{rmk:jump-flow-rule}, $\sigma\cdot\nu=\pm1$
$\HH^1$-a.e. on $\partial \mathbf F_{\bar z}\cap C_{\bar z} \cap J_u$. Using the explicit expression of $\sigma$ on $\overline{\mathbf F}_{\bar z}
\setminus \{\bar z\}$ given by Theorem \ref{thm:fan-sigma}, we deduce that $\sigma\cdot \nu$ coincides $\HH^1$-a.e. with the usual scalar product of $\sigma$ and $\nu$ on $\partial \mathbf F_{\bar z}\cap C_{\bar z}$. Therefore, $\nu=\pm\sigma$ $\HH^1$-a.e. on $\partial \mathbf F_{\bar z}\cap C_{\bar z} \cap J_u$.

Assume that $\HH^1(\partial \mathbf F_{\bar z} \cap C_{\bar z}\cap J_u)>0$, since $\HH^1$ almost every point of $\partial \mathbf F_{\bar z} \cap
 C_{\bar z}$ is a differentiability point of the boundary, we can find some $x_0 \in \partial \mathbf F_{\bar z}   \cap C_{\bar z} \cap J_u$ such that, up to a change of sign, $\nu(x_0)=\sigma(x_0)$. Let us consider the characteristic line $L_{x_0}=x_0+\R\sigma^\perp(x_0)$ which passes through the point $\bar z$ (because $x_0 \in \overline{\mathbf F}_{\bar z}$), and let $H$  be the closed half plane such that $\partial H=L_{x_0}$ which does not contain $\sigma(x_0)$ (so that $\sigma(x_0)$ is the outer unit normal to $H$). Since $\sigma(x_0)$ is also the unit outer normal to $\O_p$ at $x_0$, it results from the convexity of $\O_p$ that $\overline \O_p \subset H$. Note that since $x_0 \in C_{\bar z}$ which is open, then the point $x_0$ does not belong to the boundary of the cone $C_{\bar z}$. Hence $C'_{\bar z}:=H \cap C_{\bar z}$ is a cone with vertex $\bar z$ strictly contained in $C_{\bar z}$ which satisfies
$$\mathbf F_{\bar z}=C'_{\bar z} \cap \O_p,$$
which implies that either $x$ or $y$ does not belong to $\overline\O_p$, a contradiction. This argument proves that $\HH^1(
\partial \mathbf F_{\bar z} \cap C_{\bar z}\cap J_u)=0$ and thus, that $u^+=u^-$ $\HH^1$-a.e. on $\partial \mathbf F_{\bar z}\cap C_{\bar z}$.
Since in particular $\HH^1((\partial \mathbf F_{\bar z}\cap \partial\O_p) \setminus (\partial \mathbf F_{\bar z}\cap C_{\bar z}))=0$ the result follows.

Finally, using the boundary condition, we get that $u=w$ $\HH^1$-a.e. on $\partial \mathbf F_{\bar z} \cap {\partial\O_p}\cap \partial\O$.
\end{proof}

\subsection{Outside the fans}

The complementary set to the boundary fans, {\it i.e.},
$$\mathscr C:= \O_p\setminus {\bigg(\bigcup_{\bar z\in \mathcal F} \mathbf F_{\bar z}\bigg)}$$
 is a closed set in the relative topology of $\O_p$.

\subsubsection{Topological structure of the complementary of the fans}

We first establish topological properties of the connected components of $\mathscr C$, which are closed in the relative topology of $\O_p$.

\begin{lemma}\label{lem:Cconvex}
Let $\mathbf C$ be a connected component of $\mathscr C$. Then $\mathbf C$ is convex and for all $x \in \mathbf C$,  $L_x \cap \O_p \subset \mathbf  C$.
\end{lemma}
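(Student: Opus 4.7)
The plan is to first establish the line-segment containment and then to deduce convexity via a topological separation argument. Throughout I write each boundary fan as $\mathbf F_{\bar z}=(\bar z+C_{\bar z})\cap \O_p$, with $C_{\bar z}$ an open convex cone given by Lemma \ref{lem:cone-bdary}, and I refer to the two open segments obtained by intersecting the rays bounding $C_{\bar z}$ with $\O_p$ as the \emph{generatrix segments} of $\mathbf F_{\bar z}$.

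For the line containment, I would fix $x\in \mathbf C$ and $y\in L_x\cap\O_p$ and show that $y\in \mathscr C$. Since $L_x\cap\O_p$ is the intersection of a line with a bounded open convex set, it is a connected open segment containing $x\in \mathbf C$, so this would immediately yield $L_x\cap\O_p\subset \mathbf C$. Suppose for contradiction that $y\in \mathbf F_{\bar z}$ for some $\bar z \in \mathcal F$. By the definition of the fan $\bar z\in L_y$, and by Proposition \ref{prop:caracteristic} applied on the characteristic through $y$ we have $L_y=L_x$, so $\bar z\in L_x$. Since $y\in L_x\cap\O_p$, the line $L_x$ is not a supporting line of $\O_p$ at $\bar z$, and $L_x\cap\O_p$ is an open segment admitting $\bar z$ as the endpoint of its closure on the side of $y$. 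Parameterizing it as $\{\bar z+tv:0<t<\tau\}$ with $v=y-\bar z\in C_{\bar z}$, both $y$ and $x$ correspond to positive parameter values, so $x-\bar z\in C_{\bar z}$ and thus $x\in \mathbf F_{\bar z}$, contradicting $x\in \mathscr C$.

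For convexity, I would argue by contradiction: let $x,y\in \mathbf C$ with $[x,y]\not\subset\mathbf C$. Since $\O_p$ is convex $[x,y]\subset \O_p$, and since $[x,y]$ is connected and contains $x\in\mathbf C$, the inclusion $[x,y]\subset \mathscr C$ would imply $[x,y]\subset \mathbf C$; hence $[x,y]$ meets some fan $\mathbf F_{\bar z_0}$. By convexity of the fan and because $x,y\notin\mathbf F_{\bar z_0}$, the non-empty intersection $[x,y]\cap\mathbf F_{\bar z_0}$ is a non-trivial open sub-segment $]a,b[$ strictly inside $[x,y]$, with $a$ and $b$ lying in $\partial\mathbf F_{\bar z_0}\cap \O_p$, i.e., on the two generatrix segments of $\mathbf F_{\bar z_0}$. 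The points $a,b$ cannot both lie on the same generatrix, for otherwise the whole segment $[a,b]$ would lie on the boundary line of $C_{\bar z_0}$, contradicting $]a,b[\subset\mathbf F_{\bar z_0}$. So $a$ and $b$ sit on the two distinct generatrix segments.

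The crucial step is then a topological separation argument. The set $\overline{\mathbf F_{\bar z_0}}\cap \overline{\O_p}$ is a closed convex subset of $\overline{\O_p}$ bounded by a Jordan curve made of the two chord segments from $\bar z_0$ to the opposite endpoints of the generatrices, together with the arc of $\partial\O_p$ joining these endpoints without passing through $\bar z_0$. By the Jordan curve theorem combined with the convexity of $\O_p$, the open set $\O_p\setminus \mathbf F_{\bar z_0}$ splits into two disjoint connected components, two ``lunes'' each bounded by one generatrix chord and the sub-arc of $\partial\O_p$ joining $\bar z_0$ to the corresponding generatrix endpoint, with the associated open generatrix adjoined. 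Since $[x,y]$ enters $\mathbf F_{\bar z_0}$ through one generatrix at $a$ and exits through the other at $b$, the endpoints $x$ and $y$ sit in two different components of $\O_p\setminus \mathbf F_{\bar z_0}$. But $\mathbf C$ is connected and included in $\mathscr C\subset \O_p\setminus \mathbf F_{\bar z_0}$, so $\mathbf C$ lies entirely in one component, contradicting $x,y\in \mathbf C$. Hence $[x,y]\subset \mathscr C$, and by connectedness $[x,y]\subset \mathbf C$.

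The main obstacle will be making the Jordan-curve separation rigorous in all admissible geometries, in particular when $\bar z_0$ is a corner of $\partial \O_p$ or when a generatrix happens to lie in a flat portion of $\partial \O_p$. The latter degeneracy is actually harmless here because a segment $[x,y]$ strictly inside $\O_p$ cannot enter a fan with only one generatrix in $\O_p$ (both $a$ and $b$ would be trapped on that single generatrix, again contradicting $]a,b[\subset \mathbf F_{\bar z_0}$); the remaining degeneracies can be absorbed using that $\partial \O_p$ is a Lipschitz Jordan curve and $\O_p$ is convex.
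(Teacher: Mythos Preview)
Your argument is correct, but the convexity step takes a noticeably heavier route than the paper's. For the line containment you essentially reproduce the paper's Steps~1--2: once $L_x\cap\O_p\subset\mathscr C$ is shown, connectedness of the segment forces $L_x\cap\O_p\subset\mathbf C$; your parameterization with the cone is just a more explicit version of the paper's one-line observation that $y\in\mathbf F_{\bar z}$ implies $L_y\cap\O_p\subset\mathbf F_{\bar z}$.

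Where the approaches diverge is convexity. You pick a point $z\in[x,y]$ lying in some fan $\mathbf F_{\bar z_0}$, analyse how $[x,y]$ enters and exits through the two generatrices, and then invoke a Jordan-curve separation of $\O_p\setminus\mathbf F_{\bar z_0}$ into two lunes, with attendant case analysis for degenerate fans (a generatrix on $\partial\O_p$, corners at $\bar z_0$). This works, but the paper's Step~3 is both shorter and avoids all of that: for \emph{any} $z\in[x,y]$ one looks at the characteristic line $L_z$ itself. If $L_z\cap\mathbf C=\emptyset$, the line $L_z$ splits $\R^2$ into two open half-planes containing $x$ and $y$ respectively, contradicting the connectedness of $\mathbf C$; hence some $z'\in L_z\cap\O_p$ lies in $\mathbf C$, and the already-proved line containment gives $z\in L_z\cap\O_p=L_{z'}\cap\O_p\subset\mathbf C$. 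The point is that separation by a straight line is trivial, whereas separation by the curved region $\overline{\mathbf F_{\bar z_0}}$ requires geometric bookkeeping. The paper's argument also bootstraps more directly off the first part of the lemma, which is pedagogically nicer.
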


\begin{proof}

The proof is divided into three steps.

\medskip

\noindent {\sf Step 1.} We show that, for all $x \in \mathscr C$, $L_x \cap \O_p \subset \mathscr C$.

Let $x \in \mathscr C$, and consider the characteristic line $L_x$ passing through $x$. Assume by contradiction that there exists $y \in L_x \cap \O_p$ such that $y \not\in \mathscr C$. Then $L_x=L_y$ according to {Theorem \ref{thm:lip}} and, since $y\notin \mathscr C$, there exists $\bar z \in \mathcal F$ such that $y\in \mathbf F_{\bar z}$. Thus $x \in L_x \cap \O_p=L_y \cap \O_p \subset \mathbf F_{\bar z}$
which is impossible.

\medskip

\noindent {\sf Step 2.} We show that, for all $x \in \mathbf C$, $L_x \cap \O_p \subset \mathbf  C$.

Consider the characteristic line $L_x$ passing through $x$. We already know from the previous step that $L_x \cap \O_p \subset \mathscr C$. We assume by contradiction  that there exists $y \in L_x \cap \O_p$ such that $y \not\in \mathbf C$. Let $C_y$ be the connected component of $\mathscr C$ which contains $y$. We distinguish two cases:
\begin{itemize}
\item If the segment $[x,y] \subset \mathscr C$, then the closed set $C'=\mathbf C \cup [x,y] \cup C_y$ is connected (because $[x,y]\cap \mathbf C
\neq \emptyset$ and $[x,y] \cap C_y \neq \emptyset$), $C' \subset \mathscr C$ and $x$, $y \in C'$. Then $C'$ is { a connected subset} of  $\mathscr C$ which strictly contains $\mathbf C$,  a contradiction.
\item If there is $z \in [x,y]$ such that $z \not\in \mathscr C$, then we
can find $\bar z \in \mathcal F$  such that $z \in \mathbf F_{\bar z}$, which implies, by definition of a fan, that the line $L_x=L_z$ must pass
through the vertex $\bar z$ of the fan $\mathbf F_{\bar z}$ and $L_x \cap
\O_p \subset \mathbf F_{\bar z}$. In particular the point $x$ belongs to $\mathbf F_{\bar z}$ which is again impossible since $x \in \mathscr C$.
\end{itemize}

\medskip

\noindent {\sf Step 3.} We show that $\mathbf C$ is convex.

Let $x$ and $y \in \mathbf C$, and let us consider a point $z \in [x,y]$.
Note that since $\O_p$ is convex, then $z \in \O_p$ and it makes sense to
consider its associated characteristic line $L_z$. If, for all $z' \in L_z \cap \O_p$, $z' \not\in \mathbf C$, it would then imply that $\mathbf C
\subset H^+ \cup H^-$, where $H^\pm$ are both open half planes separated by the line $L_z$. {Since both $x$ and $y$ belong to $\mathbf C$, the segment $[x,y]$ is not contained in $L_z$. Then, up to a permutation, $x \in
H^-$ and $y \in H^+$, so that $\mathbf C \cap H^- \neq \emptyset$ and $\mathbf C \cap H^+ \neq \emptyset$}. This last property contradicts the connectedness of $\mathbf C$. Therefore, there exists $z' \in L_z \cap \O_p$
such that $z' \in \mathbf C$, and, by Step 2, we have $L_z \cap \O_p = L_{z'} \cap \O_p \subset \mathbf C$ which implies that $z \in \mathbf C$.
The proof of the lemma is  complete.
\end{proof}

The following result shows that connected components of $\mathscr C$ with
empty interior are actually characteristic lines.

\begin{proposition}\label{prop:emptyint}
Let $\mathbf C$ be a connected component of $\mathscr C$ be such that $\mathring{\mathbf C}=\emptyset$, then $\mathbf C=L_x \cap \O_p$ for some $x \in \mathbf C$.
\end{proposition}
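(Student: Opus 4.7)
The plan is to exploit Lemma \ref{lem:Cconvex}, which provides the two structural facts needed: that $\mathbf{C}$ is convex, and that for every $x \in \mathbf{C}$ the entire characteristic segment $L_x \cap \Omega_p$ is contained in $\mathbf{C}$. The argument is then essentially a dimension count.

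First, I would invoke the elementary fact that a convex subset of $\mathbb{R}^2$ with empty interior is necessarily contained in an affine line. Since $\mathbf{C}$ is convex by Lemma \ref{lem:Cconvex} and $\mathring{\mathbf{C}} = \emptyset$ by assumption, there exists a line $\ell \subset \mathbb{R}^2$ such that $\mathbf{C} \subset \ell$.

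Next, pick any $x \in \mathbf{C}$ (non-empty as a connected component). By Lemma \ref{lem:Cconvex}, $L_x \cap \Omega_p \subset \mathbf{C} \subset \ell$. But since $x$ lies in the open set $\Omega_p$, the intersection $L_x \cap \Omega_p$ is a non-degenerate open segment of $L_x$ and contains at least two distinct points. A line $L_x$ sharing two distinct points with $\ell$ must coincide with $\ell$, so $L_x = \ell$.

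Finally, for the reverse inclusion it suffices to observe that any $y \in \mathbf{C} \subset \mathscr{C} \subset \Omega_p$ satisfies $y \in \ell \cap \Omega_p = L_x \cap \Omega_p$; combined with $L_x \cap \Omega_p \subset \mathbf{C}$, this gives $\mathbf{C} = L_x \cap \Omega_p$, as desired. There is no real obstacle here: the proof is a short geometric consequence of the saturation and convexity properties already packaged in Lemma \ref{lem:Cconvex}, together with the dimension argument for planar convex sets without interior.
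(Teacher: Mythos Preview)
Your proof is correct and follows essentially the same approach as the paper: both rely on Lemma \ref{lem:Cconvex} (convexity of $\mathbf C$ and saturation by characteristic segments) together with the fact that a planar convex set with empty interior is contained in a line. The only cosmetic difference is that the paper argues by contradiction (a point $y\in\mathbf C\setminus L_x$ would make the convex hull of $\{x',x'',y',y''\}$ have nonempty interior), whereas you invoke the line containment directly and then identify that line with $L_x$.
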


\begin{proof}
Since $\mathbf C \neq \emptyset$ there exists $x \in \mathbf C$ and, by Lemma \ref{lem:Cconvex}, $L_x \cap \O_p \subset \mathbf C$. By convexity of $\O_p$, the line $L_x$ must intersects $\partial \O_p$ at two distinct points denoted by $x'$ and $x''$, and thus, $]x',x''[\, =L_x \cap \O_p \subset \mathbf C$. Assume by contradiction that there is $y \in \mathbf C\setminus (L_x \cap \O_p)$. The same argument shows that $L_y \cap \O_p=\, ]y',y''[\, \subset \mathbf C$. Since $\mathbf C$ is closed in $\O_p$ and convex, and since $x'$, $x''$, $y'$ and $y'' \in \mathbf C$ then $\overline{\mathbf C}$ contains the closed and convex hull of $\{x',x'',y',y''\}$ denoted by ${\rm conv}\{x',x'',y',y''\}$. Thus $\mathring{\mathbf  C}
\supset {\rm int} \;{\rm conv}\{x',x'',y',y''\} \neq \emptyset$ which is against the hypothesis, and finally $\mathbf C=L_x \cap \O_p$.
\end{proof}

{
\begin{remark}
Note that there might be uncountably many such characteristic lines corresponding to connected components of $\mathscr C$ with empty interior.
\hfill\P
\end{remark}
}

We next focus on the (countably many) connected components $\mathbf C$ of
$\mathscr C$ with non empty interior.  Since $\sigma$ is constant on each
characteristic line inside $\mathbf C$, we can naturally extend $\sigma$ to the part of boundary of $\mathbf C$ which is reached by a characteristic coming from the interior of $\mathbf C$, {\it i.e.}, $\partial \mathbf
C \cap \bigcup_{z \in \mathbf C} L_{z}$.  Specifically, we set
$$\sigma(x)=\sigma(z), \quad L_x:=L_{z}  \quad \text{ if }x \in \partial \mathbf C \cap L_{z},$$
in such a way that $\sigma$ is constant on each $L_{z}\cap \overline{\mathbf C}$.
The value of $\sigma$ at $x$ is unambiguous because there cannot be more than one characteristic line $L_z$ coming from the interior of $\mathbf C$ and passing through $x$, lest $x$ be the apex of a boundary fan {contained in $\mathbf C$}.

\subsubsection{Characteristic boundary points}

We  introduce the set of characteristic boundary points which are the points in $\partial\mathbf C$  not crossed by a characteristic line coming from the interior of $\mathbf C$.

\begin{definition}
Let $\mathbf C$ be a connected component of $\mathscr C$ be such that $\mathring{\mathbf C}\neq \emptyset$. We say that $x \in \partial \mathbf C \cap \partial \O_p$ is a {\it characteristic boundary point} of $\mathbf C$ if $x \not\in L_z$ for all $z\in{\mathbf C}$. We denote by $\partial^c
\mathbf C$ the set of all characteristic boundary points.
\end{definition}

The following result formalizes the idea that the set $\partial^c \mathbf
C$ of all characteristic boundary points of $\mathbf C$ is made of points
where the stress is normal to $\partial\mathbf C$, or, equivalently, where the characteristics are tangential to $\partial\mathbf C$. Note that $\sigma\in H^1_{\rm loc}(\O;\R^2) \cap L^\infty(\O;\R^2)$ so that, through a sequence of smooth approximations,  $\sigma$ is easily seen to be well defined on $\O \cap \partial\O_p$ as an element of ${H^{1/2}_{\rm loc}(\O \cap \partial\O_p;\R^2) \cap} L^\infty(\O \cap \partial\O_p;\R^2)$ (see for example \cite[Lemma 2.3]{DMDSM}). On the other hand, strictly speaking,
$\sigma$ might not be defined on $\partial \O \cap \partial \O_p$. However, since $\sigma \in H(\Div,\O) \cap L^\infty(\O;\R^2)$, the normal trace
$\sigma\cdot\nu$ is well defined as an element of $H^{-1/2}(\partial\O;\R^2) \cap L^\infty(\partial\O;\R^2)$ through similar arguments.

\begin{proposition}\label{prop.bad-points}
Let $\mathbf C$ be a connected component of $\mathscr C$ be such that $\mathring{\mathbf C}\neq \emptyset$ and $x \in \partial \mathbf C \cap \partial \O_p$.
\begin{enumerate}
\item[(i)] If $x\in \partial^c \mathbf C$ is an accumulation point of  $(\partial \mathbf C \cap \partial \O_p)  \setminus \partial^c \mathbf C$,
denoting by $\Sigma(x)$ the set of all limits of sequences $(\sigma(x_n))_{n \in \mathbb N}$ where $(x_n)_{n \in \N} \subset \partial \mathbf C \cap \partial \O_p \setminus \partial^c \mathbf C$ is such that $x_n \to x$, then
 $$\Sigma(x) \subset  {\mathscr N}_{\mathbf C}(x) \cup (- {\mathscr N}_{\mathbf C}(x)).$$
\item[(ii)] Assume that  the relative interior $(\partial \mathbf C \cap \partial \O_p)^\circ$  of  $\partial \mathbf C \cap \partial \O_p$ is not
empty and take  $x$  in that set. If $x$  is not an accumulation point of
$(\partial \mathbf C \cap \partial \O_p)  \setminus \partial^c \mathbf C$, then $x \in \partial^c \mathbf C$ and there exists a maximal open set $U_x$ containing $x$ such that  $S_x:=(\partial \mathbf C \cap \partial \O_p) \cap U_x$ is an open line segment with $S_x \subset \partial^c \mathbf C$. In addition $\sigma\cdot\nu=1$ or $\sigma\cdot\nu=-1$ $\mathcal H^1$-a.e. on $S_x$.
\end{enumerate}
\end{proposition}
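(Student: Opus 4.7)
The plan for Part (i) proceeds by contradiction. Extract a subsequence (not relabeled) so that $\sigma(x_n)\to s$ with $|s|=1$, hence $\sigma^\perp(x_n)\to s^\perp$. Each line $L_{x_n}$ passes through $x_n$ in direction $\sigma^\perp(x_n)$ and, by Lemma~\ref{lem:Cconvex}, $L_{x_n}\cap\O_p\subset\mathbf{C}$; taking the limit of the chord emanating from $x_n$ yields, up to replacing $s$ by $-s$, that $s^\perp$ is a tangent direction to $\mathbf{C}$ at $x$, i.e., $s^\perp\in\mathscr{T}_{\mathbf{C}}(x)$. Suppose by contradiction that $s\notin \mathscr{N}_{\mathbf{C}}(x)\cup(-\mathscr{N}_{\mathbf{C}}(x))$. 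By the polarity of $\mathscr{N}_{\mathbf{C}}(x)$ and $\mathscr{T}_{\mathbf{C}}(x)$ in $\R^2$, this places $s^\perp$ in the relative interior of $\mathscr{T}_{\mathbf{C}}(x)$, so for some small $\varepsilon>0$ the point $p:=x+\varepsilon s^\perp$ belongs to $\mathring{\mathbf{C}}$. For $n$ large, $L_{x_n}$ approximates $x+\R s^\perp$ uniformly on bounded windows, hence passes arbitrarily close to $p$ with direction close to $s^\perp$. If $\sigma^\perp(p)\neq\pm s^\perp$, then $L_{x_n}$ and $L_p$ intersect transversally at a point near $p$ which, for $n$ large, lies in $\mathring{\mathbf{C}}\subset\O_p$, contradicting Proposition~\ref{prop:vortex}. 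Therefore $\sigma^\perp(p)=\pm s^\perp$, forcing $L_p=x+\R s^\perp$ and thus $x\in L_p$, contradicting $x\in\partial^c\mathbf{C}$.

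For Part (ii), the hypothesis that $x$ is not an accumulation point of $(\partial\mathbf{C}\cap\partial\O_p)\setminus\partial^c\mathbf{C}$ provides an open neighborhood $V$ of $x$ with $(V\cap(\partial\mathbf{C}\cap\partial\O_p))\setminus\partial^c\mathbf{C}\subset\{x\}$, and $x\in(\partial\mathbf{C}\cap\partial\O_p)^\circ$ allows us to further shrink $V$ so that $V\cap\partial\mathbf{C}\subset\partial\O_p$. To establish that $x\in\partial^c\mathbf{C}$: if $x$ were the endpoint of a characteristic $L_x$ coming from $\mathring{\mathbf{C}}$, then the Lipschitz regularity of $\sigma$ in $\O_p$ (Theorem~\ref{thm:lip}) and continuous dependence of the characteristic foliation on its base point would yield characteristics $L_y$ through $y\in\mathring{\mathbf{C}}$ close to $L_x$ whose endpoints cluster at $x$, producing infinitely many nearby points of $(\partial\mathbf{C}\cap\partial\O_p)\setminus\partial^c\mathbf{C}$, a contradiction; hence $V\cap\partial\mathbf{C}\subset\partial^c\mathbf{C}$. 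We then claim $V\cap\partial\mathbf{C}$ is a line segment: if $\partial\mathbf{C}$ were strictly convex at some $y_0\in V\cap\partial\mathbf{C}$, then any $z\in\mathring{\mathbf{C}}$ sufficiently close to $y_0$ would give a characteristic $L_z$ whose direction must be close to the unique tangent to $\partial\mathbf{C}$ at $y_0$ (else $L_z$ would cross $V\cap\partial\mathbf{C}\subset\partial^c\mathbf{C}$ and produce an impossible characteristic endpoint there); but for $z$ close enough to $y_0$ the chord $L_z\cap\overline{\O_p}$ would then pinch into $V$ with both endpoints in $V\cap\partial\mathbf{C}\subset\partial^c\mathbf{C}$, contradicting that these endpoints are reached by $L_z$. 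Hence $\partial\mathbf{C}$ is flat throughout $V\cap\partial\mathbf{C}$, which by convexity of $\mathbf{C}$ forces $V\cap\partial\mathbf{C}$ to be a single open line segment; taking $U_x$ to be the maximal open set for which $U_x\cap\partial\mathbf{C}\cap\partial\O_p$ is such a segment through $x$ yields $S_x$.

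Finally, $\sigma\cdot\nu=\pm 1$ $\HH^1$-a.e. on $S_x$ follows because the boundary trace of $\sigma$ at $\HH^1$-a.e. point of $S_x$ coincides with the limit of $\sigma(z)$ for $z\in\mathring{\mathbf{C}}$ approaching that point, and the tangent-direction argument above forces this limit to be perpendicular to $S_x$ and of unit length. The main obstacle is the rigidity step of Part (ii), ruling out even localized strict convexity of $\partial\mathbf{C}$ inside $V$; it requires combining the convexity of $\mathbf{C}$, the tangential approach of interior characteristics to $\partial^c\mathbf{C}$, and Proposition~\ref{prop:vortex} to pinpoint a concrete characteristic with both endpoints inside $V\cap\partial\mathbf{C}$. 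Part (i), while shorter, hinges on the same non-intersection principle applied to a candidate interior point $p$ produced from a strictly interior tangent direction.
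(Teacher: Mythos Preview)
Your argument for Part (i) is different from the paper's and, once a minor issue is patched, correct. The paper proceeds by a three-case analysis on the second intersection $y_n \in L_{x_n} \cap \partial\mathbf{C}$: either the chords $[x_n, y_n]$ stay a fixed positive distance from $\partial\mathbf{C} \cap \partial\O_p$ (a compactness argument then puts $x$ on an interior characteristic, contradiction), or they collapse onto a flat piece of $\partial\mathbf{C}$ (forcing $\xi$ normal to it), or $y_n \to x$ (handled by a half-plane estimate in a local Lipschitz chart). Your unified route via the interior point $p = x + \varepsilon s^\perp$ and Proposition~\ref{prop:vortex} is cleaner. One caveat: your preliminary claim that the chord limit yields $s^\perp \in \mathscr{T}_{\mathbf{C}}(x)$ can fail at a genuine corner of $\partial\mathbf{C}$ (a limiting chord direction need not lie in the tangent cone). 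But you do not actually need it: the purely two-dimensional fact that $s \notin \mathscr{N}_{\mathbf{C}}(x) \cup (-\mathscr{N}_{\mathbf{C}}(x))$ already forces one of $\pm s^\perp$ into the interior of $\mathscr{T}_{\mathbf{C}}(x)$, and from there your contradiction via $L_p$ goes through.

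Part (ii), however, has genuine gaps. First, your flatness argument rules out strict convexity at some $y_0 \in V$ but not a corner, so a priori $V \cap \partial\mathbf{C}$ could be a broken line. The paper handles both at once by a different mechanism: if the arc $\overline{B_R(x)} \cap \partial\mathbf{C} \cap \partial\O_p$ is not the chord $[x', x'']$, the lens $C_R$ between them has nonempty interior; any characteristic through $\mathring{C}_R$ must exit through the arc, which (except possibly at $x$) lies in $\partial^c\mathbf{C}$, forcing all such characteristics through $x$ and making $x$ a fan apex inside $\mathbf{C}$, a contradiction. Second, and more seriously, your final sentence establishes at best $|\sigma \cdot \nu| = 1$ $\HH^1$-a.e.\ on $S_x$, not that the sign is \emph{constant}. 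The proposition asserts one fixed sign on all of $S_x$. The paper supplies a separate argument: if the interior limit of $\sigma$ were $+\nu$ near one height and $-\nu$ near another, the corresponding characteristics $L_{(-t\varepsilon, y)}$ and $L_{(-t\varepsilon, z)}$ would (for small $\varepsilon$) exit through opposite ends of the segment and hence intersect inside $\mathbf{C}$, contradicting Proposition~\ref{prop:vortex}; the pointwise convergence is then tied to the normal trace via \eqref{eq:sigmanu}. You need to supply both the sign-consistency step and the passage from interior limits to the $L^\infty$ normal trace.
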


\begin{proof}
{\sf Step 1.} {We first assume that $x \in \partial^c \mathbf C$ is an accumulation point of $(\partial \mathbf C \cap \partial \O_p) \setminus \partial^c \mathbf C$ and consider $\xi \in \Sigma(x)$. It means that there exists a sequence $(x_n)_{n \in \mathbb N}$ in $(\partial \mathbf C \cap \partial \O_p)  \setminus \partial^c \mathbf C$ such that $x_n \to x$ and $\sigma(x_n) \to \xi$. By convexity of $\mathbf C$, for each $n \in \N$, there exists another point $y_n\neq x_n$ in $\partial \mathbf C \cap L_{x_n}$. Up to a subsequence, we can assume that $y_n \to y \in \partial \mathbf C$. Let us distinguish two possibilities:

{\it Case I}. Assume first that $x \neq y$ and that there exists $\delta>0$ such that for each $n \in \mathbb N$,
$$\max_{z \in [x_n,y_n]}{\rm dist}(z,\partial \mathbf C \cap \partial \O_p) \geq \delta>0.$$
Therefore, there is $z_n \in [x_n,y_n]$ such that ${\rm dist}(z_n,\partial \mathbf C \cap \partial \O_p) \geq \delta$ and, by compactness, up to a
further subsequence, we can assume that $z_n \to z$ for some $z \in \mathbf C$ with ${\rm dist}(z,\partial \mathbf C \cap \partial \O_p) \geq \delta$. Since $\sigma$ is constant along  $L_{x_n}$, we can write
\begin{equation}\label{eq:xn}
x_n=z_{n} +\theta_n \sigma^\perp(z_{n})
\end{equation}
for some $\theta_n \in \R$. Using that $|\sigma(z_n)|=1$, up to a further subsequence we can also assume that $\theta_n \to \theta$ so that, passing to the limit in \eqref{eq:xn} and using the continuity of $\sigma$ in $\O_p$ yields $x=z+\theta\sigma^\perp(z),$ hence $x \in L_z$ which is against  $x \in \partial^c \mathbf C$. It shows that this first possibility never occurs.

{\it Case II}. We next suppose that $x \neq y$ and that, for a subsequence
$$\max_{z \in [x_n,y_n]}{\rm dist}(z,\partial \mathbf C \cap \partial \O_p) \to 0.$$
Since $x_n \to x$ and $y_n \to y$, then $[x_n,y_n]\to [x,y]$ in the sense
of Hausdorff. Therefore, for all $z \in [x,y]$, there is $z_n \in [x_n,y_n]$ such that $z_n \to z$. In particular, ${\rm dist}(z_n,\partial \mathbf C \cap \partial \O_p) \to 0$ which leads to $z \in \partial \mathbf C \cap \partial \O_p$. This implies that $[x,y] \subset \partial \mathbf C \cap \partial \O_p$. Since $\sigma(x_n)$ is orthogonal to $[x_n,y_n]$ for each $n \in \mathbb N$, we deduce that $\xi$ is orthogonal to $[x,y]$, hence $\xi$ belongs to $ {\mathscr N}_{\mathbf C}(x) \cup (- {\mathscr N}_{\mathbf C}(x))$.

{\it Case III}. We finally assume that $x=y$.} Because of the convexity
of $\mathbf C$, its boundary $\partial\mathbf C$ is Lipschitz continuous and thus, there exist $r>0$ and a $L$-Lipschitz function $f:\R\to \R$ such that, in a suitable coordinate system,
$$\begin{cases}
\mathbf C \cap B_r(x)=\{(s_1,s_2) \in B_r(x) : \; s_2<f(s_1)\},\\
\partial \mathbf C \cap B_r(x)=\{(s_1,s_2) \in B_r(x) : \; s_2=f(s_1)\}.
\end{cases}$$
Since $\lim_n x_n=\lim_n y_n =x$, we can assume that $n$ is large enough so that $x_n$ and $y_n \in B_r(x)$, hence $x_n=(s_n,f(s_n))$ and $y_n=(t_n,f(t_n))$.
Let $H_n$ be a half plane such that $\partial H_n =L_{x_n}$ and {$H_n$ does not contain the portion of $\partial \mathbf C$ in between $x_n$ and
$y_n$. Let us denote by $\xi_n$ be the unit exterior normal to $H_n$} and
let us fix $y \in \mathbf C$ (see Figure \ref{fig:Hn}).
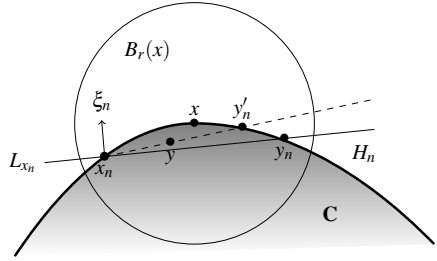
\begin{figure}[htbp]
\begin{center}
\scalebox{.8}{\begin{tikzpicture}
\begin{scope}
\shade[top color=gray, bottom color=white]
(-3,-2.2) parabola bend (0,0)  (4,-2);
\end{scope}
\draw[style=very thick] (-3,-2.2) parabola bend (0,0)  (4,-2);

\draw (0,0) node{{\small $\bullet$}};
\draw (0,0) node[above]{\small $x$};

\draw (-1.5,-0.55) node{$\bullet$};
\draw (-1.5,-0.55) node[below]{\small $x_n$};

\draw (1.5,-0.25) node{{\small $\bullet$}};
\draw (1.5,-0.25) node[below]{\small $y_n$};

\draw(-2.5,-0.65) -- (3,-0.1);
\draw (-2.5,-0.65)  node[left]{{\small $L_{x_n}$}};
\draw (2.5,-0.5) node[right]{{\small $H_n$}};

\draw (0.8,-0.07) node{{\small $\bullet$}};
\draw (0.8,-0.07) node[above]{\small $y'_n$};

\draw (-0.4,-0.32) node{\small{$\bullet$}};
\draw (-0.4,-0.32) node[below]{\small $y$};

\draw[dashed] (-1.5,-0.55) -- (3,0.4) ;

\draw (0,0) circle (2cm);

\draw (2,-1.5) node[right]{{\small $\mathbf C$}};

\draw (-1.3,1.2) node[right]{{\small $B_r(x)$}};

\draw[->] (-1.5,-0.55) -- (-1.55,0.05);
\draw (-1.55,0.05) node[above]{{\small $\xi_n$}};
\end{tikzpicture}}
\caption{{\small The case where $x \in \partial \mathbf C \cap \partial \O_p$ is an accumulation point of $(\partial \mathbf C \cap \partial \O_p)
 \setminus \partial^c \mathbf C$, and $x=\lim_n x_n=\lim_n y_n$ where
$x_n\not\in \partial^c \mathbf C$ and $x_n \neq y_n\in L_{x_n}\cap \partial \mathbf C$}}
\label{fig:Hn}
\end{center}
\end{figure}
Clearly, if $y \in \mathbf C \cap H_n$, then $\xi_n \cdot (y-x_n) \leq 0$. On the other hand, if $y \in \mathbf C \setminus H_n$ we consider the
point $y'_n \in \partial \mathbf C \cap {(x_n+ \R^*_+ (y-x_n))}$ which can be written as $y'_n=(t'_n,f(t'_n))$. Then, using the convexity of $\mathbf C$, we get that
\begin{eqnarray*}
\xi_n\cdot (y-x_n) & \leq & \xi_n\cdot (y'_n-x_n) \leq |y'_n - x_n| =|(t'_n,f(t'_n)) - (s_n,f(s_n))|\\
& \leq & \sqrt{1+L^2}|t'_n-s_n|\leq \sqrt{1+L^2}|x_n-y_n|\to 0.
\end{eqnarray*}
Thus, for all $y \in \mathbf C$, we have $\xi_n \cdot (y-x_n) \leq \sqrt{1+L^2}|x_n-y_n|$. Denoting by $\tilde \xi$ any accumulation point of the sequence $(\xi_n)_{n \in \mathbb N}$, we infer that
$$\tilde \xi \cdot (y-x)\leq 0 \quad \text{ for all }y \in \mathbf C,$$
which means that $\tilde \xi \in  {\mathscr N}_{\mathbf C}(x)$. Since $\sigma(x_n)=\pm \xi_n$, the previous argument shows that $\xi = \pm \tilde \xi$ and thus that $\Sigma(x) \subset  {\mathscr N}_{\mathbf C}(x) \cup (- {\mathscr N}_{\mathbf C}(x))$.

\medskip

{\sf Step 2.} If $x$ is not an accumulation point of $(\partial \mathbf C
\cap \partial \O_p)  \setminus \partial^c \mathbf C$,   for $R>0$ small
enough,  $\overline B_R(x) \cap (\partial \mathbf C \cap \partial \O_p)\setminus \{x\}  \subset \partial^c \mathbf C$. By convexity of $\mathbf C$
and $\O_p$ and since $x\in(\partial \mathbf C \cap \partial \O_p)^\circ$,
$R$ can also be chosen such that there exist two distinct points $x'$ and
$x''$ such that $ (\partial \mathbf C \cap \partial \O_p)  \cap \partial B_R(x)=\{x',x''\}$. Let $C_R$ be the convex set delimited by the segment $[x',x'']$ and $B_R(x) \cap (\partial \mathbf C \cap \partial \O_p)$ (see Figure \ref{fig:CR}). If $\overline B_R(x) \cap (\partial \mathbf C \cap \partial \O_p) \neq [x',x'']$, then $C_R$ has nonempty interior and,  for every $y \in \mathring C_R$, the characteristic line $L_y$ passing through $y$ must intersect $\overline B_R(x) \cap (\partial \mathbf C \cap \partial \O_p)$. Since, except for the point $x$, this set is contained in $\partial^c \mathbf C$, the only possible intersection point is $x$. But then $x$ would be the apex of a boundary fan {contained in $\mathbf C$}, which is impossible.

As a consequence, we must have that $\overline B_R(x) \cap (\partial \mathbf C \cap \partial \O_p) = [x',x'']$ is a {closed line} segment. Note that $x \in [x',x'']$ must belong to $\partial^c \mathbf C$. Indeed, if not, let $L_z$ be a characteristic line such that $x \in L_z$ for some $z \in \mathbf C$. Let us consider a triangle $T$ with apexes $x$, $a \in L_z$ and $b \in \,]x,x'[$ in such a way that $\mathring T \subset \mathring{\mathbf C}$ (see Figure \ref{fig:flat}). Then for any $y \in \mathring T$, the associated characteristic line $L_y$ must pass through the point $x$ (because $]x,a[\, \subset L_z$ and $]x,b[\, \subset\, ]x,x'[\subset \partial^c \mathbf C$). Then $x$ would be the apex of a boundary fan {contained in $\mathbf C$} which is impossible. We have thus established that the full segment $[x',x'']$ is contained in $\partial^c \mathbf C$. The maximal open set $U_x$ is obtained by taking the union of all open balls $B_R(x)$ such that $B_R(x) \cap (\partial^c \mathbf C \cap \partial \O_p)$ is a segment.

\begin{figure}[hbtp]
\begin{tabular}{cc}
\begin{minipage}[3cm]{.50\linewidth}
\scalebox{.8}{\begin{tikzpicture}

\begin{scope}
\shade[top color=gray, bottom color=white]
(-3,-2.2) parabola bend (0,0)  (3,-2);
\end{scope}

\draw[style=very thick] (-3,-2.2) parabola bend (0,0)  (3,-2);

\draw (0,0) node{{\small $\bullet$}};
\draw (0,0) node[above]{\small $x$};

\draw (-1.84,-0.81) node{$\bullet$};
\draw (-1.84,-0.81) node[below]{\small $x'$};

\draw (1.86,-0.78) node{{\small $\bullet$}};
\draw (1.86,-0.78) node[below]{\small $x''$};

\draw(-1.84,-0.81) -- (1.86,-0.78) ;

\draw (0,0) circle (2cm);

\draw (2,-1.5) node[right]{{\small $\mathbf C$}};

\draw (-0.2,-0.3) node[right]{{\small $C_R$}};

\draw (-1.3,1.2) node[right]{{\small $B_R(x)$}};
\end{tikzpicture}}
\caption{{\small The case where $\mathring{C}_R\neq\!\emptyset$ leading to a contradiction}}
\label{fig:CR}
\end{minipage}

&

\begin{minipage}[3cm]{.50\linewidth}
\scalebox{.8}{\begin{tikzpicture}

\begin{scope}
\shade[top color=gray, bottom color=white]
(-3,-2.2) parabola bend (0,0)  (2.8,-1);
\end{scope}

\draw[style=very thick] (-3,-2.2) parabola bend (0,0)  (2.8,-1);

\begin{scope}
\shade[top color=white, bottom color=white]
(-1.84,-0.81) parabola bend  (1.95,2) (1.95,-0.47);
\end{scope}

\draw (0,-0.65) node{{\small $\bullet$}};
\draw (0,-0.65) node[above]{\small $x$};

\draw (-1.84,-0.81) node{$\bullet$};
\draw (-1.84,-0.81) node[below]{\small $x'$};

\draw (1.95,-0.47) node{{\small $\bullet$}};
\draw (1.95,-0.47) node[below]{\small $x''$};

\draw[style=very thick] (-1.84,-0.81) -- (1.95,-0.47) ;

\draw (0,0) circle (2cm);

\draw (2,-1.5) node[right]{{\small $\mathbf C$}};

\draw (-1.3,1.2) node[right]{{\small $B_R(x)$}};

\draw(-2,-2.65) -- (2,1.35) ;
\draw (2,1.35)node[right]{{\small $L_z$}};
\draw (-1.4,-2.05) node{{\small $\bullet$}};
\draw  (-1.4,-2.05)  node[below]{\small $z$};

\draw (-0.5,-1.15) node{{\small $\bullet$}};
\draw  (-0.5,-1.15) node[right]{\small $a$};

\draw (-1,-0.73) node{{\small $\bullet$}};
\draw (-1,-0.73) node[above]{\small $b$};

\draw (-1,-0.73) -- (-0.5,-1.15) ;

\draw (-0.72,-0.87) node[right]{{\small $T$}};
\end{tikzpicture}}
\caption{{\small The case where $\mathring{C}_R= \emptyset$, hence $[x',x'']\subset \partial \mathbf C \cap \partial \O_p$}}
\label{fig:flat}
\end{minipage}

\end{tabular}
\end{figure}

In order to show that $\sigma\cdot\nu=1$ or $\sigma\cdot\nu=-1$ $\mathcal H^1$-a.e. on $S_x$, we assume without loss of generality that $S_x=\{0\} \times (-1,1)$ and that $\nu=e_1$ in such a way that $\mathbf C \subset \R^-\times\R$. Since $S_x \subset \partial^c \mathbf C$, then no characteristic line from a point in $\mathbf C$ intersects $S_x$. Thus, if
$\delta\in (0,1/2)$, $t \in (0,1)$ and $y \in (-1+\delta,1-\delta)$, then $(-t \e,y) \in \mathbf C$ for $\e>0$ small enough, and the characteristic line $L_{(-t\e,y)}$ passing through the point $(-t\e,y)$ intersects the vertical line $\{0\} \times \R$ at a point $(0,y^\e_0)$ with $|y^\e_0| \geq 1$, or else
$\sigma^\perp(-t \e,y)=\pm e_2$. Since $(-t\e,y)$ and $(0,y^\e_0) \in L_{(-t\e,y)}=(-t \e,y) + \R \sigma^\perp(-t\e,y)$, then
$$\sigma^\perp(-t \e,y)=\frac{\pm 1}{\sqrt{t^2\e^2 + (y^\e_0-y)^2}}
\begin{pmatrix}
t \e \\ y^\e_0-y
\end{pmatrix} \mbox{ or } \pm e_2\to \pm e_2$$
as $\e\to 0$.

If $\sigma^\perp(-t \e,y)=\pm e_2$ for some $y$, then $\sigma^\perp(-t \e,z)=\pm e_2$ for all $z\in (-1+\delta,1-\delta)$ because $(-t\e,z)\in L_{(-t\e,y)}$. Assume that there exist $y\neq z\in (-1+\delta,1-\delta)$ such that $\sigma^\perp(-t\e,y) \to e_2$ and $\sigma^\perp(-t\e,z) \to -e_2$ as $\e\to 0$. Then, denoting by $(0,y_0)$ (resp. $(0,z_0)$) the intersection point of $L_{(-t\e,y)}$ (resp. $L_{(-t\e,z)}$) with the vertical line $\{0\} \times \R$, it means that $y_0 \geq 1$ and $z_0 \leq -1$. We thus get that both characteristic lines $L_{(-t\e,y)}$ and $L_{(-t\e,z)}$ intersect at a
point $x=(x_1,x_2)$ with $-1+\delta \leq x_2 \leq 1-\delta$ and $-\frac{t \e}{\delta}\leq x_1\leq 0$ (see Figure \ref{fig:inter}).

\begin{figure}[ht]
\scalebox{.8}{\begin{tikzpicture}
\draw[style=very thick] plot[smooth cycle] coordinates {(0,-2) (-3,-1) (-4,0) (-3.5,1.5) (0,2)};
\fill[color=gray!35]  plot[smooth cycle] coordinates {(0,-2) (-3,-1) (-4,0) (-3.5,1.5) (0,2)};
\fill[color=white]  plot coordinates {(0,-2) (1,-2) (1,2) (0,2)};

\draw[style= thick] (0,-2) -- (0,2) ;
\draw[style=dashed] (0,2) -- (0,3) ;
\draw[style=dashed] (0,-2) -- (0,-3) ;

\draw (0,2.7) node{{\small $\bullet$}};
\draw (0,2.7) node[right]{{\small $y_0$}};

\draw (0,2) node{{\small $\bullet$}};
\draw (0,2) node[right]{{\small $(0,1)$}};

\draw (0,0) node{{\small $\bullet$}};
\draw (0,0) node[right]{{\small $(0,0)$}};

\draw (0,-2) node{{\small $\bullet$}};
\draw (0,-2) node[right]{{\small $(0,-1)$}};

\draw (0,-2.3) node{{\small $\bullet$}};
\draw (0,-2.3) node[right]{{\small $z_0$}};

\draw(0.05,3) -- (-1,-3) ;
\draw (-0.2,1.5) node{{\small $\bullet$}};
\draw (-0.1,1.5) node[right]{{\small $(-\e t,y)$}};
\draw (-1,-3) node[below]{{\small $L_{(-\e t,y)}$}};

\draw (0.11,-3) -- (-0.8,3) ;
\draw (-0.2,-1) node{{\small $\bullet$}};
\draw (-0.1,-1) node[right]{{\small $(-\e t,z)$}};
\draw (-0.8,3) node[above]{{\small $L_{(-\e t,z)}$}};

\draw (-0.4,0.35) node{{\small $\bullet$}};
\draw (-0.4,0.35) node[left]{\small $x$};

\draw (-3,0) node[right]{{\small $\mathbf C$}};

\end{tikzpicture}}
\caption{{\small }}
\label{fig:inter}
\end{figure}

Thus, for $\e>0$ small enough we infer that $x \in \mathbf C$ which is impossible according to Proposition \ref{prop:vortex}.

As a consequence, we always have that
\begin{equation*}\label{eq:sigma.to.e1}
\sigma((0,y) - \e t e_1)\cdot e_1 \to 1 \quad \text{ or}\quad \sigma((0,y) - \e t e_1)\cdot e_1 \to -1
\end{equation*}
for all $(t,y) \in (0,1) \times (-1+\delta,1-\delta)$. Thus in any case, according to the dominated convergence theorem,
\begin{multline}\label{eq:conv2}\lim_{\e \to 0} \int_0^1 \int_{-1+\delta}^{1-\delta} |\sigma((0,y) -\e t e_1)\cdot e_1 - 1|\, dy\, dt=0 \\
\text{ or }\quad \lim_{\e \to 0} \int_0^1 \int_{-1+\delta}^{1-\delta} |\sigma((0,y) -\e t e_1)\cdot e_1 + 1|\,  dy\, dt=0.
\end{multline}
On the other hand, using \eqref{eq:sigmanu}, we have for all $\varphi \in
\C^\infty_c((-1,1))$,
\begin{equation}\label{eq:conv1}
\int_0^1 \int_{-1}^1 [\sigma((0,y) -\e t e_1)\cdot e_1 - (\sigma\cdot\nu)(0,y)]\varphi(y)\, dy \to 0.
\end{equation}
Gathering \eqref{eq:conv2} and \eqref{eq:conv1} yields $\sigma\cdot\nu=1$ or  $\sigma\cdot\nu=-1$ $\HH^1$-a.e. in $S_x$.
\end{proof}

\subsubsection{Topological structure of the boundary of the connected components}

We now establish a result concerning the topological structure of $\partial\mathbf C$ which can be split as the disjoint union of characteristic lines and continuous curves.

\begin{proposition}\label{prop.boundary-inside}
Let $\mathbf C$ be a connected component of $\mathscr C$ be such that $\mathring{\mathbf C}\neq \emptyset$ and $\mathbf C\ne \Omega_p$. Then, $\partial \mathbf C\cap\Omega_p\ne\emptyset$ and there exist an (at most) countable set $J \subset \N$ and, for all $j \in J$, distinct points $\hat x_j \in \O_p$ such that
$$\partial \mathbf C \cap \O_p=\bigcup_{j \in  J}(L_{\hat x_j} \cap \O_p).$$
Moreover $\partial \mathbf C \cap \partial \O_p \neq \emptyset$ and there
exist a (possibly uncountable) set $I$ and, pairwise disjoint continuous curves $\{\Gamma_i\}_{i \in I}$ such that
$$\partial\mathbf C \cap \partial \O_p=\bigcup_{i \in I} \Gamma_i,$$
for some Lipschitz continuous mappings $\gamma_i:[0,1] \to \R^2$ with $\Gamma_i=\gamma_i([0,1])$.
\end{proposition}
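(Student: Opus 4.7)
The plan is to address the two assertions of the proposition separately, writing $\partial \mathbf C$ for the topological boundary in $\R^2$. For the first assertion, I will note that $\partial \mathbf C \cap \Omega_p$ is non-empty by connectedness of $\Omega_p$: since $\mathbf C$ is non-empty, relatively closed, and a proper subset of the connected open set $\Omega_p$, it cannot be relatively open. The geometric core of the claim, and the main obstacle, will be to prove that $L_x \cap \Omega_p \subset \partial \mathbf C$ for every $x \in \partial \mathbf C \cap \Omega_p$. Since $x \in \mathbf C$, Lemma~\ref{lem:Cconvex} already gives $L_x \cap \Omega_p \subset \mathbf C$, so only the possibility of some $y \in L_x \cap \Omega_p \cap \mathring{\mathbf C}$ needs to be excluded. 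Assuming such $y$ exists, I pick $r>0$ with $B_r(y) \subset \mathring{\mathbf C}$ and $\varepsilon>0$ small enough that $z := x - \varepsilon(y-x)/|y-x|$ still lies in $\Omega_p$, so that $z \in L_x \cap \Omega_p \subset \mathbf C$ by Lemma~\ref{lem:Cconvex}. The convex hull $K := \mathrm{conv}(\{z\} \cup B_r(y))$ lies in $\mathbf C$, and the parameterization $K = \{\lambda z + (1-\lambda) u : \lambda \in [0,1],\, u \in B_r(y)\}$ shows that $K \setminus \{z\}$ is open in $\R^2$ and hence contained in $\mathring{\mathbf C}$; since $x$ sits in the open segment $]z,y[\, \subset K \setminus \{z\}$, this forces $x \in \mathring{\mathbf C}$, contradicting $x \in \partial \mathbf C$.

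Combined with Proposition~\ref{prop:vortex}, which forbids two distinct characteristics from meeting inside $\Omega_p$, the family $\{L_x \cap \Omega_p\}_{x \in \partial \mathbf C \cap \Omega_p}$ is then a pairwise disjoint (or coincident) collection of open segments whose union is $\partial \mathbf C \cap \Omega_p$. For countability, I will use the fact that $\overline{\mathbf C}$ is a compact convex planar set with non-empty interior, so that $\partial \mathbf C$ is a simple closed Lipschitz Jordan curve; each segment $L_x \cap \Omega_p$ embeds into this Jordan curve as a non-degenerate open sub-arc (local one-dimensionality forces $\partial \mathbf C$ to coincide with $L_x$ in a neighbourhood of any of its points lying on $L_x$), and disjoint non-degenerate open sub-arcs of a Jordan curve form an at-most-countable family. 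Choosing one representative $\hat x_j$ per segment then yields the asserted decomposition with $J \subset \N$.

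For the second assertion, non-emptiness is immediate from the interior decomposition: since $\Omega_p$ is bounded convex, each segment $L_{\hat x_j} \cap \Omega_p$ has two endpoints on $\partial \Omega_p$, and these endpoints lie in $\overline{\mathbf C}$ (as limits of points of $\mathbf C$) but not in $\mathring{\mathbf C} \subset \Omega_p$, hence in $\partial \mathbf C \cap \partial \Omega_p$. For the curve decomposition, $\partial \Omega_p$ is itself a simple closed Lipschitz curve (it bounds a bounded convex open set with non-empty interior) and admits a Lipschitz homeomorphism $\phi : \mathbb S^1 \to \partial \Omega_p$; the set $\partial \mathbf C \cap \partial \Omega_p = \overline{\mathbf C} \cap \partial \Omega_p$ is closed in $\partial \Omega_p$, so its connected components are each either a single point or a closed arc—possibly uncountably many (Cantor-type arrangement), which is precisely why the index set $I$ is allowed to be uncountable. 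Composing $\phi$ with an affine parameterization of the corresponding pre-image in $\mathbb S^1$ produces the required Lipschitz maps $\gamma_i : [0,1] \to \R^2$ with $\Gamma_i = \gamma_i([0,1])$. The hard step remains the convex-hull argument above, since this is the only place where the rigidity given by Lemma~\ref{lem:Cconvex} and Proposition~\ref{prop:vortex} truly interacts with the convexity of $\mathbf C$ to pin characteristic lines through boundary points to the boundary; once that step is in place, everything else reduces to elementary planar convex geometry.
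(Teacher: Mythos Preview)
Your proof is correct and follows essentially the same skeleton as the paper's: non-emptiness of $\partial\mathbf C\cap\O_p$ by connectedness, the key inclusion $L_x\cap\O_p\subset\partial\mathbf C$ via Lemma~\ref{lem:Cconvex} plus convexity, countability from finite boundary length, and a decomposition of $\partial\mathbf C\cap\partial\O_p$ into connected components parameterized by Lipschitz arcs.

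Two minor differences are worth noting. For the inclusion $L_x\cap\mathring{\mathbf C}=\emptyset$, the paper argues via the tangential cone: if $L_x$ met $\mathring{\mathbf C}$ then $L_x$ would not lie in $\mathscr T_{\mathbf C}(x)$, forcing $L_x\cap B_r(x)\setminus\mathbf C\neq\emptyset$, a contradiction with Lemma~\ref{lem:Cconvex}. Your convex-hull construction $K=\mathrm{conv}(\{z\}\cup B_r(y))$ is an equivalent and perhaps more transparent way to reach the same contradiction. For the Lipschitz parameterization of the components $\Gamma_i$, the paper invokes an external result (David, Proposition~C-30.1) valid for arbitrary closed connected sets of finite $\HH^1$ measure, whereas you exploit the specific fact that $\partial\O_p$, as the boundary of a bounded planar convex set, is itself a Lipschitz Jordan curve admitting a global Lipschitz parameterization $\phi:\mathbb S^1\to\partial\O_p$; restricting $\phi$ to the preimage of each component does the job without the external reference. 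This is a legitimate and slightly more self-contained route in the present setting.
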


\begin{proof}
Note that $\partial \mathbf C\cap\Omega_p\ne\emptyset$ otherwise $\Omega_p=\mathbf C$. If $x \in \partial \mathbf C \cap \O_p$, then there exists $r>0$ such that $B_r(x) \subset \O_p$. Moreover, since $\mathbf C$ is closed in $\O_p$, then $x \in \mathbf C$, hence $L_x \cap \O_p \subset \mathbf C$ by Lemma \ref{lem:Cconvex}.
If $L_x \cap \mathring{\mathbf C} \neq \emptyset$ then, by convexity of $\mathbf C$, $L_x$ is not contained in the tangential cone to $\partial \mathbf C$ at $x$ and thus
 $L_x \cap B_r(x) \setminus \mathbf C \neq \emptyset$, which is impossible according to Lemma \ref{lem:Cconvex}. Thus $L_x \cap \mathring{\mathbf C}=\emptyset$ and $L_x \cap \O_p \subset \partial \mathbf C$. Since $\mathbf C$ is bounded and convex,  its boundary is Lipschitz continuous (see Propositions 2.4.4 and 2.4.7 in \cite{HP}) and $\HH^1(\partial \mathbf C)<+\infty$. Using that $L_x \cap \O_p$ is a nonempty open segment,  $\HH^1(L_x \cap \O_p)>0$ and thus, there are at most countably many such characteristic lines. Therefore,
$$\partial \mathbf C \cap \O_p=\bigcup_{j \in J}(L_{\hat x_j} \cap \O_p),$$
for some nonempty (at most) countable set $J \subset \N$ and some distinct points $\hat x_j \in \O_p$, for all $j \in J$. Note that by convexity, for all $j \in J$, the set $\mathbf C$ is contained in one of the half planes delimited by $L_{\hat x_j}$ for each $j\in J$.

\medskip

Remark that $\partial \mathbf C\cap\partial\Omega_p\ne\emptyset$ because,
since there exists $x\in{\mathbf C}\ne \emptyset$, $L_x\cap\Omega_p\subset {\mathbf C}$ and $L_x$ intersects $\partial\Omega_p$ at points which belong to $\partial \mathbf C$. Consider the connected components $\{\Gamma_i\}_{i \in I}$ of $\partial \mathbf C\cap\partial\Omega_p$. Note that for all $i \in I$, $\Gamma_i$ is a  closed, connected set with finite $\HH^1$ measure, therefore, according to \cite[Proposition C-30.1]{david}, we
infer that $\Gamma_i$ is arcwise connected and that there exists a Lipschitz continuous mapping $\gamma_i:[0,1] \to \R^2$ such that $\Gamma_i=\gamma_i([0,1])$ (with possibly $\gamma_i(0)=\gamma_i(1)$).
\end{proof}

The previous result motivates the following definition.

\begin{definition}
Let $\mathbf C$ be a connected component of $\mathscr C$ be such that $\mathring{\mathbf C}\neq \emptyset$ and $\mathbf C\ne \Omega_p$. The set $\partial \mathbf C \cap \partial  \O_p$ is called the exterior boundary of
$\mathbf C$ and the set $\partial\mathbf C \cap \partial \O_p \setminus \partial^c \mathbf C$ is called the non-characteristic exterior boundary of $\mathbf C$.
\end{definition}

The next result shows that the topological structure of $\mathbf C$ is severely constrained.
\begin{figure}[htbp]
\begin{tabular}{cc}
\begin{minipage}[1cm]{.50\linewidth}
\scalebox{.8}{\begin{tikzpicture}
\fill[color=gray!35]  plot[smooth cycle] coordinates {(-1,-1) (3,0) (3,2) (0,3) (-1,2)};
\fill[color=white]  plot coordinates { (0,-2) (-1,3.5) (-2,2.5) (-2,-2)
};
\fill[color=white]  plot coordinates {(1.5,-2) (4.5,-2) (4.5,3.5) (2,3.5)};

\draw plot[smooth cycle] coordinates {(-1,-1) (3,0) (3,2) (0,3) (-1,2)};

\draw[style=thick] (0,-2) -- (-1,3.5)  ;
\draw (0.1,-2) -- (-0.3,3.5) ;
\draw (0.2,-2) -- (0.3,3.5) ;
\draw (0.3,-2) -- (0.6,3.5) ;

\draw (0.6,-2) -- (0.7,3.5) ;
\draw (1,-2) -- (0.8,3.5) ;
\draw (1.2,-2) -- (1.2,3.5) ;
\draw (1.4,-2) -- (1.7,3.5) ;

\draw[style=thick] (1.5,-2) -- (2,3.5) ;

\draw (0.5,1) node[right]{\small $\mathbf C$};
\draw (2.5,0.8) node[right]{\small $\Omega_p$};
\end{tikzpicture}}
\caption{\small The case $\#(I)=2$}
\label{fig:C1}
\end{minipage}

&

\begin{minipage}[2cm]{.50\linewidth}
\scalebox{.8}{\begin{tikzpicture}
\fill[color=gray!35]  plot[smooth cycle] coordinates {(-1,-1) (3,0) (3,2) (0,3) (-1,2)};
\fill[color=white]  plot coordinates {(0.45,-2) (6,-2) (6,3.5) (0.95,3.5)};

\draw plot[smooth cycle] coordinates {(-1,-1) (3,0) (3,2) (0,3) (-1,2)};

\draw[style=thick] (-1.04,-2) -- (-1.9,3.5)  ;
\draw (-0.95,-2) -- (-1.35,3.5) ;
\draw (-0.85,-2) -- (-0.75,3.5) ;
\draw (-0.75,-2) -- (-0.45,3.5) ;

\draw (-0.45,-2) -- (-0.35,3.5) ;
\draw (-0.05,-2) -- (-0.25,3.5) ;
\draw (0.15,-2) -- (0.15,3.5) ;
\draw (0.35,-2) -- (0.65,3.5) ;

\draw[style=thick] (0.45,-2) -- (0.95,3.5) ;

\draw (-0.5,0.8) node[right]{\small $\mathbf C$};
\draw (2.5,0.8) node[right]{\small $\Omega_p$};

\draw  (-1.27,-0.5) node{{\small $\bullet$}};
\draw (-1.31,-0.5)  node[left]{{\small $\partial^c \mathbf C \ni x$}};

\end{tikzpicture}}
\caption{\small  The case $\#(I)=1$}
\label{fig:C2}
\end{minipage}

\end{tabular}
\end{figure}In fact, its boundary $\partial\mathbf C$ contains at most two characteristic lines and $\partial\mathbf C \cap \partial \O_p$ has at most two connected components {(see Figures \ref{fig:C1} and \ref{fig:C2}
for illustration)}.

\begin{theorem} \label{thm:geom-struct}
Let $\mathbf C$ be a connected component of $\mathscr C$ be such that $\mathring{\mathbf C}\neq \emptyset$ and $\mathbf C \neq \O_p$. Then $\partial\mathbf C \cap \partial \O_p$ has at most two connected components and
\begin{itemize}
\item[i)] if $\partial \mathbf C\cap\partial \Omega_p$ has exactly two connected components, then  $\partial^c\mathbf C=\emptyset$ and all characteristic lines that intersect $\partial \mathbf C {\cap \partial\O_p}$ must intersect both connected components.
\item[ii)] if  $\partial \mathbf C\cap\partial \Omega_p$ is connected, then $\partial^c \mathbf C$ is either a point or a closed line segment contained in $\partial \mathbf C\cap\partial \Omega_p$,  $\partial \mathbf C\cap\partial \Omega_p \setminus \partial^c \mathbf C$ has two connected components, and all characteristic lines that intersect $\partial \mathbf C{\cap \partial\O_p}$ must intersect both connected components {of
$\partial \mathbf C\cap\partial \Omega_p \setminus \partial^c \mathbf C$}.
\end{itemize}
\end{theorem}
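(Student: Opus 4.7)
The plan is to study a ``return map'' $\pi$ defined on $X := (\partial \mathbf C \cap \partial \O_p) \setminus \partial^c \mathbf C$, with the (finitely many) endpoints of the chord segments $L_{\hat x_j}$ from Proposition \ref{prop.boundary-inside} removed. For such $x$, Lemma \ref{lem:Cconvex} ensures that $L_x \cap \overline{\mathbf C}$ is a chord of $\mathbf C$ with exactly two endpoints on $\partial\mathbf C$; I define $\pi(x)$ to be the one distinct from $x$. The non-crossing property (Proposition \ref{prop:vortex}) forces $\pi(x)$ to lie in $X$ as well, so that $\pi$ is a continuous, fixed-point-free involution. Furthermore, since two characteristic chords of the convex set $\mathbf C$ cannot cross, $\pi$ reverses the cyclic order inherited from $\partial \mathbf C$.

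First I would prove that $\#I \le 2$. By Proposition \ref{prop.boundary-inside} the arcs $\Gamma_i$ and the chord segments $L_{\hat x_j}$ alternate around the convex curve $\partial \mathbf C$, so $\#I = \#J$; label them cyclically $\ldots, L_{\hat x_{i-1}}, \Gamma_{i-1}, L_{\hat x_i}, \Gamma_i, L_{\hat x_{i+1}}, \ldots$. Continuity of $\sigma$ forces that, as $x\in\Gamma_i$ approaches its endpoint lying on $L_{\hat x_i}$, the characteristic $L_x$ converges to $L_{\hat x_i}$ and $\pi(x)$ tends to the other endpoint of $L_{\hat x_i}$, which lies on $\Gamma_{i-1}$; symmetrically, near the other endpoint of $\Gamma_i$, $\pi(x)\in\Gamma_{i+1}$. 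For $\#I \ge 3$ we have $\Gamma_{i-1} \ne \Gamma_{i+1}$. Either some $\Gamma_i$ is disjoint from $\partial^c\mathbf C$, in which case $\pi|_{\Gamma_i}$ would continuously send the connected set $\Gamma_i$ into the disconnected set $\Gamma_{i-1}\cup\Gamma_{i+1}$ --- absurd --- or Proposition \ref{prop.bad-points}(ii) places a characteristic-boundary segment $S_i$ in every $\Gamma_i$, along which $\sigma\cdot\nu=\pm 1$ and which is therefore a line segment parallel to $\sigma^\perp|_{S_i}$. I would then show that, for $\#I\ge 3$, the limit chords arising from characteristics approaching two such segments $S_i,S_j$ from the relevant sides must cross inside $\mathbf C \subset \O_p$, contradicting Proposition \ref{prop:vortex}.

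Once $\#I \le 2$ is established, the two remaining cases follow from the same ingredients. For $\#I=2$, $\pi$ continuously pairs $\Gamma_1$ with $\Gamma_2$; the presence of a characteristic-boundary segment in either arc would, as above, force two limiting characteristics to cross inside $\mathbf C$, hence $\partial^c\mathbf C=\emptyset$ and every characteristic through $\partial \mathbf C\cap\partial\O_p$ joins $\Gamma_1$ to $\Gamma_2$. For $\#I=1$, $\pi$ would be a fixed-point-free orientation-reversing continuous involution on $\Gamma_1\setminus\partial^c\mathbf C$; since no such involution exists on a topological arc, $\partial^c\mathbf C$ must be non-empty. By Proposition \ref{prop.bad-points}(ii), $\partial^c\mathbf C$ is a finite union of closed line segments, and the orientation-reversing monotonicity of $\pi$, together with the non-crossing property, forces these segments to concatenate into a single closed segment (possibly degenerating to a point) that separates $\Gamma_1$ into exactly two subarcs; $\pi$ then interchanges these subarcs, so that all characteristics through $\partial \mathbf C\cap\partial\O_p$ join one of them to the other.

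The main obstacle will be the geometric argument ruling out $\#I\ge 3$ when every arc contains a characteristic-boundary segment: this requires carefully tracking the limiting direction of characteristics approaching such segments from within $\mathbf C$ and exploiting both the convexity of $\mathbf C$ and the cyclic layout of the arcs to exhibit the crossing contradicting Proposition \ref{prop:vortex}. The same mechanism underlies the proof that $\partial^c\mathbf C=\emptyset$ in case $\#I=2$ and that characteristic-boundary segments in case $\#I=1$ agglomerate into one.
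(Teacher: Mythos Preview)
Your return-map strategy is a different route from the paper's, which proceeds via Lemmas~\ref{lem:numberJ}--\ref{lem:516}: it first bounds $\#J$, the number of characteristic \emph{segments} $L_{\hat x_j}\subset\partial\mathbf C\cap\O_p$, by a direct crossing argument (three straight characteristic segments on the boundary of a convex set force an interior intersection, contradicting Proposition~\ref{prop:vortex}), then shows $\#I=\#J$, and handles $\#J=1,2$ separately.

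The real gap in your plan is not the crossing argument you flag at the end, but the \emph{continuity of $\pi$}. You invoke ``continuity of $\sigma$,'' yet $\sigma$ is only locally Lipschitz in the open set $\O_p$; on $\partial\O_p$ it is merely defined by extension along characteristics, and nothing established so far guarantees that $x_n\to x$ in $X$ forces $L_{x_n}\to L_x$. The dangerous scenario is $|x_n-\pi(x_n)|\to 0$: the chord midpoints then escape every compact subset of $\O_p$, and interior continuity of $\sigma$ gives no control (this is exactly Case~III of Proposition~\ref{prop.bad-points} and Case~II in the proof of Theorem~\ref{thm:cont-sigma}). In the paper, continuity of $\sigma$ up to the non-characteristic boundary is Theorem~\ref{thm:cont-sigma}, proved \emph{after} and \emph{using} Theorem~\ref{thm:geom-struct}, so invoking it here is circular. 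Every substantive step of your argument --- the connected-image contradiction for $\#I\ge 3$, $\partial^c\mathbf C=\emptyset$ for $\#I=2$, and the intermediate-value fixed-point argument for $\#I=1$ --- rests on this continuity. The paper sidesteps the issue: for $\#J=1$ it runs a constructive arc-length bisection (Lemma~\ref{lem:516}, Step~1) that directly manufactures a point of $\partial^c\mathbf C$ without any continuity of $\pi$, and elsewhere it always selects points on the chords at fixed positive distance from $\partial\O_p$, where interior regularity applies. A smaller issue: Proposition~\ref{prop.bad-points}(ii) alone does not make $\partial^c\mathbf C$ a \emph{finite} union of segments; that requires the separate maximality argument of Lemma~\ref{lem:516}, Step~3.
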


\begin{remark}
If $\mathbf C=\O_p$,   arguments similar to those used in the proof of Theorem \ref{thm:geom-struct} would show that

\noindent-- $\partial^c\O_p$ has two connected components which are both either a point or a closed line segment;

\noindent--  $\partial\O_p \setminus \partial^c\O_p$ has two connected components, and all characteristic lines that intersect $\partial \O_p$ must intersect both connected components {of $\partial\O_p \setminus \partial^c\O_p$}.\hfill\P
\end{remark}

The proof of Theorem \ref{thm:geom-struct} relies on several technical results. We first show that there can be at most two boundary characteristic line segments.

\begin{lemma}\label{lem:numberJ}
 $\#(J)= 1$ or $2$ in the notation of Proposition \ref{prop.boundary-inside}.
\end{lemma}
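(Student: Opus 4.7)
The plan is to argue the upper bound $\#(J)\le 2$ by contradiction; the lower bound $\#(J)\ge 1$ is immediate from Proposition \ref{prop.boundary-inside}, which asserts $\partial\mathbf C\cap\Omega_p\ne\emptyset$ as soon as $\mathbf C\ne \Omega_p$. Suppose three distinct characteristic chord segments $L_{\hat x_1},L_{\hat x_2},L_{\hat x_3}$ lie in $\partial\mathbf C\cap\Omega_p$, with $\hat x_i\in\Omega_p$. Since $\mathbf C$ is bounded, convex with non-empty interior (Lemma \ref{lem:Cconvex}), $\partial\mathbf C$ is a Jordan curve along which these three chords appear in a cyclic order, interleaved with three arcs $\Gamma_{12},\Gamma_{23},\Gamma_{31}\subset\partial\mathbf C\cap\partial\Omega_p$, where $\Gamma_{ij}$ separates $L_{\hat x_i}$ and $L_{\hat x_j}$. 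In the generic case no two chords share an endpoint on $\partial\Omega_p$, so the three arcs are pairwise disjoint; the alternative (two chords meeting at a point $\bar z\in\partial\Omega_p$) is dealt with separately using Lemma \ref{lem:cone-bdary}: that point belongs to $\mathcal F$, the resulting fan $\mathbf F_{\bar z}$ has non-empty interior, and convexity of $\mathbf C$ forces $\mathbf F_{\bar z}\cap\mathbf C\ne\emptyset$, contradicting the defining property of $\mathbf C$.

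The key observation is that for every $y\in\mathring{\mathbf C}$, the chord $L_y\cap\Omega_p$ is trapped in $\overline{\mathbf C}$ with both endpoints in $\Gamma_{12}\cup\Gamma_{23}\cup\Gamma_{31}$. Indeed, since $L_{\hat x_i}\subset\partial\mathbf C$ is disjoint from $\mathring{\mathbf C}$, one has $y\notin L_{\hat x_i}$, hence $L_y\ne L_{\hat x_i}$ by Proposition \ref{prop:caracteristic}, and Proposition \ref{prop:vortex} forbids $L_y$ from crossing any $L_{\hat x_i}$ inside $\Omega_p$. The connected chord $L_y\cap\Omega_p$ therefore never crosses $\partial\mathbf C\cap\Omega_p=\bigcup_i L_{\hat x_i}\cap\Omega_p$, and, being anchored at $y\in\mathring{\mathbf C}$, must stay in $\mathbf C$; its two endpoints necessarily sit in $\overline{\mathbf C}\cap\partial\Omega_p=\Gamma_{12}\cup\Gamma_{23}\cup\Gamma_{31}$.

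I would then select a continuous path $\eta:[0,1]\to\mathring{\mathbf C}$ from a point close to $\hat x_1$ to a point close to $\hat x_2$, which exists because $\mathring{\mathbf C}$ is open and convex. The local Lipschitz regularity of $\sigma$ on $\Omega_p$ (Theorem \ref{thm:lip}) guarantees that the lines $L_{\eta(t)}$ vary continuously with $t$; coupled with the Lipschitz regularity of $\partial\Omega_p$, this yields continuous dependence of the unordered pair of endpoints $\{p_1(t),p_2(t)\}:=L_{\eta(t)}\cap\partial\Omega_p$. For $t$ near $0$, the pair has one element in $\Gamma_{12}$ and the other in $\Gamma_{31}$ (approaching the two endpoints of $L_{\hat x_1}$), while for $t$ near $1$ one element lies in $\Gamma_{12}$ and the other in $\Gamma_{23}$. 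But the unordered pair space over the disjoint union $\Gamma_{12}\sqcup\Gamma_{23}\sqcup\Gamma_{31}$ splits into six connected components, and the components containing the initial and final values are distinct, so a connected continuous image cannot reach both, yielding the desired contradiction.

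The principal technical obstacle is establishing the continuous dependence of the pair $\{p_1(t),p_2(t)\}$ on the merely Lipschitz curve $\partial\Omega_p$, particularly handling isolated parameter values at which $L_{\eta(t)}$ becomes tangent to $\partial\Omega_p$, so that the two endpoints coalesce and may swap. This should be addressable either by perturbing $\eta$ slightly to avoid the thin set of tangency parameters, or by working at the level of Hausdorff continuity of the set-valued map $t\mapsto L_{\eta(t)}\cap\partial\Omega_p$, which suffices to preserve the connectedness argument in the unordered pair space.
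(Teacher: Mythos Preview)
Your approach is correct and takes a somewhat different, more topological route than the paper. The paper parametrizes $\partial\mathbf C\setminus L_1$ by a Lipschitz arc $\gamma:[0,1]\to\R^2$, places the four remaining chord-endpoints as $\gamma(s_2),\gamma(s'_2),\gamma(s_3),\gamma(s'_3)$, and performs an explicit case analysis on the cyclic order. One case is immediate (the segment $[y_1,y_3]$ between midpoints of $L_1$ and $L_3$ crosses $L_2$, contradicting $L_2\subset\partial\mathbf C$); the remaining case is a continuity argument along $[y_1,y_3]$ tracking which sub-arc of $\gamma$ the characteristic line $L_{y(t)}$ hits, very much in the spirit of your endpoint-tracking but with a concrete $\sup/\inf$ argument rather than pair-space components. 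Your formulation avoids the case split at the cost of the abstract pair-space language; both exploit exactly the same two ingredients, continuity of $\sigma$ along interior paths and Proposition~\ref{prop:vortex}.

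Two remarks on your write-up. First, your ``principal technical obstacle'' is a phantom: since $\eta(t)\in\mathring{\mathbf C}\subset\O_p$ and $\O_p$ is open and convex, the line $L_{\eta(t)}$ always meets $\partial\O_p$ in exactly two distinct points, so coalescence never occurs; writing the endpoints as $\eta(t)+s_\pm(t)\sigma^\perp(\eta(t))$ with $s_-(t)<0<s_+(t)$ gives ordered, continuous functions of $t$ directly. Second, the genuine (but easy) point you need to make explicit is that \emph{for every $t$}, the endpoints of $L_{\eta(t)}$ avoid the six chord-endpoints of $L_{\hat x_1},L_{\hat x_2},L_{\hat x_3}$: otherwise that boundary point lies on two distinct characteristics, hence belongs to $\mathcal F$, and Lemma~\ref{lem:cone-bdary} produces a boundary fan whose interior meets $\mathring{\mathbf C}$, contradicting $\mathbf C\subset\mathscr C$. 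This is the same fan argument you already invoke for the degenerate configuration, and it is what actually prevents the continuous pair from leaving its initial component; once stated, your connectedness contradiction is complete. (If $\#(J)>3$, either restrict to three chords and note that the endpoints still cannot cross \emph{any} boundary chord's endpoint by the same fan argument, so the three super-arcs on $\partial\mathbf C$ remain separated along $\partial\O_p$, or pass to the convex region bounded by three chosen chords as the paper does in Remark~\ref{rem:con.comp}.)
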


\begin{proof}
Since $\partial\mathbf C \cap \O_p \neq \emptyset$, it follows that $\#(J)\geq 1$.

\medskip

Assume now that $\#(J)\ge 3$ and let $L_1$, $L_2$, $L_3$ be three distinct interior boundary characteristic lines, that is such that $L_i\cap\O_p\subset\partial{\mathbf C}$. Note that $L_i\cap\partial\O_p=\{x_i,x'_i\}$, where both points $x_i$ and $x'_i$ lie in $\partial\O_p\cap\partial{\mathbf C}$, so that $L_i\cap\O_p=\, ]x_i,x'_i[$. Furthermore, because there can be no boundary or interior fans inside $\mathbf C$, the points $x_1$, $x'_1$, $x_2$, $x'_2$, $x_3$ and $x'_3$ are pairwise distinct and $L_i\cap L_j\cap\overline{\mathbf C}=\emptyset$ for $i\ne j$. By Lemma \ref{lem:Cconvex}, the middle points $y_i:={(x_i+x'_i)}/2$ belong to $L_i\cap\O_p\cap\mathbf C$ and consequently, $L_{y_i}=L_i$. Moreover, for $i \neq j$, the closed segments $[y_i,y_j] \subset \O_p$ cannot be contained in a characteristic line $L_x$, for some $x \in \O_p$, otherwise $L_x$
and $L_i$ (resp. $L_j$) would intersect at $y_i$ (resp. $y_j$), which is impossible in view of  Proposition \ref{prop:vortex}.

Now $\partial \mathbf C\, \setminus \, ]x_1,x'_1[$ is a closed, connected
set with finite $\HH^1$ measure, therefore, according to \cite[Proposition C-30.1]{david},  it is arcwise connected and there exists a Lipschitz continuous mapping $\gamma:[0,1] \to \R^2$ such that $\partial \mathbf C \, \setminus \, ]x_1,x'_1[\; =\gamma([0,1])$ with $\gamma(0)=x_1$ and
$\gamma(1)=x'_1$. Since $\partial \mathbf C=\gamma([0,1])\, \cup \, ]x_1,x'_1[$ with $]x_1,x'_1[ \subset L_1$, it follows from Proposition \ref{prop:vortex} that $L_2$ and $L_3$ intersect $\partial\mathbf C$ in $\gamma((0,1))$. Let us renumber the $L_i$ and exchange $x_i$ with $x'_i$ if necessary so that
$$ x_2=\gamma(s_2),\; x_3=\gamma(s_3),\; x'_3=\gamma(s'_3) \mbox{ with } 0<s_2<s_3<s'_3<1,$$
and consider $x'_2=\gamma(s'_2)$ for some $0<s'_2<1$.

If $s'_2 \in (s'_3,1)$, then the segment $]y_1,y_3[$ intersects $L_2$ at a point inside $\mathring{\mathbf C}$ (see Figure \ref{fig:3.1}), which contradicts the fact that $L_2\subset \partial{\mathbf C}$.

\begin{figure}[hbtp]
\begin{tabular}{cc}
\hskip-.5cm\begin{minipage}[1cm]{.5\linewidth}
\scalebox{.8}{\begin{tikzpicture}
\fill[color=gray!35]  plot[smooth cycle] coordinates {(-1,-1) (4,0) (3,2) (0,3) (-1,2)};
\fill[color=white]  plot coordinates {  (0,-2) (-1,3.5) (-3,3.5) (-3,-2)};
\fill[color=white]  plot coordinates {  (4.95,0.1) (0.2,3.5) (4.8,3.5) (5,-1) };

\draw plot[smooth cycle] coordinates {(-1,-1) (4,0) (3,2) (0,3) (-1,2)};
\draw[style=thick] (0,-2) -- (-1,3.5)  ;
\draw[style=thick] (2.09,-2) -- (0.09,3.5) ;
\draw[style=thick] (4.95,0.1) -- (0.2,3.5) ;
\draw[dashed](-0.45,0.6) -- (2.8,1.7);

\draw   (2.2,-2.2)      node{$L_2$};
\draw   (0.1,-2.2)      node{$L_1$};
\draw   (5.1,0)      node{$L_3$};

\draw  (-0.45,0.6) node{{\small $\bullet$}};
\draw   (-0.7,0.6)      node{$y_1$};
\draw  (2.75,1.7) node{{\small $\bullet$}};
\draw   (3.1,1.7)      node{$y_3$};
\draw  (-0.18,-1.02) node{{\small $\bullet$}};
\draw  (-0.4,-1.3) node{$x_1$};
\draw   (1.63,-0.75) node{{\small $\bullet$}};
\draw  (2,-0.9) node{$x_2$};
\draw   (4.06,0.75) node{{\small $\bullet$}};
\draw  (4.3,0.9) node{$x_3$};
\draw   (1.3,2.72) node{{\small $\bullet$}};
\draw  (1.3,3) node{$x'_3$};
\draw  (0.27,2.97)node{{\small $\bullet$}};
\draw  (-0.05,3.25) node{$x'_2$};
\draw  (-0.8,2.45) node{{\small $\bullet$}};
\draw  (-1.1,2.5) node{$x'_1$};
\draw (2,0.5) node[right]{\small $\mathbf C$};
\draw (-1.2,0) node[right]{\small $\Omega_p$};
\end{tikzpicture}}
\caption{\small  {The first case: $\!s'_2 \in (s'_3,1)$}}
\label{fig:3.1}
\end{minipage}

&

\begin{minipage}[1cm]{.5\linewidth}
\scalebox{.8}{\begin{tikzpicture}
\fill[color=gray!35]  plot[smooth cycle] coordinates {(-1,-1) (4,0) (3,2) (0,3) (-1,2)};
\fill[color=white]  plot coordinates {  (0,-2) (-1,3.5) (-3,3.5) (-3,-2)};
\fill[color=white]  plot coordinates {  (4.95,0.1) (0.2,3.5) (4.8,3.5) (5,-1) };
\fill[color=white]  plot coordinates {  (-1,-2) (4.95,.7) (4.8,3.5) (5,-2) };

\draw plot[smooth cycle] coordinates {(-1,-1) (4,0) (3,2) (0,3) (-1,2)};
\draw[style=thick] (0,-2) -- (-1,3.5)  ;
\draw[style=thick] (-1,-2) -- (4.95,.7) ;
\draw[style=thick] (4.95,0.1) -- (0.2,3.5) ;
\draw[dashed](-0.45,0.6) -- (2.8,1.7);

\draw   (0.7,3.5)      node{$L_3$};
\draw   (-1.2,-2)      node{$L_2$};
\draw   (-1.2,3.5)      node{$L_1$};
\draw  (-0.45,0.6) node{{\small $\bullet$}};
\draw   (-0.7,0.6)      node{$y_1$};
\draw  (2.75,1.7) node{{\small $\bullet$}};
\draw   (3.1,1.7)      node{$y_3$};

\draw  (-0.18,-1.02) node{{\small $\bullet$}};
\draw  (-0.4,-1.3) node{$x_1$};
\draw   (1.9,-0.7) node{{\small $\bullet$}};
\draw  (2,-0.9) node{$x_2$};
\draw   (4.06,0.75) node{{\small $\bullet$}};
\draw  (4.3,0.9) node{$x_3$};

\draw   (1.3,2.72) node{{\small $\bullet$}};
\draw  (1.3,3) node{$x'_3$};
\draw  (4.15,0.35) node{{\small $\bullet$}};
\draw  (4.38,0.2) node{$x'_2$};
\draw  (-0.8,2.45) node{{\small $\bullet$}};
\draw  (-1.1,2.5) node{$x'_1$};

\draw (1,0.5) node[right]{\small $\mathbf C$};
\draw (-1.2,0) node[right]{\small $\Omega_p$};
\end{tikzpicture}}
\caption{\small {The second case: $\!s'_2\!\in\!(s_2,s_3)$}}
\label{fig:3.2}
\end{minipage}
\end{tabular}
\end{figure}

Now $s'_2\notin (s_3,s'_3)$ because, {otherwise $L_2$ and $L_3$ would intersect at $\gamma(s'_2) \in \O_p$, which is impossible owing to Proposition \ref{prop:vortex}}.

So the other possibility is that $s'_2\in (s_2,s_3)$ (see Figure \ref{fig:3.2}). For any $t \in (0,1)$, define $y(t):=ty_3+(1-t)y_1 \in \; ]y_1,y_3[$. The intersection points of $L_{y(t)}$ with $\partial \O_p$, respectively denoted  by $\gamma(s_t)$ and $\gamma(s'_t)$, satisfy
$$s'_t\in (s'_3,1), \quad s_t \in (0,s_2) \cup(s'_2,s_3).$$
Define
\begin{eqnarray*}
\underline t& := & \sup\{t \in [0,1]: \; L_{y(t)}\cap\gamma([0,s_2])\neq \emptyset\},\\
 \bar t & := & \inf \{t \in [0,1]: \; L_{y(t)}\cap\gamma([s'_2,s_3]) \neq \emptyset\}.
\end{eqnarray*}
If, for all $y \in \; ]y_1,y_3[$, the characteristic line $L_y$ intersects $\gamma([s'_2,s_3])$, then, by continuity of $\sigma$ on $[y_1,y_3]$, we would have that $L_{y_1}\cap\gamma([s'_2,s_3])\neq \emptyset$ which is impossible since $L_{y_1}=L_1$ is disjoint from $\gamma([s'_2,s_3])$. Therefore the set $\{t \in [0,1]: \; L_{y(t)}\cap\gamma([0,s_2])\neq \emptyset\}$ is not empty and $\underline t>0$. A similar argument also shows that $\underline t<1$. Let $(t_n)_{n \in \N}$ be a maximizing sequence in
$[0,1]$ such that $L_{y(t_n)}\cap\gamma([0,s_2]) \neq \emptyset$ for all $n \in \N$ and $t_n \to \underline t$. Since $\gamma(s_{t_n}) \in L_{y(t_n)}=y(t_n)+\R \sigma^\perp(y(t_n))$, there exists $\theta_n \in \R$ such that
$$\gamma(s_{t_n})=y(t_n)+\theta_n \sigma^\perp(y(t_n)),$$
where $(\gamma(s_{t_n}))_{n \in \N}$ is a sequence in $\gamma([0,s_2])$ (hence bounded) and $(\theta_n)_{n \in \N}$ is a bounded sequence since $|\sigma^\perp(y(t_n))|=1$ for all $n\in \N$. Therefore, up to a further subsequence $\gamma(s_{t_n}) \to \underline x \in \gamma([0,s_2])$ and $\theta_n \to \theta$. Thus, using that  $\sigma$ is continuous in $[y_1,y_3]$ (because $[y_1,y_3] \subset \O_p$), it follows that
$$\underline x=y(\underline t)+\theta \sigma^\perp(y(\underline t)),$$
hence $\underline x \in L_{y(\underline t)} \cap \gamma([0,s_2])$. Note that since $\underline t \in (0,1)$, then $\underline y:=y(\underline t)
\in\; ]y_1,y_3[$. Moreover, $\underline x \neq \gamma(0)=x_1$ and $\underline x\neq \gamma(s_2)=x_2$ otherwise, $\underline x$ would be the apex of a boundary fan contained in $\mathbf C$, hence $\underline x \in L_{y(\underline t)} \cap \gamma((0,s_2))$. A similar argument shows that $\bar t \in (0,1)$, $\bar y:=y(\bar t) \in\, ]y_1,y_3[$ and $L_{\bar y} \cap \gamma((s'_2,s_3))\neq \emptyset$.

Since $s_2<s'_2$, then $\underline t < \bar t$, otherwise $L_{\underline y}$ and $L_{\bar y}$ would intersect inside $\mathring {\mathbf C}$ since
$L_{\underline y}$ intersects $\gamma((s_1,s_2))$ and  $L_{\bar y}$ intersects $\gamma((s'_2,s_3))$. But this is impossible by Proposition \ref{prop:vortex}.

Denote by $\underline H$ and $\overline H$ the open half-planes with boundary {$L_{\underline x}$} and $L_{\bar y}$ that do not contain the points $y_1$ and $y_3$ respectively. The  region $\mathbf C':=\mathbf C\cap \underline H\cap\overline H$ contains the characteristic lines $L_{y(t)}$ for all $t\in (\underline t,\bar t)$. Such a line cannot intersect {$L_{\underline x}$} and $L_{\bar y}$ {in $\mathbf C$} by Proposition \ref{prop:vortex}, and it cannot intersect the  connected boundaries $\gamma([s_1,s_2]) \cup \gamma([s'_2,s_3])$ by construction. The line $L_{y(t)}$ must therefore intersect $\gamma((s_2,s'_2))=\; ]x_2,x'_2[\;  \subset L_2$ (see Figure \ref{fig:3.2}), which is impossible according, once again, to Proposition \ref{prop:vortex}.
\end{proof}

\begin{remark}\label{rem:con.comp}
Lemma \ref{lem:numberJ} actually establishes, if $\mathbf D$ is any closed, convex subset  of $\mathscr C$ with $\mathring{\mathbf  D} \ne\emptyset$ and $\mathbf D\ne \O_p$ such that
$\partial{\mathbf D}\cap\O_p$ is a countable union of disjoint characteristic line segments, that is such that
$$
\partial \mathbf D \cap \O_p=\bigcup_{k \in  K}(L_{\hat x_k} \cap \O_p),
$$
for some countable set $K$ and distinct points $\hat x_k\in \O_p$, then {$\#(K)=1$ or $2$}.
\hfill\P\end{remark}

\medskip

Then, we show that $\partial\mathbf C \cap \partial \O_p$ has at most two
connected components as well, that those cannot reduce to a single point,
and that the extreme points lie on a characteristic line.

\begin{lemma}\label{lem:5.11}
$\#(I)=\#(J)\in \{1,2\}$ in the notation of Proposition \ref{prop.boundary-inside}. Moreover, for all $i \in I$, $\gamma_i(0) \neq \gamma_i(1)$.
 If $\#(J)=1$ then $\gamma_1(0)$ and $\gamma_1(1)$ belong to $L_1$ while if  $\#(J)=2$ then
$\gamma_1(0)$ and $\gamma_2(1)$ belong to one of the boundary characteristic lines, while $\gamma_1(1)$ and $\gamma_2(0)$ belong to the other one.
\end{lemma}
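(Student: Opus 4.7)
The plan is to leverage the fact that $\partial\mathbf C$ is a Jordan curve (homeomorphic to $S^1$), and then argue that removing the interior characteristic line segments from this Jordan curve forces both the cardinality of $\{\Gamma_i\}_{i\in I}$ and the placement of the endpoints $\gamma_i(0),\gamma_i(1)$.

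First I would observe that $\mathbf C$ is a bounded convex set with nonempty interior (boundedness coming from $\mathbf C\subset \O_p$ and $\O_p$ bounded), so that its closure in $\R^2$ is a compact convex body and $\partial\mathbf C$ is homeomorphic to $S^1$. Since $\overline{\mathbf C}\subset \overline{\O_p}$, I would then note that $\partial\mathbf C = (\partial\mathbf C\cap \O_p)\sqcup(\partial\mathbf C\cap\partial \O_p)$. By Proposition \ref{prop.boundary-inside} the first piece decomposes as a disjoint union of the open segments $L_{\hat x_j}\cap \O_p$ for $j\in J$; as each such segment has its two endpoints in $L_{\hat x_j}\cap\partial\O_p$, these are genuine open subarcs of the Jordan curve $\partial\mathbf C$, and their endpoints lie in $\partial\mathbf C\cap\partial\O_p=\bigcup_{i\in I}\Gamma_i$.

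Next I would invoke the elementary topological fact that removing $k\geq 1$ pairwise disjoint open subarcs from a Jordan curve leaves exactly $k$ pairwise disjoint closed subarcs. Combined with Lemma \ref{lem:numberJ}, which forces $\#(J)\in\{1,2\}$, this immediately gives $\#(I)=\#(J)\in\{1,2\}$, and tells us that each $\Gamma_i$ is a proper closed subarc of $\partial\mathbf C$, hence an injective image of $[0,1]$ with two distinct endpoints; in particular $\gamma_i(0)\neq \gamma_i(1)$, and each such endpoint lies in $L_{\hat x_j}\cap\partial\O_p$ for some $j\in J$.

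Finally I would distinguish the two cases. If $\#(J)=1$, only the open arc $L_1\cap\O_p$ is removed, so the two endpoints of the unique remaining $\Gamma_1$ coincide with the two points of $L_1\cap\partial\O_p$, and both $\gamma_1(0)$ and $\gamma_1(1)$ lie on $L_1$. If $\#(J)=2$, the two removed open arcs $L_1\cap\O_p$ and $L_2\cap\O_p$ alternate cyclically along $\partial\mathbf C$ with the two closed arcs $\Gamma_1$ and $\Gamma_2$, so each $\Gamma_i$ is sandwiched between $L_1\cap\O_p$ and $L_2\cap\O_p$ and must therefore have one endpoint on each; an appropriate choice of the parameterizations $\gamma_1,\gamma_2$ then yields the stated labelling $\gamma_1(0),\gamma_2(1)\in L_1$ and $\gamma_1(1),\gamma_2(0)\in L_2$. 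The main (and rather mild) obstacle is rigorously justifying the Jordan curve structure of $\partial\mathbf C$ and the cyclic alternation in the $\#(J)=2$ case; both are standard consequences of convexity and elementary plane topology.
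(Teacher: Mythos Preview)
Your approach is essentially the paper's: both view $\partial\mathbf C$ as a Jordan curve (the paper leaves this implicit, relying on convexity), remove the open characteristic segments $L_{\hat x_j}\cap\O_p$, and count the remaining closed arcs. The paper's argument is terser but structurally identical.

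There is, however, one small but genuine gap in your $\#(J)=2$ case. Your conclusion that each $\Gamma_i$ has two distinct endpoints, and hence that the two closed arcs alternate cyclically with the two removed open segments, tacitly assumes that $L_1$ and $L_2$ do not share an endpoint on $\partial\O_p$. If they met at some $\bar z\in\partial\O_p$, then removing the two disjoint open arcs from the Jordan curve would still leave two connected components, but one of them would degenerate to the single point $\{\bar z\}$, so that $\gamma_i(0)=\gamma_i(1)$ for that $i$, and the cyclic alternation would fail. The paper rules this out explicitly: if $L_1\cap L_2=\{\bar z\}$ with $\bar z\in\partial\O_p$, then $\bar z$ is the apex of a boundary fan contained in $\mathbf C$, which contradicts $\mathbf C\subset\mathscr C$. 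This step is not ``standard plane topology'' but uses the specific structure of the problem (via Lemma~\ref{lem:cone-bdary}), so you should make it explicit.
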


\begin{proof}
Assume first that $\#(J)=1$. Then the characteristic line $L_1:=L_{{\hat x}_1}$ intersects
$\partial\O_p$ at two points, and thus determine a unique connected component $\partial\mathbf C \setminus L_1=\partial\mathbf C\cap\partial\O_p$.

If $\#(J)=2$, then the two characteristic lines $L_1:=L_{{\hat x}_1}$
and $L_2:=L_{{\hat x}_2}$ are distinct. They cannot intersect at $a\in \partial\O_p$  otherwise $a$ would be the apex of a boundary fan contained in $\mathbf C$, which is not possible. They cannot intersect in $\mathring{\mathbf C}$ by virtue of Proposition \ref{prop:vortex}.
Thus they determine two disjoint connected components $\Gamma_1$ and $\Gamma_2$ for ${\partial\mathbf C \setminus (L_1\cup L_2)}=\partial\mathbf
C\cap\partial\O_p$.

The rest of the Lemma is a direct consequence of that geometry.
\end{proof}

We next show that, when $\#(J)=2$, no characteristic line can intersect
twice the same connected component of $\partial \mathbf C \cap \partial \O_p$.

\begin{lemma}\label{lem5.13}
If $\#(J)=2$, then all characteristic lines $L_x$ with $x \in \mathbf C$ intersect both $\Gamma_1$ and $\Gamma_2$.
\end{lemma}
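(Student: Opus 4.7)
The plan is a connectedness/continuity argument. Let $L_1:=L_{\hat x_1}$, $L_2:=L_{\hat x_2}$ denote the two boundary characteristic lines furnished by Proposition \ref{prop.boundary-inside}, and let $\Gamma_1,\Gamma_2$ be the two connected components of $\partial\mathbf C\cap\partial\O_p$ as in Lemma \ref{lem:5.11}, so that the endpoints of $L_1\cap\O_p$ are $\gamma_1(0)\in\Gamma_1$ and $\gamma_2(1)\in\Gamma_2$. Given $x\in \mathbf C$, fix any $y\in L_1\cap\O_p\subset\mathbf C$ (e.g.\ the midpoint) and, using the convexity of $\mathbf C$ (Lemma \ref{lem:Cconvex}), parametrize the segment $[y,x]\subset\mathbf C$ by $x(t):=(1-t)y+tx$, $t\in[0,1]$. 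By Lemma \ref{lem:Cconvex} again, $L_{x(t)}\cap\O_p\subset\mathbf C$ for every $t$.

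Since $\O_p$ is open, bounded and convex and $x(t)\in\O_p$, the intersection $L_{x(t)}\cap\partial\O_p$ consists of exactly two points, which can be written as
$$a(t):=x(t)+s^+(t)\sigma^\perp(x(t)),\qquad b(t):=x(t)+s^-(t)\sigma^\perp(x(t)),$$
with $s^-(t)<0<s^+(t)$. By Theorem \ref{thm:lip}, $t\mapsto\sigma^\perp(x(t))$ is continuous on $[0,1]$, and by the standard convex geometry of a line crossing an open bounded convex set $s^+,s^-$ depend continuously on $t$. Hence $a,b:[0,1]\to\partial\O_p$ are continuous. Moreover, $a(t)$ and $b(t)$ lie in $\overline{L_{x(t)}\cap\O_p}\cap\partial\O_p\subset \overline{\mathbf C}^{\R^2}\cap\partial\O_p\subset\partial\mathbf C\cap\partial\O_p$, so that $a(t),b(t)\in\Gamma_1\cup\Gamma_2$.

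Because $\Gamma_1$ and $\Gamma_2$ are two distinct connected components of $\partial\mathbf C\cap\partial\O_p$, they are disjoint compact sets, hence at positive distance from each other. The continuity of $a$ and $b$ on the connected interval $[0,1]$ therefore forces each of them to take values in a single $\Gamma_i$. At $t=0$ we have $L_{x(0)}=L_1$ and, by Lemma \ref{lem:5.11}, $\{a(0),b(0)\}=\{\gamma_1(0),\gamma_2(1)\}$, so (up to relabeling) $a(0)\in\Gamma_1$ and $b(0)\in\Gamma_2$. Consequently $a(t)\in\Gamma_1$ and $b(t)\in\Gamma_2$ for every $t\in[0,1]$, and taking $t=1$ yields that $L_x=L_{x(1)}$ meets both $\Gamma_1$ (at $a(1)$) and $\Gamma_2$ (at $b(1)$).

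I do not expect any substantial obstacle: the only technical point is the continuity of $s^\pm$, which is elementary given that $\O_p$ is open bounded convex and $x(t)$ stays in $\O_p$. Lipschitz regularity of $\sigma$ from Theorem \ref{thm:lip} supplies the continuity of the direction $\sigma^\perp(x(t))$, and the disjointness of $\Gamma_1$ and $\Gamma_2$ supplies the connectedness obstruction that does the rest.
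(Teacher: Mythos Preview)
Your argument is correct and takes a genuinely different route from the paper's proof. The paper argues by contradiction: if some characteristic line $L$ intersects, say, $\Gamma_1$ at two distinct points, then $\mathbf C\cap H$ (with $H$ the closed half-plane bounded by $L$ and containing $L_1,L_2$) is a closed convex subset of $\mathscr C$ whose interior boundary carries three characteristic line segments $L_1,L_2,L$, contradicting Remark~\ref{rem:con.comp} (i.e., the three-line obstruction from Lemma~\ref{lem:numberJ}). Your approach is a direct continuity/connectedness argument: you track the two intersection points $a(t),b(t)\in\partial\mathbf C\cap\partial\O_p$ along a segment from $L_1$ to $x$, using the local Lipschitz continuity of $\sigma$ and the convexity of $\O_p$ to get continuity, and then the fact that $\Gamma_1,\Gamma_2$ are disjoint compacta to trap each endpoint in its initial component. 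Your proof is self-contained and does not invoke the cutting argument of Remark~\ref{rem:con.comp}; the paper's proof is shorter (given that machinery) and fits the pattern of half-plane cuts used repeatedly in this section (e.g., in Lemmas~\ref{lem:5.14} and~\ref{lem:516}).
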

\begin{proof}
Assume by contradiction that there is a connected component of $\partial \mathbf C \cap \partial \O_p$, say $\Gamma_1$, and a characteristic line $L$ that intersects $\Gamma_1$ at two distinct points, say $\bar a=\gamma_1(\bar s)$ and $\bar b=\gamma_1(\bar t)$ with $s<t$. First, $\bar s>0$ and $\bar t<1$, otherwise $\gamma_1(0)$ and/or $\gamma_1(1)$ would be the apex of a boundary fan {contained in $\mathbf C$}. Consider the closed hyperplane $H$ bounded by $L$ and containing both $\gamma(0)$ and $\gamma(1)$. Then $\mathbf C\cap H$ is a convex set in $\mathscr C$ which has three boundary characteristic line segments $L_1 \cap\O_p$, $L_2\cap\O_p$ and $L\cap\O_p$, in contradiction with Remark \ref{rem:con.comp}.
\end{proof}

Provided that $\partial \mathbf C \cap \partial \O_p$ possesses two connected components,  there are no characteristic boundary points.

\begin{lemma}\label{lem:5.14}
Assume that $\#(J)=2$, then $\partial^c{\mathbf C}=\emptyset$.
\end{lemma}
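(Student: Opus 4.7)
The plan is to argue by contradiction. Suppose $\bar x\in\partial^c\mathbf C$; using the hypothesis $\#(J)=2$, I would like to produce either a characteristic line from $\mathbf C$ passing through $\bar x$, or a straight segment of $\partial\mathbf C\cap\partial\O_p$ that joins the two distinct components $\Gamma_1$ and $\Gamma_2$. Both possibilities yield an immediate contradiction---the first against $\bar x\in\partial^c\mathbf C$, the second against the disjointness of the $\Gamma_i$'s.

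As a preliminary reduction, I would first observe that the four endpoints $\gamma_1(0),\gamma_1(1),\gamma_2(0),\gamma_2(1)$ are all traversed by one of the boundary characteristic lines $L_1$ or $L_2$ (Lemma~\ref{lem:5.11}) and therefore cannot lie in $\partial^c\mathbf C$; so without loss of generality $\bar x=\gamma_1(\bar s)$ for some $\bar s\in(0,1)$. Since $\bar x\in\partial\mathbf C$ and $\mathring{\mathbf C}\neq\emptyset$, I would choose a sequence $(x_n)\subset\mathring{\mathbf C}$ with $x_n\to\bar x$. By Lemma~\ref{lem5.13}, the characteristic line $L_n:=L_{x_n}$ meets $\Gamma_1$ in some point $a_n$ and $\Gamma_2$ in some point $b_n$, with $\sigma(a_n)=\sigma(x_n)$ by constancy of $\sigma$ on characteristics. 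Extracting, I may assume $a_n\to a\in\Gamma_1$, $b_n\to b\in\Gamma_2$ and $\sigma(x_n)\to\xi\in\mathbb S^1$; and since $x_n\in[a_n,b_n]$ we get $\bar x\in[a,b]$, with $\bar x\neq b$ thanks to $\Gamma_1\cap\Gamma_2=\emptyset$.

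The argument then splits according to whether $\bar x=a$ or $\bar x\neq a$. If $\bar x=a$, the sequence $(a_n)$ sits in $(\partial\mathbf C\cap\partial\O_p)\setminus\partial^c\mathbf C$ and converges to $\bar x$, so Proposition~\ref{prop.bad-points}(i) yields $\xi\in\mathscr N_{\mathbf C}(\bar x)\cup(-\mathscr N_{\mathbf C}(\bar x))$; the line $\bar x+\R\xi^\perp$ is therefore a supporting line of the convex set $\mathbf C$ at $\bar x$, and since $b$ lies on this line and in $\overline{\mathbf C}$ (with $b\neq\bar x$), convexity forces the whole chord $[\bar x,b]$ to lie in $\partial\mathbf C$. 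If instead $\bar x\neq a$, then $\bar x$ lies in the open segment $(a,b)$: should there exist $z\in(a,b)\cap\mathring{\mathbf C}$, then picking $z_n\in L_n$ with $z_n\to z$ and invoking the local Lipschitz continuity of $\sigma$ on $\O_p$ (Theorem~\ref{thm:lip}) would give $\sigma(z)=\xi$ and hence $L_z=\bar x+\R\xi^\perp\ni\bar x$, contradicting $\bar x\in\partial^c\mathbf C$; otherwise $(a,b)\cap\mathring{\mathbf C}=\emptyset$, so by convexity $[a,b]\subset\partial\mathbf C$.

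In the surviving situations I am left with a straight segment $[\alpha,\beta]\subset\partial\mathbf C$ with $\alpha\in\Gamma_1$, $\beta\in\Gamma_2$ and $\bar x\in[\alpha,\beta]$. By Proposition~\ref{prop.boundary-inside}, $\partial\mathbf C\cap\O_p=(L_1\cup L_2)\cap\O_p$; since $[\alpha,\beta]$ is a single affine segment while $L_1,L_2$ are two distinct lines, either $[\alpha,\beta]\subset L_i$ for some $i\in\{1,2\}$---forcing $\bar x\in L_i\cap\Gamma_1\subset\{\gamma_1(0),\gamma_1(1)\}$, against $\bar s\in(0,1)$---or $[\alpha,\beta]\cap\O_p=\emptyset$, so $[\alpha,\beta]\subset\partial\O_p$. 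In the latter case $[\alpha,\beta]$ becomes a connected subset of $\partial\mathbf C\cap\partial\O_p=\Gamma_1\cup\Gamma_2$ meeting both components, contradicting their disjointness. The main delicate step I expect is the continuity propagation in Case~2---transferring $\sigma(x_n)\to\xi$ to the identity $\sigma(z)=\xi$ at interior points $z$ on the limiting line $\bar x+\R\xi^\perp$---for which Theorem~\ref{thm:lip} is essential; once that is in hand, the rest reduces to a purely geometric analysis of the decomposition of $\partial\mathbf C$ given by Proposition~\ref{prop.boundary-inside}.
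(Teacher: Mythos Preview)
Your argument is correct and takes a genuinely different route from the paper's. The paper argues by the dichotomy ``$\bar x$ is/is not an accumulation point of $\Gamma_1\setminus\partial^c\mathbf C$'': in the first case it locates a point $z$ well inside $\mathbf C$ on the approximating characteristics and passes to the limit to get $\bar x\in L_z$; in the second it invokes Proposition~\ref{prop.bad-points}(ii) to produce a maximal open segment of $\partial^c\mathbf C$, shows that its endpoint $a$ is a non--characteristic point, and then builds a small triangle at $a$ whose characteristics must all focus at $a$, yielding a forbidden boundary fan inside $\mathbf C$. Your proof instead approximates $\bar x$ from $\mathring{\mathbf C}$, follows the whole chord $[a_n,b_n]=L_{x_n}\cap\overline{\mathbf C}$, and reduces everything to the existence of a straight segment in $\partial\mathbf C$ joining $\Gamma_1$ to $\Gamma_2$; the contradiction is then purely topological, via the decomposition $\partial\mathbf C=(L_1\cup L_2)\cap\O_p\ \cup\ \Gamma_1\cup\Gamma_2$ from Proposition~\ref{prop.boundary-inside} and Lemma~\ref{lem:5.11}. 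You use Proposition~\ref{prop.bad-points}(i) where the paper uses~(ii), and you avoid the fan construction entirely.

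Two small points worth tightening. First, in the subcase $[\alpha,\beta]\subset L_i$, rather than writing ``$\bar x\in L_i\cap\Gamma_1\subset\{\gamma_1(0),\gamma_1(1)\}$, against $\bar s\in(0,1)$'', it is cleaner (and avoids any implicit injectivity of $\gamma_1$) to say directly that $\bar x\in L_i=L_{\hat x_i}$ with $\hat x_i\in L_i\cap\O_p\subset\mathbf C$, contradicting $\bar x\in\partial^c\mathbf C$. Second, in Case~$\bar x=a$ you should note that if $a_n=\bar x$ for some $n$ then $\bar x\in L_{x_n}$ already gives the contradiction, so one may assume $a_n\neq\bar x$ before invoking Proposition~\ref{prop.bad-points}(i).
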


\begin{proof}
Assume by contradiction that there is $x \in \partial^c \mathbf C$. Then,
without loss of generality, $x \in \Gamma_1$ and, from Lemma \ref{lem:5.11}, $x$ cannot be an extreme point of $\Gamma_1$ so there exists $s \in (0,1)$ such that $x=\gamma_1(s)$. Let us distinguish two cases:

\medskip

\noindent {\it Case I:} If there is a sequence $(x_n)_{n \in \N}$ in $\Gamma_1 \setminus \partial^c \mathbf C$ such that $x_n\to x$, then, according to Lemma \ref{lem5.13}, the characteristic line $L_{x_n}$ intersects $\partial \mathbf C$ at another point $y_n$ in $\Gamma_2$. There is $\delta>0$ and a sequence of points $z_n\in \mathbf C$ with ${\rm dist}(z_n, \partial \mathbf C \cap \partial \O_p) \geq \delta$ such that $x_n = z_n + {\theta_n} \sigma^\perp(z_n)$ for some {$\theta_n \in \R$}. Then, up to
a subsequence, $z_n \to z \in \mathbf C$, {$\theta_n \to \theta$} so that
 $x=z+{\theta}\sigma^\perp(z) \in L_{z}$, a contradiction.

\medskip

\noindent {\it Case II:}  If there exists $R>0$ such that $B_R(x) \cap \Gamma_1 \subset \partial^c \mathbf C$, according to Proposition \ref{prop.bad-points} (ii), there exists a maximal open set $U_x$ containing $x$ such that $\Gamma_1 \cap U_x=\, ]a,b[$ is a segment contained in $\partial^c \mathbf C$. Consider {\it e.g.} the point $a$ and note that $a\neq \gamma_1(0)$ and $b \neq \gamma_1(1)$ otherwise, $a$ (resp. $b$) would be the apex of a boundary fan according to Lemma \ref{lem:5.11}. Indeed, one could consider a small open triangle $T$ with vertices $a=\gamma_1(0)$ (resp. $b=\gamma_1(1)$), a point in $]a,b[$ and a point on $L_{\gamma_1(0)}$ (resp $L_{\gamma_1(1)}$) so that ${\mathring T}\subset \mathring {\mathbf C}$. But then, any point $y\in \mathring T$ would be such that
$L_y$ contains $\gamma_1(0)$ (resp. $\gamma_1(1)$) which would therefore be a boundary fan contained in $\mathbf C$.
Thus, $a=\gamma_1(s)$ for some $s \in (0,1)$.

We claim that there exists a sequence $(s_n)_{n \in \mathbb N}$ in $(0, s)$ such that $x_n=\gamma_1(s_n)\in \Gamma_1\setminus\partial^c \mathbf C$ for each $n \in \mathbb N$ and $x_n \to  a$. Otherwise, $a$, which is in $(\partial \mathbf C \cap \partial \O_p)^\circ$, is not an accumulation point of
$(\partial \mathbf C \cap \partial \O_p)  \setminus \partial^c \mathbf C$ so that, according to  Proposition \ref{prop.bad-points}(ii), the point $a$ would be in a maximal open segment $]a',a''[$ contained in $\Gamma_1 \cap \partial^c
\mathbf C$ {for some $a'=\gamma_1(s') \in \Gamma_1$ and $a''=\gamma_1(s'') \in \Gamma_1$ with $0<s'<s<s''\le 1$}. Then $]a',a''[$ and $]a,b[$ must be aligned, $\partial \mathbf C\cap\partial\O_p$ being Lipschitz, and we reach a contradiction because $]a',b[$ must strictly contain
$]a,b[$ which cannot thus be maximal.

According to the previous claim and Lemma \ref{lem5.13}, the characteristic line $L_{x_n}$ intersects $\partial \mathbf C$ at another point $y_n \in \Gamma_2$. Since ${\rm dist\;}(\Gamma_2,\Gamma_1)>0$, there exist $\delta>0$ and a sequence of points $(z_n)_{n \in \N}$ in $\mathbf C$ with ${\rm dist}(z_n, \partial \mathbf C \cap \partial \O_p) \geq \delta$ and such that $x_n = z_n + \theta_n \sigma^\perp(z_n)$ for some $\theta_n \in
\R$. Then, up to a subsequence, $y_n \to y$, $z_n \to z \in \mathbf C$ and $\theta_n \to \theta$ so that, using the continuity of $\sigma$ in $\O_p$,
$$a=z+\theta \sigma^\perp(z) \in L_{z},$$
hence $ a \not\in \partial^c \mathbf C$.  Considering a small open triangle $T$ with apexes $a$, $(a+b)/2$ and a point living on $L_z$ in such a way that $ T \subset \mathring{\mathbf C}$, then any point $w \in   T$ has
a characteristic line $L_w$ which must pass through the point $a$, leading to a boundary fan contained in $\mathbf C$ and centered at $a$, a contradiction.
\end{proof}

{\begin{remark}\label{rem:con.comp.bis}
As was the case for Remark \ref{rem:con.comp} and with the same notation,
Lemma \ref{lem:5.14} actually establishes that if
$\#(K)= 2$, then $\partial^c {\mathbf D}=\emptyset$.
\hfill\P\end{remark}

We next focus on the case where $\partial \mathbf C \cap \partial \O_p$ is connected, and show that $\partial^c \mathbf C$ is connected set which separates $\partial \mathbf C \cap \partial \O_p$ into two connected components.

\begin{lemma}\label{lem:516}
If $\#(J)=1$, then the set $\partial^c \mathbf C$ is either a single point or a {closed} line segment and $\partial\mathbf C \cap \partial\O_p \setminus \partial^c \mathbf C$ has two connected components $\Gamma'_1$ and $\Gamma'_2$. Further, all characteristic lines that intersect $\partial \mathbf C \setminus \partial^c \mathbf C$ must intersect both connected
components $\Gamma'_1$ and $\Gamma'_2$.
\end{lemma}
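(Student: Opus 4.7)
My plan is to parametrize the characteristics of $\mathbf C$ and translate the problem into one about a one-dimensional parameter set.

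\textbf{Setup.} Since $\#(J)=1$, Proposition~\ref{prop.boundary-inside} yields $\partial\mathbf C=(L_1\cap\Omega_p)\cup\Gamma_1$, where $L_1:=L_{\hat x_1}$ is the unique interior characteristic line segment and $\Gamma_1=\gamma_1([0,1])$ with $\gamma_1(0),\gamma_1(1)\in L_1$ (Lemma~\ref{lem:5.11}). The first step is to check that for every $x\in\mathbf C\setminus L_1$, the characteristic $L_x$ is a chord of $\mathbf C$ with both endpoints in $\gamma_1((0,1))$: by Proposition~\ref{prop:vortex} it cannot cross $L_1$ in $\Omega_p$, and were an endpoint to equal $\gamma_1(0)$ or $\gamma_1(1)$, that point would be the apex of a boundary fan inside $\mathbf C$, a contradiction. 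I then set $A:=\gamma_1^{-1}(\partial^c\mathbf C)\subset(0,1)$, $D:=(0,1)\setminus A$, and introduce the fixed-point-free involution $\varphi:D\to D$ in which $\gamma_1(\varphi(s))$ is the unique second endpoint of the characteristic through $\gamma_1(s)$.

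\textbf{Parametrization by depth.} Thanks to the local Lipschitz regularity of $\sigma$ on $\Omega_p$ (Theorem~\ref{thm:lip}), the characteristics in $\mathbf C\setminus L_1$ form a continuous one-parameter family. Close to $L_1$, continuity of $\sigma$ forces $L_x$ close to $L_1$, with endpoints close to $\gamma_1(0)$ and $\gamma_1(1)$ respectively. Moving ``inward'' from $L_1$, the non-crossing property (Proposition~\ref{prop:vortex}) forces consecutive chords to be strictly nested, so the left endpoint $s$ increases monotonically and the right endpoint $\varphi(s)$ decreases monotonically. The images of these two maps are then open intervals $(0,s^-)$ and $(s^+,1)$ with $s^-\le s^+$; a putative topmost true chord with distinct endpoints would leave a non-empty interior region above it, to be foliated by chords of its own, forcing it onto $\partial\Omega_p$ --- impossible since its interior point $\tilde z$ sits in the open set $\Omega_p$. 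Hence $D=(0,s^-)\cup(s^+,1)$ exactly, $A=[s^-,s^+]$, and $\varphi$ restricts to an order-reversing bijection from $(0,s^-)$ onto $(s^+,1)$. This produces the two connected components $\Gamma'_1:=\gamma_1([0,s^-))$ and $\Gamma'_2:=\gamma_1((s^+,1])$ of $\partial\mathbf C\cap\partial\Omega_p\setminus\partial^c\mathbf C$, and every characteristic meeting $\partial\mathbf C\setminus\partial^c\mathbf C$ has one endpoint in each.

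\textbf{Segment structure of $\partial^c\mathbf C$.} If $s^-=s^+$, then $\partial^c\mathbf C=\{\gamma_1(s^-)\}$ is a single point. When $s^-<s^+$, every $s\in(s^-,s^+)$ admits a whole neighborhood in $A$, so $\gamma_1(s)$ is a point of $\partial^c\mathbf C$ that is not an accumulation point of $(\partial\mathbf C\cap\partial\Omega_p)\setminus\partial^c\mathbf C$. Proposition~\ref{prop.bad-points}(ii) then places $\gamma_1(s)$ inside a maximal open line segment $S_s\subset\partial^c\mathbf C$, i.e.,~$\gamma_1$ is affine on an open interval around $s$. A would-be kink between two such affine pieces would itself lie on some $S_{s_k}$, a contradiction; by connectedness of $(s^-,s^+)$, $\gamma_1|_{(s^-,s^+)}$ is globally affine, and so $\partial^c\mathbf C=\gamma_1([s^-,s^+])$ is a closed line segment.

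\textbf{Main obstacle.} The delicate point is the monotonic parametrization in the second paragraph: rigorously organizing the characteristics into a continuous, ordered one-parameter family, verifying continuity of the left-endpoint map on the connected annular region bounded by two nested chords, and ruling out tangential characteristics. These technicalities rest on the Lipschitz regularity of $\sigma$ (Theorem~\ref{thm:lip}), the convexity of $\mathbf C$ (Lemma~\ref{lem:Cconvex}), and an intermediate-value argument that produces a characteristic hitting any conjectured gap in $D$, contradicting the gap.
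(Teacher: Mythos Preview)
Your approach is sound and genuinely different from the paper's. You exploit the total order on characteristics (nested chords, by Proposition~\ref{prop:vortex}) to produce a monotone endpoint map and read off $A=[s^-,s^+]$ directly. The paper instead argues in four decoupled steps: (1) $\partial^c\mathbf C\neq\emptyset$ via an arclength bisection --- repeatedly draw the characteristic through the arclength midpoint of the current arc, obtaining nested chords whose endpoints converge to a single boundary point, which must lie in $\partial^c\mathbf C$ lest its characteristic cross all the earlier chords; (2) connectedness of $\partial^c\mathbf C$ by cut-and-reduce --- a characteristic through a non-characteristic point lying between two putative components slices off a convex piece $\mathbf D$ with two boundary characteristic segments and $\partial^c\mathbf D\neq\emptyset$, contradicting Remark~\ref{rem:con.comp.bis}; (3) the segment structure via Proposition~\ref{prop.bad-points}(ii), essentially your third paragraph; (4) the last claim by recycling (1) or (2). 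The paper's route is modular --- it feeds the $\#(J)=1$ case back into the already-handled $\#(J)=2$ machinery --- while yours is more direct but requires building the global endpoint map.

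The items in your ``Main obstacle'' paragraph are exactly what must be supplied, and as written the second paragraph is a sketch rather than a proof. For surjectivity of the left-endpoint map onto $(0,s^-)$: given $t\in(0,s^-)$ and a chord with left parameter $a>t$, the segment joining the midpoint of $L_1\cap\overline{\O_p}$ to the midpoint of that chord lies in $\mathbf C$, and along it the left endpoint of $L_z$ varies continuously from $0$ to $a$ (Theorem~\ref{thm:lip} plus convexity of $\O_p$); IVT then hits $t$. For $s^-\in A$: if $s^-\in D$, the nesting order forces the chord through $\gamma_1(s^-)$ to have right endpoint exactly $s^+$; the lune between that chord and the arc $\gamma_1([s^-,s^+])$ has nonempty interior (the open chord sits in $\O_p$, the arc in $\partial\O_p$), and any characteristic through a point of that lune has left parameter strictly above $s^-$, contradicting the supremum. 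You also tacitly take $\gamma_1$ injective; this is legitimate since the exterior boundary is an arc on the boundary of a convex set, but worth stating.
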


\begin{proof}
Since $\#(I)=1$, there exists a unique characteristic line $L=L_1$ such that $\mathbf C= \O_p \cap H$, where $H$ is an closed hyperplane such that $\partial H=L$. Moreover,
$$\Gamma:=\partial\mathbf C \cap \partial \O_p=\gamma([0,1])$$
for some Lipschitz continuous mapping $\gamma:[0,1] \to \R^2$.

\medskip

{\sf Step 1. } Let us first prove that $\partial^c\mathbf C \neq \emptyset$. To this aim, assume by contradiction that $\partial^c\mathbf C = \emptyset$.

We first consider the characteristic line passing through $\gamma(0)$ and
$\gamma(1)$ {and set $(s_0,t_0)=(0,1)$}. Assume that $(s_n,t_n)$ are known so that $L_{\gamma(s_n)} \cap \overline \O_p=[\gamma(s_n),\gamma(t_n)]$. Let $V_n:[s_n,t_n] \to \R^+$ be the continuous function defined by
$$V_n(t)=\HH^1(\gamma([s_n,t])) =\int_{s_n}^t |\dot \gamma(s)|\, ds\quad \text{ for all }t \in [s_n,t_n]$$
which satisfies $V_n(s_n)=0$ and $V_n(t_n)=\HH^1(\gamma([s_n,t_n]))$.
According to the intermediate valued Theorem, there exists $s_{n+1/2} \in
(s_n,t_n)$ such that
$$V_n(s_{n+1/2})= \frac{V_n(t_n)}{2}=\frac{\HH^1(\gamma([s_n,t_n])}{2},$$
hus $\gamma([s_n,t_n])$ splits into two curves $\gamma([s_n,s_{n+1/2}])$ and $\gamma([s_{n+1/2},t_n])$ of equal length with
$$\HH^1(\gamma([s_n,s_{n+1/2}]))=\HH^1(\gamma([s_{n+1/2},t_n]))=\frac12 \HH^1(\gamma([s_n,t_n])).$$
Since $\O_p$ is convex, the resulting triangle $T_n$ with vertices $\gamma(s_n)$, $\gamma(t_n)$ and $\gamma(s_{n+1/2})$ satisfies $\mathring T_n \subset \O_p$. Moreover, $[\gamma(s_n),\gamma(s_{n+1/2})] \cup [\gamma(t_n),\gamma(s_{n+1/2})] \not\subset\partial\mathbf C$, otherwise it would also be lying in $\partial\mathbf C \cap \partial \O_p$ because that latter
set has only one connected component by assumption. Then the characteristic line passing through $\gamma(s_{n+1/2})$ would necessarily intersect  the open segment  $]\gamma(s_n),\gamma(t_n)[ \, \subset L_{\gamma(s_n)} \cap \O_p$, which is impossible by Proposition \ref{prop:vortex}. We then define  $t_{n+1/2}$ such that $\gamma(t_{n+1/2})$ is the other intersection point of $L_{\gamma(s_{n+1/2})}$ with $\partial \O_p$. Note that $t_{n+1/2} \in (s_n,t_n)$ otherwise $\gamma(s_n)$ (resp. $\gamma(t_n)$) would form a boundary fan contained in $\mathbf C$. Then if $s_{n+1/2} < t_{n+1/2}$, we define ${(s_{n+1},t_{n+1})}:=(s_{n+1/2},t_{n+1/2})$, while if $s_{n+1/2} > t_{n+1/2}$, we define ${(s_{n+1},t_{n+1})}:=(t_{n+1/2},s_{n+1/2})$ (see Figure \ref{fig:isoceles}). The sequence $(s_n)_{n \in \N}$ is increasing while $(t_n)_{n \in \N}$ is decreasing with $0 < s_n<s_{n+1} <t_{n+1} < t_n < 1$ for all $n \in \N$. Therefore, $s_n\to \bar s$ and $t_n\to \bar t$ for some $0 < \bar s \leq \bar t < 1$, and the line segment $[s_n,t_n]$ converges in the sense of Hausdorff to $[\bar s, \bar t]$. Since
\begin{multline*}
|\gamma(t_{n+1})- \gamma(s_{n+1})| \leq \HH^1(\gamma([s_{n+1},t_{n+1}]))
\leq  \HH^1([\gamma(s_{n+1/2},t_n])\\ =\HH^1(\gamma([s_n,s_{n+1/2}])) =
 \frac{\HH^1(\gamma([s_n,t_n])}{2} \leq \frac{\HH^1(\gamma([0,1])}{2^{n+1}} \to 0,
\end{multline*}
we deduce that $\gamma(\bar s)=\gamma(\bar t)=:x$. Since $x \not\in \partial^c \mathbf C$ (because $\partial^c \mathbf C$ is empty), it lies on a characteristic line $L_x$ which must intersect $[\gamma(s_n),\gamma(t_n)]$, a contradiction with Proposition \ref{prop:vortex}.

\begin{figure}[htbp]
\scalebox{.8}{\begin{tikzpicture}
\fill[color=gray!35]  plot[smooth cycle] coordinates {(-1,-1) (4,0) (3,2) (0,3) (-1,2)};

\fill[color=white]  plot coordinates {(2.42,-0.55) (4.5,-0.55) (4.5,1.37) (3.65,1.37)  };

\draw plot[smooth cycle] coordinates {(-1,-1) (4,0) (3,2) (0,3) (-1,2)};
\draw (1.8,-1.5) -- (4,2) ;

\draw (-2,1) -- (5,2.5) ; \draw(5,2.5) node[right]{\small $L_{\gamma(s_n)}$};
\draw  (-1.18,1.19) node{{\small $\bullet$}};
\draw (-0.8,1.3) node[below]{{\small $\gamma(t_n)$}};

\draw  (2.95,2.05) node{{\small $\bullet$}};
\draw (2.95,2.05) node[above]{{\small $\gamma(s_n)$}};

\draw[dashed] (-1.18,1.19) -- (0.6,2.9) ;
\draw[dashed] (2.95,2.05) -- (0.6,2.9) ;
\draw  (0.6,2.9) node{{\small $\bullet$}};
\draw (0.72,2.96) node[right]{{\small $\gamma(s_{n+1/2})=\gamma(s_{n+1})$}};

\draw (-2,1.3) -- (1.5,3.45) ; \draw (1.5,3.45) node[right]{\small $L_{\gamma(s_{n+1})}$};
\draw  (-1.03,1.87) node{{\small $\bullet$}};
\draw  (-1.03,1.95) node[left]{{\small $\gamma(t_{n+1})$}};

\draw (1,-1.1) node[right]{\small $\Gamma$};
\draw (2.8,0.2) node[right]{\small $L$};
\draw (1,0.8) node[right]{\small $\mathbf C$};
\draw  (2.42,-0.55) node{{\small $\bullet$}};
\draw (2.7,-0.55)  node[below]{{\small $\gamma(1)$}};
\draw  (3.65,1.37) node{{\small $\bullet$}};
\draw  (3.65,1.37)  node[right]{{\small $\gamma(0)$}};
\end{tikzpicture}}
\caption{\small  Construction of the sequences $(s_n)_{n \in \N}$ and $(t_n)_{n \in \N}$}
\label{fig:isoceles}
\end{figure}
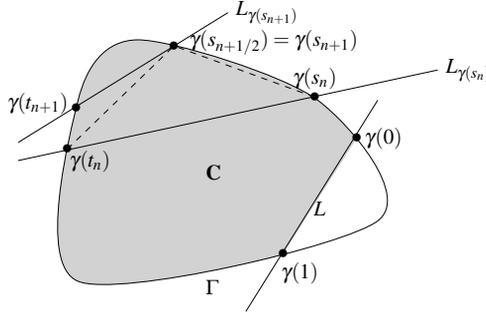

\medskip

{\sf Step 2.} Assume now that $ \partial^c\mathbf C$ is not connected and
consider two distinct connected components $S$ and $S'$ with
$$t_0:=\inf\{t \in [0,1] :\; \gamma(t)\in S'\}>\sup\{s \in [0,1] :\ \gamma(s)\in S\}=:s_0.$$
Then there exists $\bar s\in [s_0,t_0]$ such that $x:=\gamma(\bar s)\notin \partial^c\mathbf C$. Consider the characteristic line $L_x$ passing through $x$. It must intersect $\Gamma$ at some other point $x'$ which cannot coincide with $\gamma(0)$ or $\gamma(1)$, otherwise it would form with $L$ a boundary fan {contained in $\mathbf C$} with apex at one of these points. Let $H$ be the closed half-plane with $L_x$ as boundary containing both $\gamma(0)$ and $\gamma(1)$. Then the region ${\mathbf D}:=\mathbf C \cap H$ would be a convex relatively closed subset of $\O_p$ such that, on the one hand, its boundary contains {two characteristic lines $L$ and $L_x$}, and, on the other hand, {$\partial\mathbf D \cap\partial\O_p$ has two connected components}. {Since one of $S$ or $S'$ is contained in $\partial^c \mathbf D$,   $\partial^c \mathbf D \neq \emptyset$}, a  contradiction according to Remark \ref{rem:con.comp.bis}.
So $ \partial^c\mathbf C$ is connected and $\partial\mathbf C \cap \partial\O_p \setminus \partial^c \mathbf C$ has  two connected components $\Gamma'_1$ and $\Gamma'_2$.

\medskip

{\sf Step 3.} If $\partial^c \mathbf C$ is not reduced to a point, it must contain $\gamma((s',t'))$ for some $0<s'<t'<1$ so that its relative interior in $\partial\mathbf C$, denoted by $(\partial^c \mathbf C)^\circ$, is not empty. Then, any point $x \in (\partial^c \mathbf C)^\circ$ is not an accumulation point of $(\partial\mathbf C \cap \partial\O_p) \setminus \partial^c \mathbf C$ so that Proposition \ref{prop.bad-points}-(ii) ensures that $x \in \, ]a_x,b_x[\,  \subset \partial^c \mathbf C$ for
some maximal open segment $]a_x,b_x[$ containing $x$. If $]a_x,b_x[ \, \neq (\partial^c \mathbf C)^\circ$, then $a_x$ is not an accumulation of
$(\partial\mathbf C \cap \partial\O_p) \setminus \partial^c \mathbf C$, and a new application of  Proposition \ref{prop.bad-points}-(ii) shows that $a_x$ is contained in another open segment $S\subset \partial^c \mathbf
C$. Then, $]a_x,b_x[$ and $S$, being on the Lipschitz boundary $\partial \mathbf  C\cap\partial\O$, must be aligned so that $]a_x,b_x[\, \cup S \subset \partial^c\mathbf C$ is an open {line} segment containing $x$ which is strictly larger that $]a_x,b_x[$, a contradiction to the maximality of $]a_x,b_x[$. Therefore we must have that $]a_x,b_x[\, =(\partial^c \mathbf C)^\circ$, which shows that $\partial^c \mathbf C$ is a segment. If $a_x \not\in \partial^c \mathbf C$, then there exists $z \in \mathbf C$ such that $a_x \in L_z$ and $a_x$ would be the apex of a boundary fan contained in $\mathbf C$ by an argument identical to that used at the end of Case II in the proof of Lemma \ref{lem:5.14}. Therefore $a_x \in \partial^c \mathbf C$ and the same argument shows that $b_x \in \partial^c \mathbf C$. This proves that $\partial^c \mathbf C$ is a closed line segment.

\medskip

{\sf Step 4.} Assume that there is $z \in \mathbf C$ such that the characteristic line $L_z$ intersects say $\Gamma'_1$ at two distinct points $\gamma(s)$ and $\gamma(t) \in \Gamma'_1$. But then the same construction as
in Step 1 would establish the existence of some $\check s\in [s,t]$ such that $\gamma(\check s)\in \partial^c\mathbf C \cap \Gamma'_1$, which is impossible since $\Gamma'_1 \subset \partial\mathbf C \setminus \partial^c
\mathbf C$.  Alternatively, the closed convex connected subdomain $\mathbf D: =H\cap \mathbf C$ of $\mathbf C$, where $H$ is the closed half plane with boundary $L_z$ containing $\partial^c\mathbf C$, would  be such that, in the notation of Remark \ref{rem:con.comp}, $\#(K)=2$ while $\partial^c\mathbf D= \partial^c\mathbf C\ne\emptyset$ and Remark \ref{rem:con.comp.bis} would lead to a contradiction. Thus all characteristic lines that intersect $\Gamma'_1$ must
intersect $\Gamma'_2$ as well.
\end{proof}

\begin{remark}\label{rem.flat.bdary}
Remark that  in Lemma \ref{lem:516}, for any $x\in \Gamma'_1$, $y\in \Gamma'_2$, the line segment $[x,y]\not\subset \partial \mathbf C\cap\partial\O_p$. In other words, the boundary $\partial \mathbf C\cap\partial\O_p$ cannot be flat around $\partial^c \mathbf C$. Assuming otherwise, consider the Lipschitz {parameterization} $\gamma$ of $\partial \mathbf C$ and assume, for example, that $x=\gamma(s)$, $y=\gamma(t)$ with $s<t$. Take $L_x$ (resp. $L_y$) be the associated characteristic lines. Then, $L_x$
must  intersect $\Gamma'_2$ at $\gamma(t')$ with $t'>t$ while $L_y$ must
intersect $\Gamma'_1$ at $\gamma(s')$ with $s'<s$. But then, $L_x$ and $L_y$ must intersect in $\mathbf C$, which is impossible by Proposition \ref{prop:vortex}.
\hfill\P\end{remark}

\begin{remark}\label{rem:Gamma12}
Theorem \ref{thm:geom-struct} can be rephrased as follows:  the non-characteristic exterior boundary $\partial\mathbf C \cap \partial \O_p \setminus \partial^c \mathbf C$ has always exactly two connected components, denoted hereafter by $\Gamma_1$ and $\Gamma_2$ and all characteristic lines that intersect $\partial \mathbf C$ must intersect both $\Gamma_1$ and $\Gamma_2$.
\hfill\P
\end{remark}

\medskip

{The particular geometrical structure of  the connected components of $\mathscr C$ allows one to improve the continuity of the Cauchy stress $\sigma$ up to the (non characteristic) boundary.}

\begin{theorem}\label{thm:cont-sigma}
Let $\mathbf C$ be a connected component of $\mathscr C$ with nonempty interior. Then $\sigma$ is continuous in $\overline{\mathbf C}\setminus \partial^c \mathbf C$.
\end{theorem}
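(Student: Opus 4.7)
The approach is to split $\overline{\mathbf C} \setminus \partial^c \mathbf C$ as $\mathbf C \cup \big((\partial \mathbf C \cap \partial \O_p) \setminus \partial^c \mathbf C\big)$. Since $\mathbf C \subset \O_p$, Theorem \ref{thm:lip} already delivers local Lipschitz continuity of $\sigma$ on $\mathbf C$, so the content of the theorem reduces to continuity at each $x \in (\partial \mathbf C \cap \partial \O_p) \setminus \partial^c \mathbf C$. At such a point, $\sigma(x) := \sigma(z')$ for any $z' \in \mathbf C$ with $x \in L_{z'}$: this definition is unambiguous, since otherwise $x$ would be the apex of a boundary fan meeting $\mathbf C$ (by Lemma \ref{lem:cone-bdary}), contradicting $\mathbf C \subset \mathscr C$. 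I will argue by contradiction, extracting a subsequence with $y_n \to x$ in $\overline{\mathbf C} \setminus \partial^c \mathbf C$ along which $\sigma(y_n) \to \xi \ne \sigma(x)$.

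The heart of the argument is to exhibit an ``alternative'' characteristic line through $x$, namely the limit $L'$ of the $L_{y_n}$, and then derive a contradiction from the uniqueness of characteristic lines at points outside $\mathcal F$. By Theorem \ref{thm:geom-struct} (in the formulation of Remark \ref{rem:Gamma12}), the characteristic $L_{y_n}$ meets $\overline{\mathbf C}$ along a chord $[a_n, b_n]$ with $a_n \in \Gamma_1$, $b_n \in \Gamma_2$, the two disjoint connected components of $(\partial \mathbf C \cap \partial \O_p) \setminus \partial^c \mathbf C$. A case-by-case inspection of the two alternatives of Theorem \ref{thm:geom-struct} yields $\overline{\Gamma}_1 \cap \overline{\Gamma}_2 \subset \partial^c \mathbf C$; combined with $x \notin \partial^c \mathbf C$ and (without loss of generality) $x \in \Gamma_1$, this forces $d(x, \overline{\Gamma}_2) > 0$, so $|x - b_n|$ is bounded below. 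Up to a further extraction, $b_n \to b \ne x$, the chord length $|a_n - b_n|$ stays bounded below, and the lines $L_{y_n}$ converge to the line $L'$ through $x$ and $b$ of direction $\xi^\perp$.

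The midpoint $p := (x + b)/2$ will serve as an interior point of $L' \cap \O_p$ at which to invoke Lipschitz continuity of $\sigma$; establishing $p \in \O_p$ is the delicate step. By the convexity dichotomy in $\overline{\O_p}$, either $(x, b) \subset \O_p$ or $(x, b) \subset \partial \O_p$, and the second alternative, via the inclusion $\overline{\mathbf C} \cap \partial \O_p \subset \partial \mathbf C$, would force $[x, b] \subset \partial \mathbf C \cap \partial \O_p$; this is ruled out by the disjointness of the closed components $\Gamma_1, \Gamma_2$ when $\#(J) = 2$, and by Remark \ref{rem.flat.bdary} (applied to $[x, b_n]$ for large $n$ and passed to the limit) when $\#(J) = 1$. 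Once $p \in \O_p$ is secured, placing $p_n$ on $L_{y_n}$ at the fractional position corresponding to $p$ produces $p_n \in [a_n, b_n]$ with $p_n \to p$; by constancy of $\sigma$ along characteristics (Proposition \ref{prop:caracteristic}), $\sigma(p_n) = \sigma(y_n) \to \xi$, and Lipschitz continuity of $\sigma$ at $p$ yields $\sigma(p) = \xi$. Hence $L_p = L'$ is a genuine characteristic line through $x$. If $L_p \ne L_{z'}$, Lemma \ref{lem:cone-bdary} makes $x$ the apex of a boundary fan $\mathbf F_x$ whose interior contains the open segment $(p, z') \subset \O_p$ (via the open cone $C(p - x, z' - x)$); since $(p, z') \subset \mathbf C$ by Lemma \ref{lem:Cconvex}, this contradicts $\mathbf F_x \cap \mathbf C = \emptyset$. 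Therefore $L_p = L_{z'}$, and the constant value of $\sigma$ along this common line gives $\xi = \sigma(p) = \sigma(z') = \sigma(x)$, the desired contradiction.

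The main obstacle is the potentially degenerate scenario in which $(x, b) \subset \partial \O_p$: then $L'$ is tangent to $\overline{\mathbf C}$ at $x$, $|L_{y_n} \cap \O_p| \to 0$, and no interior point of $L' \cap \O_p$ is available at which to identify $\sigma(p_n)$ with $\xi$. It is precisely the two-component structure of $(\partial \mathbf C \cap \partial \O_p) \setminus \partial^c \mathbf C$ from Theorem \ref{thm:geom-struct}, combined with the non-flatness Remark \ref{rem.flat.bdary}, that excludes this degeneracy.
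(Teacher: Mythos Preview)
Your argument closely parallels the paper's: both locate a point of $\Omega_p$ on the limiting line $L'$ of the characteristics $L_{y_n}$ and exploit the Lipschitz continuity of $\sigma$ there. The paper does this directly (its Case~I), while you argue by contradiction and finish with a clean fan--apex argument; the substance is the same. There is, however, a real gap in the step where you exclude $(x,b)\subset\partial\Omega_p$ when $\#(J)=1$.

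The phrase ``Remark~\ref{rem.flat.bdary} applied to $[x,b_n]$ for large $n$ and passed to the limit'' does not do the job. The remark needs both endpoints in $\Gamma_1$, $\Gamma_2$, so it yields $(x,b_n)\subset\Omega_p$ for each $n$, and it yields $(x,b)\subset\Omega_p$ \emph{directly} whenever $b\in\Gamma_2$. But you only know $b\in\overline\Gamma_2$, and when $\#(J)=1$ one has $\overline\Gamma_2\setminus\Gamma_2=\{d\}$, the endpoint of $\partial^c\mathbf C$ adjacent to $\Gamma_2$. For $b=d$ the passage to the limit fails: in a convex $\Omega_p$ with a flat boundary arc one easily builds $(x,b_n)\subset\Omega_p$ for all $n$ with $(x,b)\subset\partial\Omega_p$ (take $\Omega_p$ a disk cut by a chord, $x$ and $b$ the two corners, $b_n$ on the round arc tending to $b$). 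Note also that in this scenario $|L_{y_n}\cap\Omega_p|=|a_n-b_n|$ stays bounded \emph{below} (since $|x-b|>0$), contrary to your description of the obstacle; the chords have uniformly positive length but flatten onto $\partial\Omega_p$.

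The paper disposes of precisely this residual case in its Case~II. Assuming the limit segment lies in $\partial\mathbf C\cap\partial\Omega_p$, it picks $x'\in(x,c)\subset\Gamma_1$ with $c$ the endpoint of $\partial^c\mathbf C$ adjacent to $\Gamma_1$. Since $x'\notin\partial^c\mathbf C$, the characteristic $L_{x'}$ exists and (by Remark~\ref{rem.flat.bdary} applied to $[x',y']$ with $y'\in\Gamma_2$) enters $\Omega_p$; hence $L_{x'}$ is transversal to $L'$ at $x'\in(x,b)$ and strictly separates $x$ from $b$, so for large $n$ it meets the open segment $(a_n,b_n)\subset\Omega_p$. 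Two distinct characteristics then cross in $\Omega_p$, contradicting Proposition~\ref{prop:vortex}. Splitting your $\#(J)=1$ argument into ``$b\in\Gamma_2$: apply Remark~\ref{rem.flat.bdary} to $[x,b]$ directly'' and ``$b=d$: use the transversal $L_{x'}$'' closes the gap.
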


\begin{proof}
We already know that $\sigma$ is locally Lipschitz continuous on $\mathbf
C$. It thus remains to prove that $\sigma$ is continuous on $\partial\mathbf C\cap\partial\O_p\setminus \partial^c \mathbf C$. According to Theorem \ref{thm:geom-struct} and Remark \ref{rem:Gamma12}, we know that $\partial \mathbf C \cap \partial \O_p \setminus \partial^c \mathbf C$ has two connected components $\Gamma_1$ and $\Gamma_2$.

Let $x \in \partial\mathbf C\cap\partial\O_p\setminus \partial^c \mathbf C$ and $(x_n)_{n \in \N}$ be a sequence in $\overline{\mathbf C}\setminus
\partial^c \mathbf C$ such that $x_n \to x$. {Without loss of generality,
we can assume that $x \in \Gamma_1$. Since by Theorem \ref{thm:geom-struct} $L_{x_n}$ must intersect both $\Gamma_1$ and $\Gamma_2$, there exists $a_n \in L_{x_n} \cap \Gamma_1 $ and $b_n \in L_{x_n} \cap \Gamma_2$ which, up to a subsequence, satisfy $a_n \to a \in \overline\Gamma_1$ and $b_n \to b \in \overline\Gamma_2$. As a consequence, the closed segment $S_n:=L_{x_n}\cap \overline \O_p$ converges in the sense of Hausdorff to the segment $S=[a,b]$ with $x=a$}.

\medskip

{\it Case I:} Assume first that there exists $\delta>0$ such that for all
$n \in \N$,
$$\max_{z \in S_n} {\rm dist}(z,\partial\mathbf C\cap\partial\O_p) \geq \delta,$$
then there exists $z_n \in S_n$ such that ${\rm dist}(z_n,\partial\mathbf
C\cap\partial\O_p) \geq \delta$ and, up to a subsequence, $z_n \to z$ for
some $z \in S$ with ${\rm dist}(z,\partial\mathbf C\cap\partial\O_p) \geq
\delta$. Since $x_n \in L_{z_n}$, there exists $\theta_n \in \R$ such that
$$x_n=z_n+\theta_n\sigma^\perp(z_n).$$
Note that, up to a further subsequence, $\theta_n \to \theta \in \R$ and thus, by continuity of $\sigma$ in $\O_p$, we have $x=z+\theta\sigma^\perp(z)$ which ensures that $x \in L_z$. Thus, using that $\sigma$ is constant along characteristics and, once again, the continuity of $\sigma$ in
$\O_p$, we get that
$$\sigma(x_n)=\sigma(z_n) \to \sigma(z)=\sigma(x).$$

\medskip

{\it Case II:} Assume next that, up to a subsequence
$${\max_{z \in S_n} {\rm dist}(z,\partial\mathbf C\cap\partial\O_p) \to 0.}$$
By Hausdorff convergence, for all $z \in S$, there exists a sequence $(z_n)_{n \in \N}$ with $z_n \in S_n$ and $z_n \to z$. Thus, ${\rm dist}(z_n,\partial\mathbf C\cap\partial\O_p) \to 0$ which ensures that $S \subset \partial \mathbf C{\cap\partial\O_p}$. Then {$S=[a,b]=[x,b]$} is a line segment contained in $\partial\mathbf C\cap\partial\O_p$ {which contains  $\partial^c\mathbf C$. Further,  $ \partial^c\mathbf C=[c,d]$ for some points $c,d\in \partial\mathbf C\cap\partial\O_p$ with $c\in \overline\Gamma_1$, $d\in \overline\Gamma_2$ and maybe $c=d$. Since $x\notin \partial^c\mathbf C$, $x=a\ne c$, $[a,c[\cap\partial^c\mathbf C=\emptyset$, and there exists a point $x'\in [a,c[$ such that $x'\notin \partial^c\mathbf C$. But then, the characteristic line segment $L_{x'}\cap\overline\O_p$ must intersect $S_n$ since $S_n$ Hausdorff-converges to $[a,b]$, which is impossible according to Proposition \ref{prop:vortex}.  }
Thus, this second case never occurs.
\end{proof}

The following result is analogous to Proposition \ref{prop:unique-u-fan}.
It gives a uniqueness result for the displacement in some portion of $\mathbf C$ whose boundary intersects the boundary of the domain $\O$.

\begin{proposition}\label{prop:unique-C}
Let $\mathbf C$ be a connected component of $\mathscr C$ with nonempty interior. Extend $u$ by $w$ outside $\Omega$. Then, $u^+=u^-$ $\HH^1$-a.e. on $\partial \mathbf C \cap   \partial \O_p \setminus \partial^c \mathbf C$ and, in particular, $u=w$ $\HH^1$-a.e. on  $\partial \mathbf C \cap   \partial \O_p \cap \partial\Omega \setminus \partial^c \mathbf C$.
\end{proposition}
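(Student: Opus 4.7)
The plan is to adapt the strategy of Proposition~\ref{prop:unique-u-fan}, replacing the explicit vortex formula of Theorem~\ref{thm:fan-sigma} by the boundary continuity of $\sigma$ provided by Theorem~\ref{thm:cont-sigma}, and then to exploit convexity of $\O_p$ to rule out the possibility of a characteristic line being tangent to $\partial\O_p$ at a non-characteristic boundary point.

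First, I apply Remark~\ref{rmk:jump-flow-rule} to the open set $\omega:=\mathring{\mathbf C}$. Since $\mathbf C$ is convex (Lemma~\ref{lem:Cconvex}), its boundary is Lipschitz, and the remark, together with the extension of $u$ by $w$ outside $\O$, yields $(\sigma\cdot\nu)(u^+-u^-)=|u^+-u^-|$ $\HH^1$-a.e.\ on $\partial\mathbf C$, where $u^+$ is replaced by $w$ on $\partial\mathbf C\cap\partial\O$. In particular, at every jump point of the (extended) $u$ on $\partial\mathbf C$ one has $\sigma\cdot\nu=\pm 1$.

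Second, I combine this with Theorem~\ref{thm:cont-sigma}: $\sigma$ is continuous on $\overline{\mathbf C}\setminus\partial^c\mathbf C$, and since $|\sigma|\equiv 1$ on the interior of $\O_p$, continuity forces $|\sigma(x)|=1$ for every $x\in\overline{\mathbf C}\setminus\partial^c\mathbf C$. At $\HH^1$-a.e.\ point $x$ of $\partial\mathbf C\cap\partial\O_p\setminus\partial^c\mathbf C$ the Lipschitz boundary $\partial\O_p$ is differentiable, and at such a point the outer normal to $\mathring{\mathbf C}$ coincides with the outer normal $\nu(x)$ to $\O_p$; hence $\sigma\cdot\nu$ agrees with the ordinary scalar product of $\sigma(x)$ and $\nu(x)$. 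If $x$ were in addition a jump point, then $\sigma(x)\cdot\nu(x)=\pm 1$ together with $|\sigma(x)|=|\nu(x)|=1$ would force $\sigma(x)=\pm\nu(x)$.

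The third and decisive step is geometric. The characteristic direction $\sigma^\perp(x)=\pm\nu^\perp(x)$ is then tangent to $\partial\O_p$ at $x$, so $L_x$ is a supporting line of the convex set $\overline{\O_p}$. Therefore $\overline{\O_p}$ is contained in one closed half-plane bounded by $L_x$, which, since $\O_p$ is open, implies $L_x\cap\O_p=\emptyset$. However, $x\notin\partial^c\mathbf C$ provides some $z\in\mathbf C\subset\O_p$ with $x\in L_z$, so by Theorem~\ref{thm:lip} (constancy of $\sigma$ along characteristics) $L_x=L_z$, and then $z\in L_x\cap\O_p$ gives the desired contradiction. This proves that $\HH^1(J_u\cap\partial\mathbf C\cap\partial\O_p\setminus\partial^c\mathbf C)=0$, hence $u^+=u^-$ $\HH^1$-a.e.\ on $\partial\mathbf C\cap\partial\O_p\setminus\partial^c\mathbf C$. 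Restricting to the portion lying on $\partial\O$, where by construction the outer trace equals $w$, yields the final $u=w$ statement. The only delicate point I anticipate is to justify the application of Remark~\ref{rmk:jump-flow-rule} when $\partial\mathbf C$ meets $\partial\O$ on a set of positive $\HH^1$-measure, but the remark is explicitly stated to handle that by substituting $w$ for the outer trace; the heart of the proof is the convexity-based tangency contradiction, which is essentially the one used in Proposition~\ref{prop:unique-u-fan}, here made accessible by continuity of $\sigma$ in lieu of an explicit formula.
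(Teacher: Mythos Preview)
Your proof is correct and follows essentially the same route as the paper's: apply Remark~\ref{rmk:jump-flow-rule} to force $\sigma\cdot\nu=\pm1$ at jump points, use Theorem~\ref{thm:cont-sigma} to identify the normal trace with the pointwise scalar product and conclude $\sigma(x)=\pm\nu(x)$, and then derive a contradiction from the fact that the characteristic through $x$ would be tangent to the convex set $\O_p$ while $x\notin\partial^c\mathbf C$ forces that line to meet $\O_p$. Your third step spells out the supporting-line argument more explicitly than the paper, but the idea is identical.
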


\begin{proof}
In the current setting,  $\partial \mathbf C \cap   \partial \O_p \setminus \partial^c \mathbf C$ has two connected components $\Gamma_1$ and $\Gamma_2$ which are {open in the relative topology of $\partial\O_p$ and
Lipschitz}. According to Remark \ref{rmk:jump-flow-rule}, $\sigma\cdot\nu=\pm1$ $\HH^1$-a.e. on $\partial \mathbf C \cap \partial\O_p \cap J_u\setminus \partial^c \mathbf C$. Theorem \ref{thm:cont-sigma} ensures that
$\sigma$ is continuous in $\overline{\mathbf C}\setminus \partial^c \mathbf C$. We thus deduce that $\sigma\cdot \nu$ coincides $\HH^1$-a.e. with the usual scalar product of $\sigma$ and $\nu$ on $\partial \mathbf C\cap
\partial\O_p\setminus \partial^c \mathbf C$. Therefore, $\nu=\pm\sigma$
$\HH^1$-a.e. on $\partial \mathbf C \cap  \partial\O_p\cap J_u\setminus \partial^c \mathbf C$.

Assume that $\HH^1(\partial \mathbf C \cap  \partial\O_p \cap J_u\setminus \partial^c \mathbf C)>0$, we can find a point $x_0 \in \partial \mathbf
C \cap  \partial\O_p \cap J_u\setminus \partial^c \mathbf C$ such that, up to a change of sign, $\nu(x_0)=\sigma(x_0)$. It thus follows that $L_{x_0}$ is tangent to $\mathbf C$ at $x_0$ which is in contradiction with the fact that $x_0 \not\in \partial^c \mathbf C$. Therefore, $\HH^1(\partial \mathbf C \cap  \partial\O_p \cap J_u\setminus \partial^c \mathbf C)=0$ and thus,  $u^+=u^-$ $\HH^1$-a.e. on $\partial \mathbf C \cap  \partial\O_p \setminus \partial^c \mathbf C$.

Using the Dirichlet boundary condition, we get that $u=w$ $\HH^1$-a.e. on $\partial \mathbf C \cap  \partial\O_p\cap \partial\O\setminus \partial^c \mathbf C$.
\end{proof}

\begin{remark}\label{rem:strict-conv}
If $\O$ was a convex domain and if $\sigma$ was such that $|\sigma|\equiv
1$ in $\O$, then, $\O=\O_p$ would be the disjoint union of characteristic lines $L_{x_\lambda}$ (see Proposition \ref{prop:emptyint}),  of countably many boundary fans $\mathbf F_{\bar z_i}$, and of countably many connected components $\mathbf C_j$ (with non empty interior) such that every
point in $\partial\mathbf C_j \cap \partial\O \setminus \partial^c \mathbf C_j$ is traversed by a characteristic line (see Theorem \ref{thm:geom-struct}). Since all characteristic lines cut $\partial\O$ into two distinct points by convexity of $\O$, then $u$ is entirely determined by $w$ in each fan $\mathbf F_{\bar z_j}$ according Proposition \ref{prop:unique-u-fan}. Similarly, by virtue of Proposition \ref{prop:unique-C}, $u(x)=w(x)$ for $\HH^1$-a.e. $x \in \partial\mathbf C_j \cap \partial\O \setminus
\partial^c \mathbf C_j$ and, finally, by Theorem \ref{thm:cst-disp}, $u$ is constant along $\mathcal H^1$-a.e. associated characteristic line $L_x$.

Since it is generically not so that the values of $w$ at both points of $L_x \cap \partial \O$ should match, this situation is not possible. Therefore, in the scalar case with a generic Dirichlet boundary condition over the entire boundary, a convex body cannot be entirely plastified unless it is made only of boundary fans.

However, as seen in the example in the introduction, the situation is different in the case of mixed boundary conditions.
\hfill\P\end{remark}

\subsection{Miscellanea}\label{examples}
\subsubsection{Exterior fans}

In contrast with the case of  boundary fans, it might happen that two characteristic lines which intersect outside $\overline \O_p$ will not generate an exterior fan. In particular, there is no analogue to Lemma \ref{lem:cone-bdary}. We thus need to force this property by introducing the (possibly empty) set
\begin{eqnarray*}
\mathcal F^{\rm ext} & := & \Big\{\bar z \in \R^2\setminus \overline \O_p: \; \exists \, x, y \in \O_p \text{ with }y \neq x \text{ such that } L_x \cap L_y= \{\bar z\}\nonumber\\
&& \hspace{2cm}\text{ and } \bar z \in L_z \text{ for all }z \in {(\bar z+C(x-\bar z,y-\bar z))\cap \O_p}\Big\}.
\end{eqnarray*}

\begin{figure}[htbp]
\begin{tabular}{cc}
\begin{minipage}[1cm]{.50\linewidth}
\scalebox{.8}{\begin{tikzpicture}
\fill[color=gray!35]  plot[smooth cycle] coordinates {(-1,-1) (3,0) (3,2) (0,3) (-1,2)};
\fill[color=white]  plot coordinates {(0,-2) (-0.6,3.5) (-2,3.5) (-2,-2)};
\fill[color=white]  plot coordinates {(0,-2) (2,3.5) (5,3.5) (5,-2)};

\draw plot[smooth cycle] coordinates {(-1,-1) (3,0) (3,2) (0,3) (-1,2)};
\draw[style=thick] (0,-2) -- (-0.6,3.5)  ;
\draw (0,-2) -- (0,3.5) ;
\draw (0,-2) -- (0.5,3.5) ;
\draw (0,-2) -- (1,3.5) ;
\draw (0,-2) -- (1.5,3.5) ;
\draw[style=thick] (0,-2) -- (2,3.5) ;

\draw (0.1,1.8) node[right]{\small $\mathbf C$};
\draw (1.5,0.8) node[right]{\small $\Omega_p$};
\draw (0,-2) node[below]{\small $\bar z$};
\end{tikzpicture}}
\caption{\small  An exterior fan with apex $\bar z$ in the case $\#(I)=2$}
\label{fig:extfan1}
\end{minipage}

&

\begin{minipage}[1cm]{.50\linewidth}
\scalebox{.8}{\begin{tikzpicture}
\fill[color=gray!35]  plot[smooth cycle] coordinates {(-1,-1) (3,0) (3,2) (0,3) (-1,2)};
\fill[color=white]  plot coordinates {  (-0.85,-2)  (0,3.5)  (5,3.5) (5,-2)};
\draw plot[smooth cycle] coordinates {(-1,-1) (3,0) (3,2) (0,3) (-1,2)};

\draw[style=thick] (-0.85,-2) -- (-2.45,3.5)  ;

\draw  (-1.22,-0.7) node{{\small $\bullet$}};
\draw (-1.25,-0.7)  node[left]{{\small $\partial^c \mathbf C \ni x$}};

\draw (-0.85,-2) -- (-2,3.5) ;
\draw (-0.85,-2) -- (-0.5,3.5) ;
\draw (-0.85,-2) -- (-1,3.5) ;
\draw (-0.85,-2) -- (-1.5,3.5) ;
\draw[style=thick] (-0.85,-2) -- (0,3.5) ;

\draw (-1,1) node[right]{\small $\mathbf C$};
\draw (1.5,0.8) node[right]{\small $\Omega_p$};
\draw (-0.85,-2) node[below]{\small $\bar z$};
\end{tikzpicture}}
\caption{\small  An exterior fan with apex $\bar z$ in the case $\#(I)=1$}
\label{fig:extfan2}
\end{minipage}

\end{tabular}
\end{figure}
For all $\bar z \in \mathcal F^{\rm ext}$, there exists a maximal open set ${(\bar z+C(x\!-\!\bar z,y\!-\!\bar z)) \!\cap \O_p}$, for some $x$ and
$y \in \overline \O_p$, which is denoted by $\mathbf F_{\bar z}$, and which is called an {\it exterior fan}. Arguing exactly as in the proof of Theorem \ref{thm:fan-sigma}, we obtain the following rigidity result inside
exterior fans.
\begin{proposition}\label{prop:ext-fan}
If $\mathbf C$ is a connected component of $\mathscr C$ with nonempty interior such that $\mathbf C=\overline{\mathbf F}_{\bar z}\cap \O_p$, for
some  $\bar z \in \mathcal F^{\rm ext}$, then there exists $\alpha \in \{-1,1\}$ such that
$$\sigma(x)=\alpha \frac{(x-\bar z)^\perp}{|x-\bar z|} \quad \text{ for
all }x \in \overline{\mathbf F}_{\bar z} {\cap \O_p}$$
and
$$u(x)=\alpha h\left( \frac{(x-\bar z)_2}{(x-\bar z)_1}\right) \text{ for all }x \in \mathbf F_{\bar z},$$
for some nondecreasing function $h:\R \to \R$.
\end{proposition}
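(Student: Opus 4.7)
The plan is to mimic, almost verbatim, the polar–coordinate argument used for boundary fans in the proof of Theorem \ref{thm:fan-sigma}, the only structural change being that the apex $\bar z$ now lies at positive distance from $\overline\O_p$ rather than on $\partial\O_p$. After translating so that $\bar z=0$ and rotating so that $\mathbf F_{\bar z}$ sits in the half-plane $\{x_1>0\}$, introduce the polar change of variables $\Psi(r,\theta)=(r\cos\theta,r\sin\theta)$, $\theta\in(-\pi,\pi)$. By the very definition of $\mathcal F^{\rm ext}$, $\mathbf F_{\bar z}$ is the intersection of $\O_p$ with an open cone of apex $0$, so $\tilde F:=\Psi^{-1}(\mathbf F_{\bar z})\subset (r_{\min},+\infty)\times(\theta_0,\theta_1)$ for some $r_{\min}>0$ and $-\frac{\pi}{2}\le\theta_0<\theta_1\le\frac{\pi}{2}$, and $\Psi$ restricts to a $\mathcal C^\infty$-diffeomorphism between a neighborhood of $\tilde F$ and a neighborhood of $\mathbf F_{\bar z}$.

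For the stress, by definition of an exterior fan every characteristic line in $\mathbf F_{\bar z}$ passes through $0$, so the locally Lipschitz vector field $\sigma$, which is unit and constant along characteristics, is orthogonal to $e_r$ and depends only on $\theta$. Writing $\sigma\circ\Psi=f(\theta)e_\theta$ with $|f|=1$, the equation $\Div\sigma=0$ expressed in polar coordinates reduces to $f'(\theta)=0$ a.e. in $(\theta_0,\theta_1)$; by connectedness, $f\equiv\alpha\in\{-1,1\}$. Returning to Cartesian coordinates yields $\sigma(x)=\alpha\,x^\perp/|x|$ on $\mathbf F_{\bar z}$ and, by the continuity of the locally Lipschitz representative of $\sigma$ up to the relative boundary inside $\O_p$, on $\overline{\mathbf F}_{\bar z}\cap\O_p$.

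For the displacement, recall from Remark \ref{rmk:strong-fr} that $Du=\sigma\mu$ in $\M(\O_p)$ with $\mu=\LL^2+|p|$, so that $\Div(\sigma^\perp\mu)=0$ in $\mathcal D'(\O_p)$. Since $\sigma^\perp\circ\Psi=-\alpha e_r$ on $\mathbf F_{\bar z}$, testing against $\varphi=\tilde\varphi\circ\Psi^{-1}$ with $\tilde\varphi\in\mathcal C^\infty_c(\tilde F)$ gives $D_r\tilde\mu=0$ in $\mathcal D'(\tilde F)$, where $\tilde\mu:=\Psi^{-1}\#\mu$. Consequently $\tilde\mu=\LL^1_r\otimes\eta$ for some nonnegative $\eta\in\M((\theta_0,\theta_1))$. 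Setting $\tilde u:=u\circ\Psi\in BV(\tilde F)$ and reproducing lines \eqref{eq:Du1}--\eqref{eq:Du3} of the proof of Theorem \ref{thm:fan-sigma}, the identity $Du=\alpha(x^\perp/|x|)\mu$ translates into
\[
rD_r\tilde u=0,\qquad D_\theta\tilde u=\alpha\,\LL^1_r\otimes\eta\qquad\text{in }\mathcal D'(\tilde F).
\]
The first equation forces $\tilde u(r,\theta)=\alpha\tilde h(\theta)$ for some $\tilde h\in BV((\theta_0,\theta_1))$, while the second gives $D\tilde h=\eta\ge 0$, i.e.\ $\tilde h$ is nondecreasing. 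Setting $h:=\tilde h\circ\arctan$ and returning to Cartesian coordinates yields the claim.

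No new obstacle arises: the cosmetic simplification relative to the boundary-fan case is that the exceptional slit of $\Psi$ never enters the picture since $\bar z=0\notin\overline\O_p$, so the $\mathcal C^\infty$-diffeomorphism character of $\Psi$ on a neighborhood of $\tilde F$ is immediate and the $BV$ change of variables for $\tilde u$ is unproblematic.
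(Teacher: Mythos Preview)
Your proposal is correct and follows exactly the approach the paper takes: the paper's proof of this proposition is a single sentence, ``Arguing exactly as in the proof of Theorem \ref{thm:fan-sigma}'', and you have faithfully reproduced that polar-coordinate argument with the appropriate (and indeed simplifying) modifications for an apex lying outside $\overline\O_p$. Your observation that the exceptional slit of $\Psi$ is irrelevant here is a nice clarification.
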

\begin{remark}\label{rem:ext-fan}
The analogue of Proposition \ref{prop:unique-u-fan} also holds true for exterior fans and there can be no jumps on $\partial \mathbf F_{\bar z}\cap C_{\bar z}$, {where $C_{\bar z}$ is the maximal open cone such that $\mathbf F_{\bar z}= C_{\bar z}\cap \O_p$}. Also, $u=w$ $\HH^1$-a.e. on
$\partial \mathbf F_{\bar z}\cap C_{\bar z}\cap \partial\Omega$.
\hfill\P\end{remark}

\subsubsection{Constant zones}

Constant zones have a special structure.

\begin{proposition}\label{prop:cst-u}
If $\mathbf C$ is a connected component of $\mathscr C$ with nonempty interior on which $\sigma=\bar \sigma$ is constant, then $u(x)=\bar \sigma \cdot x+v(\bar \sigma \cdot x)$ for all $x \in \mathring{\mathbf C}$ for some non decreasing function $v$,  and, if $\partial \mathbf C \cap \partial \O_p$ has two connected components, then both characteristic lines
$L_1$ and $L_2$ lying on $\partial\mathbf C \cap \O_p$ are parallel to $\R \bar \sigma^\perp$.
\end{proposition}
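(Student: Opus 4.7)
The plan is to establish the two assertions separately; I begin with the structural statement about the boundary characteristics since it is the more direct one.

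Assume $\partial \mathbf C \cap \partial \O_p$ has two connected components. By Theorem \ref{thm:geom-struct}(i), $\partial^c \mathbf C = \emptyset$, and $\partial \mathbf C \cap \O_p$ is made of the two characteristic line segments $L_1 \cap \O_p$ and $L_2 \cap \O_p$. Since $\partial^c \mathbf C = \emptyset$, Theorem \ref{thm:cont-sigma} ensures that $\sigma$ is continuous on $\overline{\mathbf C}$. Combined with $\sigma \equiv \bar\sigma$ on $\mathbf C$, this forces $\sigma \equiv \bar\sigma$ throughout $\overline{\mathbf C}$, and in particular on $L_1$ and $L_2$. Since each $L_i$ coincides with $L_{\hat x_i} = \hat x_i + \R \sigma^\perp(\hat x_i)$ for some $\hat x_i \in L_i \cap \overline{\mathbf C}$, its direction is $\bar\sigma^\perp$.

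For the representation formula, I would first observe that $\sigma \equiv \bar\sigma$ on $\mathring{\mathbf C}$ implies that every characteristic line $L_x$ with $x \in \mathring{\mathbf C}$ has direction $\bar\sigma^\perp$. Theorem \ref{thm:cst-disp} then shows that $u$ is constant along $L_x \cap \mathring{\mathbf C}$ for $\LL^2$-a.e. $x \in \mathring{\mathbf C}$. After rotating to the coordinates $y_1 = \bar\sigma \cdot x$, $y_2 = \bar\sigma^\perp \cdot x$, and denoting by $\Omega'$ the image of $\mathring{\mathbf C}$, this means precisely that $u$ seen as a function of $(y_1,y_2)$ is independent of $y_2$, so there exists $\tilde u \in BV_{\rm loc}(\pi_1(\Omega'))$ with $u(x) = \tilde u(\bar\sigma \cdot x)$ a.e. in $\mathring{\mathbf C}$, where $\pi_1$ denotes projection on the first coordinate.

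To recover the desired form $u(x) = \bar\sigma \cdot x + v(\bar\sigma \cdot x)$ with $v$ nondecreasing, I would invoke \eqref{eq:DU=}, which here reads $Du = \bar\sigma(\LL^2 + |p|)$ in $\M(\mathring{\mathbf C}; \R^2)$. Taking the scalar product with $\bar\sigma \in \mathbb S^1$ yields, in the rotated frame, $\partial_{y_1} u = \LL^2 + |p|_{\rm rot} \geq \LL^2$ as a nonnegative measure. Combined with the slicing theorem (\cite[Theorem 3.107]{AFP}) applied to $u(y_1,y_2) = \tilde u(y_1)$, which gives $\partial_{y_1} u = D\tilde u \otimes \LL^1_{y_2}$, one concludes that $D\tilde u \geq \LL^1$ on $\pi_1(\Omega')$. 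Therefore $v(t) := \tilde u(t) - t$ is nondecreasing, as claimed. The main technical point requiring care is the slicing/Fubini identification $\partial_{y_1} u = D\tilde u \otimes \LL^1_{y_2}$ and the comparison with the lower bound $\LL^2$, but this step is entirely analogous to the slicing argument employed in the proof of Proposition \ref{prop:cst-disp}.
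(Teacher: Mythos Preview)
Your proof is correct, but both parts take a somewhat different route from the paper's.

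For the boundary characteristics, the paper argues by contradiction: if $L_1$ were not parallel to $\R\bar\sigma^\perp$, then some characteristic $L_x$ with $x\in\mathring{\mathbf C}$ (which has direction $\bar\sigma^\perp$) would intersect $L_1$ inside $\O_p$, violating Proposition~\ref{prop:vortex}. Your approach via continuity is more direct, but note that you do not actually need Theorem~\ref{thm:cont-sigma}: the points of $L_i\cap\O_p$ lie in $\O_p$ itself, where $\sigma$ is already locally Lipschitz by Theorem~\ref{thm:lip}, and $L_i\cap\O_p\subset\mathbf C$ by Lemma~\ref{lem:Cconvex}. So $\sigma(\hat x_i)=\bar\sigma$ follows immediately.

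For the representation formula, the paper works at the level of measures from the start: since $Du=\bar\sigma(\LL^2+|p|)$, applying $\curl$ gives $D_{y_2}|p|=0$ in $\mathcal D'(\mathring{\mathbf C})$ (in the frame with $\bar\sigma=e_1$), whence $|p|=\pi_{y_1}\otimes\LL^1_{y_2}$ for some nonnegative $\pi$, and then $Du=e_1(1+\pi\otimes\LL^1)$ yields $u(x)=y_1+v(y_1)$ with $Dv=\pi\ge 0$. Your approach is also valid but more circuitous: you invoke Theorem~\ref{thm:cst-disp} to obtain $y_2$-independence of $u$, when in fact $D_{y_2}u=\bar\sigma^\perp\!\cdot Du=0$ is immediate from \eqref{eq:DU=}. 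The paper's route avoids the heavy machinery of Theorem~\ref{thm:cst-disp} altogether and keeps the argument self-contained in two lines.
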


\begin{proof}
Assume for simplicity that $\bar\sigma=e_1=(1,0)$. Addressing the last part of the Proposition, if one of both characteristic lines, say $L_1$, lying on $\partial\mathbf C\cap \O_p$ is not parallel to $e_2=(0,1)$,
then there exists $x \in \mathring{\mathbf C}$ such that $L_x$ intersects
$L_1$ inside $\O_p$ which is absurd in view of Proposition \ref{prop:vortex}.

Further, with the help of \eqref{eq:DU=},
$$D_2|p|= \Div (\bar \sigma^\perp(\mathcal L^2+|p|))=0 \quad \text{ in }\mathcal D'(\mathring{\mathbf C}),$$
which means that $|p|$ is independent of $x_2$. As a consequence, $|p|=
\pi_{x_1}\otimes \LL^1_{x_2} $ for some nonnegative measure $\pi=\pi_{x_1} \in \mathcal M(\R)$, {\it i.e.},
$$\int_{\mathring{\mathbf C}}\varphi\, d|p|=\int_{\mathring{\mathbf C}}
\varphi(x_1,x_2)\, d\pi(x_1)\, dx_2 \quad \text{ for all }\varphi\in \C_c(\mathring{\mathbf C}).$$
This implies, in view of \eqref{eq:DU=},  that $Du=e_1 (1+ \pi_{x_1}\otimes \LL^1_{x_2})$ on $\mathring{\mathbf C}$. Hence $u(x)=x_1+v(x_1)$
for a.e. $x \in \mathring{\mathbf C}$ for some $v\in BV(\R)$ with $Dv=\pi\geq 0$.
\end{proof}

\begin{figure}[htbp]
\begin{tabular}{cc}
\hskip-.5cm\begin{minipage}[5cm]{.50\linewidth}

\scalebox{.8}{\begin{tikzpicture}
\fill[color=gray!35]  plot[smooth cycle] coordinates {(-1,-1) (3,0) (3,2) (0,3) (-1,2)};
\fill[color=white]  plot coordinates {  (0,-2) (-1,3.5) (-3,3.5) (-3,-2)};
\fill[color=white]  plot coordinates {  (1.81,-2) (0.81,3.5) (5,3.5) (5,-2) };

\draw plot[smooth cycle] coordinates {(-1,-1) (3,0) (3,2) (0,3) (-1,2)};
\draw[style=thick] (0,-2) -- (-1,3.5)  ;
\draw (0.36,-2) -- (-0.64,3.5) ;
\draw (0.73,-2) -- (-0.27,3.5) ;
\draw (1.09,-2) -- (0.09,3.5) ;
\draw (1.45,-2) -- (0.45,3.5) ;
\draw[style=thick]  (1.81,-2) -- (0.81,3.5) ;

\draw (0.1,1) node[right]{\small $\mathbf C$};
\draw (1.5,0.8) node[right]{\small $\Omega_p$};
\end{tikzpicture}}
\caption{\small  A constant zone in the case $\#(I)=2$}
\label{fig:const1}
\end{minipage}

&

\hskip.5cm\begin{minipage}[5cm]{.50\linewidth}
\scalebox{.8}{\begin{tikzpicture}
\fill[color=gray!35]  plot[smooth cycle] coordinates {(-1,-1) (3,0) (3,2) (0,3) (-1,2)};
\fill[color=white]  plot coordinates {  (0.76,-2) (-0.24,3.5) (5,3.5) (5,-2)};

\draw plot[smooth cycle] coordinates {(-1,-1) (3,0) (3,2) (0,3) (-1,2)};

\draw[style=thick] (-1,-2) -- (-2,3.5)  ;
\draw (-0.66,-2) -- (-1.66,3.5) ;
\draw (-0.3,-2) -- (-1.3,3.5) ;
\draw (0.03,-2) -- (-0.95,3.5) ;
\draw (0.40,-2) -- (-0.60,3.5) ;
\draw[style=thick]  (0.76,-2) -- (-0.24,3.5) ;

\draw (-0.8,0.7) node[right]{\small $\mathbf C$};
\draw (1.5,0.8) node[right]{\small $\Omega_p$};
\draw  (-1.26,-0.55) node{{\small $\bullet$}};
\draw (-1.3,-0.55) node[left]{{\small $\partial^c \mathbf C \ni x$}};

\end{tikzpicture}}
\caption{\small  A constant zone in the case $\#(I)=1$}
\label{fig:const2}
\end{minipage}

\end{tabular}
\end{figure}

\subsubsection{An example of a non-fan, non-constant structure}\label{4.5.3}

Assume that a connected component $\mathbf C$ of $\O_p$ is such that
$\mathbf C \subset (0,R)^2,$
for some $R>0$, and that the characteristic lines are given by the one-parameter family
$$L_t=\left\{(x,y) \in \R^2 : \; y=\frac{x}{t}-t\right\}, \quad t>0.$$
Note that this family of lines does not intersect at a single point so that it does not define an exterior fan (see Figure \ref{fig:x/t-t}).
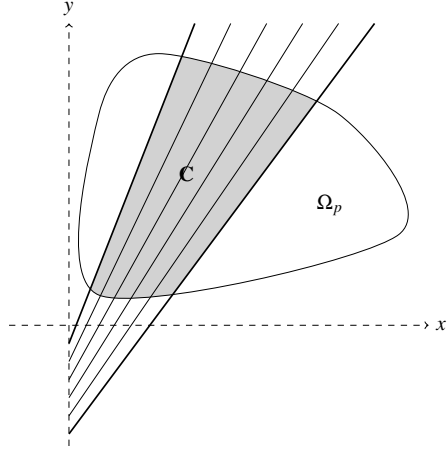
\begin{figure}[htbp]
\scalebox{.8}{\begin{tikzpicture}
\fill[color=gray!35]  plot[smooth cycle] coordinates  {(0.5,0.5) (5.5,1.5) (4.5,3.5) (1.5,4.5) (0.5,3.5)};
\fill[color=white]  plot coordinates {  (0,-0.3)  (2.1,5) (0,5) (0,-0.3)};
\fill[color=white]  plot coordinates { (0,-1.8)  (5.1,5) (7,5) (7,0)};

\draw plot[smooth cycle] coordinates {(0.5,0.5) (5.5,1.5) (4.5,3.5) (1.5,4.5) (0.5,3.5)};

\draw[dashed,->] (0,-2) -- (0,5)  ;
\draw[dashed, ->]  (-1,0) -- (6,0);

\draw[style=thick] (0,-0.3) -- (2.1,5) ;
\draw (0,-0.6) -- (2.7,5) ;
\draw (0,-0.9) -- (3.3,5) ;
\draw (0,-1.2) -- (3.9,5) ;
\draw (0,-1.5) -- (4.5,5) ;
\draw[style=thick] (0,-1.8) -- (5.1,5) ;

\draw(0,5) node[above]{\small$y$};
\draw(6,0) node[right]{\small$x$};
\draw (1.7,2.5) node[right]{\small $\mathbf C$};
\draw (4,2) node[right]{\small $\Omega_p$};
\end{tikzpicture}}
\caption{\small  An example of a non-fan, non-constant structure}
\label{fig:x/t-t}
\end{figure}
However, we are going to construct an example of smooth solutions $(u,\sigma,p)$ of
$$\begin{cases}
{\rm div\, }\sigma=0, \quad |\sigma|=1\quad \text{ in }\mathbf C,\\
\nabla u=\sigma+p, \quad p=\sigma |p| \quad \text{ in }\mathbf C,
\end{cases}$$
such that $u$ and $\sigma$ are constant along $L_t \cap \mathbf C$ for all $t>0$. Doing so, we will have  satisfied all equations \eqref{eq:plast}
on $\mathbf C$, ignoring the boundary condition on $\partial \O$ and recalling Remark \ref{rmk:strong-fr}.

\medskip

Let us start by constructing a (unique) stress $\sigma:\mathbf C \to \R^2$. Indeed, since $\sigma(x,y)$ must be an unimodular vector orthogonal to
$L_t$ at $(x,y=\frac{x}{t}-t)$, then
$$\sigma(x,y)=\frac{1}{\sqrt{1+t^2}}
\begin{pmatrix} -1 \\ t \end{pmatrix} \quad \text{ with }\quad t=\frac{-y+\sqrt{y^2+4x}}{2},$$
hence
$$\sigma(x,y):= \frac{2}{\sqrt{4+\left(\sqrt{y^2+4x}-y\right)^2}}\begin{pmatrix}
-1\\\frac{\sqrt{y^2+4x}-y}{2}
\end{pmatrix}.
$$
Then $\sigma \in \C^\infty(\mathbf C;\R^2)$ and, by construction, $|\sigma|=1$ in $\mathbf C$. Further a lengthy computation would show that ${\rm div\,}\sigma=0$ in $\mathbf C$.

\medskip

We now construct the displacement $u:\mathbf C\to\R$ as
$u(x,y):=f(\sqrt{y^2+4x}-y) \; \text{ for all }(x,y) \in \mathbf C,$
for some smooth decreasing function $f:\R^+ \to \R$ such that
\begin{equation}\label{eq:f'}
f'(t)\le -\frac{R+t}{\sqrt{4+t^2}}\quad \text{ for all }t>0.
\end{equation}
By construction, $u$ is constant along all lines $L_t$ for all
$t>0$.
We can immediately compute $\nabla u$ on $\mathbf C$. We  get
\begin{eqnarray*}
\nabla u(x,y)  & = & -\frac{2}{{\sqrt{y^2+4x}}} \begin{pmatrix}
-1\\ \frac{\sqrt{y^2+4x}-y}2
\end{pmatrix} f'(\sqrt{y^2+4x}-y)\\
& = & -\left(\frac{4+\left(\sqrt{y^2+4x}-y\right)^2}{y^2+4x}\right)^{1/2} \sigma(x,y)\, f'(\sqrt{y^2+4x}-y)  .
\end{eqnarray*}

\medskip

We then define the plastic strain $p: \mathbf C \to \R^2$ by
\begin{eqnarray*}
p(x,y) & := & \nabla u(x,y)-\sigma(x,y)\\
& = & -\left(\frac{\sqrt{4+\left(\sqrt{y^2+4x}-y\right)^2}}{\sqrt{y^2+4x}} f'(\sqrt{y^2+4x}-y)+1\right) \sigma(x,y).
\end{eqnarray*}
Note that, in view of \eqref{eq:f'},
$$-f'(\sqrt{y^2+4x}-y) \frac{\sqrt{4+\left(\sqrt{y^2+4x}-y\right)^2}}{\sqrt{y^2+4x}}-1\ge 0 $$
for all $(x,y) \in \mathbf C,$ so that
$$|p(x,y)|=-\left(\frac{\sqrt{4+\left(\sqrt{y^2+4x}-y\right)^2}}{\sqrt{y^2+4x}} f'(\sqrt{y^2+4x}-y)+1\right)$$
for all $(x,y) \in \mathbf C,$
and thus $p=\sigma|p|$. We have thus constructed smooth functions $u$ and $p$ satisfying $\nabla u=\sigma+p$ and $p=\sigma |p|$ in $\mathbf C$. Remark that there are many choices of functions $f$ satisfying \eqref{eq:f'}, even considering only  non-increasing $BV$-functions.



\appendix

\section{}

Let $I \subset \R$ be a bounded open interval. If $x_0 \in I$ and $\rho>0$, we denote by {$I_{x_0,\rho}=(x_0-\rho,x_0+\rho)$}. We recall that a function $f \in L^1(I)$ has {\it vanishing mean oscillation} in $I$, and we write $f \in {\rm VMO}(I)$, if
$$\frac{1}{\LL^1(I \cap I_{x_0,\rho})}\int_{I \cap I_{x_0,\rho}}|f(x)-f_{x_0,\rho}|\, dx \to 0, \quad \text{ as }\rho \to 0$$
uniformly with respect to $x_0 \in I$, where $f_{x_0,\rho}:=\frac{1}{\LL^1(I \cap I_{x_0,\rho})} \int_{I_{x_0,\rho}}f(y)\, dy$ is the average of
$f$ on $I \cap I_{x_0,\rho}$.

We first show a (well known) result (see \cite[Example 2]{BN}).

\begin{lemma}\label{lem:VMO}
If $f \in H^{1/2}(I)$, then $f \in {\rm VMO}(I)$.
\end{lemma}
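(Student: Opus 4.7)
The plan is to exploit the Slobodeckij representation of the $H^{1/2}$-seminorm, namely
$$[f]_{H^{1/2}(I)}^2 = \int_I\int_I \frac{|f(x)-f(y)|^2}{|x-y|^2}\,dx\,dy < +\infty,$$
and to control the mean oscillation of $f$ over any small subinterval by the restriction of this double integral to that subinterval squared.

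First I would fix $x_0 \in I$ and $\rho>0$ and set $J:=I\cap I_{x_0,\rho}$, which has length $|J|\le 2\rho$. By Jensen's (or Cauchy--Schwarz) inequality applied to the measure $dx/|J|$ on $J$,
$$\frac{1}{|J|}\int_J |f(x)-f_{x_0,\rho}|\,dx \le \left(\frac{1}{|J|}\int_J |f(x)-f_{x_0,\rho}|^2\,dx\right)^{1/2}.$$
Next, using that $f_{x_0,\rho}$ is the average of $f$ on $J$ and Jensen's inequality again (or an elementary computation), one has
$$\frac{1}{|J|}\int_J |f(x)-f_{x_0,\rho}|^2\,dx \le \frac{1}{|J|^2}\int_J\int_J |f(x)-f(y)|^2\,dx\,dy.$$

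Then I would insert the factor $|x-y|^2/|x-y|^2$ and use that on $J\times J$ one has $|x-y|\le |J|$, to obtain
$$\frac{1}{|J|^2}\int_J\int_J |f(x)-f(y)|^2\,dx\,dy \le \int_J\int_J \frac{|f(x)-f(y)|^2}{|x-y|^2}\,dx\,dy.$$
Combining the three displays, we deduce that
$$\left(\frac{1}{\LL^1(I\cap I_{x_0,\rho})}\int_{I\cap I_{x_0,\rho}} |f(x)-f_{x_0,\rho}|\,dx\right)^2 \le \int_{J\times J}\frac{|f(x)-f(y)|^2}{|x-y|^2}\,dx\,dy.$$

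Finally, the key step is to observe that $J\times J\subset I\times I$ with $\LL^2(J\times J)\le 4\rho^2$, so that this set shrinks in measure to zero uniformly in $x_0$ as $\rho \to 0$. Since the integrand $(x,y)\mapsto |f(x)-f(y)|^2/|x-y|^2$ is in $L^1(I\times I)$ by assumption, the absolute continuity of the Lebesgue integral guarantees that
$$\sup_{x_0\in I}\int_{(I\cap I_{x_0,\rho})\times(I\cap I_{x_0,\rho})} \frac{|f(x)-f(y)|^2}{|x-y|^2}\,dx\,dy \xrightarrow[\rho\to 0]{} 0,$$
which yields the required uniform convergence of the mean oscillation and hence $f\in \mathrm{VMO}(I)$. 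No step is particularly delicate here; the only slightly subtle point is securing the uniformity in $x_0$, which is handled cleanly by absolute continuity once the pointwise estimate has been established.
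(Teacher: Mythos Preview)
Your proof is correct and follows essentially the same approach as the paper: both bound the mean oscillation on $J=I\cap I_{x_0,\rho}$ by $\left(\iint_{J\times J}\frac{|f(x)-f(y)|^2}{|x-y|^2}\,dx\,dy\right)^{1/2}$ via Cauchy--Schwarz/Jensen and the estimate $|x-y|\le |J|$, and then invoke absolute continuity of the $L^1(I\times I)$ integrand together with $\LL^2(J\times J)\le 4\rho^2$ to obtain the uniformity in $x_0$. The only cosmetic difference is the order of presentation (the paper states absolute continuity first and then derives the pointwise estimate).
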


\begin{proof}
Since $f \in H^{1/2}(I)$, then by definition
$$\iint_{I \times I}\frac{|f(x)-f(y)|^2}{|x-y|^2}\, dx\, dy <+\infty.$$
Therefore, by absolute continuity of the integral, for each $\e>0$ there exists $\delta>0$ such that for every Lebesgue measurable set $A \subset I \times I$ with $\LL^2(A) \leq \delta$, then
$$\iint_{A}\frac{|f(x)-f(y)|^2}{|x-y|^2}\, dx\, dy \leq  \e^2.$$

Let $x_0 \in I$ and $\rho \leq \sqrt{\delta}/2$ so that $\LL^2(I_{x_0,\rho}\times I_{x_0,\rho}) =4\rho^2 \leq \delta$. According to the triangle
and Cauchy-Schwarz inequalities, we have
\begin{multline*}
\frac{1}{\LL^1(I \cap I_{x_0,\rho})} \int_{I \cap I_{x_0,\rho}}|f(x)-f_{x_0,\rho}|\, dx   \\
\leq \frac{1}{\LL^1(I \cap I_{x_0,\rho})^2}\iint_{(I \cap I_{x_0,\rho})^2}|f(x)-f(y)|\, dx\, dy
  \\ \leq \left(\frac{1}{\LL^1(I \cap I_{x_0,\rho})^2}\iint_{(I \cap I_{x_0,\rho})^2}|f(x)-f(y)|^2\, dx\, dy\right)^{1/2}.
\end{multline*}
{If $x$ and $y \in I \cap I_{x_0,\rho}$, then,  $|x-y|\leq \LL^1(I \cap I_{x_0,\rho})$, hence }
\begin{multline*}
\frac{1}{\LL^1(I \cap I_{x_0,\rho})}\int_{I_{x_0,\rho}}|f-f_{x_0,\rho}|\,
dx\\
  \leq \left(\iint_{(I \cap I_{x_0,\rho})^2}\frac{|f(x)-f(y)|^2}{|x-y|^2}\, dx\, dy \right)^{1/2} \leq \e,
  \end{multline*}
which proves that $f \in {\rm VMO}(I)$.
\end{proof}

We now show that a characteristic function which belongs to $H^{1/2}(I)$ must be constant.

\begin{lemma}\label{lem:constant}
Let $\chi={\bf 1}_A$ for some Lebesgue measurable set $A \subset I$. If
$\chi \in H^{1/2}(I)$ then either $\LL^1(A)=0$ or $\LL^1(I \setminus A)=0$.
\end{lemma}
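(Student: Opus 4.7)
The plan is to combine Lemma \ref{lem:VMO}, which gives $\chi \in \mathrm{VMO}(I)$, with an elementary dichotomy argument: a characteristic function whose mean oscillation is controlled must be localized near either $0$ or $1$ on every small ball, and by connectedness of $I$ it cannot switch between these two regimes without violating the uniform VMO estimate.

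First I would carry out the key algebraic computation. For $x_0 \in I$, $\rho > 0$, set $a(x_0,\rho) := \chi_{x_0,\rho} = \mathcal{L}^1(A \cap I_{x_0,\rho})/\mathcal{L}^1(I \cap I_{x_0,\rho}) \in [0,1]$. A direct computation splitting the integral over $A$ and $I \setminus A$ gives
$$\frac{1}{\mathcal{L}^1(I \cap I_{x_0,\rho})} \int_{I \cap I_{x_0,\rho}} |\chi(x) - a(x_0,\rho)| \, dx = 2\, a(x_0,\rho) \bigl(1 - a(x_0,\rho)\bigr).$$
By Lemma \ref{lem:VMO} together with the VMO hypothesis, this quantity tends to $0$ as $\rho \to 0$ uniformly in $x_0 \in I$. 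Hence, for every $\varepsilon > 0$, there exists $\rho_0 > 0$ such that $a(x_0,\rho)(1 - a(x_0,\rho)) < \varepsilon$ for all $x_0 \in I$ and all $\rho < \rho_0$; equivalently, $a(x_0,\rho) \in [0, \eta) \cup (1-\eta, 1]$ for some $\eta = \eta(\varepsilon) \to 0$ as $\varepsilon \to 0$, choosing $\eta < 1/2$.

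The argument now proceeds by contradiction. Assume both $\mathcal{L}^1(A) > 0$ and $\mathcal{L}^1(I \setminus A) > 0$. By the Lebesgue differentiation theorem, there exist $x_0 \in A$ and $x_1 \in I \setminus A$ which are density points of $A$ and $I \setminus A$ respectively, so that $a(x_0,\rho) \to 1$ and $a(x_1,\rho) \to 0$ as $\rho \to 0$. For fixed $\rho$, however, the function $x \mapsto a(x,\rho)$ is continuous on $I$ (continuity in $x$ of $\int_{I \cap I_{x,\rho}} \chi \, dy$ and of $\mathcal{L}^1(I \cap I_{x,\rho})$ follows from dominated convergence). Taking $\rho$ small enough that $a(x_0,\rho) > 1 - \eta$ and $a(x_1,\rho) < \eta$ with $\eta < 1/2$, the intermediate value theorem produces a point $x^* = x^*(\rho) \in I$ between $x_0$ and $x_1$ at which $a(x^*,\rho) = 1/2$. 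But then $a(x^*,\rho)(1 - a(x^*,\rho)) = 1/4$, contradicting the uniform VMO bound for $\varepsilon < 1/4$. Therefore either $\mathcal{L}^1(A) = 0$ or $\mathcal{L}^1(I \setminus A) = 0$.

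The only delicate point is verifying the continuity of $x \mapsto a(x,\rho)$ on all of $I$, which must be handled carefully at points $x$ where $I_{x,\rho}$ is not entirely contained in $I$ (so that $\mathcal{L}^1(I \cap I_{x,\rho})$ varies with $x$); this is routine but is what allows the intermediate value argument to be applied globally. Everything else is bookkeeping.
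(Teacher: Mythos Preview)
Your proof is correct and follows essentially the same route as the paper: both use Lemma~\ref{lem:VMO} to force the averages $\chi_{x,\rho}$ uniformly close to $\{0,1\}$, then invoke continuity of $x\mapsto\chi_{x,\rho}$ on the interval to rule out switching between the two regimes, and conclude via Lebesgue differentiation. The only cosmetic differences are that you compute the mean oscillation explicitly as $2a(1-a)$ (the paper uses the cruder bound $\dist(\chi_{x,\rho},\{0,1\})\le |\chi_{x,\rho}-\chi(y)|$ and integrates), and you phrase the connectedness step as a contradiction via the intermediate value theorem rather than the paper's direct observation that a continuous map from $I$ into $[0,1/3]\cup[2/3,1]$ must land in one component.
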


\begin{proof}
For $\LL^1$-a.e. $y\in I \cap I_{x,\rho}$ we have that $\dist(\chi_{x,\rho},\{0,1\})\le |\chi_{x,\rho}-\chi(y)|$.  Lemma \ref{lem:VMO} implies that  $\chi \in {\rm VMO}(I)$. Therefore, integrating over $I \cap I_{x,\rho}$, we get
$$\dist(\chi_{x,\rho},\{0,1\}) \leq \frac{1}{\LL^1(I \cap I_{x,\rho})}\int_{I \cap I_{x,\rho}} |\chi-\chi_{x,\rho}|\, dy \to 0$$
as $\rho \to 0$, uniformly with respect to $x \in I$. In particular, there exists $\rho_0>0$ such that $\dist(\chi_{x,\rho},\{0,1\})<1/3$ for all $0 < \rho<\rho_0$ and all $x \in I$. Let $\rho<\rho_0$. Since the function $x \in I \mapsto \chi_{x,\rho}$ is continuous, then, either $\chi_{x,\rho}\le 1/3$ for all $x \in I$, or $\chi_{x,\rho}\ge 2/3$ for all $x \in I$.
But, since $\chi_{x,\rho} \to \chi(x)$ as $\rho\to 0$ for every Lebesgue point $x \in I$ of $\chi$, then, either $\chi=0$ a.e. in $I$, or $\chi=1$ a.e. in $I$.
\end{proof}

We next prove that a piecewise constant function on a Lipschitz graph cannot belong to $H^{1/2}$.

\begin{lemma}\label{lem:notH1/2}
Let $\Gamma=\Gamma^- \cup \Gamma^+$ where $\Gamma^\pm$ are segments in $\R^2$ such that $\overline \Gamma^+ \cap \overline \Gamma^-$ is a single
point.  Let $f:\Gamma\to \R$ be such that $f=\alpha$ on $\Gamma^-$ and $f=\beta$ on $\Gamma^+$ where $\alpha \neq \beta$. Then $f \not\in H^{1/2}(\Gamma)$.
\end{lemma}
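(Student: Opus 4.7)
My plan is to establish the claim by direct computation of the Gagliardo seminorm
\[
[f]_{H^{1/2}(\Gamma)}^2 := \iint_{\Gamma\times\Gamma}\frac{|f(x)-f(y)|^2}{|x-y|^2}\,d\HH^1(x)\,d\HH^1(y),
\]
showing that it is infinite, whence $f\notin H^{1/2}(\Gamma)$. Let $z_0$ denote the common endpoint of $\Gamma^-$ and $\Gamma^+$, let $L^\pm:=\HH^1(\Gamma^\pm)$, and parametrize each piece by arc length from $z_0$ as $x(s)=z_0+se_-$ on $\Gamma^-$ and $y(t)=z_0+te_+$ on $\Gamma^+$, with unit vectors $e_\pm$ along the respective segments.

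Because $f$ is constant on each piece, the integrand above vanishes except when one of the two points lies on $\Gamma^-$ and the other on $\Gamma^+$, in which case the numerator equals $|\beta-\alpha|^2$. The triangle inequality applied at $z_0$ yields the elementary upper bound $|x(s)-y(t)|\le s+t$, independently of the angle between the two segments, and thus
\[
[f]_{H^{1/2}(\Gamma)}^2 \;\ge\; 2|\beta-\alpha|^2 \int_0^{L^-}\!\int_0^{L^+} \frac{ds\,dt}{(s+t)^2}.
\]
A direct evaluation of the inner integral gives
\[
\int_0^{L^+}\frac{dt}{(s+t)^2} \;=\; \frac{1}{s}-\frac{1}{s+L^+},
\]
and integration over $s\in(0,L^-)$ diverges logarithmically as $s\to 0^+$, so $[f]_{H^{1/2}(\Gamma)}^2=+\infty$. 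This is the entire argument; the only mildly delicate point is the control of $|x(s)-y(t)|$ near the vertex $z_0$, which is handled by the trivial triangle-inequality bound above, so no genuine obstacle arises. (An alternative route would be to pull $f$ back by a bi-Lipschitz arc-length parametrization of $\Gamma$ and invoke Lemmas \ref{lem:VMO} and \ref{lem:constant}, but the direct computation above is more transparent.)
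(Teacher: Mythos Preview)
Your proof is correct and follows the same strategy as the paper: both compute the Gagliardo seminorm directly and show it diverges because the cross term over $\Gamma^-\times\Gamma^+$ blows up logarithmically near the vertex. The paper parametrizes $\Gamma$ as the graph of $x_1\mapsto a|x_1|$, computes $|x-y|^2=(x_1-y_1)^2+a^2(x_1+y_1)^2$, bounds this above by $2(1+a^2)(s^2+t^2)$, and finishes with polar coordinates; your triangle-inequality bound $|x(s)-y(t)|\le s+t$ in arc-length coordinates is a slightly cleaner route to the same divergent integral.
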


\begin{proof}
Without loss of generality, we suppose that $\Gamma \supset\{(x_1,x_2) \in \R^2 : \; -L < x_1 <L,\; x_2=a|x_1|\}$ where $L>0$, $a>0$, $\Gamma^-=\{(x_1,x_2) \in \R^2 : \; -L < x_1 <0,\; x_2=-a x_1\}$ and $\Gamma^+=\{(x_1,x_2) \in \R^2 : \; 0< x_1 <L,\; x_2=a x_1\}$. Let us show that
$$\iint_{\Gamma \times \Gamma}\frac{|f(x)-f(y)|^2}{|x-y|^2}\, d\HH^1(x)\,
d\HH^1(y)=+\infty.$$
Indeed, using the definition of a curvilinear integral and a change of variables
\begin{eqnarray*}
&&\iint_{\Gamma \times \Gamma}\frac{|f(x)-f(y)|^2}{|x-y|^2}\, d\HH^1(x)\,
d\HH^1(y)\\
&&\hspace{2cm}= 2|\alpha-\beta|^2 \iint_{\Gamma^- \times \Gamma^+}\frac{d\HH^1(x)\, d\HH^1(y) }{|x-y|^2}\\
&&\hspace{2cm}= 2(1+a^2)|\alpha-\beta|^2 \iint_{(-L,0) \times (0,L)}\frac{dx_1\, dy_1 }{(x_1-y_1)^2+a^2(x_1+y_1)^2}\\
&&\hspace{2cm}= 2(1+a^2)|\alpha-\beta|^2 \iint_{ (0,L)^2}\frac{ds\, dt }{(s+t)^2+a^2(s-t)^2}.
\end{eqnarray*}
Since $(s+t)^2+a^2(s-t)^2 \leq 2(1+a^2)(s^2+t^2)$, we deduce that
\begin{eqnarray*}
\iint_{\Gamma \times \Gamma}\frac{|f(x)-f(y)|^2}{|x-y|^2}\, d\HH^1(x)\, d\HH^1(y)&  \geq &
|\alpha-\beta|^2 \iint_{ (0,L)^2}\frac{ds\, dt }{s^2+t^2}\\
& \geq& |\alpha-\beta|^2 \iint_{ D_L}\frac{ds\, dt }{s^2+t^2},
\end{eqnarray*}
where $D_L:=(0,L)^2 \cap B_L(0)$ is the  right upper quarter of a disk of radius $L$. Using a change of variables in polar coordinates, we get that
$$\iint_{ D_L}\frac{ds\, dt }{s^2+t^2}=\frac{\pi}{2} \int_0^L \frac{dr}{r}=+\infty,$$
which completes the proof of the result.
\end{proof}



\ack
{\scriptsize This research was supported by the National Science Fundation Grants  DMS-1615839 and DMS/DMREF-1922371 and was completed in part while the first author was visiting the Courant Institute in April 2019 with the support of Grant DMS-1615839.  {The first author's research  was also partially supported by a
public grant as part of the Investissement d'avenir project, reference ANR-11-LABX-0056-LMH, LabEx LMH.} The second author also wishes to thank the Flatiron Institute of the Simons Foundation for its hospitality.}


\frenchspacing
\bibliographystyle{cpam}

\end{document}